\newtheorem{theorem}{Theorem}[section]
\newtheorem{prop}[theorem]{Proposition}
\newtheorem{lem}[theorem]{Lemma}
\newtheorem{cor}[theorem]{Corollary}
\theoremstyle{definition}
\newtheorem{defn}[theorem]{Definition}
\newtheorem{ass}[theorem]{Assumption}
\newtheorem{cond}[theorem]{Condition}
\theoremstyle{remark}
\newtheorem{remark}[theorem]{Remark}
\numberwithin{equation}{section}
\newcommand{\be}{\begin{equation}}
\newcommand{\ee}{\end{equation}}
\newcommand{\lb}{\left(}
\newcommand{\rb}{\right)}
\newcommand{\lsb}{\left[}
\newcommand{\rsb}{\right]}
\newcommand{\lcb}{\left\{}
\newcommand{\rcb}{\right\}}
\newcommand{\ip}[1]{\langle#1\rangle}
\newcommand{\norm}[1]{\lVert#1\rVert}
\newcommand{\wt}{\widetilde}
\newcommand{\wh}{\widehat}
\newcommand{\ve}{\varepsilon}
\newcommand{\x}{f}
\newcommand{\y}{g}
\newcommand{\z}{h}
\newcommand{\B}{\mathcal{B}}
\newcommand{\E}{\mathbb{E}}
\newcommand{\F}{\mathcal{F}}
\newcommand{\J}{\mathbb{I}}
\renewcommand{\L}{{L}}
\newcommand{\N}{\mathbb{N}}
\renewcommand{\P}{\mathbb{P}}
\newcommand{\Q}{\mathbb{Q}}
\newcommand{\R}{\mathbb{R}}
\renewcommand{\S}{\mathcal{S}}
\newcommand{\U}{\mathcal{N}}
\newcommand{\V}{\mathcal{V}}
\newcommand{\W}{\mathcal{W}}
\newcommand{\X}{{X}}
\newcommand{\Y}{{Y}}
\newcommand{\Z}{{Z}}
\newcommand{\bm}{{W}}
\newcommand{\cadlag}{RCLL }
\newcommand{\push}{\ell}
\newcommand{\phib}{\mathcal{J}}
\newcommand{\etab}{\mathcal{K}}
\newcommand{\psib}{\mathcal{H}}
\newcommand{\lin}{\text{Lin}}
\newcommand{\allN}{\mathcal{I}}
\newcommand{\lip}{\kappa}
\newcommand{\spaan}{\text{span}}
\newcommand{\conv}{{\text{cone}}}
\newcommand{\dist}{\text{dist}}
\newcommand{\cts}{\mathbb{C}}
\newcommand{\dlr}{\mathbb{D}_{{\rm l,r}}}
\newcommand{\dr}{\mathbb{D}_{{\rm r}}}
\newcommand{\dlim}{\mathbb{D}_{{\rm lim}}}
\newcommand{\sphere}{\mathbb{S}^{J-1}}
\newcommand{\esm}{\bar\Gamma}
\newcommand{\dm}{\Lambda}
\newcommand{\proj}{\mathcal{L}}
\begin{document}

\title[Pathwise differentiability of reflected diffusions]{Pathwise differentiability of reflected diffusions in convex polyhedral domains}

\date{\today}

\author[David Lipshutz]{David Lipshutz*}
\address{Division of Applied Mathematics \\ 
        Brown University \\ 
        182 George Street, Providence \\ 
        RI 02912}
\email{david{\_}lipshutz@brown.edu}

\author[Kavita Ramanan]{Kavita Ramanan\textsuperscript{$\dagger$}}
\email{kavita{\_}ramanan@brown.edu}
\urladdr{http://www.dam.brown.edu/people/ramanan}

\keywords{reflected diffusion, reflected Brownian motion, boundary jitter property, derivative process, pathwise differentiability, stochastic flow, sensitivity analysis, directional derivative of the extended Skorokhod map, derivative problem}

\subjclass[2010]{Primary: 60G17, 90C31, 93B35. Secondary: 60H07, 60H10, 65C30}

\thanks{*The first author was supported in part by NSF grant DMS-1148284, NSF grant CMMI-1407504 and AFOSR grant FA9550-12-1-0399.}
\thanks{\textsuperscript{$\dagger$}The second author was supported in part by NSF grant CMMI-1407504.}

\dedicatory{Brown University}

\begin{abstract}
Reflected diffusions in convex polyhedral domains arise in a variety of applications, including interacting particle systems, queueing networks, biochemical reaction networks and mathematical finance. Under suitable conditions on the data, we establish pathwise differentiability of such a reflected diffusion with respect to its defining parameters --- namely, its initial condition, drift and diffusion coefficients, and (oblique) directions of reflection along the boundary of the domain. We characterize the right-continuous regularization of a pathwise derivative of the reflected diffusion as the pathwise unique solution to a constrained linear stochastic differential equation with jumps whose drift and diffusion coefficients, domain and directions of reflection depend on the state of the reflected diffusion. The proof of this result relies on properties of directional derivatives of the associated (extended) Skorokhod reflection map and their characterization in terms of a so-called derivative problem, and also involves establishing certain path properties of the reflected diffusion at nonsmooth parts of the boundary of the polyhedral domain, which may be of independent interest. As a corollary, we obtain a probabilistic representation for derivatives of expectations of functionals of reflected diffusions, which is useful for sensitivity analysis of reflected diffusions.
\end{abstract}

\maketitle

\tableofcontents

\section{Introduction}\label{sec:intro}

\subsection{Overview}

Reflected diffusions in convex polyhedral domains arise in a variety of contexts, including in the study of interacting particle systems \cite{Burdzy2002,Warren2007,Zam2004}, rank-based diffusion models in mathematical finance \cite{Banner2005,Ichiba2011}, ``heavy traffic'' limits of stochastic networks \cite{Chen1991,Mandelbaum1998,Peterson1991,Ramanan2003,Ramanan2008a,Reiman1984}, and directed percolation and polymer models \cite{OConnell2001}. A reflected diffusion with given drift and diffusion coefficients in a convex polyhedral domain with oblique reflection along the boundary is a continuous Markov process that, roughly speaking, behaves like a diffusion (with the same drift and diffusion coefficients) in the interior of the domain and is constrained to remain in the closure of the domain by a ``constraining'' process that acts only when the reflected diffusion is at the boundary of the domain and only in specified directions that are constant along each boundary face of the polyhedral domain. Given the convex polyhedral domain, such a reflected diffusion is completely characterized by certain parameters --- namely, its initial condition, drift and diffusion coefficients, and directions of reflection (along the boundary).  The aim of this paper is to establish pathwise differentiability of the reflected diffusion with respect to all its defining parameters and provide a tractable characterization of the associated pathwise derivatives. Our work is motivated by both theoretical and applied perspectives. Pathwise differentiability with respect to the initial condition is closely related to differentiability of the transition semigroup and stochastic flows of reflected diffusions. Additionally,  in applications, pathwise derivatives are useful for characterizing sensitivities of expectations of functionals of reflected diffusions with respect to key model parameters, and this often entails simultaneous estimation of derivatives with respect to the drift and diffusion coefficients, as well as the directions of reflection (see \cite{Lipshutz2017a} for concrete examples). This motivates the development of a common framework within which to treat perturbations with respect to all defining parameters of a reflected diffusion. 

In the \emph{unconstrained} setting, the study of pathwise derivatives of a diffusion with respect to parameters that describe the diffusion is a classical topic in stochastic analysis (see, e.g., \cite{Kunita1997} and references therein), and has found a variety of applications, including in the estimation of so-called ``Greeks'' in math finance (see, e.g., \cite[Chapter 2]{Malliavin2006}). Under general conditions on the drift and diffusion coefficients, it is well known that the pathwise derivative of an \emph{unconstrained} diffusion is the unique solution to a linear stochastic differential equation (SDE) whose coefficients are modulated by the state of the diffusion (see, e.g., \cite{Metivier1982}). 

In the \emph{constrained} setting of reflected diffusions, pathwise derivatives are more complicated and no longer have continuous (or even right-continuous) paths. The analysis of pathwise differentiability is challenging due to the singular behavior of the constraining process or local time on the boundary of the domain, and additional challenges arise when the boundary is not smooth and reflection directions are multivalued at nonsmooth points of the boundary. In general, one would expect the derivative to satisfy a linearized version of the constrained SDE \eqref{eq:Z} for the reflected diffusion. However, it is a priori not even clear how to formally linearize the constrained SDE due to the presence of the constraining term and nonsmooth boundary. Furthermore, even when such an equation has been identified, additional challenges arise in establishing existence and uniqueness of solutions to this stochastic equation and showing that it indeed characterizes the pathwise derivative of the original equation. These tasks are further complicated in the presence of state-dependent drift and diffusion coefficients. We introduce a novel framework for analyzing pathwise derivatives of constrained processes, which in particular allows us to identify a suitable linearization. For a large class of reflected diffusions in convex polyhedral domains, our main result, Theorem \ref{thm:pathwise}, shows that (the right-continuous regularization of) a pathwise derivative can be characterized in terms of a so-called derivative process, which is shown to be the pathwise unique solution to a \emph{constrained} linear SDE with jumps whose drift and diffusion coefficients, domain and directions of reflection are modulated by the state of the reflected diffusion. Our characterization can thus be viewed as the analog, in the constrained setting, of the classical characterization of pathwise derivatives of diffusions. 

Our approach builds on the extended Skorokhod reflection problem (ESP) introduced in \cite{Ramanan2006} and the characterization of directional derivatives of the related extended Skorokhod map (ESM) obtained in \cite{Lipshutz2016}. The ESP provides an axiomatic framework with which to constrain a continuous deterministic path to the closure of a domain via a ``regulator'' function that acts in specified directions when the constrained path is at the boundary of the domain (see Definition \ref{def:esp} below). The mapping that takes the unconstrained path to the constrained path is referred to as the ESM. Under the ESP approach, the reflected diffusion is represented as the image of an unconstrained stochastic process under the ESM. For the class of domains and directions of reflection we consider, the corresponding ESM is Lipschitz continuous on path space, and (under standard Lipschitz continuity and nondegeneracy assumptions on the drift and diffusion coefficients) we show that pathwise derivatives of the reflected diffusion exist and can be characterized in terms of directional derivatives of the ESM (see Proposition \ref{prop:psivepsi} below). For a general class of ESMs in convex polyhedral domains, it was shown in \cite{Lipshutz2016} that directional derivatives exist along any continuous (deterministic) path whose constrained version satisfies a certain ``boundary jitter'' property (see Definition \ref{def:jitter} below), and their right-continuous regularizations can be uniquely characterized as solutions to a so-called derivative problem (see Definition \ref{def:dp} below).  For a smaller class of ESMs that satisfy a certain monotonicity property and admit a semi-explicit representation in terms of coupled one-dimensional Skorokhod maps, a characterization of directional derivatives along \emph{all} continuous paths was previously obtained in \cite{ManRam10}. However, this characterization lacks linearity properties satisfied by solutions to the derivative problem that yield a more tractable characterization of the pathwise derivative.

To establish pathwise differentiability of a reflected diffusion, we follow three main steps. First, in Section \ref{sec:jitterproof}, we prove that the reflected diffusions we consider almost surely satisfy the boundary jitter property, a result that may be of independent interest. The boundary jitter property (specifically conditions 3 and 4) describes the sample path behavior of a reflected diffusion immediately prior to hitting the nonsmooth parts of the boundary, and immediately after time zero if the reflected diffusion starts on the nonsmooth parts of the boundary. The proof of this property relies on uniform hitting time estimates, scaling properties of reflected diffusions, a change of measure and a weak convergence argument. When combined with results obtained in \cite{Lipshutz2016}, this implies that directional derivatives of the ESM evaluated at an associated unconstrained process exist almost surely. In the second step, carried out in Section \ref{sec:derivativeprocessproof}, we introduce the formal linearization of the original constrained SDE, which is expressed in terms of the so-called derivative problem introduced in \cite{Lipshutz2016}, and show that it admits a pathwise unique solution, which we refer to as the derivative process along the reflected diffusion. Finally, in Section \ref{sec:pathwiseproof}, we use stochastic estimates to show that pathwise derivatives can be expressed in terms of directional derivatives of the ESM, and characterize the right-continuous regularizations of the pathwise derivatives in terms of the corresponding derivative process along the reflected diffusion.

In summary, for a reflected diffusion in a large class of convex polyhedral domains, the main contributions of this work are as follows:
\begin{itemize}
	\item Verification of the boundary jitter property (Section \ref{sec:jitter} and Section \ref{sec:jitterproof}).
	\item Definition and analysis of the derivative process along the reflected diffusion (Section \ref{sec:derivativeprocess} and Section \ref{sec:derivativeprocessproof}).
	\item Existence of pathwise derivatives and their characterization via the derivative process (Section \ref{sec:pathwise} and Section \ref{sec:pathwiseproof}).
\end{itemize}
Our work appears to be the first to establish pathwise differentiability of reflected diffusions with state-dependent diffusion coefficients in nonsmooth domains, and also the first to consider perturbations of reflected diffusions with respect to diffusion coefficients and directions of reflection, which are of relevance in applications. For example, see \cite{Lipshutz2017a} where the results obtained here are used to construct new estimators for sensitivities of reflected diffusions.

\subsection{Prior results}

There are relatively few results on pathwise derivatives of \emph{obliquely} reflected diffusions in convex polyhedral domains. Two exceptions include the works of Andres \cite{Andres2009} and Dieker and Gao \cite{Dieker2014}. The work \cite{Andres2009} characterizes derivatives of flows of an obliquely reflected diffusion with identity diffusion coefficient in a polyhedral domain, but only up until the first time the reflected diffusion hits the nonsmooth part of the boundary. This avoids the difficulties that arise at the nonsmooth parts of the boundary, and essentially reduces the problem to studying differentiability of flows of obliquely reflected diffusions with identity diffusion coefficient in a half space. On the other hand, the work \cite{Dieker2014} considers the same class of ESMs in the nonnegative orthant studied in \cite{ManRam10}, and characterizes sensitivities of associated obliquely reflected diffusions to perturbations of the drift in the direction $-{\bf 1}$, the vector with negative one in each component.

In addition to these works, the following authors considered certain pathwise derivatives of \emph{normally} reflected diffusions: Deuschel and Zambotti \cite{Deuschel2005} characterized derivatives of stochastic flows for normally reflected diffusions with identity diffusion coefficient in the orthant; Burdzy \cite{Burdzy2009a} characterized derivatives of stochastic flows for normally reflected Brownian motions in \emph{smooth} domains; Andres \cite{Andres2011} generalized the results of \cite{Burdzy2009a} to allow state-dependent drifts; Pilipenko (see \cite{Pilipenko2013} and references therein) investigated derivatives of stochastic flows for normally reflected diffusions in the half space; and Bossy, Ciss\'e and Talay \cite{Bossy2011} obtained an explicit representation for the derivatives of a one-dimensional reflected diffusion in a bounded interval. Lastly, Costantini, Gobet and Karoui \cite{Costantini2006} studied boundary sensitivities of normally reflected diffusions in a time-dependent domain. 

\subsection{Outline of the paper}

In Section \ref{sec:coupleddiffusions}, we define a family of coupled reflected diffusions indexed by parameters that determine their initial conditions, drift and diffusion coefficients, and directions of reflection. In Section \ref{sec:main}, we present our main results. As explained prior to the summary of our main results above, Sections \ref{sec:jitterproof}, \ref{sec:derivativeprocessproof} and \ref{sec:pathwiseproof} are devoted to proving our main results. Appendices \ref{apdx:push}--\ref{apdx:ZkYkESPXk} contain the proofs of some auxiliary results. 

\subsection{Notation}\label{sec:notation}

We now collect some notation that will be used throughout this work. We let $\N=\{1,2,\dots\}$ denote the set of positive integers. Given $J\in\N$, we use $\R_+^J$ to denote the closed nonnegative orthant in $J$-dimensional Euclidean space $\R^J$. When $J=1$, we suppress $J$ and write $\R$ for $(-\infty,\infty)$ and $\R_+$ for $[0,\infty)$. We let $\Q$ denote the subset of rational numbers in $\R$. For a subset $A\subset\R$, we let $\inf A$ and $\sup A$ denote the infimum and supremum, respectively, of $A$. We use the convention that the infimum and supremum of the empty set are respectively defined to be $\infty$ and $-\infty$. For a column vector $x\in\R^J$, let $x^j$ denote the $j$th component of $x$. We write $\ip{\cdot,\cdot}$ and $|\cdot|$ for the usual Euclidean inner product and Euclidean norm, respectively, on $\R^J$. We let $\sphere\doteq\{x\in\R^J:|x|=1\}$ denote the unit sphere in $\R^J$ centered at the origin. For $J,K\in\N$, let $\R^{J\times K}$ denote the set of real-valued matrices with $J$ rows and $K$ columns. We write $M^T\in\R^{K\times J}$ for the transpose of a matrix $M\in\R^{J\times K}$. Given normed vector spaces $({\mathcal X},\norm{\cdot}_{\mathcal X})$ and $({\mathcal Y},\norm{\cdot}_{\mathcal Y})$, we let $\lin({\mathcal X},{\mathcal Y})$ denote the space of linear operators mapping ${\mathcal X}$ to ${\mathcal Y}$. For $T\in\lin({\mathcal X}, {\mathcal Y})$, we write the arguments of $T$ in square brackets to emphasize that $T$ is linear; that is, we write $T[x]$. For a bounded linear operator $T\in\lin({\mathcal X}, {\mathcal Y})$, we write $\norm{T}$ to denote the operator norm of $T$; that is, $\norm{T}\doteq\sup\{\norm{T[x]}_{\mathcal Y}:\norm{x}_{\mathcal X}=1\}$.

Given a subset $E\subseteq\R^J$, we let $\B(E)$ denote the Borel subsets of $E$. We let
	$$\conv(E)\doteq\lcb\sum_{k=1}^Kr_kx_k:K\in\N,x_k\in E,r_k\geq0\rcb,$$
denote the convex cone generated by $E$, and let $\spaan(E)$ denote the set of all possible finite linear combinations of vectors in $E$, with the convention that $\conv(\emptyset)$ and $\spaan(\emptyset)$ are equal to $\{0\}$. We let $E^\perp$ denote the orthogonal complement of $\spaan(E)$ in $\R^J$. We let $\dlim(E)$ denote the space of functions on $[0,\infty)$ taking values in $E$ that have finite left limits at every $t>0$ and finite right limits at every $t\geq0$. We let $\dlr(E)$ denote the subset of functions in $\dlim(E)$ that are either left-continuous or right-continuous at every $t>0$. We let $\dr(E)$ denote the subset of right-continuous functions in $\dlr(E)$ and refer to functions in $\dr(E)$ as right-continuous with finite left limits, or \cadlag for short. We let $\cts(E)$ denote the subset of continuous functions in $\dr(E)$. Given a subset $A\subseteq E$, we use $\cts_A(E)$ to denote the subset of continuous functions $f\in\cts(E)$ with $f(0)\in A$. We equip $\dlim(E)$ and its subsets with the topology of uniform convergence on compact intervals in $[0,\infty)$. For $f\in\dlr(E)$ and $t\in[0,\infty)$, define
	$$\norm{f}_t\doteq\sup_{s\in[0,t]}|f(s)|<\infty.$$
For $f\in\dlr(E)$, we let $f(t-)\doteq\lim_{s\uparrow t}f(s)$ for all $t>0$ and $f(t+)\doteq\lim_{s\downarrow t}f(s)$ for all $t\geq0$.

Throughout this paper we fix a filtered probability space $(\Omega,\F,\{\F_t\},\P)$ satisfying the usual conditions; that is, $(\Omega,\F,\P)$ is a complete probability space, $\F_0$ contains all $\P$-null sets in $\F$ and the filtration $\{\F_t\}$ is right-continuous. We write $\E$ to denote expectation under $\P$. By a $K$-dimensional $\{\F_t\}$-Brownian motion on $(\Omega,\F,\P)$, we mean that $\{\bm_t,\F_t, t \geq 0\}$ is a $K$-dimensional continuous martingale with quadratic variation $[\bm]_t=t$ for $t\geq0$ that starts at the origin. We let $C_p<\infty$, for $p\geq 2$, denote the universal constants in the Burkholder-Davis-Gundy (BDG) inequalities (see, e.g., \cite[Chapter IV, Theorem 42.1]{Rogers2000a}). 

We abbreviate ``almost surely'' as ``a.s.'' and ``infinitely often'' as ``i.o.''. 

\section{A parameterized family of reflected diffusions}\label{sec:coupleddiffusions}

In this section we introduce a family of coupled reflected diffusions in a convex polyhedral domain and describe their relation to the ESP.

\subsection{Description of the polyhedral domain}\label{sec:G}

Let $G$ be a nonempty convex polyhedron in $\R^J$ equal to the intersection of a finite number of closed half spaces in $\R^J$; that is,
	\be\label{eq:G}G\doteq\bigcap_{i=1,\dots,N}\lcb x\in\R^J:\ip{x,n_i}\geq c_i\rcb,\ee
for some positive integer $N\in\N$, unit vectors $n_i\in \sphere$ and constants $c_i\in\R$, for $i=1,\dots,N$. We assume the representation for $G$ given in \eqref{eq:G} is minimal in the sense that the intersection of half spaces $\{x\in\R^J:\ip{x,n_i}\geq c_i\}$ over $i$ in any strict subset of $\{1,\dots,N\}$ is not equal to $G$. For each $i=1,\dots,N$, we let $F_i\doteq\{x\in\partial G:\ip{x,n_i}=c_i\}$ denote the $i$th face. For notational convenience, we let $\allN\doteq\{1,\dots,N\}$, and for $x\in G$, we write $\label{eq:allNx}\allN(x)\doteq\{i\in\allN:x\in F_i\}$ to denote the (possibly empty) set of indices associated with the faces that intersect at $x$. Given a subset $I\subseteq\allN$, we let $|I|$ denote the cardinality of the set $I$. For each $x\in\partial G$, we let
	$$n(x)\doteq\conv(\{n_i,i\in\allN(x)\})$$
denote the cone of inward normals to the polyhedron $G$ at $x$. For notational convenience, we extend the definition of $n(x)$ to all of $G$ by setting $n(x)\doteq\{0\}$ for $x\in G^\circ$.

\subsection{Introduction of parameters and definition of a reflected diffusion}\label{sec:reflecteddiffusion}

Let $M\in\N$ and let the parameter set $U$ be an open subset of $\R^M$. For each $i\in\allN$, fix a continuously differentiable mapping
	$$d_i:U\mapsto\R^J$$ 
satisfying $\ip{d_i(\alpha),n_i}>0$ for all $\alpha\in U$. For a given parameter $\alpha\in U$, $d_i(\alpha)$ denotes the associated direction of reflection along the face $F_i$. Since the directions of reflection can always be renormalized (while also preserving the continuous differentiability in $\alpha$ of the normalized mapping), we assume without loss of generality that $\ip{d_i(\alpha),n_i}=1$ for all $\alpha\in U$. For $\alpha\in U$ and $x\in\partial G$, we let $d(\alpha,x)$ denote the cone generated by the admissible directions of reflection at $x$; that is,
	\be\label{eq:dx}d(\alpha,x)\doteq\conv\lb\lcb d_i(\alpha),i\in\allN(x)\rcb\rb.\ee
For convenience, we extend the definition of $d(\alpha,\cdot)$ to all of $G$ by setting $d(\alpha,x)\doteq\{0\}$ for $x\in G^\circ$. Fix continuously differentiable functions 
\begin{align*}
	b:U\times G\mapsto\R^J,\qquad\sigma:U\times G\mapsto\R^{J\times K},
\end{align*}
and denote their respective Jacobians by $b':U\times G\mapsto\lin(\R^M\times\R^J,\R^J)$ and $\sigma':U\times G\mapsto\lin(\R^M\times\R^J,\R^{J\times K})$. For each $\alpha\in U$, $b(\alpha,\cdot)$ and $a(\alpha,\cdot)\doteq\sigma(\alpha,\cdot)\sigma^T(\alpha,\cdot)$, respectively, denote the drift and diffusion coefficients of the reflected diffusion associated with the parameter $\alpha$.

\begin{defn}\label{def:rd}
Given $\alpha\in U$, $\{(d_i(\alpha),n_i,c_i),i\in\allN\}$, $b(\alpha,\cdot)$, $\sigma(\alpha,\cdot)$, $x\in G$ and a $K$-dimensional $\{\F_t\}$-Brownian motion on $(\Omega,\F,\P)$, a reflected diffusion associated with the parameter $\alpha$, initial condition $x$ and driving Brownian motion $W$ is a $J$-dimensional continuous $\{\F_t\}$-adapted process $\Z^{\alpha,x}=\{\Z_t^{\alpha,x},t\geq0\}$ such that a.s.\ for all $t\geq0$, $\Z_t^{\alpha,x}\in G$ and
	\be\label{eq:Z}\Z_t^{\alpha,x}=x+\int_0^t b(\alpha,\Z_s^{\alpha,x})ds+\int_0^t\sigma(\alpha,\Z_s^{\alpha,x})d\bm_s+\Y_t^{\alpha,x},\ee
where $\Y^{\alpha,x}=\{\Y_t^{\alpha,x},t\geq0\}$ is a $J$-dimensional continuous $\{\F_t\}$-adapted process that a.s.\ satisfies $\Y_0^{\alpha,x}=0$ and, for all $0\leq s<t<\infty$,
	\be\label{eq:Y}\Y_t^{\alpha,x}-\Y_s^{\alpha,x}\in\conv\lsb\cup_{u\in(s,t]}d(\alpha,\Z_u^{\alpha,x})\rsb.\ee
We refer to $\Y^{\alpha,x}$ as the constraining process associated with $\Z^{\alpha,x}$. 
\end{defn}

Conditions under which a pathwise unique reflected diffusion exists are specified in Proposition \ref{prop:rdeu} below. 

\begin{remark}\label{rmk:Valpha}
In \cite[Theorem 4.3]{Ramanan2006} it was shown that a.s.\ $\Y^{\alpha,x}$ has finite total variation on compact subsets of the stochastic interval $[0,\tau_0^{\alpha,x})$, where $\tau_0^{\alpha,x}$ is the first hitting time of the set
	\be\label{eq:Valpha}\V^\alpha\doteq\partial G\setminus\{x\in\partial G:\;\exists\; n\in n(x)\text{ such that }\ip{n,d}>0,\;\forall\;d\in d(\alpha,x)\setminus\{0\}\}.\ee
(The set $\V^\alpha$ has a different definition in \cite[equation (2.15)]{Ramanan2006}; however, an examination of the proof of \cite[Theorem 4.3]{Ramanan2006} reveals that the result holds with $\V^\alpha$ defined as in \eqref{eq:Valpha}.) Consequently, when $\V^\alpha$ is empty, a.s.\ the total variation of $\Y^{\alpha,x}$ on $[0,t]$, denoted $|\Y^{\alpha,x}|(t)$, is finite for all $t<\infty$ and, as shown in \cite[Lemma 2.7]{Kang2017}, there exists a measurable function $\xi^{\alpha,x}:(\Omega\times[0,\infty),\F\otimes\B([0,\infty)))\mapsto(\R^J,\B(\R^J))$ such that a.s.\ for all $0\leq s<t<\infty$,
	$$\Y_t^{\alpha,x}-\Y_s^{\alpha,x}=\int_{[s,t]}\xi_u^{\alpha,x}d|\Y^{\alpha,x}|(u),$$
and $\xi_u^{\alpha,x}\in d(\alpha,\Z_u^{\alpha,x})$ for $d|\Y^{\alpha,x}|$-almost every $u\geq0$.
\end{remark}

\subsection{The extended Skorokhod reflection problem}\label{sec:esp}

In this section we state the ESP (for continuous paths) and recall conditions under which the associated ESM is well defined. The ESP was introduced in \cite{Ramanan2006} as a pathwise method for constructing reflected diffusions.  It is a generalization of the Skorokhod problem that allows for a constraining term that potentially has unbounded variation on compact intervals. Even when the constraining term is of bounded variation on compact intervals, the ESP formulation is often more convenient since the associated ESM has desirable properties such as a closed graph. In particular, the ESP formulation more naturally leads to the identification of a suitable  linearized version that characterizes pathwise derivatives of constrained processes (see the similarity between the definition of the ESP and that of the derivative problem given in Definition \ref{def:dp}).

\begin{defn}\label{def:esp}
Let $\alpha\in U$. Given $\x\in\cts_G(\R^J)$, $(\z,\y)\in\cts(G)\times\cts(\R^J)$ solves the ESP $\{(d_i(\alpha),n_i,c_i),i\in\allN\}$ for $\x$ if $\z(0)=\x(0)$, and if for all $t\geq0$ the following properties hold:
\begin{itemize}
	\item[1.] $\z(t)=\x(t)+\y(t)$;
	\item[2.] for every $s\in[0,t)$, 
		$$\y(t)-\y(s)\in\conv\lsb\cup_{u\in(s,t]}d(\alpha,\z(u))\rsb.$$
\end{itemize}
If there exists a unique solution $(\z,\y)$ to the ESP $\{(d_i(\alpha),n_i,c_i),i\in\allN\}$ for $\x$, we write $\z=\esm^\alpha(\x)$ and refer to $\esm^\alpha$ as the ESM associated with the ESP $\{(d_i(\alpha),n_i,c_i),i\in\allN\}$.
\end{defn}

\begin{remark}\label{rmk:X}
Given $\alpha\in U$, $x\in G$ and a reflected diffusion $\Z^{\alpha,x}$ as in Definition \ref{def:rd}, define the $J$-dimensional continuous $\{\F_t\}$-adapted process $\X^{\alpha,x}=\{\X_t^{\alpha,x},t\geq0\}$ by
	\be\label{eq:X}\X_t^{\alpha,x}\doteq x+\int_0^tb(\alpha,\Z_s^{\alpha,x})ds+\int_0^t\sigma(\alpha,\Z_s^{\alpha,x})d\bm_s,\qquad t\geq0.\ee
Then by the properties stated in Definition \ref{def:rd} and the statement of the ESP in Definition \ref{def:esp}, a.s.\ $(\Z^{\alpha,x},\Y^{\alpha,x})$ is a solution to the ESP $\{(d_i(\alpha),n_i,c_i),i\in\allN\}$ for $\X^{\alpha,x}$.
\end{remark}

We now provide geometric conditions on the data $\{(d_i(\alpha),n_i,c_i),i\in\allN\}$ under which the ESM is well defined on $\cts_G(\R^J)$. The first condition, Assumption \ref{ass:setB} below, was introduced in \cite[Assumption 2.1]{Dupuis1991} and ensures the ESM is Lipschitz continuous on its domain of definition.

\begin{ass}\label{ass:setB}
For each $\alpha\in U$ there exists $\delta_\alpha>0$ and a compact, convex, symmetric set $B^\alpha$ in $\R^J$ with $0\in(B^\alpha)^\circ$ such that for $i\in\allN$,
	\be\label{eq:setB}\lcb\begin{array}{l}z\in\partial B^\alpha\\|\ip{z,n_i}|<\delta_\alpha\end{array}\rcb\qquad\Rightarrow\qquad\ip{\nu,d_i(\alpha)}=0\qquad\text{for all }\;\nu\in\nu_{B^\alpha}(z),\ee
where $\nu_{B^\alpha}(z)$ denotes the set of inward normals to the set $B^\alpha$ at $z$; that is,
	$$\nu_{B^\alpha}(z)\doteq\{\nu\in\sphere:\ip{\nu,y-z}\geq0\text{ for all }y\in B^\alpha\}.$$
\end{ass}

\begin{prop}\label{prop:esmlip}
Suppose Assumption \ref{ass:setB} holds. For each $\alpha\in U$ there exists $\lip_{\esm}(\alpha)<\infty$ such that if $(\z_1,\y_1)$ is a solution to the ESP for $\x_1\in\cts_G(\R^J)$ and $(\z_2,\y_2)$ is a solution to the ESP for $\x_2\in\cts_G(\R^J)$, then for all $t<\infty$,
\begin{align*}
	\norm{\z_1-\z_2}_t+\norm{\y_1-\y_2}_t&\leq\lip_{\esm}(\alpha)\norm{\x_1-\x_2}_t.
\end{align*}
\end{prop}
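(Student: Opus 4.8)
The plan is to reduce the Lipschitz estimate for the ESP to the corresponding Lipschitz estimate for the (classical) Skorokhod problem, which is precisely the content of \cite[Theorem 2.2]{Dupuis1991} under Assumption \ref{ass:setB}. The key observation is that Assumption \ref{ass:setB} is exactly the geometric condition (the existence of the set $B^\alpha$ and associated test functions) under which \cite{Dupuis1991} constructs a family of smooth convex functions adapted to the directions of reflection, and this construction does not use the finite-variation property of the constraining term in any essential way. So first I would recall, for a fixed $\alpha\in U$, the function built in \cite{Dupuis1991}: for each unit vector $\theta$ there is a convex function $f_\theta:\R^J\to\R_+$ (a suitable support-type function associated with $B^\alpha$, or its smoothed version) such that $f_\theta(0)=0$, $f_\theta$ grows linearly, and crucially $\ip{\nabla f_\theta(z), d_i(\alpha)} \le 0$ (or the relevant sign) whenever $z$ is an outward-type direction along $F_i$, which follows directly from \eqref{eq:setB}. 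The existence of such a function is what the set $B^\alpha$ encodes.

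Next I would set $u \doteq \z_1 - \z_2$, $v \doteq \y_1 - \y_2$, $w \doteq \x_1 - \x_2$, so $u = w + v$ with $u$ continuous, $v$ continuous, $v(0)=0$. Fix $T<\infty$ and fix $\theta$; I want to bound $f_\theta(u(t))$ for $t\le T$. Since the ESP constraining terms $\y_1,\y_2$ need not have finite variation, I cannot simply write a Stieltjes integral $\int \ip{\nabla f_\theta(u(s)), d\y_1(s)}$. Instead I would argue by a time-discretization / oscillation argument directly at the level of the defining inclusion in Definition \ref{def:esp}(2): on a partition $0=t_0<t_1<\dots<t_m=t$ with mesh $\to 0$, the increment $\y_k(t_{j+1})-\y_k(t_j)$ lies in $\conv[\cup_{u\in(t_j,t_{j+1}]} d(\alpha,\z_k(u))]$, and by continuity of $\z_k$ and of the face structure, each such increment is (up to an error vanishing with the mesh) a nonnegative combination of directions $d_i(\alpha)$ with $\z_k(u)\in F_i$ for $u$ near $t_j$. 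Using convexity of $f_\theta$ and the sign property $\ip{\nabla f_\theta(\cdot), d_i(\alpha)}\le 0$ near such points, the contribution of the $v$-increments to the telescoped sum $\sum_j [f_\theta(u(t_{j+1})) - f_\theta(u(t_j))]$ is $\le$ (mesh error), while the contribution of the $w$-increments is controlled by $\|\nabla f_\theta\|_\infty \, \|w\|_t$ via the fundamental theorem of calculus. Passing to the limit in the mesh gives $f_\theta(u(t)) \le C(\alpha)\|w\|_t$ for every $\theta$ and every $t\le T$, and taking the supremum over $\theta$ (together with the linear growth / nondegeneracy of $\{f_\theta\}$ inherited from $0\in(B^\alpha)^\circ$) yields $\|u\|_t \le \lip_\esm(\alpha)\|w\|_t$. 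The bound on $\|v\|_t = \|u - w\|_t \le (\lip_\esm(\alpha)+1)\|w\|_t$ is then immediate, and one absorbs the $+1$ into the constant.

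I expect the main obstacle to be the second step — making the oscillation/discretization argument rigorous in the absence of a finite-variation constraining term, i.e. justifying that the "bad" increments of $v$ contribute nonpositively to $f_\theta(u(t))$ in the limit despite $\y_1,\y_2$ potentially having infinite variation. The care needed is in two places: (i) showing that along a fine partition each increment $\y_k(t_{j+1})-\y_k(t_j)$ can be replaced, up to $o(\text{mesh})$, by a genuine nonnegative combination of the $d_i(\alpha)$'s with indices $i$ for which $\nabla f_\theta$ has the right sign near $\z_k$ on $(t_j,t_{j+1}]$ — this uses uniform continuity of $\z_k$ on $[0,T]$ and the fact that the faces $F_i$ are closed, so the active index set is upper semicontinuous; and (ii) controlling the error from evaluating $\nabla f_\theta$ at $u(t_j)$ rather than along the path, which uses uniform continuity of $\nabla f_\theta$ (one should use the smoothed version of $f_\theta$ from \cite{Dupuis1991} so that $\nabla f_\theta$ is Lipschitz). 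All remaining ingredients — the existence of $B^\alpha$, the sign relations, linear growth — are supplied verbatim by Assumption \ref{ass:setB} and \cite{Dupuis1991}, so the novelty here is purely the passage from the Skorokhod problem to the ESP, which is exactly why the ESP was formulated with a closed-graph-friendly inclusion rather than a Stieltjes integral.
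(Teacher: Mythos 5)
The paper does not prove this proposition from scratch: it simply invokes \cite[Theorem 3.3]{Ramanan2006}, which establishes Lipschitz continuity of the ESM on $\cts_G(\R^J)$ under Assumption \ref{ass:setB}, and then obtains the bound on $\norm{\y_1-\y_2}_t$ from the identity $\y_k=\z_k-\x_k$ (property 1 of Definition \ref{def:esp}). Your plan to re-derive the estimate by adapting the Dupuis--Ishii test-function argument is therefore a genuinely different and much more ambitious route, and it has a real gap at exactly the point you flag as ``the main obstacle.''

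The gap is this: the telescoped sum $\sum_j[f_\theta(u(t_{j+1}))-f_\theta(u(t_j))]$ can only be split into separate contributions of the $w$-increments and the $v$-increments after linearizing $f_\theta$ at each partition point, and the resulting remainder is of order $\sum_j|u(t_{j+1})-u(t_j)|^2$ (or, if you instead use summation by parts to convert $\sum_j|w(t_{j+1})-w(t_j)|$ into something bounded by $\norm{w}_t$, of order $\sum_j|\nabla f_\theta(u(t_{j+1}))-\nabla f_\theta(u(t_j))|$). Either way the error is controlled by the total variation of $u=\z_1-\z_2$ over the partition, not by its modulus of continuity; and for the ESP the constraining terms $\y_1,\y_2$ --- hence $u$ --- may have unbounded variation on compact intervals, which is the whole point of the ESP formulation. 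Uniform continuity of $\z_k$ and upper semicontinuity of the active index set, which is all you invoke, make the maximal increment small but do not make (mesh)$\times$(variation) vanish as the mesh is refined. Your assertion that the construction in \cite{Dupuis1991} ``does not use the finite-variation property in any essential way'' is precisely what fails: the Skorokhod-problem proof integrates against $d|\y|$, and extending the Lipschitz estimate to constraining terms of possibly infinite variation is the nontrivial content of \cite[Theorem 3.3]{Ramanan2006}, whose argument works directly with the convex symmetric set $B^\alpha$ and its inward normals via an oscillation/contradiction estimate rather than through smooth test functions and Stieltjes-type sums. Your final step, $\norm{\y_1-\y_2}_t\leq\norm{\z_1-\z_2}_t+\norm{\x_1-\x_2}_t$, does match the paper's use of property 1 of Definition \ref{def:esp} and is fine; but the core of the Lipschitz bound for $\norm{\z_1-\z_2}_t$ is not established by your sketch.
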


\begin{proof}
This follows from \cite[Theorem 3.3]{Ramanan2006} and property 1 of Definition \ref{def:esp}.
\end{proof}

The next condition requires that for each $\alpha\in U$ there is a projection mapping from $\R^J$ to $G$ satisfying certain geometric conditions related to the associated directions of reflection.

\begin{ass}\label{ass:projection}
For each $\alpha\in U$ there is a function $\pi^\alpha:\R^J\mapsto G$ satisfying $\pi^\alpha(x)=x$ for all $x\in G$ and $\pi^\alpha(x)-x\in d(\alpha,\pi^\alpha(x))$ for all $x\not\in G$.
\end{ass}

See \cite[Section 2.3]{Lipshutz2016} and references therein for examples of broad classes of ESPs that satisfy Assumptions \ref{ass:setB} and \ref{ass:projection}.

\begin{prop}\label{prop:esp}
Suppose Assumptions \ref{ass:setB} and \ref{ass:projection} hold. Then for each $\alpha\in U$ and $\x\in\cts_G(\R^J)$, there exists a unique solution $(\z,\y)$ to the ESP $\{(d_i(\alpha),n_i,c_i),i\in\allN\}$ for $\x$. In particular, for each $\alpha\in U$ the ESM $\esm^\alpha$ is well defined on $\cts_G(\R^J)$.
\end{prop}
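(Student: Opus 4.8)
The plan is to establish existence and uniqueness of solutions to the ESP separately, since these require rather different ingredients. For existence, I would use the projection mapping $\pi^\alpha$ furnished by Assumption \ref{ass:projection} to construct an approximating sequence. Concretely, given $\x\in\cts_G(\R^J)$, one fixes a time horizon $T<\infty$, takes a dyadic partition $0=t_0^n<t_1^n<\dots<t_{k_n}^n=T$ of mesh $2^{-n}$, and builds a piecewise-defined path $\z_n$ that evolves as $\x$ (with an accumulated correction) on each subinterval and is projected back into $G$ via $\pi^\alpha$ at the partition points, letting $\y_n\doteq\z_n-\x$ record the cumulative correction. The defining property $\pi^\alpha(y)-y\in d(\alpha,\pi^\alpha(y))$ guarantees each correction increment lies in the appropriate cone $d(\alpha,\cdot)$ evaluated at a point of $\z_n$, so $(\z_n,\y_n)$ satisfies an approximate version of property 2 in Definition \ref{def:esp}. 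I would then show $\{(\z_n,\y_n)\}$ is relatively compact (e.g., via an equicontinuity/oscillation estimate using that $\x$ is uniformly continuous on $[0,T]$ and that the total correction is controlled) and extract a subsequential limit $(\z,\y)$ in the topology of uniform convergence on $[0,T]$; a diagonalization over $T\uparrow\infty$ gives a limit on $[0,\infty)$. Passing to the limit, property 1 is immediate and property 2 follows from the closedness of the set-valued map $u\mapsto d(\alpha,\z(u))$ together with the convexity of the cone on the right-hand side, so $(\z,\y)$ solves the ESP.

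For uniqueness, I would invoke Proposition \ref{prop:esmlip}, which holds under Assumption \ref{ass:setB}: if $(\z_1,\y_1)$ and $(\z_2,\y_2)$ both solve the ESP for the same input $\x$, then taking $\x_1=\x_2=\x$ in the Lipschitz estimate yields
\[
\norm{\z_1-\z_2}_t+\norm{\y_1-\y_2}_t\leq\lip_{\esm}(\alpha)\norm{\x-\x}_t=0
\]
for every $t<\infty$, hence $\z_1=\z_2$ and $\y_1=\y_2$. Thus the solution produced by the existence argument is the unique one, and we may unambiguously write $\z=\esm^\alpha(\x)$; since $\x\in\cts_G(\R^J)$ was arbitrary, $\esm^\alpha$ is well defined on all of $\cts_G(\R^J)$.

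I expect the main obstacle to be the existence half, specifically verifying the compactness/equicontinuity of the approximating sequence $\{(\z_n,\y_n)\}$ and checking that property 2 survives the limiting procedure at points where $\z$ touches the nonsmooth part of $\partial G$. The delicate point is that the cone-valued increment condition is stated over half-open intervals $(s,t]$ with a $\conv[\cup_{u\in(s,t]}\,\cdot\,]$ on the right, and one must argue that the approximate inclusions satisfied by $\y_n$ — where the reflection directions are sampled at the slightly-displaced projection points $\z_n(t_j^n)$ rather than at the limiting path — pass to the correct inclusion for $\y$. This is handled by exploiting upper semicontinuity of $x\mapsto d(\alpha,x)$ on $G$ (which follows from the explicit description \eqref{eq:dx} and the fact that $\allN(x)$ can only shrink under small perturbations of $x$), so that any limit of directions at $\z_n(u_n)$ with $u_n\to u$ lies in $d(\alpha,\z(u))$, and then a standard Carathéodory-type argument closes the inclusion. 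Everything else — the Lipschitz-based uniqueness and the reduction of well-definedness of $\esm^\alpha$ to pointwise existence and uniqueness — is routine. Since this proposition is essentially \cite[Theorem 3.3 and related results]{Ramanan2006} transcribed to the present setting, I would in practice simply cite those results for both existence and uniqueness and omit the construction above.
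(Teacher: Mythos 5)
Your proposal is correct and, in its final form, coincides with the paper's proof: the paper simply deduces the proposition from Proposition \ref{prop:esmlip} (uniqueness via the Lipschitz estimate with $\x_1=\x_2=\x$) together with \cite[Lemma 2.6]{Ramanan2006} (existence under Assumption \ref{ass:projection}), which is exactly the citation route you fall back on. Your sketched discretize-project-and-pass-to-the-limit construction is essentially an outline of how that cited lemma is proved, with the caveat that the equicontinuity/compactness step you flag is in practice handled by a Cauchy-sequence argument driven by the Lipschitz bound rather than by subsequence extraction.
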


\begin{proof}
The proposition follows from Proposition \ref{prop:esmlip} and \cite[Lemma 2.6]{Ramanan2006}.
\end{proof}

\begin{remark}\label{rmk:picontinuous}
Under Assumptions \ref{ass:setB} and \ref{ass:projection}, for each $\alpha\in U$, the function $\pi^\alpha:\R^J\mapsto G$ is continuous (see the discussion in \cite[Section 5.3]{Dupuis1991}).
\end{remark}

\subsection{The reflection matrix}

Define $R:U\mapsto\R^{J\times N}$ by
	$$\label{eq:R}R(\alpha)\doteq\begin{pmatrix}d_1(\alpha)&\cdots&d_N(\alpha)\end{pmatrix},$$
and let $R':U\mapsto\lin(\R^M,\R^{J\times N})$ denote the Jacobian of $R$, which is well defined since $d_i(\cdot)$ are continuously differentiable. For each $\alpha\in U$, we refer to $R(\alpha)$ as the \emph{reflection matrix} associated with $\alpha$. Under the following linear independence assumption on the directions of reflection, given a solution $(\z,\y)$ to the ESP, there is a unique decomposition of a constraining path $\y$ into an $N$-dimensional path that describes its action along each face.

\begin{cond}\label{cond:independent}
For $\alpha\in U$ and $x\in\partial G$, $\{d_i(\alpha),i\in\allN(x)\}$ is a set of linearly independent vectors.
\end{cond}

\begin{lem}\label{lem:push}
Suppose Condition \ref{cond:independent} holds. Let $\alpha\in U$. Given a solution $(\z,\y)$ to the ESP $\{(d_i(\alpha),n_i,c_i),i\in\allN\}$ for $\x\in\cts_G(\R^J)$, there exists a unique function $\push\in\cts(\R_+^N)$ such that $\y=R\push$ and for each $i\in\allN$, $\push^i(0)=0$, $\push^i$ is nondecreasing and $\push^i$ can only increase when $\z$ lies in face $F_i$; that is,
	\be\label{eq:dLi}\int_0^\infty1_{\{\z(s)\not\in F_i\}}d\push^i(s)=0.\ee
Consequently, $\y$ has finite variation on compact time intervals.  Moreover, there exists $\lip_\ell(\alpha)<\infty$ such that if, for $k=1,2$, $(\z_k,\y_k)$ is the solution to the ESP $\{(d_i(\alpha),n_i,c_i),i\in\allN\}$ for $\x_k\in\cts_G$ and $\push_k$ is as above, but with $\z_k,\y_k$ and $\push_k$ in place of $\z,\y$ and $\push$, then for all $t<\infty$, 
	\be\label{eq:pushL}\norm{\push_1-\push_2}_t\leq\lip_\ell(\alpha)\norm{\y_1-\y_2}_t, \ee
and, in addition, $\lip_\ell(\cdot)$ can be chosen so that it is bounded on compact subsets of $U$. Furthermore, given $\wt\alpha\in U$, $(\z,\y)$ is solution to the ESP $\{(d_i(\wt\alpha),n_i,c_i),i\in\allN\}$ for $\wt\x\doteq\x+(R(\alpha)-R(\wt\alpha))\ell$. 
\end{lem}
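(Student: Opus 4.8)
The plan is to prove Lemma~\ref{lem:push} in four parts, corresponding to the four assertions: (i) existence and uniqueness of the decomposition $\push$ with the stated minimality property; (ii) finite variation of $\y$; (iii) the Lipschitz estimate \eqref{eq:pushL} with local boundedness of $\lip_\ell(\cdot)$; and (iv) the ESP invariance under the change of parameter $\alpha\mapsto\wt\alpha$.

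\medskip
\textbf{Step 1 (the decomposition $\push$).}
First I would fix $\alpha\in U$ and a solution $(\z,\y)$ to the ESP for $\x$, and note that by property~2 of Definition~\ref{def:esp}, for every $t\geq0$ the increment $\y(t)-\y(s)$ for $s<t$ lies in $\conv[\cup_{u\in(s,t]}d(\alpha,\z(u))]$, a finitely generated cone contained in $\conv(\{d_i(\alpha):i\in\allN\})$. The key structural input is Condition~\ref{cond:independent}: at each boundary point $x$ the active directions $\{d_i(\alpha):i\in\allN(x)\}$ are linearly independent, so locally (in a neighbourhood of any time $t$ where $\z(t)\in F_i$ for $i$ in some index set $I$) the constraining path has a unique coordinate representation. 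To globalize this, I would invoke the continuity of $\z$ and a compactness/partition-of-$[0,T]$ argument: on each subinterval where $\z$ stays near a fixed face structure, write $\y$ in the (unique) basis of active reflection directions, obtaining nonnegative coefficients, and then patch these together. The support property $\push^i$ increases only when $\z\in F_i$ follows because, by continuity of $\z$, if $\z(t)\notin F_i$ then $\z(u)\notin F_i$ for $u$ in a neighbourhood of $t$, so $d_i(\alpha)$ is not among the generators of the cone in property~2 on that neighbourhood, forcing $\push^i$ to be constant there. Monotonicity of each $\push^i$ is immediate from nonnegativity of the increments of the coordinates. Uniqueness follows from linear independence: $R\push_1=R\push_2$ with both satisfying the support constraints forces $\push_1=\push_2$ (restricting to times in each face, the columns $\{d_i(\alpha):i\in\allN(\z(t))\}$ are independent). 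I expect this patching argument — carefully handling the times when the active face set changes — to be the main obstacle; the cleanest route is probably to define $\push^i(t)$ directly as a limiting total-variation-type functional of $\y$ projected onto the direction complementary to $\{n_j:j\neq i,\ j\ \text{active}\}$, and then verify the properties, rather than patching local charts.

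\medskip
\textbf{Step 2 (finite variation).}
Given Step~1, $\y=R\push=\sum_{i\in\allN}d_i(\alpha)\push^i$ is a finite linear combination of the monotone (hence finite-variation) continuous functions $\push^i$, so $\y$ has finite variation on compact time intervals, with $|\y|(t)\leq\sum_i|d_i(\alpha)|\,\push^i(t)$.

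\medskip
\textbf{Step 3 (Lipschitz estimate).}
For \eqref{eq:pushL}, I would exploit that $R(\alpha)$ has linearly independent columns when restricted to any admissible active index set, but more usefully that the whole matrix $R(\alpha)\in\R^{J\times N}$ combined with the disjoint-support structure of $\push$ admits a bounded left inverse on the relevant subspace. Concretely, for each admissible $I\subseteq\allN$ (those arising as $\allN(x)$ for some $x\in\partial G$), the map $(\push^i)_{i\in I}\mapsto\sum_{i\in I}d_i(\alpha)\push^i$ is injective, hence has an inverse bounded by some constant $c_I(\alpha)$; take $\lip_\ell(\alpha)=\max_I c_I(\alpha)$. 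Then at each time $t$, since $\push_k(t)-\push_k(s)$ and $\y_k(t)-\y_k(s)$ are linked through the active directions at times in $(s,t]$, a standard argument (decomposing $[0,t]$ at the finitely many ``face-change'' configurations, applying the local bound on each piece, and using that only faces containing $\z_k(u)$ contribute) yields $\norm{\push_1-\push_2}_t\leq\lip_\ell(\alpha)\norm{\y_1-\y_2}_t$. For local boundedness of $\lip_\ell(\cdot)$: the constants $c_I(\alpha)$ depend continuously on $\alpha$ because the $d_i(\cdot)$ are continuous and linear independence is an open condition, so on any compact $K\subset U$ the finitely many functions $\alpha\mapsto c_I(\alpha)$ are bounded, giving a uniform bound for $\lip_\ell$ on $K$.

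\medskip
\textbf{Step 4 (change of parameter).}
Finally, set $\wt\x\doteq\x+(R(\alpha)-R(\wt\alpha))\push$. I claim $(\z,\y)$ solves the ESP $\{(d_i(\wt\alpha),n_i,c_i),i\in\allN\}$ for $\wt\x$. Property~1: we need $\z=\wt\x+\wt\y$ where $\wt\y\doteq R(\wt\alpha)\push$; indeed $\wt\x+R(\wt\alpha)\push=\x+(R(\alpha)-R(\wt\alpha))\push+R(\wt\alpha)\push=\x+R(\alpha)\push=\x+\y=\z$. Property~2: for $s<t$, $\wt\y(t)-\wt\y(s)=\sum_{i}d_i(\wt\alpha)(\push^i(t)-\push^i(s))$, and since $\push^i$ is nondecreasing and can only increase when $\z\in F_i$, each nonzero increment $\push^i(t)-\push^i(s)$ forces $\z(u)\in F_i$ for some $u\in(s,t]$, i.e.\ $i\in\allN(\z(u))$ so $d_i(\wt\alpha)\in d(\wt\alpha,\z(u))$; hence $\wt\y(t)-\wt\y(s)\in\conv[\cup_{u\in(s,t]}d(\wt\alpha,\z(u))]$. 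Also $\z(0)=\x(0)=\wt\x(0)$ since $\push(0)=0$, and $\z$ stays in $G$. Thus $(\z,\wt\y)$ solves the ESP for $\wt\x$ as claimed, completing the proof.
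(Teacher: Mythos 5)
Your Step 4 is correct and is essentially the paper's own argument, and Step 2 is a valid deduction once $\push$ exists. The genuine gap is in Steps 1 and 3, i.e., in the construction of $\push$ itself. Your patching argument presumes that $[0,t]$ can be decomposed into finitely many subintervals on each of which the active face configuration is fixed (you later speak of ``decomposing $[0,t]$ at the finitely many face-change configurations''). This fails for precisely the paths this paper is concerned with: when $\z$ hits the nonsmooth part $\U$ of the boundary at a time $t$, the boundary jitter property (conditions 3 and 4 of Definition \ref{def:jitter}) forces $\allN(\z(s))$ to change infinitely often in every left neighbourhood of $t$, so no finite partition exists to patch over. A second, independent problem is that over an interval $(s,t]$ during which several faces are visited, property 2 of Definition \ref{def:esp} only places $\y(t)-\y(s)$ in the cone generated by $\{d_i(\alpha):i\in\cup_{u\in(s,t]}\allN(\z(u))\}$, and Condition \ref{cond:independent} does \emph{not} make this collection linearly independent --- it is a pointwise condition on the single set $\allN(x)$ (e.g., three faces visited in succession in $\R^2$ give three directions that cannot be independent). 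Hence the coordinates of an interval increment need not be unique, and the ``bounded left inverse on the relevant subspace'' you invoke is not available at the level of interval increments.

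The paper's proof circumvents both issues by working at the level of instants rather than intervals: it starts from the representation $\y(t)=\int_{(0,t]}\xi(s)\,d|\y|(s)$ with $\xi(s)\in d(\alpha,\z(s))$ for $d|\y|$-a.e.\ $s$ (from \cite[Definition 1.1]{Ramanan2006}), decomposes $\xi(s)$ pointwise in the genuinely independent basis $\{d_i(\alpha):i\in\allN(\z(s))\}$ via a measurable selection $\chi(s)=\zeta_{\allN(\z(s))}(\xi(s))$, and defines $\push^i(t)\doteq\int_{(0,t]}\chi^i(s)\,d|\y|(s)$; monotonicity, the support property \eqref{eq:dLi}, uniqueness and the bound \eqref{eq:pushL} then all follow from this pointwise decomposition. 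This a.e.\ directional representation (which in particular requires knowing that $\y$ has locally finite variation \emph{before} constructing $\push$) is the missing ingredient in your proposal; your parenthetical alternative of defining $\push^i$ as ``a limiting total-variation-type functional'' gestures at it but is not carried out. If you first establish that representation, the remainder of your outline goes through.
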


When $N=J$ and there exists $x\in G$ such that $\cap_{i=1,\dots,J}F_i=\{x\}$ (e.g., if $G=\R_+^J$), Condition \ref{cond:independent} implies that $R(\alpha)$ is invertible and upon setting $\push\doteq(R(\alpha))^{-1}\y$ it is readily verified that the first statement of the lemma follows from condition 2 of Definition \ref{def:esp}. The proof of Lemma \ref{lem:push} in full generality is given in Appendix \ref{apdx:push}. 

\begin{remark}\label{rmk:local}
Suppose Condition \ref{cond:independent} holds. Then according to Remark \ref{rmk:X} and Lemma \ref{lem:push}, given a reflected diffusion $\Z^{\alpha,x}$ with associated constraining process $\Y^{\alpha,x}$, there is a unique $N$-dimensional continuous $\{\F_t\}$-adapted process $\L^{\alpha,x}=\{\L_t^{\alpha,x},t\geq0\}$ such that a.s.\ $\Y^{\alpha,x}=R(\alpha)\L^{\alpha,x}$, $\L_0^{\alpha,x}=0$ and for each $i\in\allN$, the $i$th component $[\L^{\alpha,x}]^i$ is nondecreasing and can only increase when $\Z^{\alpha,x}$ lies in face $F_i$. In particular, $\Y^{\alpha,x}$ has finite variation on compact time intervals, so by \eqref{eq:Z}, $\Z^{\alpha,x}$ is a semimartingale.
\end{remark}

We impose the following assumption to ensure that when the directions of reflection are nonconstant in $\alpha\in U$, then there is a unique decomposition of the constraining process into its action along each face, in the sense of Lemma \ref{lem:push}.

\begin{ass}\label{ass:DRL}
At least one of the following holds:
\begin{itemize}
	\item Condition \ref{cond:independent} and there exists $\lip_R<\infty$ such that $\norm{R'(\alpha)}\leq\lip_R$ for all $\alpha\in U$.
	\item $R(\alpha)$ is constant in $\alpha\in U$.
\end{itemize}
\end{ass}

\begin{remark}\label{rmk:DRL}
Suppose Assumption \ref{ass:DRL} holds. Given $\alpha\in U$, $x\in G$, a reflection diffusion $\Z^{\alpha,x}$ with associated constraining process $\Y^{\alpha,x}$, and $\beta\in\R^M$, for conciseness in the statements of proofs and theorems, with some abuse of notation we interpret the $J$-dimensional continuous process $\{R'(\alpha)[\beta]\L_t^{\alpha,x},t\geq0\}$ as follows:
\begin{itemize}
	\item If Condition \ref{cond:independent} holds, then $\L^{\alpha,x}$ denotes the $N$-dimensional continuous process described in Remark \ref{rmk:local}.
	\item On the other hand, if Condition \ref{cond:independent} does not hold, then $R(\alpha)$ is constant in $\alpha\in U$ and we interpret $\{R'(\alpha)[\beta]\L_t^{\alpha,x},t\geq0\}$ to be identically zero (even though the process $\L_t^{\alpha,x}$ may not be well defined).
\end{itemize}
\end{remark}

\subsection{Existence and uniqueness of reflected diffusions}\label{sec:rbmeu}

In this section we recall a well known result that guarantees strong existence and pathwise uniqueness of reflected diffusions.

\begin{ass}\label{ass:drift}
There exists $\lip_{b,\sigma}<\infty$ such that $\norm{b'(\alpha,x)}+\norm{\sigma'(\alpha,x)}\leq\lip_{b,\sigma}$ for all $\alpha\in U$ and $x\in G$.
\end{ass}

\begin{prop}[{\cite[Theorem 4.3]{Ramanan2006}}]
\label{prop:rdeu}
Given $\{(d_i(\cdot),n_i,c_i),i\in\allN\}$, $b(\cdot,\cdot)$ and $\sigma(\cdot,\cdot)$, suppose Assumptions \ref{ass:setB}, \ref{ass:projection} and \ref{ass:drift} hold. Then for each $\alpha\in U$, $x\in G$ and $K$-dimensional $\{\F_t\}$-Brownian motion on $(\Omega,\F,\P)$, there exists a reflected diffusion $\Z^{\alpha,x}$ associated with the parameter $\alpha$, initial condition $x$ and driving Brownian motion $\bm$, and $\Z^{\alpha,x}$ is a strong Markov process. Furthermore, if $\wt\Z^{\alpha,x}$ is another reflected diffusion associated with the parameter $\alpha$, initial condition $x$ and driving Brownian motion $\bm$, then a.s.\ $\Z^{\alpha,x}=\wt\Z^{\alpha,x}$. In other words, pathwise uniqueness holds.
\end{prop}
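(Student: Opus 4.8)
The plan is to construct the reflected diffusion as the fixed point of a map built from the ESM $\esm^\alpha$, exploiting the fact that, under Assumptions \ref{ass:setB} and \ref{ass:projection}, $\esm^\alpha$ is well defined and Lipschitz continuous on $\cts_G(\R^J)$ (Propositions \ref{prop:esmlip} and \ref{prop:esp}). First I would record two preliminary observations. Since $G$ is convex and, by Assumption \ref{ass:drift}, the Jacobians of $b(\alpha,\cdot)$ and $\sigma(\alpha,\cdot)$ are bounded by $\lip_{b,\sigma}$, the maps $b(\alpha,\cdot)$ and $\sigma(\alpha,\cdot)$ are globally Lipschitz on $G$ with constant $\lip_{b,\sigma}$. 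Moreover, the \emph{time-localized} form of the estimate in Proposition \ref{prop:esmlip} shows that $\esm^\alpha$ is non-anticipating: if $\x_1\equiv\x_2$ on $[0,t]$ then $\esm^\alpha(\x_1)\equiv\esm^\alpha(\x_2)$ on $[0,t]$. Hence, whenever $\Z$ is a continuous $\{\F_t\}$-adapted $G$-valued process, the process $\X_t\doteq x+\int_0^tb(\alpha,\Z_s)\,ds+\int_0^t\sigma(\alpha,\Z_s)\,d\bm_s$ is continuous and adapted with $\X_0=x\in G$, and therefore $\esm^\alpha(\X)$ is again continuous, adapted and $G$-valued.

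Next I would run a Picard iteration. Set $\Z^{(0)}_t\equiv x$ and, inductively, $\Z^{(n+1)}\doteq\esm^\alpha(\X^{(n)})$, where $\X^{(n)}_t\doteq x+\int_0^tb(\alpha,\Z^{(n)}_s)\,ds+\int_0^t\sigma(\alpha,\Z^{(n)}_s)\,d\bm_s$; by the previous paragraph each $\Z^{(n)}$ is continuous, adapted and $G$-valued. Applying Proposition \ref{prop:esmlip} with $\x_1=\X^{(n)}$, $\x_2=\X^{(n-1)}$ (and using that the constant path $x$ is fixed by $\esm^\alpha$ to seed the recursion), and then bounding $\norm{\X^{(n)}-\X^{(n-1)}}_t^2$ via the BDG inequality (with constant $C_2$) and the Lipschitz bound on $b,\sigma$, one obtains for each $T<\infty$ a finite constant $C=C(T,\alpha)$ with
\[
\E\big[\norm{\Z^{(n+1)}-\Z^{(n)}}_t^2\big]\;\le\;C\int_0^t\E\big[\norm{\Z^{(n)}-\Z^{(n-1)}}_s^2\big]\,ds,\qquad 0\le t\le T.
\]
Iterating gives $\E[\norm{\Z^{(n+1)}-\Z^{(n)}}_T^2]\le ((CT)^n/n!)\,\E[\norm{\Z^{(1)}-\Z^{(0)}}_T^2]$, which is summable in $n$; hence $(\Z^{(n)})$ converges uniformly on compact time intervals in the $L^2$ sense (and a.s.\ along a subsequence, so the limit is a.s.\ continuous) to a continuous adapted $G$-valued process $\Z^{\alpha,x}$. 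Passing to the limit in $\Z^{(n+1)}=\esm^\alpha(\X^{(n)})$ — using continuity of $\esm^\alpha$ and $L^2$-convergence of the stochastic and Lebesgue integrals — yields $\Z^{\alpha,x}=\esm^\alpha(\X^{\alpha,x})$ with $\X^{\alpha,x}$ as in \eqref{eq:X}; by Remark \ref{rmk:X} and Definition \ref{def:esp}, this is precisely the assertion that $\Z^{\alpha,x}$, together with $\Y^{\alpha,x}\doteq\Z^{\alpha,x}-\X^{\alpha,x}$, is a reflected diffusion in the sense of Definition \ref{def:rd}.

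For pathwise uniqueness, if $\Z^{\alpha,x}$ and $\wt\Z^{\alpha,x}$ are two reflected diffusions driven by the same $\bm$, then by Remark \ref{rmk:X} each equals $\esm^\alpha$ applied to its associated unconstrained process, so Proposition \ref{prop:esmlip} gives $\norm{\Z^{\alpha,x}-\wt\Z^{\alpha,x}}_t\le\lip_{\esm}(\alpha)\norm{\X^{\alpha,x}-\wt\X^{\alpha,x}}_t$; the same BDG-and-Lipschitz computation as above (localizing by exit times of large balls if needed to secure finite moments) yields $\E[\norm{\Z^{\alpha,x}-\wt\Z^{\alpha,x}}_t^2]\le C\int_0^t\E[\norm{\Z^{\alpha,x}-\wt\Z^{\alpha,x}}_s^2]\,ds$, and Gronwall's inequality forces this to vanish for all $t$, i.e.\ $\Z^{\alpha,x}=\wt\Z^{\alpha,x}$ a.s. Finally, strong existence plus pathwise uniqueness gives (by the usual Yamada--Watanabe argument) that $\Z^{\alpha,x}$ is a measurable functional of $x$ and the driving path; because $\esm^\alpha$ is non-anticipating, this functional satisfies the flow/cocycle identity obtained by restarting at time $t$ relative to the shifted Brownian motion $\bm_{t+\cdot}-\bm_t$, and the strong Markov property follows in the standard way. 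I expect the displayed self-improving estimate to be the point requiring the most care: everything must be kept in the supremum norm $\norm{\cdot}_t$ so that the Lipschitz property of $\esm^\alpha$ applies, yet the Gronwall loop must still close, and it is the BDG inequality that reconciles these; the genuinely geometric content — well-posedness and Lipschitz continuity of the ESM on $\cts_G(\R^J)$ — has already been isolated in Propositions \ref{prop:esmlip} and \ref{prop:esp}.
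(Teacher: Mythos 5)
The paper offers no proof of this proposition; it is quoted directly from \cite[Theorem 4.3]{Ramanan2006}, and the argument there is precisely the one you outline: a Picard iteration built on the Lipschitz continuity and non-anticipation of the ESM, with Gronwall closing both the existence and the pathwise-uniqueness loops. Your proposal is correct and follows essentially the same route as the cited source; the only step you treat somewhat briskly is the strong Markov property, which indeed requires the measurable (in fact continuous) dependence of the law on $x$ furnished by the same Gronwall estimates, but this is the standard completion of the argument.
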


We close this section by showing there exists a modification of the family of reflected diffusions that is continuous in its various parameters. Given $\Z^{\alpha,x}$ recall the definition of $\X^{\alpha,x}$ in \eqref{eq:X}.

\begin{lem}\label{lem:ZalphaxZbetaybounds}
Given $\{(d_i(\cdot),n_i,c_i),i\in\allN\}$, $b(\cdot,\cdot)$ and $\sigma(\cdot,\cdot)$, suppose Assumptions \ref{ass:setB}, \ref{ass:projection}, \ref{ass:DRL} and \ref{ass:drift} hold. Then for each $p\geq 2$, $t<\infty$ and compact subsets $V\subset U$ and $K\subset G$, there exist constants $C^\dagger,C^\ddagger<\infty$, depending only on $p$, $t$, $V$ and $K$, such that for all $(\alpha,x),(\wt\alpha,\wt x)\in V\times K$,
\begin{align}\label{eq:ZpKolmogorov}
	\E\lsb\norm{\Z^{\alpha,x}-\Z^{\wt\alpha,\wt x}}_t^p\rsb&\leq C^\dagger|\alpha-\wt\alpha|^p+C^\ddagger|x-\wt x|^p,\\ \label{eq:XpKolmogorov}
	\E\lsb\norm{\X^{\alpha,x}-\X^{\wt\alpha,\wt x}}_t^p\rsb&\leq\wt C^\dagger|\alpha-\wt\alpha|^p+\wt C^\ddagger|x-\wt x|^p.
\end{align}
\end{lem}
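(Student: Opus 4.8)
The plan is to reduce both estimates to a single application of a Gronwall-type argument, treating the two perturbations (in $\alpha$ and in $x$) simultaneously. Fix $p\geq 2$, $t<\infty$, and compact sets $V\subset U$, $K\subset G$. The starting point is the defining equation \eqref{eq:Z} together with the representation in Remark~\ref{rmk:X}: for each $(\alpha,x)$ we have $\Z^{\alpha,x}=\esm^\alpha(\X^{\alpha,x})$ where $\X^{\alpha,x}$ is given by \eqref{eq:X}. The first step is to handle the mismatch in the reflection data: by the last sentence of Lemma~\ref{lem:push} (under Assumption~\ref{ass:DRL}, using the interpretation of $R'(\alpha)[\beta]\L^{\alpha,x}$ from Remark~\ref{rmk:DRL}), $(\Z^{\alpha,x},\Y^{\alpha,x})$ also solves the ESP for $\wt\alpha$ driven by $\X^{\alpha,x}+(R(\alpha)-R(\wt\alpha))\L^{\alpha,x}$. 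Hence by the Lipschitz estimate of Proposition~\ref{prop:esmlip} applied with the common parameter $\wt\alpha$,
\begin{align*}
\norm{\Z^{\alpha,x}-\Z^{\wt\alpha,\wt x}}_t
&\leq \lip_{\esm}(\wt\alpha)\,\norm{\X^{\alpha,x}+(R(\alpha)-R(\wt\alpha))\L^{\alpha,x}-\X^{\wt\alpha,\wt x}}_t\\
&\leq \lip_{\esm}(\wt\alpha)\Big(\norm{\X^{\alpha,x}-\X^{\wt\alpha,\wt x}}_t+\norm{R(\alpha)-R(\wt\alpha)}\,\norm{\L^{\alpha,x}}_t\Big).
\end{align*}
Since $\lip_{\esm}(\cdot)$ is bounded on the compact set $V$, and $\norm{R(\alpha)-R(\wt\alpha)}\leq \lip_R|\alpha-\wt\alpha|$ by Assumption~\ref{ass:DRL} (when $R$ is constant this term vanishes and the point is moot), it suffices to bound $\E[\norm{\L^{\alpha,x}}_t^p]$ uniformly over $V\times K$ — which follows from $\norm{\L^{\alpha,x}}_t\leq\lip_\ell(\alpha)\norm{\Y^{\alpha,x}}_t$ (again $\lip_\ell(\cdot)$ bounded on $V$) combined with $\norm{\Y^{\alpha,x}}_t=\norm{\Z^{\alpha,x}-\X^{\alpha,x}}_t\leq \lip_{\esm}(\alpha)\norm{\X^{\alpha,x}}_t$, and the standard $L^p$ bound $\E[\norm{\X^{\alpha,x}}_t^p]<\infty$ uniform over $V\times K$ obtained from \eqref{eq:X} via the linear growth of $b,\sigma$ (a consequence of Assumption~\ref{ass:drift} and compactness of $V\times K$... more precisely, bounding $|b(\alpha,z)|\leq |b(\alpha,x_0)|+\lip_{b,\sigma}|z-x_0|$ and using $\Z^{\alpha,x}\in G$ and a Gronwall argument on $\E[\norm{\Z^{\alpha,x}}_s^p]$, Jensen and the BDG inequality).

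The second step is to control $\E[\norm{\X^{\alpha,x}-\X^{\wt\alpha,\wt x}}_t^p]$, i.e.\ \eqref{eq:XpKolmogorov}, which will in turn feed back into \eqref{eq:ZpKolmogorov}. From \eqref{eq:X},
\begin{align*}
\X_s^{\alpha,x}-\X_s^{\wt\alpha,\wt x}
&=(x-\wt x)+\int_0^s\big(b(\alpha,\Z_r^{\alpha,x})-b(\wt\alpha,\Z_r^{\wt\alpha,\wt x})\big)dr
+\int_0^s\big(\sigma(\alpha,\Z_r^{\alpha,x})-\sigma(\wt\alpha,\Z_r^{\wt\alpha,\wt x})\big)d\bm_r.
\end{align*}
Taking $\sup_{s\leq t}$, raising to the $p$-th power, taking expectations, and applying Jensen (to the drift term) and the BDG inequality (to the martingale term), the increments of $b$ and $\sigma$ are split as $|b(\alpha,\Z_r^{\alpha,x})-b(\wt\alpha,\Z_r^{\wt\alpha,\wt x})|\leq |b(\alpha,\Z_r^{\alpha,x})-b(\alpha,\Z_r^{\wt\alpha,\wt x})|+|b(\alpha,\Z_r^{\wt\alpha,\wt x})-b(\wt\alpha,\Z_r^{\wt\alpha,\wt x})|$, and similarly for $\sigma$. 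The first piece is $\leq \lip_{b,\sigma}|\Z_r^{\alpha,x}-\Z_r^{\wt\alpha,\wt x}|$ by Assumption~\ref{ass:drift}; the second piece is $\leq \lip_{b,\sigma}|\alpha-\wt\alpha|$, again by Assumption~\ref{ass:drift} (bounding the $\alpha$-Jacobian along the segment from $\wt\alpha$ to $\alpha$, which lies in the convex... if $V$ is not convex one enlarges to a compact convex neighborhood inside $U$, harmless since the constants only need to depend on $V$). This yields
\begin{align*}
\E\big[\norm{\X^{\alpha,x}-\X^{\wt\alpha,\wt x}}_s^p\big]
&\leq c_1|x-\wt x|^p+c_2|\alpha-\wt\alpha|^p+c_3\int_0^s\E\big[\norm{\Z^{\alpha,x}-\Z^{\wt\alpha,\wt x}}_r^p\big]dr,
\end{align*}
for constants $c_1,c_2,c_3$ depending only on $p,t,V,K$ (with $c_3$ absorbing $(t^{p-1}+C_p t^{p/2-1})\lip_{b,\sigma}^p$ up to a dimensional factor).

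The third step closes the loop. Substituting the first-step bound for $\norm{\Z^{\alpha,x}-\Z^{\wt\alpha,\wt x}}_r$ (which involves $\norm{\X^{\alpha,x}-\X^{\wt\alpha,\wt x}}_r$, $|\alpha-\wt\alpha|$, and $\norm{\L^{\alpha,x}}_r$) into the right side above, and denoting $\phi(s)\doteq\E[\norm{\X^{\alpha,x}-\X^{\wt\alpha,\wt x}}_s^p]$, one arrives at an inequality of the form $\phi(s)\leq A|x-\wt x|^p+B|\alpha-\wt\alpha|^p+C\int_0^s\phi(r)\,dr$ with $A,B,C$ depending only on $p,t,V,K$ (here one uses the uniform moment bound $\sup_{V\times K}\E[\norm{\L^{\alpha,x}}_t^p]<\infty$ from the first step to turn the $\norm{\L}$ contribution into part of the $|\alpha-\wt\alpha|^p$ term). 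Gronwall's inequality then gives $\phi(t)\leq (A|x-\wt x|^p+B|\alpha-\wt\alpha|^p)e^{Ct}$, which is \eqref{eq:XpKolmogorov} with $\wt C^\ddagger=Ae^{Ct}$, $\wt C^\dagger=Be^{Ct}$; feeding this back into the first-step bound and taking $p$-th moments yields \eqref{eq:ZpKolmogorov}.

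The main obstacle, and the place requiring genuine care rather than routine bookkeeping, is the first step: handling the perturbation of the \emph{reflection directions} via the identity from Lemma~\ref{lem:push}, and in particular controlling the auxiliary process $\L^{\alpha,x}$ uniformly in $L^p$ over the compact parameter set. This is where Assumption~\ref{ass:DRL} and the boundedness of $\lip_\ell(\cdot)$ and $\lip_{\esm}(\cdot)$ on compacts are essential; without the trick of re-expressing $(\Z^{\alpha,x},\Y^{\alpha,x})$ as a solution to the ESP for the \emph{other} parameter $\wt\alpha$, one cannot apply the Lipschitz estimate of Proposition~\ref{prop:esmlip}, since that estimate compares two solutions of the \emph{same} ESP. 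Everything else — the linear growth bounds, the BDG/Jensen splitting of the $b$ and $\sigma$ increments, and the final Gronwall argument — is standard.
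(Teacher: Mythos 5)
Your overall architecture is the same as the paper's: express the difference of the two reflected diffusions through the ESM via the reparameterization trick of Lemma \ref{lem:push}, control the auxiliary process $\L^{\alpha,x}$ in $L^p$, bound $\E[\norm{\X^{\alpha,x}-\X^{\wt\alpha,\wt x}}_t^p]$ by BDG/Jensen and the Lipschitz splitting of $b,\sigma$, and close with Gronwall. However, there is one genuine gap in your first step: you apply Proposition \ref{prop:esmlip} with the \emph{varying} parameter $\wt\alpha$ (and again with $\alpha$ when bounding $\norm{\Y^{\alpha,x}}_t$) and assert that ``$\lip_{\esm}(\cdot)$ is bounded on the compact set $V$.'' That claim is not established anywhere in the paper and does not follow from the standing assumptions: Proposition \ref{prop:esmlip} only provides a finite constant $\lip_{\esm}(\alpha)$ for each fixed $\alpha$, and this constant depends on the set $B^\alpha$ and the threshold $\delta_\alpha$ of Assumption \ref{ass:setB}, whose dependence on $\alpha$ is completely unconstrained. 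Note that the paper goes out of its way to prove local boundedness for $\lip_\ell(\cdot)$ in Lemma \ref{lem:push} precisely because such uniformity is not automatic; no analogous statement is made for $\lip_{\esm}(\cdot)$.

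The fix is exactly the device used in the paper's proof: fix a single reference parameter $\alpha_0\in V$ and use Lemma \ref{lem:push} to transport \emph{both} solutions to the ESP with parameter $\alpha_0$, writing $\Z^{\alpha,x}=\esm^{\alpha_0}\bigl(\X^{\alpha,x}+(R(\alpha)-R(\alpha_0))\L^{\alpha,x}\bigr)$ and likewise for $(\wt\alpha,\wt x)$, so that the only ESM Lipschitz constant ever invoked is the single number $\lip_{\esm}(\alpha_0)$; the $\alpha$-dependence is then carried entirely by the locally bounded constants $\lip_\ell(\alpha)$ and $\lip_R$. With that modification (and the corresponding adjustment $\norm{\Y^{\alpha,x}}_t\leq\lip_{\esm}(\alpha_0)\norm{\X^{\alpha,x}+(R(\alpha)-R(\alpha_0))\L^{\alpha,x}-x}_t$ in your bound on $\norm{\L^{\alpha,x}}_t$, which for $\lip_R\,\mathrm{diam}(V)$ not small is most cleanly handled as in the paper's Case 1 chain of inequalities), the rest of your argument --- the uniform moment bound on $\X^{\alpha,x}-x$, the splitting of the $b$ and $\sigma$ increments, the degenerate case where $R$ is constant, and the final Gronwall step --- matches the paper's proof and is correct.
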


The proof of Lemma \ref{lem:ZalphaxZbetaybounds} is given in Appendix \ref{sec:continuousfield}.

\begin{prop}\label{prop:continuousfield}
Given $\{(d_i(\cdot),n_i,c_i),i\in\allN\}$, $b(\cdot,\cdot)$ and $\sigma(\cdot,\cdot)$, suppose Assumptions \ref{ass:setB}, \ref{ass:projection}, \ref{ass:DRL} and \ref{ass:drift} hold. Then there is a modification of the random field $\{\Z_t^{\alpha,x},\alpha\in U,x\in G,t\geq0\}$ such that for each $\omega\in\Omega$, $(\alpha,x,t)\mapsto\Z_t^{\alpha,x}(\omega)$ is continuous as a function from $U\times G\times[0,\infty)$ to $G$.
\end{prop}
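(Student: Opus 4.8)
The plan is to deduce Proposition~\ref{prop:continuousfield} from the moment bound in Lemma~\ref{lem:ZalphaxZbetaybounds} via the Kolmogorov continuity theorem, suitably applied to a random field indexed by the finite-dimensional parameter set $U\times G\times[0,\infty)\subset\R^M\times\R^J\times\R$. First I would fix a compact ``box'' $Q=V\times K\times[0,T]$ with $V\subset U$ and $K\subset G$ compact and $T<\infty$, and show that on $Q$ the field has a H\"older-continuous modification. The key input is that for $p\geq2$ and $(\alpha,x,s),(\wt\alpha,\wt x,t)\in Q$,
\begin{equation*}
	\E\sbr{\abs{\Z_s^{\alpha,x}-\Z_t^{\wt\alpha,\wt x}}^p}\leq C\br{\abs{\alpha-\wt\alpha}^p+\abs{x-\wt x}^p+\abs{s-t}^{p/2}},
\end{equation*}
for a constant $C=C(p,T,V,K)$. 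The spatial part $\abs{\alpha-\wt\alpha}^p+\abs{x-\wt x}^p$ comes directly from \eqref{eq:ZpKolmogorov} after bounding $\E[\abs{\Z_t^{\alpha,x}-\Z_t^{\wt\alpha,\wt x}}^p]\leq\E[\norm{\Z^{\alpha,x}-\Z^{\wt\alpha,\wt x}}_T^p]$; the temporal part follows from \eqref{eq:Z}: the drift term contributes $\abs{s-t}^p$ (using the linear growth of $b$, which follows from Assumption~\ref{ass:drift} together with continuity of $b$, hence boundedness on $V\times K$ after noting $\Z$ stays in a suitable compact set — here one uses that $\Z_t^{\alpha,x}\in G$ and $\norm{\Z^{\alpha,x}}_T$ has finite $p$th moment), the stochastic integral term contributes $\abs{s-t}^{p/2}$ via the BDG inequality, and for the constraining term $\Y$ one uses Proposition~\ref{prop:esmlip} (Lipschitz continuity of the ESM) to reduce $\norm{\Y^{\alpha,x}}$-increments to $\norm{\X^{\alpha,x}}$-increments, which are again controlled by drift and BDG estimates. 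Since $\abs{s-t}^{p/2}\leq T^{p/2}$ is bounded on $Q$, combining these gives the desired bound with exponent $p$ on the $(M+J)$-dimensional parameter piece and $p/2$ on the one-dimensional time piece.

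Next I would invoke the multiparameter Kolmogorov--\v{C}entsov continuity theorem (see, e.g., \cite[Chapter I]{Kunita1997}): since the right-hand side is $C\cdot\abs{(\alpha,x,s)-(\wt\alpha,\wt x,t)}_\ast^{p\wedge(p/2)}$ in a suitable (anisotropic) metric, and since we may take $p$ arbitrarily large so that $p/2-1$ exceeds the dimension $M+J+1$ of the index set, there exists, for each such box $Q$, a modification of $\{\Z_t^{\alpha,x},(\alpha,x,t)\in Q\}$ that is a.s.\ (locally H\"older-)continuous in $(\alpha,x,t)$. To pass from boxes to the full index set $U\times G\times[0,\infty)$, I would exhaust $U\times G\times[0,\infty)$ by a countable increasing sequence of such boxes $Q_n$ (using that $U\times G\times[0,\infty)$ is $\sigma$-compact — $U$ is open in $\R^M$ and $G$ is closed, so both are $\sigma$-compact) and patch the modifications together: on the intersection $Q_n\cap Q_{n+1}$ the two continuous modifications agree at every rational point a.s., hence everywhere a.s.\ by continuity, so the patched field is well defined and continuous a.s., and one discards the single null set where any of the countably many patchings fails (replacing the field by, say, $\Z_t^{\alpha,x}$ itself, or zero, on that null set, to obtain a genuine modification defined for every $\omega$). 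Finally, one checks this patched field is indeed a modification of the original: for each fixed $(\alpha,x,t)$ it equals $\Z_t^{\alpha,x}$ a.s., which holds by construction within any box containing $(\alpha,x,t)$.

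The main obstacle, and the part deserving the most care, is establishing the temporal modulus estimate $\E[\abs{\Z_s^{\alpha,x}-\Z_t^{\alpha,x}}^p]\leq C\abs{s-t}^{p/2}$ uniformly over $(\alpha,x)\in V\times K$ — in particular controlling the increment of the constraining term $\Y^{\alpha,x}$ over $[s,t]$, since $\Y$ need not have bounded variation in general and is only accessed through the Lipschitz ESM bound of Proposition~\ref{prop:esmlip} applied to the increments of $\X^{\alpha,x}$. A clean way around this is to work with the increments of $\X^{\alpha,x}$ directly: write $\Z_t^{\alpha,x}-\Z_s^{\alpha,x}=(\esm^\alpha(\X^{\alpha,x}))(t)-(\esm^\alpha(\X^{\alpha,x}))(s)$ and use that, for a solution of the ESP, $\norm{\z(\cdot+s)-\z(s)-[\x(\cdot+s)-\x(s)]}$-type increments are controlled by $\norm{\x(\cdot+s)-\x(s)}$ via the Lipschitz property (after a time-shift, which maps ESP solutions to ESP solutions), reducing everything to $\E[\abs{\X_t^{\alpha,x}-\X_s^{\alpha,x}}^p]$, which is standard. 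A secondary, more technical point is the bookkeeping needed to promote the isotropic Kolmogorov theorem to the anisotropic setting (different H\"older exponents in the $(\alpha,x)$ and $t$ directions); this is handled either by citing an anisotropic version directly or by the standard trick of replacing the time coordinate $t$ by $\sqrt{t}$ (or by simply using the weaker uniform exponent $p/2$ in all directions, which still suffices once $p$ is large).
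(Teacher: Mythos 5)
Your argument is correct, but it takes a more laborious route than the paper, and the difference is worth noting. The paper observes that since each sample path $t\mapsto\Z_t^{\alpha,x}(\omega)$ is already continuous, it suffices to produce a modification of the $\cts(\R^J)$-valued random field $(\alpha,x)\mapsto\Z^{\alpha,x}$ that is continuous from $U\times G$ into $\cts(\R^J)$ (with the topology of uniform convergence on compacts); the Banach-space-valued Kolmogorov criterion of \cite[Theorem 1.4.1]{Kunita1997} then applies directly, because the bound \eqref{eq:ZpKolmogorov} of Lemma \ref{lem:ZalphaxZbetaybounds} is already a $p$th moment estimate on the $\sup_{[0,t]}$-norm with exponent $p$ exceeding the parameter dimension $M+J$ for $p$ large. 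This makes your entire temporal analysis unnecessary: no estimate of the form $\E[|\Z_t^{\alpha,x}-\Z_s^{\alpha,x}|^p]\leq C|t-s|^{p/2}$ is needed, and with it disappear the anisotropic bookkeeping and the delicate control of the increments of the constraining term $\Y^{\alpha,x}$. That said, your treatment of the latter is sound: the time-shift trick (noting that $(\z(s+\cdot),\y(s+\cdot)-\y(s))$ solves the ESP for $\z(s)+\x(s+\cdot)-\x(s)$, and comparing with the constant solution for $\x\equiv\z(s)$ via Proposition \ref{prop:esmlip}) is exactly the right way to reduce increments of $\Z$ to increments of $\X$ without invoking bounded variation of $\Y$, and your patching over a $\sigma$-compact exhaustion is standard. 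So your proof would go through, but at the cost of reproving a temporal modulus that the function-space-valued formulation renders free.
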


\begin{proof}
Since for each $\alpha\in U$ and $x\in G$, $\Z^{\alpha,x}$ takes values in $\cts(\R^J)$, it suffices to show there is a modification of $\Z^{\alpha,x}$ such that for each $\omega\in\Omega$, $(\alpha,x)\mapsto\Z^{\alpha,x}(\omega)$ is continuous as a mapping from $U\times G$ to $\cts(\R^J)$. This follows from Kolmogorov's continuity criterion on random fields (see, e.g., \cite[Theorem 1.4.1]{Kunita1997}) and Lemma \ref{lem:ZalphaxZbetaybounds}.
\end{proof}

\section{Main results}\label{sec:main}

In Section \ref{sec:jitter} we introduce the boundary jitter property and show that a reflected diffusion satisfies this property under a uniform ellipticity condition on the diffusion coefficient. In Section \ref{sec:derivativeprocess} we introduce the derivative process along a reflected diffusion and establish pathwise uniqueness. In Section \ref{sec:pathwise} we present our main result on the existence of pathwise derivatives of a reflected diffusion and their characterization via a derivative process.

\emph{Throughout the remainder of this work we assume the coefficients $b(\cdot,\cdot)$ and $\sigma(\cdot,\cdot)$ satisfy Assumption \ref{ass:drift} and the data $\{(d_i(\cdot),n_i,c_i),i\in\allN\}$ satisfies Assumptions \ref{ass:setB}, \ref{ass:projection} and \ref{ass:DRL}, and only state additional assumptions made, where required. We fix a $K$-dimensional $\{\F_t\}$-Brownian motion on $(\Omega,\F,\P)$ and for each $\alpha\in U$ and $x\in G$, we let $\Z^{\alpha,x}$ denote the pathwise unique reflected diffusion associated with the parameter $\alpha$, initial condition $x$ and driving Brownian motion $\bm$. According to Proposition \ref{prop:continuousfield}, there is a continuous modification of the field $\{\Z_t^{\alpha,x},\alpha\in U,x\in G,t\geq0\}$. We work with this continuous modification. Let $\Y^{\alpha,x}$ denote the associated constraining process introduced in Definition \ref{def:rd}, Let $\X^{\alpha,x}$ denote the process defined in \eqref{eq:X}, $\Y^{\alpha,x}\doteq\X^{\alpha,x}-\Z^{\alpha,x}$ so that $(\Z^{\alpha,x},\Y^{\alpha,x})$ is the solution to the ESP $\{(d_i(\alpha),n_i,c_i),i\in\allN\}$ for $\X^{\alpha,x}$, and, for $\beta\in\R^M$, let $\{R'(\alpha)[\beta]\L_t^{\alpha,x},t\geq0\}$ denote the process described in Remark \ref{rmk:DRL}.}

\subsection{Boundary jitter property}\label{sec:jitter}

In this section we state the boundary jitter property, which was first introduced in \cite{Lipshutz2016}. Let
	\be\label{eq:U}\U\doteq\{x\in\partial G:|\allN(x)|\geq2\}\ee
denote the nonsmooth part of the boundary $\partial G$, and
	\be\label{eq:S}\S\doteq\partial G\setminus\U=\{x\in\partial G:|\allN(x)|=1\}\ee
denote the smooth part of the boundary $\partial G$.

\begin{defn}\label{def:jitter}
We say that $(\z,\y)\in\cts(G)\times\cts(\R^J)$ satisfies the boundary jitter property if the following conditions hold:
\begin{itemize}
	\item[1.] If $t\geq0$ is such that $\z(t)\in\S$, then for all $t_1<t<t_2$, $\y$ is nonconstant on $(t_1\vee0,t_2)$.
	\item[2.] The path $\z$ does not spend positive Lebesgue time in $\U$; that is,
		$$\int_0^\infty 1_{\U}(\z(t))dt=0.$$
	\item[3.] If $\z(t)\in\U$ for some $t>0$, then for each $i\in\allN(\z(t))$ and every $\delta\in(0,t)$, there exists $s\in(t-\delta,t)$ such that $\allN(\z(s))=\{i\}$.
	\item[4.] If $\z(0)\in\U$, then for each $i\in\allN(\z(0))$ and every $\delta>0$, there exists $s\in(0,\delta)$ such that $\allN(\z(s))=\{i\}$.
\end{itemize}
\end{defn}

Condition 3 of the jitter property states that if a path hits a point in the nonsmooth part of the boundary at some time $t>0$, then it must hit the relative interior of all the adjoining faces infinitely often in any interval just prior to $t$, whereas condition 4 is a time-reversed version of condition 3 that states that a path 
starting at a point in $\U$ must hit all adjoining faces infinitely often in any interval just after time $t=0$.

Under the following uniform ellipticity condition on the diffusion coefficient, we show that a reflected diffusion along with its constraining process satisfies the boundary jitter property.

\begin{ass}\label{ass:elliptic}
For each $\alpha\in U$ there exists $\lambda(\alpha)>0$ such that for all $x\in G$,
	$$y^Ta(\alpha,x)y\geq\lambda(\alpha)|y|^2,\qquad y\in\R^J.$$
\end{ass}

\begin{theorem}\label{thm:jitter}
Under Assumption \ref{ass:elliptic}, for each $\alpha\in U$ and $x\in G$ a.s.\ $(\Z^{\alpha,x},\Y^{\alpha,x})$ satisfies the boundary jitter property.
\end{theorem}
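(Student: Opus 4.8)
The plan is to verify the four conditions of Definition \ref{def:jitter} separately, relying on the uniform ellipticity Assumption \ref{ass:elliptic}, the semimartingale structure of $\Z^{\alpha,x}$ (available here since either Condition \ref{cond:independent} holds, giving Remark \ref{rmk:local}, or the reflection matrix is constant so that $\Y^{\alpha,x}$ can still be shown to have locally finite variation near regular points), and standard tools for non-degenerate diffusions: uniform hitting-time estimates, the scaling (self-similarity) of diffusions under Brownian rescaling of time and space, Girsanov's theorem to remove the drift and reduce to reflected Brownian motion, and weak convergence arguments. I would fix $\alpha\in U$ and $x\in G$ throughout and suppress these superscripts.

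First I would handle condition 2, that $\Z^{\alpha,x}$ spends zero Lebesgue time in the nonsmooth set $\U$. Since $\U$ is contained in a finite union of affine subspaces of $\R^J$ of dimension at most $J-2$ (intersections of at least two distinct faces $F_i\cap F_j$), it suffices to show that the occupation measure of $\Z^{\alpha,x}$ is absolutely continuous with respect to Lebesgue measure, or at least charges no set of dimension $\le J-2$; under uniform ellipticity this follows from an occupation-time/Krylov-type estimate or from comparison with RBM via Girsanov, using that the constraining term only acts on the boundary and has locally finite variation. Condition 1 (that the regulator is non-constant in every two-sided neighborhood of a time at which $\Z^{\alpha,x}$ hits the smooth boundary $\S$) is, by the Lipschitz property of the ESM (Proposition \ref{prop:esmlip}) together with the characterization of $\L^{\alpha,x}$ in Remark \ref{rmk:local}, essentially the statement that a non-degenerate diffusion immediately reflects off a smooth face it touches; this I would prove by a one-dimensional argument, projecting onto $n_i$ and invoking the fact that a one-dimensional diffusion with non-degenerate diffusion coefficient that is at the boundary of a half-line immediately accumulates local time, combined with the non-constancy of $\langle \Y^{\alpha,x}, n_i\rangle = \L^{\alpha,x,i}$.

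The substantive content is in conditions 3 and 4, which describe the oscillatory behavior of $\Z^{\alpha,x}$ just before (resp.\ just after time $0$, if started from) a point of $\U$. Fix $i\in\allN(\z(t))$. The goal is to show that in every left-neighborhood $(t-\delta,t)$ there is $s$ with $\allN(\Z^{\alpha,x}_s)=\{i\}$, i.e.\ $\Z^{\alpha,x}_s$ lies in the relative interior of $F_i$. I would localize near the point $\Z^{\alpha,x}_t$ using the strong Markov property, flatten/translate the boundary so that locally $G$ looks like an intersection of half-spaces through the origin, and then use Brownian scaling: blowing up space by a factor $\lambda$ and time by $\lambda^2$ around the hitting time turns the question into one about the behavior of a (drift-removed, via Girsanov) reflected diffusion over $O(1)$ time scales, where uniform hitting-time estimates for the non-degenerate diffusion show that with probability bounded below it hits the relative interior of $F_i$; a Borel--Cantelli / $0$--$1$-law argument along a geometric sequence of scales $\lambda_n\to\infty$ then upgrades "probability bounded below" to "infinitely often, almost surely." The weak convergence argument enters to pass from the possibly complicated local geometry and state-dependence of $b,\sigma$ to a limiting (Brownian) problem where the hitting estimates are clean; uniform ellipticity is exactly what makes these estimates uniform in the base point and scale. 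Condition 4 is the time-reversed analogue and would be handled by the same scaling argument applied at $t=0$, or by a time-reversal of the localized process.

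The main obstacle I anticipate is condition 3 (and 4): making the scaling/weak-convergence argument rigorous requires (i) controlling the error from linearizing the faces and freezing the coefficients $b,\sigma$ at $\Z^{\alpha,x}_t$ uniformly over the relevant small time windows, (ii) a genuinely uniform lower bound, over all admissible base points and all scales, on the probability that the rescaled reflected diffusion visits the relative interior of a prescribed face within unit time — this is where I expect the real work, since it must hold in the presence of oblique reflection off the \emph{other} faces meeting at the point and cannot simply quote a half-space result — and (iii) the independence (or sufficient mixing) across scales needed to run Borel--Cantelli, which I would obtain via the strong Markov property at a sequence of stopping times shrinking to $t$. Removing the drift by Girsanov and comparing with reflected Brownian motion in a polyhedral cone should reduce (ii) to a statement about RBM in a cone, for which support-theorem-type arguments (the process can follow, with positive probability, any smooth path staying in $G$ and ending in $F_i^\circ$) give the required uniform bound.
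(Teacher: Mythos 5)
Your plan matches the paper's proof in all essentials: conditions 1 and 2 are handled by projecting onto $n_i$ and by a zero-boundary-occupation-time result for the nondegenerate reflected diffusion (the paper invokes the submartingale-problem results of Kang and Ramanan rather than a Krylov-type estimate, but the content is the same), while conditions 3 and 4 are proved exactly as you outline --- localize at a point of $F_I$, rescale diffusively, remove the drift by Girsanov, prove a uniform positive lower bound on the probability of reaching the relative interior of $F_i$ via weak convergence to a frozen-coefficient Brownian problem combined with the Wiener-measure support theorem and the Lipschitz continuity of the reduced ESM on the local cone $G_{\bar x}$, and then iterate over the dyadic shells $\mathbb{S}_I(2^{-k})$ using the strong Markov property. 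The only cosmetic differences are that the paper's iteration is a direct product bound $\prod_k(1-\ve)=0$ rather than a literal Borel--Cantelli argument, and that $\ip{\Y^{\alpha,x},n_i}$ coincides with the $i$th component of $\L^{\alpha,x}$ only locally near the relative interior of $F_i$, not globally.
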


\begin{remark}
The proof of Theorem \ref{thm:jitter} does not require that Assumption \ref{ass:DRL} hold, but does require that  the other standing assumptions stated at the beginning of Section \ref{sec:main} hold. 
\end{remark}

The proof of Theorem \ref{thm:jitter} is given in Section \ref{sec:jitterproof}. 

\subsection{Derivative process}\label{sec:derivativeprocess}

We now introduce a derivative process along a reflected diffusion. To define its domain, we set, for each $x\in\partial G$,
	\be\label{eq:Hx}H_x\doteq\bigcap_{i\in\allN(x)}\lcb y\in\R^J:\ip{y,n_i}=0\rcb.\ee
For $x\in G^\circ$, set $H_x\doteq\R^J$. Given $x\in\partial G$, perturbations of $x$ in directions that lie in $H_x$ remain in the same subset of faces that $x$ lies in; that is, if $y\in H_x$, then $\allN(x+\ve y)=\allN(x)$ for all $\ve>0$ sufficiently small. As shown in Theorem \ref{thm:pathwise}, it suffices to consider only such perturbations. For the following, given $\alpha\in U$, $x\in G$ and $\beta\in\R^M$, recall the interpretation of the $J$-dimensional process $\{R'(\alpha)[\beta]\L_t^{\alpha,x},t\geq0\}$ given in Remark \ref{rmk:DRL}.

\begin{defn}\label{def:de}
Let $\alpha\in U$ and $x\in G$. A derivative process along $\Z^{\alpha,x}$ is an $\{\F_t\}$-adapted \cadlag process $\phib^{\alpha,x}=\{\phib_t^{\alpha,x},t\geq0\}$ taking values in $\lin(\R^M\times H_x,\R^J)$ that a.s.\ satisfies for all $t\geq0$ and $(\beta,y)\in\R^M\times H_x$, $\phib_t^{\alpha,x}[\beta,y]\in H_{\Z_t^{\alpha,x}}$ and
\begin{align}\label{eq:de}
	\phib_t^{\alpha,x}[\beta,y]&=y+\int_0^tb'(\alpha,\Z_s^{\alpha,x})[\beta,\phib_s^{\alpha,x}[\beta,y]]ds+\int_0^t\sigma'(\alpha,\Z_s^{\alpha,x})[\beta,\phib_s^{\alpha,x}[\beta,y]]d\bm_s\\ \notag
	&\qquad+R'(\alpha)[\beta]\L_t^{\alpha,x}+\etab_t^{\alpha,x}[\beta,y],
\end{align}
where $\etab^{\alpha,x}=\{\etab_t^{\alpha,x},t\geq0\}$ is an $\{\F_t\}$-adapted \cadlag process taking values in $\lin(\R^M\times H_x,\R^J)$ such that a.s.\ for all $(\beta,y)\in\R^M\times H_x$, $\etab_0^{\alpha,x}[\beta,y]=0$ and for all $0\leq s<t<\infty$,
	\be\label{eq:etab}\etab_t^{\alpha,x}[\beta,y]-\etab_s^{\alpha,x}[\beta,y]\in\spaan\lsb\cup_{u\in(s,t]}d(\alpha,\Z_u^{\alpha,x})\rsb.\ee
\end{defn}

As mentioned in the introduction, a derivative process satisfies a constrained linear SDE with jumps of the form \eqref{eq:de} whose drift and diffusion coefficients, domain and directions of reflection all depend on the state of the reflected diffusion. To understand its dynamics, note that on time intervals when $Z^{\alpha,x}$ lies in the interior of the domain, the last two terms in \eqref{eq:de} are constant and hence, $\phib^{\alpha,x}[\beta,y]$ evolves (continuously) according to a linear SDE, whose coefficients are modulated by the process $Z^{\alpha,x}$.  At any time $t>0$ when $Z^{\alpha,x}_t$ hits the boundary $\partial G$, the conditions $\phib_t^{\alpha,x}[\beta,y]\in H_{\Z_t^{\alpha,x}}$ and \eqref{eq:etab} ensure that $\phib^{\alpha,x}_t[\beta,y]$ is the image of $\phib_{t-}^{\alpha,x}[\beta,y]$ under a certain linear ``derivative projection'' operator ${\mathcal L}_{\Z_t^{\alpha,x}}^\alpha$ that depends only on the faces (and the associated directions of reflection) on which  $\Z_t^{\alpha,x}$ lies (see Lemma \ref{lem:projx} below). 

We close this section with conditions ensuring pathwise uniqueness of a derivative process. Existence of derivative processes will follow from Theorem \ref{thm:pathwise}.

\begin{theorem}\label{thm:dpunique}
Let $\alpha\in U$ and $x\in G$. Suppose $\phib^{\alpha,x}$ and $\wt\phib^{\alpha,x}$ are derivative processes along $\Z^{\alpha,x}$. Then a.s.\ $\phib^{\alpha,x}=\wt\phib^{\alpha,x}$. In other words, pathwise uniqueness holds.
\end{theorem}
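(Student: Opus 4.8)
The plan is to argue pathwise, showing that the difference $\Delta_t[\beta,y]\doteq\phib_t^{\alpha,x}[\beta,y]-\wt\phib_t^{\alpha,x}[\beta,y]$ is a.s.\ identically zero for each fixed $(\beta,y)\in\R^M\times H_x$; by continuity of $\beta,y\mapsto$ (everything) and right-continuity in $t$, vanishing on a countable dense set of $(\beta,y,t)$ then forces $\phib^{\alpha,x}=\wt\phib^{\alpha,x}$ identically. The key structural observation is that the $R'(\alpha)[\beta]\L^{\alpha,x}$ terms appearing in the two defining equations \eqref{eq:de} are \emph{the same} deterministic-given-$\omega$ process (they depend only on $\Z^{\alpha,x}$, not on the derivative process), so they cancel. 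Hence $\Delta$ satisfies
	\be\notag
	\Delta_t[\beta,y]=\int_0^tb'(\alpha,\Z_s^{\alpha,x})[0,\Delta_s[\beta,y]]\,ds+\int_0^t\sigma'(\alpha,\Z_s^{\alpha,x})[0,\Delta_s[\beta,y]]\,d\bm_s+\big(\etab_t^{\alpha,x}[\beta,y]-\wt\etab_t^{\alpha,x}[\beta,y]\big),
	\ee
with the ``constraining'' term $N_t\doteq\etab_t^{\alpha,x}[\beta,y]-\wt\etab_t^{\alpha,x}[\beta,y]$ satisfying $N_0=0$ and $N_t-N_s\in\spaan[\cup_{u\in(s,t]}d(\alpha,\Z_u^{\alpha,x})]$, while both $\phib_t[\beta,y]$ and $\wt\phib_t[\beta,y]$, hence their difference $\Delta_t[\beta,y]$, lie in $H_{\Z_t^{\alpha,x}}$ for all $t$. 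Thus $\Delta$ is itself a solution of a homogeneous version of the constrained linear equation, and the goal is to show the only such solution with these constraints is $0$.

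The heart of the argument is an energy/Gronwall estimate exploiting a one-sided (coercivity) relation between $H_x$ and $\spaan[d(\alpha,x)]$. Concretely, I would first establish a pointwise linear-algebra lemma: there is a constant $c_\alpha<\infty$ (locally bounded on $U$) such that for every $x\in\partial G$, every $v\in H_x$ and every increment direction $w\in\spaan[\cup_{u}d(\alpha,\Z_u)]$ arising in the constraint, the ``projection'' structure encoded by the operator ${\mathcal L}_{\Z_t}^\alpha$ alluded to after Definition \ref{def:de} (and made precise in the forthcoming Lemma \ref{lem:projx}) is a bounded linear idempotent; equivalently, the jumps of $\Delta$ at boundary-hitting times are $\Delta_t=\mathcal L^\alpha_{\Z_t}\Delta_{t-}$ with $\norm{\mathcal L^\alpha_{\Z_t}}\le c_\alpha$ uniformly. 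Using this, between jumps $\Delta$ solves a genuine linear SDE with coefficients bounded by $\lip_{b,\sigma}$ (Assumption \ref{ass:drift}), so by Itô's formula applied to $|\Delta_t[\beta,y]|^2$, the BDG inequality, and the jump bound $|\Delta_t|^2\le c_\alpha^2|\Delta_{t-}|^2$ at the (at most countably many) boundary-hitting times, one gets
	\be\notag
	\E\big[\norm{\Delta[\beta,y]}_t^2\big]\le C(\alpha,t)\int_0^t\E\big[\norm{\Delta[\beta,y]}_s^2\big]\,ds,
	\ee
provided the jumps do not accumulate and the number of jumps in $[0,t]$ has suitable integrability. Gronwall's inequality then yields $\E[\norm{\Delta[\beta,y]}_t^2]=0$ for all $t$, hence $\Delta\equiv0$ a.s.

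The main obstacle I anticipate is \emph{controlling the jump times of the constraining term}: unlike in the classical unconstrained case, $\etab^{\alpha,x}$ may jump (indeed must, at boundary hits), and the naive Gronwall argument breaks if jumps accumulate or if $\sum_{s\le t}(|\Delta_s|^2-|\Delta_{s-}|^2)$ is not controlled. Two ideas address this. First, localize: work on the stochastic intervals between successive visits to $\U$ (or to a neighborhood of $\partial G$), on each of which either $\Delta$ is continuous or the constraint acts along a single fixed face $F_i$ so that the ``jump'' is in fact a continuous reflection in the fixed direction $d_i(\alpha)$ — here one can project onto $H_{F_i}$-coordinates and the equation reduces to a standard linear SDE on a subspace, for which uniqueness is classical. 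Second, use the boundary jitter property (Theorem \ref{thm:jitter}) together with the finite-variation structure of $\L^{\alpha,x}$ (Remark \ref{rmk:local}) to rule out accumulation of genuinely multivalued boundary hits within any compact time interval, so the localization exhausts $[0,\infty)$ in countably many steps. Piecing the intervals together via continuity of $\phib^{\alpha,x}$ and $\wt\phib^{\alpha,x}$ from the right, and using that $\Delta$ vanishes at the left endpoint of each interval by induction, completes the proof. A careful statement of the pointwise projection lemma and the jitter-based exhaustion will be the technical core; the SDE/Gronwall step is routine once those are in place.
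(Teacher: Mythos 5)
Your reduction to showing that the difference $\Delta=\phib^{\alpha,x}-\wt\phib^{\alpha,x}$ solves a homogeneous constrained linear equation (with the $R'(\alpha)[\beta]\L^{\alpha,x}$ terms cancelling) is correct, but the core of your argument --- the energy/Gronwall estimate on $|\Delta_t[\beta,y]|^2$ with a jump bound at boundary hits, pieced together by localizing between successive visits to $\U$ --- has genuine gaps. First, the set of times at which $\Z^{\alpha,x}$ lies on $\partial G$ (and, in general, on $\U$) is a closed Lebesgue-null set without isolated points, so there is no ``next visit'': the constraining term $N=\etab^{\alpha,x}[\beta,y]-\wt\etab^{\alpha,x}[\beta,y]$ acts on a Cantor-like set of times, and the boundary jitter property does not rule out accumulation of visits to $\U$ --- condition 3 of Definition \ref{def:jitter} only forces excursions to the relative interiors of the adjoining faces before such a visit and is perfectly compatible with uncountably many non-isolated visits. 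Hence the proposed exhaustion of $[0,\infty)$ in countably many steps fails. Second, even on an interval where the constraint acts along a single face, the It\^o/energy argument needs a one-sided bound on the cross term $\int\ip{\Delta_{s-},dN_s}$; for \emph{oblique} directions of reflection no such coercivity inequality holds in general (this is precisely why Assumption \ref{ass:setB}, rather than a convexity/monotonicity argument, is needed even for the ESM itself), and your projection lemma only controls the isolated jumps of $\Delta$, not the continuous part of $N$. Note also that Theorem \ref{thm:dpunique} is stated without Assumption \ref{ass:elliptic}, so invoking Theorem \ref{thm:jitter} would in any case prove a weaker statement.

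The paper avoids all of this by outsourcing the treatment of the constraining term to a deterministic result: by Remark \ref{rmk:dpde}, $(\phib^{\alpha,x}[\beta,y],\etab^{\alpha,x}[\beta,y])$ solves the derivative problem along $\Z^{\alpha,x}$ for the ``free'' input $\psib^{\alpha,x}[\beta,y]$ of \eqref{eq:psib}, so $\phib^{\alpha,x}[\beta,y]=\dm_{\Z^{\alpha,x}}^\alpha[\psib^{\alpha,x}[\beta,y]]$, and likewise for $\wt\phib^{\alpha,x}$. The Lipschitz continuity of the derivative map (Proposition \ref{prop:dmlip}, with constant depending only on $\alpha$) then gives $\norm{\phib^{\alpha,x}[\beta,y]-\wt\phib^{\alpha,x}[\beta,y]}_t\leq\lip_\dm(\alpha)\norm{\psib^{\alpha,x}[\beta,y]-\wt\psib^{\alpha,x}[\beta,y]}_t$, and since $\psib^{\alpha,x}[\beta,y]-\wt\psib^{\alpha,x}[\beta,y]$ involves only the $b'$ and $\sigma'$ integrals of $\phib^{\alpha,x}[\beta,y]-\wt\phib^{\alpha,x}[\beta,y]$, a routine BDG/Gronwall argument in $L^2$ finishes the proof. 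If you wish to keep a pathwise energy approach you would essentially have to reprove this Lipschitz estimate, which is the hard deterministic content of \cite{Lipshutz2016}.
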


The proof of Theorem \ref{thm:dpunique} is deferred to Section \ref{sec:derivativeprocessproof}. 

\subsection{Pathwise derivatives}\label{sec:pathwise}

The main result of this section is Theorem \ref{thm:pathwise}, which characterizes pathwise derivatives of reflected diffusions. First, we introduce an additional regularity assumption. Recall that we use $\norm{\cdot}$ to denote the operator norm.

\begin{ass}\label{ass:Holder}
There exists $\lip'<\infty$ and $\gamma\in(0,1]$ such that for all $\alpha,\beta\in U$ and $x,y\in G$,
	\be\label{eq:Holder}\norm{b'(\alpha,x)-b'(\beta,y)}+\norm{\sigma'(\alpha,x)-\sigma'(\beta,y)}+\norm{R'(\alpha)-R'(\beta)}\leq\lip'|(\alpha,x)-(\beta,y)|^\gamma.\ee
\end{ass}

Given $x\in\partial G$, define 
	\be\label{eq:Gx}G_x\doteq\bigcap_{i\in\allN(x)}\{y\in\R^J:\ip{y,n_i}\geq0\},\ee
and for $x\in G^\circ$, set $G_x\doteq\R^J$. Then $G_x$ describes the local structure of the polyhedron $G$ at $x$ and denotes the directions in which we allow the initial condition $x$ to be perturbed. In particular, since $U$ is an open set and due to \eqref{eq:G} and \eqref{eq:Gx}, given $\alpha\in U$, $x\in G$, $\beta\in\R^M$ and $y\in G_x$, there exists $\ve_0(\alpha,x,\beta,y)>0$ sufficiently small such that
	 \be\label{eq:ve0}\alpha+\ve\beta\in U,\qquad x+\ve y\in G,\qquad\text{for all }0<\ve<\ve_0(\alpha,x,\beta,y).\ee
For such $\ve>0$ sufficiently small, define the continuous process $\partial_{\beta,y}^\ve\Z^{\alpha,x}=\{\partial_{\beta,y}^\ve\Z_t^{\alpha,x},t\geq0\}$ by
	\be\label{eq:nablaveZ}\partial_{\beta,y}^\ve\Z^{\alpha,x}\doteq\frac{\Z^{\alpha+\ve\beta,x+\ve y}-\Z^{\alpha,x}}{\ve}.\ee
In Theorem \ref{thm:pathwise} below, we characterize a.s.\ limits of \eqref{eq:nablaveZ} as $\ve\downarrow0$. First, we have the following bound on the moments of $\partial_{\beta,y}^\ve\Z^{\alpha,x}$.

\begin{lem}\label{lem:UI}
Given $\alpha\in U$, $x\in G$, $\beta\in\R^M$ and $y\in G_x$, let $\ve_0\doteq\ve_0(\alpha,x,\beta,y)>0$ be sufficiently small such that \eqref{eq:ve0} holds. Then for each $p\geq2$ and $t<\infty$,
	\be\label{eq:nablaZmomentbound}\sup\lcb\E\lsb\norm{\partial_{\beta,y}^\ve\Z^{\alpha,x}}_t^p\rsb:0<\ve<\ve_0\rcb<\infty.\ee
Consequently,
	\be\label{eq:uniformintegrability}\lim_{C\to\infty}\sup\lcb\E\lsb\norm{\partial_{\beta,y}^\ve\Z^{\alpha,x}}_t1_{\lcb\norm{\partial_{\beta,y}^\ve\Z^{\alpha,x}}_t\geq C\rcb}\rsb:0<\ve<\ve_0\rcb=0.\ee
\end{lem}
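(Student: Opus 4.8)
The plan is to derive \eqref{eq:nablaZmomentbound} from a Gr\"onwall-type argument applied to the defining equations, and then obtain \eqref{eq:uniformintegrability} as an immediate consequence of uniform boundedness of the $p$-th moments for $p > 1$. Write $\wh\alpha\doteq\alpha+\ve\beta$ and $\wh x\doteq x+\ve y$. The starting point is the decomposition
\[
	\ve\,\partial_{\beta,y}^\ve\Z_t^{\alpha,x} = \Z_t^{\wh\alpha,\wh x} - \Z_t^{\alpha,x},
\]
and similarly for the $\X$ processes. Rather than differencing \eqref{eq:Z} directly (which is awkward because of the constraining terms), I would exploit the Lipschitz continuity of the ESM. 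By Remark \ref{rmk:X}, $(\Z^{\wh\alpha,\wh x},\Y^{\wh\alpha,\wh x})$ solves the ESP for $\wh\alpha$ applied to $\X^{\wh\alpha,\wh x}$, while $(\Z^{\alpha,x},\Y^{\alpha,x})$ solves the ESP for $\alpha$ applied to $\X^{\alpha,x}$. The two ESPs have different data when $R(\cdot)$ is nonconstant, so I would use the last assertion of Lemma \ref{lem:push}: $(\Z^{\wh\alpha,\wh x},\Y^{\wh\alpha,\wh x})$ also solves the ESP for $\alpha$ applied to $\X^{\wh\alpha,\wh x}+(R(\wh\alpha)-R(\alpha))\push^{\wh\alpha,\wh x}$. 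Then Proposition \ref{prop:esmlip} gives, for all $t<\infty$,
\[
	\norm{\Z^{\wh\alpha,\wh x}-\Z^{\alpha,x}}_t \le \lip_{\esm}(\alpha)\Big(\norm{\X^{\wh\alpha,\wh x}-\X^{\alpha,x}}_t + \norm{(R(\wh\alpha)-R(\alpha))\push^{\wh\alpha,\wh x}}_t\Big),
\]
and the second term is controlled by $\lip_R|\wh\alpha-\alpha|\norm{\push^{\wh\alpha,\wh x}}_t \le C\,\ve\,|\beta|\,\norm{\Y^{\wh\alpha,\wh x}}_t$ via Lemma \ref{lem:push} and Assumption \ref{ass:DRL} (when $R$ is constant this term vanishes and this step is trivial). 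Standard a priori moment bounds on reflected diffusions (obtainable from Lemma \ref{lem:ZalphaxZbetaybounds} with $\wt x = \wh x$, $\wt\alpha = \wh\alpha$, or directly) bound $\E[\norm{\Y^{\wh\alpha,\wh x}}_t^p]$ uniformly over $(\wh\alpha,\wh x)$ in a compact set, so that term contributes $O(\ve)$ after taking $p$-th moments and dividing by $\ve$.

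It then remains to bound $\E[\norm{\X^{\wh\alpha,\wh x}-\X^{\alpha,x}}_t^p]/\ve^p$ uniformly in $\ve$. From \eqref{eq:X},
\[
	\X_t^{\wh\alpha,\wh x}-\X_t^{\alpha,x} = (\wh x - x) + \int_0^t\big(b(\wh\alpha,\Z_s^{\wh\alpha,\wh x})-b(\alpha,\Z_s^{\alpha,x})\big)ds + \int_0^t\big(\sigma(\wh\alpha,\Z_s^{\wh\alpha,\wh x})-\sigma(\alpha,\Z_s^{\alpha,x})\big)d\bm_s.
\]
Here $\wh x - x = \ve y$, and using that $b$ and $\sigma$ are continuously differentiable with Jacobians bounded by $\lip_{b,\sigma}$ (Assumption \ref{ass:drift}), I split each integrand as $[b(\wh\alpha,\Z^{\wh\alpha,\wh x}_s)-b(\wh\alpha,\Z^{\alpha,x}_s)] + [b(\wh\alpha,\Z^{\alpha,x}_s)-b(\alpha,\Z^{\alpha,x}_s)]$; the first piece is bounded by $\lip_{b,\sigma}|\Z^{\wh\alpha,\wh x}_s-\Z^{\alpha,x}_s| = \lip_{b,\sigma}\,\ve\,|\partial_{\beta,y}^\ve\Z^{\alpha,x}_s|$ and the second by $\lip_{b,\sigma}|\wh\alpha-\alpha| = \lip_{b,\sigma}\,\ve\,|\beta|$. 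Applying the BDG inequality to the stochastic integral (with the universal constant $C_p$), Jensen/H\"older to the drift integral, and collecting everything, I get for a constant $C = C(p,t,\alpha,x,\beta,y)$ that
\[
	\E\big[\norm{\partial_{\beta,y}^\ve\Z^{\alpha,x}}_s^p\big] \le C\Big(1 + \int_0^s \E\big[\norm{\partial_{\beta,y}^\ve\Z^{\alpha,x}}_u^p\big]\,du\Big), \qquad 0\le s\le t,
\]
where the ``$1$'' absorbs $|y|^p$, $|\beta|^p$ and the $O(\ve^p)$ contribution from the reflection-matrix term (using $\ve<\ve_0$ so $\ve$ is bounded). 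A priori one should truncate with stopping times $\tau_n = \inf\{s: \norm{\partial_{\beta,y}^\ve\Z^{\alpha,x}}_s \ge n\}$ to ensure the expectations are finite before applying Gr\"onwall, then let $n\to\infty$ by monotone convergence; this is routine. Gr\"onwall's inequality then yields $\E[\norm{\partial_{\beta,y}^\ve\Z^{\alpha,x}}_t^p] \le C e^{Ct}$, a bound independent of $\ve\in(0,\ve_0)$, which is \eqref{eq:nablaZmomentbound}.

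For \eqref{eq:uniformintegrability}: fix $p=2$ (any $p>1$ works) and set $A \doteq \sup_{0<\ve<\ve_0}\E[\norm{\partial_{\beta,y}^\ve\Z^{\alpha,x}}_t^2] < \infty$ by the previous step. On the event $\{\norm{\partial_{\beta,y}^\ve\Z^{\alpha,x}}_t \ge C\}$ we have $\norm{\partial_{\beta,y}^\ve\Z^{\alpha,x}}_t \le \norm{\partial_{\beta,y}^\ve\Z^{\alpha,x}}_t^2/C$, so
\[
	\E\Big[\norm{\partial_{\beta,y}^\ve\Z^{\alpha,x}}_t\,1_{\{\norm{\partial_{\beta,y}^\ve\Z^{\alpha,x}}_t\ge C\}}\Big] \le \frac{1}{C}\,\E\big[\norm{\partial_{\beta,y}^\ve\Z^{\alpha,x}}_t^2\big] \le \frac{A}{C},
\]
uniformly in $\ve\in(0,\ve_0)$, and letting $C\to\infty$ gives \eqref{eq:uniformintegrability}.

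\textbf{Main obstacle.} The delicate point is the handling of the constraining/reflection-matrix term $(R(\wh\alpha)-R(\alpha))\push^{\wh\alpha,\wh x}$ when the directions of reflection vary in $\alpha$: one must invoke the reduction in Lemma \ref{lem:push} so that both reflected diffusions are viewed as solving the \emph{same} ESP (so Proposition \ref{prop:esmlip} applies), and then control $\E[\norm{\push^{\wh\alpha,\wh x}}_t^p]$ — equivalently $\E[\norm{\Y^{\wh\alpha,\wh x}}_t^p]$ — uniformly over $\wh\alpha,\wh x$ in a compact neighborhood. Once that uniform moment bound is in hand (via Lemma \ref{lem:ZalphaxZbetaybounds} applied with a fixed reference point, or a direct a priori estimate), this term is merely $O(\ve)$ and the rest is a textbook Gr\"onwall/BDG argument. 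In the case where $R$ is constant this obstruction disappears entirely and the proof is standard.
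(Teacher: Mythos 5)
Your argument is correct and is essentially the paper's: the uniform bound \eqref{eq:nablaZmomentbound} is precisely the content of Lemma \ref{lem:ZalphaxZbetaybounds} (whose proof in Appendix \ref{sec:continuousfield} is the same ESM-Lipschitz/Lemma \ref{lem:push}/Gr\"onwall/BDG argument you reconstruct), so the paper simply applies that lemma with $\wt\alpha=\alpha+\ve\beta$ and $\wt x=x+\ve y$ over the compact sets $V=\{\alpha+\ve\beta:0\le\ve\le\ve_0\}$ and $K=\{x+\ve y:0\le\ve\le\ve_0\}$ and divides by $\ve^p$, rather than re-deriving it. Your deduction of \eqref{eq:uniformintegrability} from the uniform $p$-th moment bound matches the paper's, which treats it as immediate.
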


\begin{proof}
Define the compact sets $V\doteq\{\alpha+\ve\beta,0\leq\ve\leq\ve_0\}$ and $K\doteq\{x+\ve y,0\leq \ve\leq\ve_0\}$. The moment bound \eqref{eq:nablaZmomentbound} then follows from Lemma \ref{lem:ZalphaxZbetaybounds} with $\wt\alpha=\alpha+\ve\beta$ and $\wt x=x+\ve y$, and the uniform integrability shown in \eqref{eq:uniformintegrability} is an immediate consequence of \eqref{eq:nablaZmomentbound}.
\end{proof}

In order to establish existence of pathwise derivatives of reflected diffusions, we require that the reflected diffusion not hit a certain subset of the boundary of the domain. For $\alpha\in U$, define
	\be\label{eq:Walpha}\W^\alpha\doteq\{x\in\U:\spaan(d(\alpha,x)\cup H_x)\neq\R^J\},\ee
and
	\be\label{eq:tau}\tau^{\alpha,x}\doteq\inf\{t\geq0:\Z_t^{\alpha,x}\in\W^\alpha\}.\ee 
In \cite{Lipshutz2016} it was shown that directional derivatives of the ESM $\esm^\alpha$ may not exist at times that the constrained path lies in $\W^\alpha$ (see \cite[Appendix D.2]{Lipshutz2016} for an example) and so we require that a.s.\ $\tau^{\alpha,x}=\infty$. This is not too stringent a requirement since, as the next lemma shows, under a mild linear independence condition on the directions of reflection, the set $\W^\alpha$ is empty.

\begin{lem}[{\cite[Lemma 8.2]{Lipshutz2016}}]
\label{lem:Wempty}
Suppose Condition \ref{cond:independent} holds. Then $\W^\alpha$ is empty for all $\alpha\in U$.
\end{lem}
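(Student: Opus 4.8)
The plan is to establish the following ``pointwise transversality'' fact, from which the lemma follows by an elementary dimension count: for every $\alpha\in U$ and every $x\in\partial G$,
	$$\spaan\{d_i(\alpha):i\in\allN(x)\}\cap H_x=\{0\}.$$
Since $\conv$ denotes the cone generated by a set and $H_x$ is a linear subspace, we have $\spaan(d(\alpha,x)\cup H_x)=\spaan\{d_i(\alpha):i\in\allN(x)\}+H_x$, so this identity is exactly what is needed to control the left-hand side appearing in the definition \eqref{eq:Walpha} of $\W^\alpha$.

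For the transversality fact I would argue by contradiction, using only Assumption~\ref{ass:setB}. Fix $x\in\partial G$, put $I\doteq\allN(x)$, and suppose there is $w\neq0$ with $w\in\spaan\{d_i(\alpha):i\in I\}$ and $w\in H_x$. Since $B^\alpha$ is compact and $0\in(B^\alpha)^\circ$, the ray $\{tw:t\geq0\}$ meets $\partial B^\alpha$ at a point $z_0=t^*w$ with $t^*>0$, and $z_0\in H_x$ because $H_x$ is a subspace; in particular $\ip{z_0,n_i}=0$ for all $i\in I$. Thus for each $i\in I$ the hypothesis of \eqref{eq:setB} holds at $z_0$ (indeed $|\ip{z_0,n_i}|=0<\delta_\alpha$), and Assumption~\ref{ass:setB} yields $\ip{\nu,d_i(\alpha)}=0$ for every $\nu\in\nu_{B^\alpha}(z_0)$ and every $i\in I$; hence every such $\nu$ is orthogonal to $\spaan\{d_i(\alpha):i\in I\}$, so $\ip{\nu,w}=0$ and therefore $\ip{\nu,z_0}=0$. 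On the other hand $\nu_{B^\alpha}(z_0)\neq\emptyset$ by the supporting hyperplane theorem; fixing a unit $\nu\in\nu_{B^\alpha}(z_0)$ and choosing $\rho>0$ with $-\rho\nu\in B^\alpha$ (possible since $0\in(B^\alpha)^\circ$), the defining inequality $\ip{\nu,(-\rho\nu)-z_0}\geq0$ forces $\ip{\nu,z_0}\leq-\rho<0$, a contradiction. This proves the transversality fact, and I expect this to be the only genuinely substantive step; note that it uses Assumption~\ref{ass:setB} but not Condition~\ref{cond:independent}.

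To conclude, fix $\alpha\in U$ and suppose $x\in\W^\alpha$; then $x\in\U$, so $|\allN(x)|\geq2$. Writing $D_x\doteq\spaan\{d_i(\alpha):i\in\allN(x)\}$, Condition~\ref{cond:independent} gives $\dim D_x=|\allN(x)|$, while $\dim H_x\geq J-|\allN(x)|$ since $H_x$ is an intersection of $|\allN(x)|$ hyperplanes. By the transversality fact $D_x\cap H_x=\{0\}$, so $\dim(D_x+H_x)=\dim D_x+\dim H_x\geq|\allN(x)|+(J-|\allN(x)|)=J$, i.e.\ $\spaan(d(\alpha,x)\cup H_x)=D_x+H_x=\R^J$, contradicting $x\in\W^\alpha$. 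Hence $\W^\alpha=\emptyset$. The role of Condition~\ref{cond:independent} is confined to this final count and it cannot be dropped: for instance, for a two-faced wedge in $\R^3$ one can choose linearly independent reflection directions that are ``too parallel'' (so that Assumption~\ref{ass:setB} fails, which is consistent with the lemma's standing hypotheses) and obtain $\W^\alpha\neq\emptyset$.
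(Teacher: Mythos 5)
Your argument is correct. Note that the paper does not actually prove Lemma \ref{lem:Wempty}: it imports the statement from \cite[Lemma 8.2]{Lipshutz2016}, so there is no in-text proof to compare against, and what you have written functions as a self-contained substitute. The two-step structure is sound. The ``transversality'' claim $\spaan\{d_i(\alpha):i\in\allN(x)\}\cap H_x=\{0\}$ follows exactly as you say from Assumption \ref{ass:setB} alone: the hypothesis of \eqref{eq:setB} is met at $z_0=t^*w\in\partial B^\alpha$ because $|\ip{z_0,n_i}|=0<\delta_\alpha$ for every $i\in\allN(x)$, so every inward normal $\nu\in\nu_{B^\alpha}(z_0)$ is orthogonal to each $d_i(\alpha)$ and hence to $z_0\in\spaan\{d_i(\alpha):i\in\allN(x)\}$, while testing the defining inequality of $\nu_{B^\alpha}(z_0)$ against $-\rho\nu\in B^\alpha$ forces $\ip{\nu,z_0}\leq-\rho<0$, a contradiction. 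The reduction $\spaan(d(\alpha,x)\cup H_x)=D_x+H_x$ is also fine, since the span of the cone $\conv(\{d_i(\alpha),i\in\allN(x)\})$ is $D_x$, and the dimension count $\dim(D_x+H_x)=\dim D_x+\dim H_x\geq|\allN(x)|+(J-|\allN(x)|)=J$ is precisely where Condition \ref{cond:independent} enters. The only blemish is the closing aside: the example you sketch (linearly independent but ``too parallel'' directions on a three-dimensional wedge) is one in which Condition \ref{cond:independent} \emph{holds} and Assumption \ref{ass:setB} fails, so it illustrates that the transversality step genuinely requires Assumption \ref{ass:setB}, not that Condition \ref{cond:independent} cannot be dropped; for the latter, the relevant example is the one the paper itself mentions after Lemma \ref{lem:Wempty}, namely the two-dimensional wedge with equal reflection directions on both faces, where $\W^\alpha=\{0\}$. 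This does not affect the validity of your proof.
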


\begin{remark}
There are cases under which Condition \ref{cond:independent} does not hold and nevertheless, $\W^\alpha$ is empty (see, e.g., \cite[Section D.1]{Lipshutz2016}). Even when $\W^\alpha$ is not empty, there are cases where a.s.\ $\tau^{\alpha,x}=\infty$. For instance, consider a reflected Brownian motion in a two-dimensional wedge with vertex at the origin and equal directions of reflection along both faces of the wedge (this corresponds to the setting in \cite{Varadhan1984} with $\theta_1+\theta_2+\pi=\xi$). In this case $\W^\alpha=\{0\}$. However, according to \cite[Theorem 2.2]{Varadhan1984}, if the reflected Brownian motion does not start at the origin, then a.s.\ $\tau^{\alpha,x}=\infty$.
\end{remark}

In the next lemma, we recall the definition of a so-called derivative projection operator introduced in \cite{Lipshutz2016} to characterize directional derivatives of the ESM.

\begin{lem}[{\cite[Lemma 8.3]{Lipshutz2016}}]
\label{lem:projx}
Given $\alpha\in U$ and $x\in G\setminus\W^\alpha$, there exists a unique mapping $\proj_x^\alpha :\R^J\mapsto H_x$ that satisfies $\proj_x^\alpha(y)-y\in\spaan[d(\alpha,x)]$ for all $y\in\R^J$. Furthermore, $\proj_x^\alpha$ is a linear map.
\end{lem}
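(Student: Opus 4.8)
The plan is to construct $\proj_x^\alpha$ explicitly and then verify that the hypothesis $x \notin \W^\alpha$ forces uniqueness and linearity. Fix $\alpha\in U$ and $x\in G\setminus\W^\alpha$. If $x\in G^\circ$, then $\allN(x)=\emptyset$, so $d(\alpha,x)=\{0\}$, $H_x=\R^J$, and the only map satisfying $\proj_x^\alpha(y)-y\in\spaan[\{0\}]=\{0\}$ is the identity, which is linear; so assume $x\in\partial G$. Write $I=\allN(x)$ and set $D_x=\spaan[d(\alpha,x)]=\spaan\{d_i(\alpha):i\in I\}$. The key structural fact, which I would extract first from the definition \eqref{eq:Walpha} of $\W^\alpha$, is that $x\notin\W^\alpha$ means precisely $\spaan(d(\alpha,x)\cup H_x)=\R^J$, i.e. $D_x + H_x = \R^J$. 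Combined with the observation that $D_x\cap H_x=\{0\}$ --- since any $v=\sum_{i\in I}\lambda_i d_i(\alpha)\in H_x$ satisfies $0=\ip{v,n_j}=\sum_{i\in I}\lambda_i\ip{d_i(\alpha),n_j}$ for every $j\in I$, and the normalization $\ip{d_i(\alpha),n_i}=1$ together with $\ip{d_i(\alpha),n_i}>0$ along with the positivity structure would need a short argument; more robustly, $\dim D_x \le |I|$ and $\dim H_x = J - \dim\spaan\{n_i:i\in I\}$, and a dimension count against $D_x+H_x=\R^J$ forces the intersection to be trivial --- we obtain the direct sum decomposition $\R^J = D_x \oplus H_x$.

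Given this decomposition, I would define $\proj_x^\alpha$ to be the linear projection of $\R^J$ onto $H_x$ along $D_x$: for $y\in\R^J$, write $y = y_D + y_H$ uniquely with $y_D\in D_x$, $y_H\in H_x$, and set $\proj_x^\alpha(y)\doteq y_H$. This is linear by construction, maps into $H_x$, and satisfies $\proj_x^\alpha(y)-y = -y_D\in D_x = \spaan[d(\alpha,x)]$, which is exactly the required property. For uniqueness, suppose $T:\R^J\mapsto H_x$ is any map (not assumed linear) with $T(y)-y\in\spaan[d(\alpha,x)] = D_x$ for all $y$. Then for each $y$, both $T(y)$ and $\proj_x^\alpha(y)$ lie in $H_x$, and $T(y)-\proj_x^\alpha(y) = (T(y)-y) - (\proj_x^\alpha(y)-y)\in D_x$; since it also lies in $H_x$ and $D_x\cap H_x=\{0\}$, we get $T(y)=\proj_x^\alpha(y)$. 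This simultaneously establishes uniqueness and shows any such $T$ automatically coincides with the linear map, giving the final "furthermore" clause for free.

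I expect the main obstacle to be the linear-algebra step establishing $D_x\oplus H_x = \R^J$ --- specifically verifying $D_x\cap H_x=\{0\}$ cleanly. The cleanest route is the dimension count: from $D_x+H_x=\R^J$ one has $\dim D_x + \dim H_x - \dim(D_x\cap H_x) = J$; since $\dim H_x = J - r$ where $r = \dim\spaan\{n_i:i\in I\}$ and $\dim D_x \le |I|$, if the directions $\{d_i(\alpha):i\in I\}$ were linearly independent (Condition \ref{cond:independent}, as in the cited Lemma 8.2) then $\dim D_x = |I| \ge r$ with equality forced, yielding $\dim(D_x\cap H_x)\le 0$. In the general case one argues directly: the $|I|\times|I|$ matrix $(\ip{d_i(\alpha),n_j})_{i,j\in I}$ applied to the coefficient vector of any element of $D_x\cap H_x$ returns zero, and invertibility of this matrix (a consequence of Assumption \ref{ass:setB}, or of the completely-$\mathcal{S}$ structure, which is standard for these ESPs) gives triviality of the intersection. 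Since this lemma is quoted verbatim from \cite[Lemma 8.3]{Lipshutz2016}, in the write-up I would either reproduce this short argument or simply cite it; the construction above is the substance of the proof.
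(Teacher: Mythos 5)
The paper does not prove this lemma: it is imported verbatim from \cite[Lemma 8.3]{Lipshutz2016}, so there is no in-paper argument to compare yours against. Your overall strategy is surely the right one --- writing $D_x\doteq\spaan[d(\alpha,x)]$, existence of the map for every $y$ is equivalent to $D_x+H_x=\R^J$, uniqueness is equivalent to $D_x\cap H_x=\{0\}$, and once $\R^J=D_x\oplus H_x$ is in hand the oblique projection onto $H_x$ along $D_x$ does everything, with linearity and the ``furthermore'' clause coming for free exactly as you say. One small point first: for $x\in\S$ the hypothesis $x\notin\W^\alpha$ is vacuous (since $\W^\alpha\subseteq\U$), so both $D_x+H_x=\R^J$ and $D_x\cap H_x=\{0\}$ must be checked directly there; this is one line, since $D_x$ is then the line through $d_i(\alpha)$ and $\ip{d_i(\alpha),n_i}=1\neq0$ shows that line is transverse to the hyperplane $H_x$.

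The genuine gap is the step $D_x\cap H_x=\{0\}$ for $x\in\U\setminus\W^\alpha$, and neither of your two arguments closes it. The dimension count gives $\dim(D_x\cap H_x)=\dim D_x-r$ with $r\doteq\dim\spaan\{n_i:i\in\allN(x)\}$, and the hypothesis $D_x+H_x=\R^J$ only yields $\dim D_x\geq r$; nothing you invoke forces equality. Under Condition \ref{cond:independent} you get $\dim D_x=|\allN(x)|$, but the normals at $x$ need not be linearly independent: $|\allN(x)|$ can exceed $r$ (e.g.\ at a vertex of an octahedron-like polyhedron, or at a point whose tangent cone is the product of such a vertex cone with a line, in which case $r<J$ and $H_x\neq\{0\}$, so the issue is not vacuous). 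Hence ``$\dim D_x=|\allN(x)|\geq r$ with equality forced'' is unjustified as written. Your fallback --- invertibility of the matrix $\bigl(\ip{d_i(\alpha),n_j}\bigr)_{i,j\in\allN(x)}$ --- is also not a consequence of Assumption \ref{ass:setB} or of a completely-$\mathcal{S}$ structure: the $2\times2$ all-ones matrix is completely-$\mathcal{S}$ yet singular, and the rank of this matrix is at most $r$, so it cannot be invertible whenever $|\allN(x)|>r$. What remains to be shown is precisely that $x\notin\W^\alpha$, together with the standing geometric assumptions on the data, excludes $D_x\cap H_x\neq\{0\}$; this is the actual content of \cite[Lemma 8.3]{Lipshutz2016}, and it cannot be obtained from the linear algebra you have written down. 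Either reproduce that argument or cite the lemma outright, as the paper does.
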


\begin{remark}\label{rmk:projx}
Let $\alpha\in U$. By the uniqueness of the mapping $\proj_x^\alpha$, we have $\proj_x^\alpha[y]=y$ for all $y\in H_x$. When $x\in G^\circ$, it follows that $H_x=\R^J$ and $\proj_x^\alpha$ reduces to the identity operator on $\R^J$.
\end{remark}

We can now state our main result.

\begin{theorem}\label{thm:pathwise}
Suppose Assumption \ref{ass:Holder} holds. Let $\alpha\in U$ and $x\in G\setminus\W^\alpha$. Suppose that a.s.\ $\tau^{\alpha,x}=\infty$ and $(\Z^{\alpha,x},\Y^{\alpha,x})$ satisfies the boundary jitter property. Then there exists a pathwise unique derivative process $\phib^{\alpha,x}$ along $\Z^{\alpha,x}$ and for all $\beta\in\R^M$ and $y\in G_x$, a.s.\ the following hold:
\begin{itemize}
	\item[(i)] The pathwise derivative of $\Z^{\alpha,x}$ in the direction $(\beta,y)$, defined for $t\geq0$ by 
		\be\label{eq:nablaZ}\partial_{\beta,y}\Z_t^{\alpha,x}\doteq\lim_{\ve\downarrow0}\frac{\Z_t^{\alpha+\ve\beta,x+\ve y}-\Z_t^{\alpha,x}}{\ve},\ee
	exists.
	\item[(ii)] The pathwise derivative $\partial_{\beta,y}\Z^{\alpha,x}=\{\partial_{\beta,y}\Z_t^{\alpha,x},t\geq0\}$ takes values in $\dlr(\R^J)$ and is continuous at times $t>0$ when $\Z_t^{\alpha,x}\in G^\circ\cup\U$. 
	\item[(iii)] The right-continuous regularization of the pathwise derivative $\partial_{\beta,y}\Z^{\alpha,x}$ is equal to the derivative process $\phib^{\alpha,x}$ evaluated in the direction $(\beta,\proj_x^\alpha[y])$; that is,
		\be\label{eq:pathwisederivativeprocess}\lim_{s\downarrow t}\partial_{\beta,y}\Z_s^{\alpha,x}=\phib_t^{\alpha,x}[\beta,\proj_x^\alpha[y]],\qquad t\geq0.\ee
\end{itemize}
\end{theorem}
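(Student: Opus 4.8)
The plan is to realize the reflected diffusion as the image of the unconstrained process $\X^{\alpha,x}$ under the ESM, $\Z^{\alpha,x}=\esm^\alpha(\X^{\alpha,x})$, and to reduce the simultaneous perturbation of the initial condition, of the coefficients $b,\sigma$, and of the reflection directions to a single perturbation of the input path, so that the directional-derivative theory for the ESM developed in \cite{Lipshutz2016} becomes applicable. The first step absorbs the perturbation of the reflection directions via the last assertion of Lemma~\ref{lem:push}: since $(\Z^{\alpha+\ve\beta,x+\ve y},\Y^{\alpha+\ve\beta,x+\ve y})$ solves the ESP for $\alpha+\ve\beta$ with input $\X^{\alpha+\ve\beta,x+\ve y}$ and decomposition $\L^{\alpha+\ve\beta,x+\ve y}$, it also solves the ESP for $\alpha$ with input
$$\wt\X^\ve\doteq\X^{\alpha+\ve\beta,x+\ve y}+\br{R(\alpha+\ve\beta)-R(\alpha)}\L^{\alpha+\ve\beta,x+\ve y},$$
so that, writing $v^\ve\doteq\ve^{-1}(\wt\X^\ve-\X^{\alpha,x})$,
$$\partial_{\beta,y}^\ve\Z^{\alpha,x}=\frac{\esm^\alpha(\X^{\alpha,x}+\ve v^\ve)-\esm^\alpha(\X^{\alpha,x})}{\ve}.$$
(When Condition~\ref{cond:independent} fails, Assumption~\ref{ass:DRL} makes $R$ constant in $\alpha$, the extra term disappears, and $v^\ve$ is just the difference quotient of the $\X$'s, consistent with Remark~\ref{rmk:DRL}.) It then suffices to identify $\lim_{\ve\downarrow0}v^\ve$ and to push it through $\esm^\alpha$.

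Next I would construct simultaneously the limiting input and the derivative process by a Picard iteration. Writing out \eqref{eq:X} and performing a first-order Taylor expansion of $b$ and $\sigma$ — legitimate by the H\"older bound in Assumption~\ref{ass:Holder} — leads to the following coupled pair of relations: $\phib_t^{\alpha,x}[\beta,\proj_x^\alpha[y]]$, $t\geq0$, is the right-continuous regularization of the directional derivative of $\esm^\alpha$ at $\X^{\alpha,x}$ in the direction $\psib^{\alpha,x}[\beta,y]$, where
\begin{align*}
	\psib_t^{\alpha,x}[\beta,y]&\doteq y+\int_0^tb'(\alpha,\Z_s^{\alpha,x})[\beta,\phib_s^{\alpha,x}[\beta,\proj_x^\alpha[y]]]\,ds\\
	&\qquad+\int_0^t\sigma'(\alpha,\Z_s^{\alpha,x})[\beta,\phib_s^{\alpha,x}[\beta,\proj_x^\alpha[y]]]\,d\bm_s+R'(\alpha)[\beta]\L_t^{\alpha,x}.
\end{align*}
By the characterization in \cite{Lipshutz2016}, the regularization of that directional derivative is the unique solution of the derivative problem for $\X^{\alpha,x}$ along $\psib^{\alpha,x}[\beta,y]$, and substituting the displayed formula for $\psib^{\alpha,x}[\beta,y]$ into this problem reproduces exactly \eqref{eq:de} (with $\proj_x^\alpha[y]$ in the role of the $H_x$-argument). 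The coupling of $\psib$ and $\phib$ is broken by iteration: the solution map of the derivative problem is Lipschitz in its input (a result of \cite{Lipshutz2016}), and the $ds$-integral contributes a factor $O(t)$, yielding a contraction on a short time horizon that is then propagated to all of $[0,\infty)$; this produces the derivative process $\phib^{\alpha,x}$ (in particular establishing its existence), while pathwise uniqueness is Theorem~\ref{thm:dpunique}. Running essentially the same estimate on the error $v^\ve-\psib^{\alpha,x}[\beta,y]$ — now invoking the Burkholder--Davis--Gundy inequality and Gronwall's lemma, with the Taylor remainders of $b,\sigma$ controlled uniformly in $\ve$ through Assumption~\ref{ass:Holder}, the continuity of the field $(\alpha,x)\mapsto\Z^{\alpha,x}$ (Proposition~\ref{prop:continuousfield}), and the uniform moment bounds of Lemmas~\ref{lem:ZalphaxZbetaybounds} and~\ref{lem:UI} — shows that $v^\ve\to\psib^{\alpha,x}[\beta,y]$ a.s., uniformly on compact time intervals. (Since the pathwise derivative is only \cadlag while $\phib^{\alpha,x}$ is its right-continuous regularization, and such a path has at most countably many jumps, the two agree Lebesgue-a.e.\ and $d\bm$-a.e., so the integrands above are unambiguous.)

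To conclude, I would push this convergence through the ESM. Because $\Z^{\alpha,x}$ satisfies the boundary jitter property and, since $x\notin\W^\alpha$ and a.s.\ $\tau^{\alpha,x}=\infty$, a.s.\ never visits $\W^\alpha$, the directional derivative of $\esm^\alpha$ at $\X^{\alpha,x}$ exists in every direction (in particular in direction $\psib^{\alpha,x}[\beta,y]$) by \cite{Lipshutz2016}. Writing
\begin{align*}
	\partial_{\beta,y}^\ve\Z^{\alpha,x}&=\frac{\esm^\alpha(\X^{\alpha,x}+\ve v^\ve)-\esm^\alpha(\X^{\alpha,x}+\ve\psib^{\alpha,x}[\beta,y])}{\ve}\\
	&\qquad+\frac{\esm^\alpha(\X^{\alpha,x}+\ve\psib^{\alpha,x}[\beta,y])-\esm^\alpha(\X^{\alpha,x})}{\ve},
\end{align*}
the first term is bounded in norm on $[0,t]$ by $\lip_{\esm}(\alpha)\norm{v^\ve-\psib^{\alpha,x}[\beta,y]}_t\to0$ by the Lipschitz continuity of $\esm^\alpha$ (Proposition~\ref{prop:esmlip}), and the second converges to the directional derivative of $\esm^\alpha$ at $\X^{\alpha,x}$ in direction $\psib^{\alpha,x}[\beta,y]$ by the definition of that derivative; this yields (i) and identifies $\partial_{\beta,y}\Z^{\alpha,x}$ with that directional derivative. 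Assertions (ii) and (iii) then follow from the structural properties of solutions of the derivative problem established in \cite{Lipshutz2016}: such a solution lies in $\dlr(\R^J)$ and is continuous at times $t>0$ with $\Z_t^{\alpha,x}\in G^\circ\cup\U$, and its right-continuous regularization starts from $\proj_x^\alpha[y]$ rather than $y$ — the instantaneous reflection, along $\spaan[d(\alpha,x)]$, of the initial perturbation $y\in G_x$ onto $H_x$ when $x\in\partial G$ (Lemma~\ref{lem:projx} and Remark~\ref{rmk:projx}) — so that this regularization is $\phib^{\alpha,x}[\beta,\proj_x^\alpha[y]]$, which is \eqref{eq:pathwisederivativeprocess}.

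The principal difficulty is the coupled passage to the limit in the second paragraph: $(\psib^{\alpha,x}[\beta,y],\phib^{\alpha,x}[\beta,y])$ must be constructed while, at the same time, the convergence $v^\ve\to\psib^{\alpha,x}[\beta,y]$ is proved, and the direction $v^\ve$ actually supplied to $\esm^\alpha$ agrees with $\psib^{\alpha,x}[\beta,y]$ only in the limit. Since the map from a direction $\z$ to the directional derivative of $\esm^\alpha$ at $\X^{\alpha,x}$ along $\z$ is positively homogeneous but not linear, the argument must be routed through the Lipschitz continuity of $\esm^\alpha$ and of the derivative-problem solution map rather than through a naive linearization. Keeping all the stochastic estimates uniform in $\ve$ while the reflected diffusion moves in and out of the nonsmooth part of the boundary is the delicate technical point, and it is precisely here that the uniform integrability in Lemma~\ref{lem:UI} and the boundary jitter property carry the argument.
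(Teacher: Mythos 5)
Your proposal follows essentially the same route as the paper: it absorbs the perturbation of the reflection directions via the last assertion of Lemma~\ref{lem:push} (the paper's Lemma~\ref{lem:Zveesm}), constructs the coupled pair $(\Psi(\beta,y),\Phi(\beta,y))$ by Picard iteration using the Lipschitz continuity of $\psi\mapsto\nabla_\psi\esm^\alpha(\X^{\alpha,x})$ together with BDG/Gronwall estimates and Assumption~\ref{ass:Holder} (the paper's Lemma~\ref{lem:PhiPsi} and Proposition~\ref{prop:psivepsi}), and then invokes the derivative-problem characterization of directional derivatives under the boundary jitter property to obtain (ii), (iii) and the existence of $\phib^{\alpha,x}$ with initial value $\proj_x^\alpha[y]$. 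The only cosmetic deviation is that you assert a.s.\ uniform convergence of the perturbed direction $v^\ve$, whereas the paper establishes convergence in $L^2$ (equation~\eqref{eq:PsiveL2}) and then passes the limit through $\esm^\alpha$ via Proposition~\ref{prop:esmpsive}; this does not change the argument in any essential way.
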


\begin{remark}
The derivative projection operator $\proj_x^\alpha$ in part (iii) of Theorem \ref{thm:pathwise} serves to map $y\in G_x$, the direction in which $x$ is perturbed, to $H_x$ (the domain of $\phib^{\alpha,x}[\beta,\cdot]$). The presence of $\proj_x^\alpha$ in part (iii) can be interpreted as stating that any perturbation to the initial condition $x$ in a direction that lies in $G_x\setminus H_x$ is instantly projected to the linear subspace $H_x$ along a direction that lies in $\spaan[d(\alpha,x)]$.
\end{remark}

The proof of Theorem \ref{thm:pathwise} is given in Section \ref{sec:pathwiseproof}. When combined with Theorem \ref{thm:jitter} and Lemma \ref{lem:Wempty}, we have the following corollary.

\begin{cor}\label{cor:pathwise}
Suppose Condition \ref{cond:independent} and Assumptions \ref{ass:elliptic} and \ref{ass:Holder} hold. Let $\alpha\in U$ and $x\in G$. Then there exists a pathwise unique derivative process and for all $\beta\in\R^M$ and $y\in G_x$, a.s.\ (i), (ii) and (iii) of Theorem \ref{thm:pathwise} hold.
\end{cor}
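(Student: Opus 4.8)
The plan is to deduce Corollary \ref{cor:pathwise} directly from Theorem \ref{thm:pathwise} by verifying, under the stronger hypotheses (Condition \ref{cond:independent} together with Assumptions \ref{ass:elliptic} and \ref{ass:Holder}), that the two extra hypotheses of Theorem \ref{thm:pathwise} --- namely $x \in G \setminus \W^\alpha$ and the a.s.\ requirements $\tau^{\alpha,x} = \infty$ and the boundary jitter property for $(\Z^{\alpha,x}, \Y^{\alpha,x})$ --- hold automatically. First I would invoke Lemma \ref{lem:Wempty}: since Condition \ref{cond:independent} is assumed, $\W^\alpha$ is empty for every $\alpha \in U$, so in particular the given $x \in G$ trivially lies in $G \setminus \W^\alpha = G$, and the hypothesis $x \in G \setminus \W^\alpha$ of Theorem \ref{thm:pathwise} is satisfied. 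Moreover, since $\W^\alpha = \emptyset$, the hitting time $\tau^{\alpha,x} = \inf\{t \ge 0 : \Z_t^{\alpha,x} \in \W^\alpha\}$ of the empty set is $\infty$ by the convention that $\inf \emptyset = \infty$; hence a.s.\ (indeed, surely) $\tau^{\alpha,x} = \infty$.

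Next I would handle the boundary jitter property. Assumption \ref{ass:elliptic} (uniform ellipticity of $a(\alpha,\cdot)$) is in force, and the remaining standing assumptions of Section \ref{sec:main} --- Assumption \ref{ass:drift} on the coefficients and Assumptions \ref{ass:setB}, \ref{ass:projection}, \ref{ass:DRL} on the data --- are also in force (note Condition \ref{cond:independent} together with the bound on $\norm{R'(\alpha)}$, or alternatively we are in a setting where the relevant structure holds; in any case Assumption \ref{ass:DRL} is part of the standing assumptions). Therefore Theorem \ref{thm:jitter} applies and yields that for each $\alpha \in U$ and $x \in G$, a.s.\ $(\Z^{\alpha,x}, \Y^{\alpha,x})$ satisfies the boundary jitter property. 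This verifies the last outstanding hypothesis of Theorem \ref{thm:pathwise}.

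With all hypotheses of Theorem \ref{thm:pathwise} verified, I would then simply apply that theorem: it gives the existence of a pathwise unique derivative process $\phib^{\alpha,x}$ along $\Z^{\alpha,x}$, and for every $\beta \in \R^M$ and $y \in G_x$, the a.s.\ validity of statements (i), (ii) and (iii). Since the conclusion of Corollary \ref{cor:pathwise} is precisely these assertions, the proof is complete. I do not anticipate any genuine obstacle here --- the corollary is a clean specialization --- but the one point requiring a small amount of care is the bookkeeping of which standing assumptions are active: one should confirm that Condition \ref{cond:independent} does not conflict with (and indeed is compatible with, or implies the relevant half of) Assumption \ref{ass:DRL}, and that Assumption \ref{ass:elliptic} is genuinely the only \emph{new} hypothesis needed to pass from Theorem \ref{thm:pathwise}'s abstract jitter-and-no-bad-set hypotheses to the concrete hypotheses of the corollary. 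A remark after the proof could note that Condition \ref{cond:independent} is stronger than strictly necessary --- $\W^\alpha = \emptyset$ or even a.s.\ $\tau^{\alpha,x} = \infty$ would suffice --- but for the corollary as stated the argument above is all that is required.
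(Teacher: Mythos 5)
Your proposal is correct and follows exactly the route the paper intends: the paper gives no separate proof but simply states that the corollary follows by combining Theorem \ref{thm:pathwise} with Theorem \ref{thm:jitter} (for the boundary jitter property under Assumption \ref{ass:elliptic}) and Lemma \ref{lem:Wempty} (so that $\W^\alpha=\emptyset$, whence $x\in G\setminus\W^\alpha$ and $\tau^{\alpha,x}=\infty$ by the convention $\inf\emptyset=\infty$). Your verification of the hypotheses, including the bookkeeping of the standing assumptions, matches this precisely.
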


Let $\zeta_1:G\mapsto\R$ and $\zeta_2:G\mapsto\R$ be continuously differentiable functions with bounded first partial derivatives and denote their respective Jacobians by $\zeta_1':G\mapsto\lin(\R^J,\R)$ and $\zeta_2':G\mapsto\lin(\R^J,\R)$. Let $t>0$ and define $\Theta:U\times G\mapsto\R$ by
	\be\label{eq:Falphax} \Theta(\alpha,x)\doteq\E\lsb\int_0^t\zeta_1(\Z_s^{\alpha,x})ds+\zeta_2(\Z_t^{\alpha,x})\rsb,\qquad(\alpha,x)\in U\times G\ee
Such quantities arise in applications and it is of interest in sensitivity analysis to compute the Jacobian of $\Theta(\alpha,x)$. The following corollary provides a stochastic representation for the Jacobian of $\Theta(\alpha,x)$.

\begin{cor}\label{cor:sensitivity}
Let $\alpha\in U$ and $x\in G^\circ$. Suppose Condition \ref{cond:independent} and Assumptions \ref{ass:elliptic} and \ref{ass:Holder} hold. Then $\Theta$ is differentiable at $(\alpha,x)$ and its Jacobian at $(\alpha,x)$, denoted $\Theta'(\alpha,x)$, satisfies
	\be\label{eq:DF}\Theta'(\alpha,x)=\E\lsb\int_0^t\zeta_1'(\Z_s^{\alpha,x})[\phib_s^{\alpha,x}]ds+\zeta_2'(\Z_t^{\alpha,x})[\phib_t^{\alpha,x}]\rsb.\ee
\end{cor}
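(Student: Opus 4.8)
The plan is to differentiate under the expectation in \eqref{eq:Falphax} using the pathwise derivative result of Theorem \ref{thm:pathwise}, combined with a uniform integrability argument to justify passage of the limit inside the expectation. First I would fix $(\alpha,x)\in U\times G^\circ$, $\beta\in\R^M$ and $y\in\R^J$ (note $G_x=\R^J$ since $x\in G^\circ$, and also $\proj_x^\alpha$ is the identity by Remark \ref{rmk:projx}, and $\W^\alpha=\emptyset$ by Lemma \ref{lem:Wempty} so $\tau^{\alpha,x}=\infty$ trivially). For $\ve>0$ small enough that \eqref{eq:ve0} holds, form the difference quotient
\[
	\frac{\Theta(\alpha+\ve\beta,x+\ve y)-\Theta(\alpha,x)}{\ve}=\E\lsb\int_0^t\frac{\zeta_1(\Z_s^{\alpha+\ve\beta,x+\ve y})-\zeta_1(\Z_s^{\alpha,x})}{\ve}\,ds+\frac{\zeta_2(\Z_t^{\alpha+\ve\beta,x+\ve y})-\zeta_2(\Z_t^{\alpha,x})}{\ve}\rsb.
\]
By the mean value theorem and boundedness of $\zeta_1',\zeta_2'$, the integrand equals $\zeta_i'(\xi_s^\ve)[\partial_{\beta,y}^\ve\Z_s^{\alpha,x}]$ for an intermediate point $\xi_s^\ve$ on the segment between $\Z_s^{\alpha,x}$ and $\Z_s^{\alpha+\ve\beta,x+\ve y}$, so it is bounded in absolute value by $\norm{\zeta_i'}_\infty\,\norm{\partial_{\beta,y}^\ve\Z^{\alpha,x}}_t$.

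Next I would verify the two ingredients needed to pass to the limit. Theorem \ref{thm:jitter} gives that a.s. $(\Z^{\alpha,x},\Y^{\alpha,x})$ satisfies the boundary jitter property, so Theorem \ref{thm:pathwise}(i) applies and a.s. $\partial_{\beta,y}\Z_s^{\alpha,x}=\lim_{\ve\downarrow0}\partial_{\beta,y}^\ve\Z_s^{\alpha,x}$ exists for every $s\geq0$; moreover, since $x\in G^\circ$ we have $\Z_0^{\alpha,x}\in G^\circ$, and by Theorem \ref{thm:pathwise}(iii) the right-continuous regularization of $\partial_{\beta,y}\Z^{\alpha,x}$ is $\phib^{\alpha,x}[\beta,y]$, with equality $\partial_{\beta,y}\Z_0^{\alpha,x}=\phib_0^{\alpha,x}[\beta,y]$ at $t=0$ since $Z_0^{\alpha,x}\in G^\circ$. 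By Theorem \ref{thm:pathwise}(ii), $\partial_{\beta,y}\Z^{\alpha,x}$ differs from its right-continuous regularization $\phib^{\alpha,x}[\beta,y]$ only at the (a.s. Lebesgue-null, by condition 2 of the jitter property together with continuity of $\phib$ off $\U$) set of times $s$ with $\Z_s^{\alpha,x}\in\S$; hence $\int_0^t\zeta_1'(\Z_s^{\alpha,x})[\partial_{\beta,y}\Z_s^{\alpha,x}]ds=\int_0^t\zeta_1'(\Z_s^{\alpha,x})[\phib_s^{\alpha,x}[\beta,y]]ds$ a.s., and the terminal term is $\zeta_2'(\Z_t^{\alpha,x})[\phib_t^{\alpha,x}[\beta,y]]$ provided $t$ is such that $\Z_t^{\alpha,x}\notin\S$ a.s. — this last point needs a small argument: either invoke that for a uniformly elliptic reflected diffusion $\P(\Z_t^{\alpha,x}\in\S)=0$ for fixed $t$ (the occupation measure of $\partial G$ at a fixed time is zero), or absorb it by noting $\zeta_2'(\Z_t^{\alpha,x})[\partial_{\beta,y}\Z_t^{\alpha,x}]$ and $\zeta_2'(\Z_t^{\alpha,x})[\phib_t^{\alpha,x}]$ agree off that null set. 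For the uniform integrability, Lemma \ref{lem:UI} supplies $\sup_{0<\ve<\ve_0}\E[\norm{\partial_{\beta,y}^\ve\Z^{\alpha,x}}_t^p]<\infty$ for $p\geq2$, so the family $\{\int_0^t\zeta_i'(\xi_s^\ve)[\partial_{\beta,y}^\ve\Z_s^{\alpha,x}]ds\}_\ve$ and $\{\zeta_2'(\xi_t^\ve)[\partial_{\beta,y}^\ve\Z_t^{\alpha,x}]\}_\ve$ are bounded in $L^p$, hence uniformly integrable.

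Combining these, dominated/Vitali convergence gives
\[
	\lim_{\ve\downarrow0}\frac{\Theta(\alpha+\ve\beta,x+\ve y)-\Theta(\alpha,x)}{\ve}=\E\lsb\int_0^t\zeta_1'(\Z_s^{\alpha,x})[\phib_s^{\alpha,x}[\beta,y]]ds+\zeta_2'(\Z_t^{\alpha,x})[\phib_t^{\alpha,x}[\beta,y]]\rsb,
\]
which shows that all one-sided directional derivatives of $\Theta$ at $(\alpha,x)$ exist and depend linearly on $(\beta,y)$ (linearity in $(\beta,y)$ is inherited from the linearity of $\phib^{\alpha,x}$ in $[\beta,\cdot]$ and the affine structure in $\beta$, which follows from the linear-SDE characterization \eqref{eq:de} and the pathwise uniqueness of Theorem \ref{thm:dpunique}). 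To upgrade from directional differentiability to full (Fréchet) differentiability, I would show the convergence is locally uniform in the direction: this follows from Proposition \ref{prop:continuousfield} (continuity of the field in $(\alpha,x)$) together with the moment bounds of Lemma \ref{lem:ZalphaxZbetaybounds}, which give equicontinuity of the difference quotients, so the limit map $(\beta,y)\mapsto\E[\cdots]$ is the Jacobian $\Theta'(\alpha,x)$ in \eqref{eq:DF} (here I write $\phib_s^{\alpha,x}$ for the linear operator and $\zeta_i'(\Z_s^{\alpha,x})[\phib_s^{\alpha,x}]$ for the composition, matching the statement). The main obstacle is the interchange of limit and expectation in the terminal term: unlike the time-integral term, where the jitter property makes the exceptional set of bad times Lebesgue-null, the terminal term requires knowing $\P(\Z_t^{\alpha,x}\in\S)=0$ for the fixed time $t$, so that $\partial_{\beta,y}\Z_t^{\alpha,x}$ may be replaced by $\phib_t^{\alpha,x}[\beta,y]$; establishing this fixed-time boundary-avoidance (via uniform ellipticity and a standard occupation-density or heat-kernel estimate, or alternatively by a Fubini argument integrating over a small time window and using condition 2 of the jitter property) is the delicate point.
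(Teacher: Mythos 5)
Your proposal is correct and follows essentially the same route as the paper: pass the limit inside the expectation via the uniform integrability of Lemma \ref{lem:UI} and Theorem \ref{thm:pathwise}(i), then use parts (ii) and (iii) together with fixed-time boundary avoidance and $\proj_x^\alpha=\mathrm{id}$ to replace $\partial_{\beta,y}\Z^{\alpha,x}$ by $\phib^{\alpha,x}[\beta,y]$, and conclude by linearity in $(\beta,y)$. The one point you flag as delicate --- that $\P(\Z_t^{\alpha,x}\in\partial G)=0$ for fixed $t>0$ --- is exactly the paper's Lemma \ref{lem:ZtGcirc}, which is already established (via uniform ellipticity and \eqref{eq:jitter2}), so no further argument is needed there.
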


\begin{proof}
Fix $(\alpha,x)\in U\times G^\circ$. Given $\beta\in\R^M$ and $y\in\R^J$, by \eqref{eq:Falphax}, the Lipschitz continuity of $\zeta_1$ and $\zeta_2$, the uniform integrability shown in Lemma \ref{lem:UI} and part (i) of Theorem \ref{thm:pathwise}, we have
\begin{align*}
	\lim_{\ve\to0}\frac{\Theta(\alpha+\ve\beta,x+\ve y)-\Theta(\alpha,x)}{\ve}&=\E\lsb\int_0^t\zeta_1'(\Z_s^{\alpha,x})[\nabla_{\beta,y}\Z_s^{\alpha,x}]ds+\zeta_2'(\Z_t^{\alpha,x})[\nabla_{\beta,y}\Z_t^{\alpha,x}]\rsb.
\end{align*}
Then, due to parts (ii) and (iii) of Theorem \ref{thm:pathwise} and the facts that $\P(\Z_t^{\alpha,x}\in G^\circ)=1$ (see Lemma \ref{lem:ZtGcirc} below), $\partial_{\beta,y}\Z^{\alpha,x}$ is continuous at $t$ if $\Z_t^{\alpha,x}\in G^\circ$ and $\proj_x^\alpha$ is equal to the identity operator when $x\in G^\circ$ (see Remark \ref{rmk:projx}), we have
\begin{align*}
	\lim_{\ve\to0}\frac{\Theta(\alpha+\ve\beta,x+\ve y)-\Theta(\alpha,x)}{\ve}&=\E\lsb\int_0^t\zeta_1'(\Z_s^{\alpha,x})[\phib_s^{\alpha,x}[\beta,y]]ds+\zeta_2'(\Z_t^{\alpha,x})[\phib_t^{\alpha,x}[\beta,y]]\rsb.
\end{align*}
Since $\beta\in\R^M$ and $y\in\R^J$ were arbitrary and the right-hand side of the last display is linear in $(\beta,y)$, this concludes the proof of the corollary.
\end{proof}

The representation \eqref{eq:DF} suggests pathwise methods for estimating $\Theta'(\alpha,x)$, which we develop in subsequent work \cite{Lipshutz2017a}. Pathwise estimators (also referred to as infinitesimal perturbation analysis estimators) are usually preferable when available (see, e.g., the discussion at the end of \cite[Chapter 7]{Glasserman2003}). For instance, they have smaller bias than finite difference estimators. In addition, likelihood ratio estimators, which rely on a change of measure argument (see, e.g., \cite{Yang1991}), only apply to perturbations of the drift because perturbations to the initial condition, diffusion coefficient or directions of reflection typically do not preserve absolute continuity of the law of the perturbed process with respect to the law of the unperturbed process.  

\section{Verification of the boundary jitter property}\label{sec:jitterproof}

In this section we prove Theorem \ref{thm:jitter}, which provides conditions under which a reflected diffusion, along with its constraining process, satisfies the boundary jitter property stated in Definition \ref{def:jitter}. As explained in Section \ref{sec:jitter12}, the first two conditions of the boundary jitter property can be deduced  in a fairly straightforward manner from existing results. On the other hand, the verifications of the last two conditions of the
boundary jitter property, which are carried out in Section \ref{sec:jitter34}, are considerably more complicated. For a class of reflected Brownian motions in the quadrant, these conditions were established in \cite{Lipshutz2016}. A (non-trivial) generalization of that argument could also be used to establish the jitter property for reflected Brownian motions in higher dimensional polyhedral domains under suitable conditions. However, the study of the boundary jitter property in the presence of state-dependent diffusion coefficients requires a completely new approach. Our proof of this property in this more general setting relies on some uniform hitting time estimates, Lipschitz continuity of ESMs associated with certain reduced versions of the original ESP, and weak convergence arguments that are established in Section \ref{sec:weakconvergence}.

Throughout this section we fix $\alpha\in U$. For convenience, we suppress the ``$\alpha$'' dependence and write $\Z^x$, $\Y^x$ and $\X^x$ in place of $\Z^{\alpha,x}$, $\Y^{\alpha,x}$ and $\X^{\alpha,x}$, respectively.

\subsection{Verifications of conditions 1 and 2}\label{sec:jitter12}

We first prove condition 1 of the boundary jitter property in the case $b(\alpha,\cdot)\equiv0$. In the proof of Theorem \ref{thm:jitter} below, we use a change of measure argument to show that condition 1 holds for general Lipschitz continuous drifts.

\begin{lem}\label{lem:jitter1}
Suppose Assumption \ref{ass:elliptic} holds and $b(\alpha,\cdot)\equiv0$. Let $x\in G$. Almost surely, if $t\geq0$ is such that $\Z_t^x\in\partial G$, then for all $t_1,t_2\in\R$ satisfying $t_1<t<t_2$, $\Y^x$ is nonconstant on $(t_1\vee0,t_2)$. In other words, a.s.\ $(\Z^x,\Y^x)$ satisfies condition 1 of the boundary jitter property.
\end{lem}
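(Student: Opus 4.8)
The plan is to show that whenever $\Z^x_t\in\partial G$, the constraining process $\Y^x$ cannot be constant on any two-sided neighborhood of $t$ (one-sided if $t=0$). I would argue by contradiction: suppose there is a (random) time $t$ with $\Z^x_t\in\partial G$ such that $\Y^x$ is constant on some interval $(t_1\vee 0,t_2)$ containing $t$. On such an interval, by \eqref{eq:Z} (with $b(\alpha,\cdot)\equiv 0$), the reflected diffusion evolves as a pure stochastic integral $\Z^x_s=\Z^x_{t_1\vee 0}+\int_{t_1\vee 0}^s\sigma(\alpha,\Z^x_u)d\bm_u$, i.e.\ like an unconstrained (time-changed) Brownian-type martingale, and in particular it is an It\^o diffusion with uniformly elliptic coefficient $a(\alpha,\cdot)$ on that interval. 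Since $\Z^x_t\in F_i$ for some $i\in\allN$, we have $\ip{\Z^x_t,n_i}=c_i$. The key point is that the one-dimensional continuous semimartingale $s\mapsto\ip{\Z^x_s,n_i}-c_i$, which vanishes at $s=t$, has quadratic variation $\int\! \ip{n_i,a(\alpha,\Z^x_u)n_i}\,du$ that is strictly increasing (by uniform ellipticity, Assumption \ref{ass:elliptic}, the integrand is bounded below by $\lambda(\alpha)>0$); hence this scalar process behaves locally like a nondegenerate Brownian motion near its zero at time $t$ and therefore a.s.\ takes strictly negative values at times arbitrarily close to $t$ on both sides. But strict negativity of $\ip{\Z^x_s,n_i}-c_i$ means $\Z^x_s\notin G$, contradicting $\Z^x_s\in G$ for all $s$.

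To make the contradiction argument rigorous and to handle the fact that $t$ is random, I would reduce to a countable family of deterministic times. Concretely, it suffices to show: for every pair of rationals $q_1<q_2$, a.s.\ on the event that $\Y^x$ is constant on $(q_1\vee 0,q_2)$, we have $\Z^x_s\in G^\circ$ for all $s\in(q_1\vee 0,q_2)$ (hence $\Z^x$ avoids $\partial G$ on the open rational interval, and by taking $t_1\uparrow t$, $t_2\downarrow t$ through rationals one recovers the stated conclusion at the random time $t$). On the event $A_{q_1,q_2}=\{\Y^x\text{ is constant on }(q_1\vee 0,q_2)\}$, which is $\F_{q_1\vee 0}$-measurable up to null sets after a limiting argument, $\Z^x$ restricted to $[q_1\vee 0, q_2)$ solves the unconstrained SDE $dZ=\sigma(\alpha,Z)\,d\bm$ started from $\Z^x_{q_1\vee 0}$. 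For a fixed index $i$ and a fixed rational $r\in(q_1\vee 0,q_2)$, consider the scalar process $M^i_s=\ip{\Z^x_{r+s}-\Z^x_r,n_i}$ for $s\geq 0$ small; it is a continuous martingale (in the time-shifted filtration) with $\langle M^i\rangle_s=\int_0^s \ip{n_i,a(\alpha,\Z^x_{r+u})n_i}\,du\geq\lambda(\alpha)s$. By the Dambis--Dubins--Schwarz theorem, $M^i$ is a time-changed Brownian motion with a strictly increasing clock, so a.s.\ $M^i$ changes sign immediately: for every $\delta>0$ there is $s\in(0,\delta)$ with $M^i_s<0$. If $\Z^x_r\in F_i$ this forces $\ip{\Z^x_{r+s},n_i}<c_i$, i.e.\ $\Z^x_{r+s}\notin G$, a contradiction. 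Running this over all rationals $r$ and all $i\in\allN$ shows that on $A_{q_1,q_2}$ a.s.\ $\Z^x_r\notin\partial G$ for every rational $r\in(q_1\vee 0,q_2)$; continuity of $\Z^x$ and the fact that $\partial G$ is closed then need a little more care, so I would instead directly show $\Z^x_s\in G^\circ$ for \emph{all} $s$ in the interval by noting that if $\Z^x_s\in\partial G$ for some (possibly irrational) $s$, then applying the DDS argument at the stopping time $s$ (the strong Markov property and uniform ellipticity still apply since on $A_{q_1,q_2}$ the dynamics are unconstrained past time $s$ up to $q_2$) again produces points outside $G$ just after $s$.

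The remaining bookkeeping is to verify that the event $A_{q_1,q_2}$ can be treated as an initial-condition event so that the (conditional) strong Markov property and the SDE characterization on $[q_1\vee 0,q_2)$ apply: on $A_{q_1,q_2}$, by Definition \ref{def:rd} and \eqref{eq:Y}, $\Y^x_s-\Y^x_{q_1\vee 0}\in\conv[\cup_{u\in(q_1\vee 0,s]}d(\alpha,\Z^x_u)]$ and equals its value at $s=q_1\vee 0$, so indeed $\Z^x$ solves the driftless SDE there; this is standard once one localizes using stopping times of the form $\inf\{s>q_1\vee 0:|\Y^x_s-\Y^x_{q_1\vee 0}|>0\}$. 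The union over the countably many rational pairs $(q_1,q_2)$ and indices $i$ gives a single null set off which the conclusion holds at every (random) time $t$ with $\Z^x_t\in\partial G$, by approximating $t$ from both sides (or from the right if $t=0$) with rationals. I expect the \textbf{main obstacle} to be the careful measure-theoretic handling of the random time $t$ and the event ``$\Y^x$ constant on a neighborhood of $t$'': one must pass from the deterministic rational-interval statement to the random-time statement, and one must justify that, conditioned on $\Y^x$ being locally constant, the process genuinely evolves as an unconstrained uniformly elliptic It\^o diffusion so that the Dambis--Dubins--Schwarz sign-change argument is legitimate — the uniform ellipticity from Assumption \ref{ass:elliptic} is exactly what rules out degeneracy of the scalar clock and makes the ``immediate sign change'' step work.
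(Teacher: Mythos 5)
Your proposal is correct and follows essentially the same route as the paper: on an interval where $\Y^x$ is constant, $\ip{\X^x_\cdot-\X^x_{t_0},n_i}$ is a continuous local martingale whose quadratic variation grows at rate at least $\lambda(\alpha)$ by Assumption \ref{ass:elliptic}, so by Dambis--Dubins--Schwarz it is a time-changed Brownian motion that must become negative immediately after a zero, contradicting $\Z^x\in G$; the random time is handled by rational intervals and the strong Markov property at the first hitting time of $\partial G$. The only bookkeeping difference is that the paper phrases the key step as a pathwise event inclusion $\{\Y^x\text{ constant on }(0,t_2)\}\subseteq\{\ip{\X^x_t-x,n_i}\geq0\text{ for }0<t<t_2\}$ and bounds the probability of the latter using the \emph{unconditional} martingale property of $\X^x$ (rather than asserting, as you do, that the process is a martingale ``on'' or ``conditioned on'' the constancy event, and your claim that this event is $\F_{q_1\vee0}$-measurable is not needed and not true); this is exactly the clean resolution of the obstacle you flag.
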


\begin{proof}
Consider the events
	\be\label{eq:A0x}A_0^x\doteq\bigcap_{t_2\in\Q\cap(0,\infty)}\lcb \Y^x\text{ is nonconstant on }(0,t_2)\rcb,\ee
and
	$$A^x\doteq\bigcap_{t_1\in\Q\cap(0,\infty)}\bigcap_{t_2\in\Q\cap(t_1,\infty)}\lcb \Y^x\text{ is nonconstant on }(t_1,t_2)\rcb\cup\lcb\Z_t^x\in G^\circ\text{ for }t_1<t<t_2\rcb.$$
Then we need to show that $\P(A_0^x)=1$ for all $x\in\partial G$ and $\P(A^x)=1$ for all $x\in G$.	
	
Suppose $x\in\partial G$. Let $i\in\allN(x)$ and $t_2\in\Q\cap(0,\infty)$. By \eqref{eq:Z}, \eqref{eq:X} and because $\Y_0^x=0$ and $\Z_t^x\in G$ for all $t\geq0$,
	\be\label{eq:Yxconstant}\{\Y^x\text{ is constant on }(0,t_2)\}=\{\ip{\X_t^x-x,n_i}=\ip{\Z_t^x-x,n_i}\geq 0\text{ for }0< t< t_2\}.\ee
By \eqref{eq:X}, the fact that $b(\alpha,\cdot)\equiv0$ and Assumption \ref{ass:elliptic}, $\{\ip{\X_t-x,n_i},t\geq0\}$ is a one-dimensional continuous local martingale starting at zero with quadratic variation
	\be\label{eq:Xnqvar}[\ip{\X^x-x,n_i}]_t\doteq\int_0^tn_i^Ta(\alpha,\Z_s^x)n_ids\geq\lambda(\alpha) t,\qquad t\geq0.\ee
Therefore, by \cite[Chapter IV, Theorem 34.1]{Rogers2000a}, there is a (one-dimensional) Brownian motion $B=\{B_t,t\geq0\}$ such that $\ip{\X_t^x-x,n_i}=B_{[\ip{\X^x,n_i}]_t}$ for $t\geq0$. Thus, by \eqref{eq:Yxconstant} and \eqref{eq:Xnqvar},
	\be\label{eq:PYxconstant}\P(\Y^x\text{ is constant on }(0,t_2))\leq\P\lb B_t\geq0\text{ for }0<t<\lambda(\alpha)t_2\rb=0,\ee
where the final equality is a well known property of Brownian motion. Together, \eqref{eq:A0x} and \eqref{eq:PYxconstant} imply $\P(A_0^x)=1$.

Now suppose $x\in G$. Fix $t_1\in\Q\cap(0,\infty)$ and $t_2\in\Q\cap(t_1,\infty)$. Define the $\{\F_t\}$-stopping time $\rho\doteq\inf\{t>t_1:\Z_t^x\in\partial G\}$. Note that $\{\rho\geq t_2\}=\{\Z_s^x\in G^\circ\text{ for }t_1<t<t_2\}$, so we are left to consider the event $\{t_1\leq\rho<t_2\}$. By the strong Markov property, $\{\Y_{\rho+t}^x-\Y_{\rho}^x,t\geq0\}$ conditioned on $\{\rho\in[t_1,t_2),\Z_\rho^x=y\}$ is equal in distribution to $\Y^y$. Since, as shown above, $\P(A_0^y)=1$ for all $y\in\partial G$, we have, for $t_3\in\Q\cap(0,\infty)$,
\begin{align*}
	&\P(\Y_{\rho+\cdot}^x\text{ is nonconstant on }(0,t_3)|t_1\leq\rho<t_2)\\
	&\qquad=\int_{\partial G}\P(\Y^y\text{ is nonconstant on }(0,t_3))\P(\Z_\rho^x\in dy|t_1\leq\rho<t_2)=1.
\end{align*}
Since the above holds for every $t_3\in\Q\cap(0,\infty)$, $\P(A^x)=1$, which completes the proof.
\end{proof}

\begin{lem}\label{lem:jitter2}
Suppose Assumption \ref{ass:elliptic} holds. Let $x\in G$. Then
 	\be\label{eq:jitter2}\P\lb\int_0^\infty1_{\partial G}(\Z_s^x)ds=0\rb=1.\ee
In particular, a.s.\ $\Z^x$ satisfies condition 2 of the boundary jitter property.
\end{lem}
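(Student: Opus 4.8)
The plan is to show that the expected Lebesgue time spent by $\Z^x$ on $\partial G$ is zero, which immediately yields \eqref{eq:jitter2}. By Tonelli's theorem,
\[
	\E\lsb\int_0^\infty 1_{\partial G}(\Z_s^x)\,ds\rsb = \int_0^\infty \P(\Z_s^x\in\partial G)\,ds,
\]
so it suffices to prove that $\P(\Z_s^x\in\partial G)=0$ for (Lebesgue-)almost every $s\geq0$; in fact I expect to get this for every $s>0$. Since $\partial G=\bigcup_{i\in\allN}F_i$ and $F_i\subseteq\{y:\ip{y,n_i}=c_i\}$, it is enough to show that for each fixed $i\in\allN$ and each $s>0$, $\P(\ip{\Z_s^x,n_i}=c_i)=0$.

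The key step is to analyze the one-dimensional semimartingale $M_t^i\doteq\ip{\Z_t^x-x,n_i}$. From \eqref{eq:Z} and $\ip{d_j(\alpha),n_i}\leq$ (some sign structure), one writes $\ip{\Z_t^x,n_i}-c_i = \ip{x,n_i}-c_i + \int_0^t\ip{b(\alpha,\Z_s^x),n_i}\,ds + N_t^i + \ip{\Y_t^x,n_i}$, where $N_t^i\doteq\int_0^t (\sigma^T(\alpha,\Z_s^x)n_i)^T d\bm_s$ is a continuous local martingale with quadratic variation $[N^i]_t=\int_0^t n_i^T a(\alpha,\Z_s^x)n_i\,ds\geq\lambda(\alpha)t$ by Assumption \ref{ass:elliptic}. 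In particular $[N^i]$ is a strictly increasing, absolutely continuous (hence continuous) process, so by the Dambis--Dubins--Schwarz theorem (\cite[Chapter IV, Theorem 34.1]{Rogers2000a}) there is a Brownian motion $\beta^i$ with $N_t^i=\beta^i_{[N^i]_t}$. The remaining pieces of $\ip{\Z_t^x,n_i}-c_i$ — the initial value, the drift integral, and $\ip{\Y_t^x,n_i}$ — are, conditionally on the path of $N^i$, of a form that cannot concentrate the law of $\ip{\Z_s^x,n_i}$ at the single point $c_i$.

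To make this precise I would condition appropriately and reduce to the following one-dimensional fact: if $X$ is a random variable whose conditional law given some $\sigma$-algebra $\G$ has a component that is absolutely continuous (here coming from the Brownian term $\beta^i_{[N^i]_s}$, whose conditional law given $[N^i]_s>0$ is Gaussian with positive variance), then $\P(X=c_i)=0$. Concretely: fix $s>0$ and any $\varepsilon\in(0,s)$; on the event $\{[N^i]_s\geq\lambda(\alpha)s\}$ (which has probability one), write $N_s^i = (N_s^i - N_\varepsilon^i) + N_\varepsilon^i$; conditionally on $\F_\varepsilon$ the increment $N_s^i-N_\varepsilon^i$ is, again after time-change, a Brownian increment over a time interval of length $[N^i]_s-[N^i]_\varepsilon\geq\lambda(\alpha)(s-\varepsilon)>0$, hence has a conditional density, and $\ip{\Z_s^x,n_i}-c_i$ equals this increment plus an $\F_\varepsilon$-measurable-plus-remainder quantity; a Fubini argument over the conditional Gaussian density gives $\P(\ip{\Z_s^x,n_i}=c_i)=0$. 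Summing over $i\in\allN$ gives $\P(\Z_s^x\in\partial G)=0$, and integrating in $s$ (using Tonelli) completes the proof; condition 2 of the boundary jitter property follows since $\U\subseteq\partial G$.

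The main obstacle is handling the constraining term $\ip{\Y_t^x,n_i}$, which is neither of bounded variation in general (only on $[0,\tau_0^{\alpha,x})$, by Remark \ref{rmk:Valpha}) nor adapted in a way that decouples it from $N^i$; the clean way around this is precisely the conditioning-on-$\F_\varepsilon$ device above, which isolates a genuinely Gaussian increment of $N^i$ over $[\varepsilon,s]$ whose conditional density washes out \emph{any} $\F_\varepsilon$-measurable shift, regardless of the (complicated, anticipating-looking but actually adapted) structure of the rest of $\ip{\Z_s^x,n_i}-c_i$. One should double-check that the time-change $[N^i]$ and the Brownian motion $\beta^i$ interact correctly with the filtration so that "conditionally Gaussian over a strictly positive time interval" is legitimate; this is standard but is the one spot requiring care.
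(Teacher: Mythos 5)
Your reduction via Tonelli to showing $\P(\ip{\Z_s^x,n_i}=c_i)=0$ for each $i\in\allN$ and each $s>0$ is a legitimate first step (and that stronger statement is in fact true; cf.\ Lemma \ref{lem:ZtGcirc}), but the argument you give for it breaks down at exactly the point you flag. After conditioning on $\F_\ve$, the quantity $\ip{\Z_s^x,n_i}-c_i$ decomposes as the increment $N_s^i-N_\ve^i$ plus $\ip{\Z_\ve^x,n_i}-c_i+\int_\ve^s\ip{b(\alpha,\Z_u^x),n_i}\,du+\ip{\Y_s^x-\Y_\ve^x,n_i}$, and the last two terms are \emph{not} $\F_\ve$-measurable: they are functionals of the path of $\Z^x$ on $[\ve,s]$ and hence correlated with the very Gaussian increment you are trying to exploit. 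A conditional density only ``washes out'' shifts that are measurable with respect to the conditioning $\sigma$-algebra (equivalently, independent of the Gaussian increment); the sum of a Gaussian variable and a correlated variable can perfectly well charge a point (take the second summand to be the negative of the first). This is not a technicality here: the constraining term is constructed precisely to offset the part of the driving noise that would push $\Z^x$ out of $G$, so on $\{\Z_s^x\in F_i\}$ the term $\ip{\Y_s^x-\Y_\ve^x,n_i}$ realizes exactly the cancellation your argument must rule out. Already in one dimension, with $\Z=\X+\Y$ and $\Y_t=-\inf_{u\leq t}(\X_u\wedge 0)$, the event $\{\Z_t=0\}$ is $\{\X_t=\inf_{u\leq t}\X_u\wedge 0\}$, and showing this has probability zero requires the reflection principle or a local-time computation, not merely the existence of a conditional density for $\X_t$.

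The paper sidesteps all of this: it notes that by Proposition \ref{prop:rdeu} and \cite{Kang2017} the law of $\Z^x$ solves the associated submartingale problem, and then quotes \cite[Proposition 2.12]{Kang2017}, which asserts that such solutions spend zero Lebesgue time on $\partial G$. If you want a more self-contained argument, the natural route when $\Z^x$ is a semimartingale (e.g.\ under Condition \ref{cond:independent}, via Remark \ref{rmk:local}) is the occupation-time formula: $V_t\doteq\ip{\Z_t^x,n_i}-c_i$ is then a continuous semimartingale with $d[V]_t=n_i^Ta(\alpha,\Z_t^x)n_i\,dt\geq\lambda(\alpha)\,dt$ by Assumption \ref{ass:elliptic}, so $\lambda(\alpha)\int_0^t1_{\{V_u=0\}}\,du\leq\int_0^t1_{\{V_u=0\}}\,d[V]_u=\int_{\R}1_{\{a=0\}}L_t^a\,da=0$, where $L^a$ is the local time of $V$ at level $a$. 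Note that this yields the lemma directly, without proving $\P(\Z_s^x\in\partial G)=0$ for each fixed $s$, but it does require the finite-variation property of $\Y^x$, which is not guaranteed under the hypotheses of the lemma alone; this is precisely why the paper appeals to the submartingale-problem framework instead.
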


\begin{proof}
Due to the definition of $G$ given in \eqref{eq:G} as the intersection of finitely many half spaces, it is clear that $\V^\alpha$ defined in \eqref{eq:Valpha} is the union of finitely many closed disconnected sets. Then by Proposition \ref{prop:rdeu} and \cite[Theorem 2]{Kang2017}, the law of the process $\Z^x$ induced on $\cts(G)$ (equipped with its $\sigma$-algebra of Borel subsets) is a solution to the associated submartingale problem starting at $x$ (see \cite[Definition 2.9]{Kang2017}). The lemma then follows from \cite[Proposition 2.12]{Kang2017}.
\end{proof}

\subsection{Uniform hitting time estimates}\label{sec:weakconvergence}

Before verifying conditions 3 and 4 of the boundary jitter property, we establish estimates on certain hitting times. We first consider the case that the drift satisfies $b(\alpha,\cdot)\equiv0$, which is assumed throughout this section. To handle the case of general drift coefficients, we can then use a change of measure argument (see the proof of Theorem \ref{thm:jitter} below).

Since conditions 3 and 4 of the boundary jitter property hold automatically when $\U$, the nonsmooth part of the boundary $\partial G$,  is empty, we assume $\U$ is nonempty. Set $\J\doteq\{\allN(y):y\in\U\}$. For $I\in\J$, define the nonempty set
	\be\label{eq:FI} F_I\doteq\bigcap_{i\in I}F_i,\ee
where recall that $F_i = \{x \in \partial G: \langle x, n_i \rangle  = c_i\}$ for $i\in\allN$. Let $\dist(\cdot,\cdot)$ denote the usual Euclidean metric
on $\R^J$, and given $x \in\R^J, A \subset \R^J$, let $\dist(x,A) \doteq \inf_{y \in A} \dist (x,y)$. Define the $\{\F_t\}$-stopping times 
\begin{align}\label{eq:thetaix}
	\theta_i^x&\doteq\inf\{t>0:\Z_t^x\in F_i\},\\ \label{eq:sigmaIx}
	\sigma_I^x&\doteq\inf\{t>0:\dist(\Z_t^x,F_I)\leq\dist(x,F_I)/2\},\\ \label{eq:tauIx}
	\tau_I^x&\doteq\inf\{t>0:\dist(\Z_t^x,F_I)\geq2\dist(x,F_I)\}.
\end{align} 
For $r>0$ define 
	\be\label{eq:SIr}\mathbb{S}_I(r)\doteq\{y\in G:\dist(y,F_I)=r\}.\ee
Define the decreasing sequence $\{r_k\}_{k\in\N}$ in $(0,\infty)$ by 
	\be\label{eq:rk}r_k\doteq 2^{-k},\qquad k\in\N.\ee 
Since $G$ is convex with nonempty interior, $\mathbb{S}_I(r)$ is nonempty for all $r>0$ sufficiently small. Without loss of generality we assume $\mathbb{S}_I(r_k)$ is nonempty for all $k\in\N$. 

The following is the main hitting time estimate of this section.

\begin{prop}\label{prop:weakconvergence}
Let $I\in\J$ and $\bar x\in F_I$. Suppose $\{x_k\}_{k\in\N}$ is a sequence in $G$ satisfying 
	\be\label{eq:xkSrk}x_k\in\mathbb{S}_I(r_k),\; k\in\N,\qquad\text{and}\qquad\lim_{k\to\infty}x_k=\bar{x}.\ee
Then for each $i\in I$,
\begin{align}\label{eq:sigmaIxksigmaI}
	\liminf_{k\to\infty}\P\lb\theta_i^{x_k}<\sigma_I^{x_k}\rb&>0,\\ \label{eq:tauIxktauI}
	\liminf_{k\to\infty}\P\lb\theta_i^{x_k}<\tau_I^{x_k}\rb&>0.
\end{align}
\end{prop}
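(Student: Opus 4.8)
\textbf{Proof proposal for Proposition \ref{prop:weakconvergence}.}

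The plan is to prove the two lower bounds \eqref{eq:sigmaIxksigmaI} and \eqref{eq:tauIxktauI} by a scaling-plus-weak-convergence argument. First I would exploit the Brownian-type scaling available when $b(\alpha,\cdot)\equiv0$: for a point $x_k\in\mathbb{S}_I(r_k)$, rescale space by $1/r_k$ and time by $1/r_k^2$, so that the rescaled reflected diffusion $\wt\Z^k_t\doteq r_k^{-1}\Z^{x_k}_{r_k^2 t}$ is again a reflected diffusion (with respect to a rescaled Brownian motion) in the rescaled polyhedron $r_k^{-1}G$, started at the point $\wt x_k\doteq r_k^{-1}x_k$, which satisfies $\dist(\wt x_k, r_k^{-1}F_I)=1$. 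Because $x_k\to\bar x\in F_I$, the rescaled polyhedra $r_k^{-1}G$ converge (locally, in the Hausdorff sense on compacts) to the \emph{cone} $G_{\bar x}$ at $\bar x$ defined in \eqref{eq:Gx}, and the faces among $\{i\in I\}$ converge to the corresponding hyperplanes through the origin; the directions of reflection $d_i(\alpha)$ are constant so they carry over unchanged. The hitting times $\theta_i^{x_k}$, $\sigma_I^{x_k}$, $\tau_I^{x_k}$ scale to the corresponding hitting times for $\wt\Z^k$ of $r_k^{-1}F_i$, of the ball of radius $1/2$ about $r_k^{-1}F_I$, and of the sphere of radius $2$, respectively, so the probabilities in \eqref{eq:sigmaIxksigmaI}--\eqref{eq:tauIxktauI} are unchanged under the rescaling.

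Next I would establish tightness of the laws of $\{\wt\Z^k\}$ on $\cts(\R^J)$ (restricted to a fixed compact time interval, with a localization/stopping argument at the exit from a large ball to control the non-smooth convergence of the domains), using the Lipschitz continuity of the ESM (Proposition \ref{prop:esmlip}), the moment bounds coming from Assumption \ref{ass:drift} together with the BDG inequalities, and the fact that the unconstrained driving processes $\wt\X^k$ are tight. By Prohorov's theorem, pass to a subsequence along which $\wt\Z^k$ converges weakly to a limit process $\wt\Z^\infty$ started at a limit point $\bar{\wt x}$ (with $\dist(\bar{\wt x},H_{\bar x})=1$, where $H_{\bar x}=F_I$-cone through the origin), and identify $\wt\Z^\infty$, using the closed-graph property of the ESP and a martingale-problem characterization, as a reflected Brownian motion with covariance $a(\alpha,\bar x)$ in the cone $G_{\bar x}$ with the reflection directions $\{d_i(\alpha)\}$. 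The two probabilities in question then converge (along the subsequence, using that the relevant hitting times are a.s.\ continuous functionals of the path — here one uses that $\wt\Z^\infty$ is a nondegenerate diffusion, so it does not get ``stuck'' on the relevant sets, an argument in the spirit of Lemmas \ref{lem:jitter1} and \ref{lem:jitter2}) to the corresponding probabilities for $\wt\Z^\infty$.

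Finally, I would show that these limiting probabilities are \emph{strictly positive}: for the limiting reflected Brownian motion $\wt\Z^\infty$ in the cone $G_{\bar x}$ started at distance $1$ from the hyperplane-cone $H_{\bar x}=\cap_{i\in I}F_i$ (translated to the origin), with probability bounded below it reaches the relative interior of the face $F_i$ before halving (resp. before doubling) its distance to $F_I$. This is a reachability statement for a nondegenerate diffusion with oblique reflection: it follows from a support-theorem / Stroock–Varadhan-type argument, steering the unconstrained path along a deterministic curve that approaches $F_i$ while staying suitably far from the other faces, combined with uniform ellipticity (Assumption \ref{ass:elliptic}) to lower-bound the probability of a tube around that curve; the key point is that $i\in I$ so $F_i$ is one of the faces adjoining $F_I$, and the geometry of the cone makes such a steering possible. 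Since the bound is uniform along the (sub)sequence, $\liminf_k$ of the original probabilities is strictly positive, and a standard subsequence argument upgrades this from ``along a subsequence'' to the full $\liminf$.

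The main obstacle I expect is the identification and reachability step for the limiting process: one must simultaneously (a) justify the weak-convergence limit is genuinely the reflected Brownian motion in the limiting cone — which requires care because the domains $r_k^{-1}G$ converge in a non-smooth way and the constraining terms can a priori have unbounded variation near the vertex $F_I$, so one leans on the ESP closed-graph property rather than on the Skorokhod-map continuity — and (b) prove the strict positivity of the limiting hitting probabilities via a support theorem adapted to obliquely reflected diffusions in a polyhedral cone, which is where the uniform ellipticity and the geometry of adjoining faces enter crucially. The scaling and tightness parts are comparatively routine given the standing assumptions.
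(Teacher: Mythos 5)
Your proposal follows the same first move as the paper (parabolic rescaling of space by $1/r_k$ and time by $1/r_k^2$ so that the rescaled process starts at unit distance from $F_I$), but from there it takes a genuinely different route: you propose tightness of the rescaled laws, identification of a weak limit as an obliquely reflected Brownian motion in the cone $G_{\bar x}$, and then a reachability/support argument for that limiting process. The paper never passes to a weak limit. Instead it (i) stops the original process at $\rho_I^{x_k}$, the first hitting time of a face \emph{not} indexed by $I$, so that the rescaled pair $(\Z^k,\Y^k)$ solves the ESP $\{(d_i(\alpha),n_i,0),i\in I\}$ in the \emph{fixed} cone $G_{\bar x}$ for every $k$ (Remark \ref{rmk:Zkesm}) --- this completely removes the ``converging domains'' difficulty you worry about, and Lemma \ref{lem:rhok} shows the rescaled stopping time diverges; (ii) proves an $L^2$ bound, on a single probability space with the same driving Brownian motion, showing $\Pi_I[\wt\X^k]\to\Pi_I[\X^\ast]$ where $\X^\ast$ is a Brownian motion with the frozen covariance $a(\alpha,\bar x)$ (equation \eqref{eq:L2XkXast}); and (iii) builds an explicit deterministic steering path $\x$ into the relative interior of $F_i^{\bar x}$ and uses the Lipschitz continuity of the projected ESM (Lemma \ref{lem:ZkYkESPXk}) together with the full support of Wiener measure to bound $\P(\theta_i^k<\sigma_I^k)$ below by $\P(\norm{\bm-w}_t<\delta)$ minus a term that vanishes by Chebyshev. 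What the paper's approach buys is that no tightness, no identification of a limit law, and no continuity of hitting-time functionals are ever needed; what your approach would buy, if completed, is a cleaner conceptual picture (the limit object is the reflected Brownian motion in the tangent cone), at the cost of several additional technical lemmas.

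The one step of your outline that is a genuine gap as written is the assertion that ``the two probabilities in question then converge \dots\ using that the relevant hitting times are a.s.\ continuous functionals of the path.'' Under weak convergence, convergence of $\P(\theta_i^k<\sigma_I^k)$ would require the event $\{\theta_i<\sigma_I\}$ to be a continuity set for the limit law, which is delicate for hitting times of boundary faces of a cone and is not established by nondegeneracy alone; moreover it is not what you need. Since only a lower bound on the $\liminf$ is required, the correct move is the portmanteau inequality applied to an \emph{open} set of paths contained in $\{\theta_i<\sigma_I\}$ --- precisely the tube around a deterministic steering path that you construct in your final paragraph --- so your second and third paragraphs should be merged and the claim of convergence of the probabilities dropped. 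You would also need to justify the identification of the weak limit via uniqueness in law for the ESP in the cone (which holds because the reduced data $\{(d_i(\alpha),n_i,0),i\in I\}$ inherits Assumptions \ref{ass:setB} and \ref{ass:projection}, as in Lemma \ref{lem:ZkYkESPXk}), and your localization should be at the first exit from the faces outside $I$ rather than from a large ball, since that is what makes the cone representation exact at the prelimit level.
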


The remainder of this section is devoted to the proof of Proposition \ref{prop:weakconvergence}. Fix $I\in\J$, $\bar x\in F_I$ and a sequence $\{x_k\}_{k\in\N}$ in $G$ satisfying \eqref{eq:xkSrk}. Let $i\in I$. For $x\in G$, define the $\{\F_t\}$-stopping time
	\be\label{eq:rhoIx}\rho_I^x\doteq\inf\lcb t>0:\Z_t^x\in\cup_{j\in\allN\setminus I}F_j\rcb\ee
to be the first time $\Z^x$ lies in a face that does not contain $\bar{x}$. For each $k\in\N$, consider the scaled processes defined by 
\begin{align}\label{eq:bmk}
	\bm_t^k&\doteq\frac{\bm_{r_k^2t}}{r_k},\\ \label{eq:Zk}
	\Z_t^k&\doteq\frac{\Z_{r_k^2t\wedge\rho_I^{x_k}}^{x_k}-\bar{x}}{r_k},\\ \label{eq:Xk}
	\X_t^k&\doteq\frac{\X_{r_k^2t\wedge\rho_I^{x_k}}^{x_k}-\bar{x}}{r_k},\\ \label{eq:Yk}
	\Y_t^k&\doteq\frac{\Y_{r_k^2t\wedge\rho_I^{x_k}}^{x_k}}{r_k},
\end{align}
for $t\geq0$. 

\begin{remark}\label{rmk:Zkesm}
For each $k\in\N$, it follows from Definition \ref{def:esp}, \eqref{eq:Zk}--\eqref{eq:Yk} and a straightforward verification argument that a.s.\ $(\Z^k,\Y^k)$ is a solution to the ESP $\{(d_i(\alpha),n_i,0),i\in I\}$ for $\X^k$. 
\end{remark}

Let $\F_t^k\doteq\F_{r_k^2t}$ for $t\geq0$. Clearly, the processes $\bm^k$, $\Z^k$, $\X^k$ and $\Y^k$ are $\{\F_t^k\}$-adapted. Define the $\{\F_t^k\}$-stopping times
\begin{align}\label{eq:thetaik1}
	\theta_i^k&\doteq\inf\{t>0:\ip{\Z_t^k,n_i}=0\},\\ \label{eq:sigmaIk1}
	\sigma_I^k&\doteq\inf\{t>0:\dist(\Z_t^k,F_I)\leq1/2\},\\ \label{eq:tauIk1}
	\tau_I^k&\doteq\inf\{t>0:\dist(\Z_t^k,F_I)\geq2\}.
\end{align}
Due to \eqref{eq:thetaix}, \eqref{eq:sigmaIx}, \eqref{eq:tauIx}, \eqref{eq:Zk}, the fact that $\bar x\in F_I$, and the fact that $\dist(x_k,F_I)=r_k$ by \eqref{eq:xkSrk} and \eqref{eq:SIr}, we have
\begin{align}\label{eq:thetaixrk2thetaik}
	\theta_i^{{x_k}}&=r_k^2\theta_i^k,\qquad\sigma_I^{{x_k}}=r_k^2\sigma_I^k,\qquad\tau_I^{{x_k}}=r_k^2\tau_I^k.
\end{align}
In the following remark, we sketch the proof of Proposition \ref{prop:weakconvergence} in a simple case where the argument is relatively straight forward. 

\begin{remark}
Suppose the diffusion coefficient is constant (i.e., $\sigma(\alpha,\cdot)\equiv\sigma(\alpha)$ so $\Z^x$ is a reflected Brownian motion), $G$ is a convex cone with vertex at the origin (i.e., $F_{\allN}=\{0\}$), $I=\allN$ (so $\bar x=0$) and there exists $x_0\in\mathbb{S}_I(1)$ such that $x_k=r_kx_0$ for $k\in\N$. For $k\in\N$, by \eqref{eq:Zk}, the facts that $\Z^{x_k}$ takes values in the convex cone $G$, \eqref{eq:bmk} and \eqref{eq:Yk}, it is readily verified that $(\Z^k,\Y^k)$ is a solution to the ESP $\{(d_i(\alpha),n_i,0),i\in\allN)\}$ for $\X^k=x_0+\sigma(\alpha)\bm^k$. It follows from \eqref{eq:bmk} and Brownian scaling that $\X^k\buildrel{d}\over=\X^1$ for all $k\in\N$. This, along with the measurability of $\esm^\alpha$ and the fact that $\Z^k=\esm(\X^k)$ for all $k\in\N$, implies $\Z^k\buildrel{d}\over=\Z^1$ for all $k\in\N$. Thus, by \eqref{eq:thetaik1}--\eqref{eq:thetaixrk2thetaik}, we have
\begin{align*}
	\P\lb\theta_i^{x_k}<\sigma_I^{x_k}\rb&=\P\lb\theta_i^k<\sigma_I^k\rb=\P\lb\theta_i^1<\sigma_I^1\rb,\\
	\P\lb\theta_i^{x_k}<\tau_I^{x_k}\rb&=\P\lb\theta_i^k<\tau_I^i\rb=\P\lb\theta_i^1<\tau_I^1\rb.
\end{align*}
The estimates \eqref{eq:sigmaIxksigmaI} and \eqref{eq:tauIxktauI} then follow once we show the probabilities on the right-hand side of the above display are positive, which  follows from the nondegeneracy of the diffusion coefficient stated in Assumption \ref{ass:elliptic}. Since the argument is similar to the one carried out in the proof of Proposition \ref{prop:weakconvergence} below, we omit the details here. The proof for the case when $\bar{x}$ lies on another face $F_I$, $I \subsetneq {\mathcal I}$, is more complicated, even when the diffusion coefficient is constant.
\end{remark}

The proof of Proposition \ref{prop:weakconvergence} in the general setting of state-dependence covariance is considerably more involved. We first state the following helpful lemmas. 

\begin{lem}[{\cite[Lemma 2.1]{Kang2007}}]
\label{lem:allNusc}
For each $x\in G$, there is an open neighborhood $V_x$ of $x$ in $\R^J$ such that
	\be\label{eq:allNusc}\allN(y)\subseteq\allN(x)\qquad\text{for all }y\in V_x\cap G.\ee
\end{lem}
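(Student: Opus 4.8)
The plan is to exploit the fact that membership in $\allN(x)$ is governed by \emph{equality} in the defining half-space constraints of $G$, so that non-membership is an \emph{open} condition. Recall from \eqref{eq:G} that $G=\bigcap_{i\in\allN}\{x:\ip{x,n_i}\geq c_i\}$ and that $F_i=\{x\in\partial G:\ip{x,n_i}=c_i\}$, so for $x\in G$ we have the dichotomy: $i\in\allN(x)$ iff $\ip{x,n_i}=c_i$, and $i\notin\allN(x)$ iff $\ip{x,n_i}>c_i$ (strictly).

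Fix $x\in G$ and consider the finite index set $\allN\setminus\allN(x)=\{i\in\allN:\ip{x,n_i}>c_i\}$. For each $i$ in this set, set $\delta_i\doteq\ip{x,n_i}-c_i>0$; since $|n_i|=1$, the Cauchy--Schwarz inequality gives $\ip{y,n_i}>c_i$ for every $y$ with $|y-x|<\delta_i$. Define $V_x\doteq\{y\in\R^J:|y-x|<\delta\}$ where $\delta\doteq\min\{\delta_i:i\in\allN\setminus\allN(x)\}>0$ (with $\delta\doteq1$, say, if $\allN(x)=\allN$, so that the minimum over the empty set poses no issue); this is an open neighborhood of $x$ precisely because $N<\infty$, so only finitely many constraints are involved. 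Then for any $y\in V_x\cap G$ and any $i\notin\allN(x)$ we have $\ip{y,n_i}>c_i$, hence $y\notin F_i$, i.e.\ $i\notin\allN(y)$. Equivalently $\allN(y)\subseteq\allN(x)$, which is \eqref{eq:allNusc}.

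There is essentially no obstacle here: the only point worth isolating is that the union of faces avoided at $x$ is closed (being a finite union of the closed sets $F_i$, $i\notin\allN(x)$), so $x$ lies at positive distance from it, which is exactly what produces the radius $\delta$. This is the set-valued upper semicontinuity of $x\mapsto\allN(x)$ on $G$, and finiteness of the face collection is what makes it hold with a single explicit ball.
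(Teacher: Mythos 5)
Your proof is correct. The paper does not prove this lemma itself — it simply cites \cite[Lemma 2.1]{Kang2007} — and your elementary argument (non-membership $i\notin\allN(x)$ is the open condition $\ip{x,n_i}>c_i$, which persists on a ball of radius $\min_i(\ip{x,n_i}-c_i)$ over the finitely many such $i$) is exactly the standard proof of that cited result, so there is nothing to flag.
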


Recall the definition of $\rho_I^x$ in \eqref{eq:rhoIx}. 

\begin{lem}\label{lem:rholsc}
Almost surely, the mapping $x\mapsto\rho_I^x$ from $G$ to $\R_+$ is lower semicontinuous.
\end{lem}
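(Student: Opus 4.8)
\textbf{Proof proposal for Lemma \ref{lem:rholsc}.}

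The plan is to show lower semicontinuity of $x \mapsto \rho_I^x$ by verifying that, for $\P$-a.e.\ $\omega$, whenever a sequence $x_n \to x$ in $G$, one has $\liminf_n \rho_I^{x_n}(\omega) \geq \rho_I^x(\omega)$. I would work on the full-measure event on which $(\alpha,y,t) \mapsto \Z_t^{\alpha,y}(\omega)$ is jointly continuous (available from Proposition \ref{prop:continuousfield}), so that $\Z^{x_n} \to \Z^x$ uniformly on compacts. Fix such an $\omega$ and suppress it. Let $c \doteq \rho_I^x$; if $c = 0$ there is nothing to prove, so assume $c > 0$, and fix an arbitrary $T \in (0,c)$. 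It suffices to show $\rho_I^{x_n} > T$ for all $n$ large, i.e.\ that $\Z_t^{x_n} \notin \cup_{j \in \allN \setminus I} F_j$ for all $t \in [0,T]$ and all large $n$.

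The key step is to get a \emph{uniform} separation of the path $\{\Z_t^x : t \in [0,T]\}$ from the closed set $C_I \doteq \cup_{j \in \allN \setminus I} F_j$. Here is where one must be slightly careful: $\rho_I^x$ is a \emph{first hitting time of a closed set}, so by definition of the infimum, $\Z_t^x$ could in principle touch $C_I$ at some time $t < c$ only if the infimum were not attained — but since $C_I$ is closed and $\Z^x$ is continuous, the hitting time is attained, so in fact $\Z_t^x \notin C_I$ for all $t < c$, hence for all $t \in [0,T]$. By compactness of $\{\Z_t^x : t \in [0,T]\}$ and closedness of $C_I$, there exists $\eta > 0$ with $\dist(\Z_t^x, C_I) \geq \eta$ for all $t \in [0,T]$. (If $C_I$ is empty the statement is trivial, $\rho_I^{\cdot} \equiv \infty$.) By uniform convergence $\Z^{x_n} \to \Z^x$ on $[0,T]$, for $n$ large we have $\norm{\Z^{x_n} - \Z^x}_T < \eta$, whence $\dist(\Z_t^{x_n}, C_I) > 0$ for all $t \in [0,T]$, so $\Z_t^{x_n} \notin C_I$ on $[0,T]$ and therefore $\rho_I^{x_n} \geq T$. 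Since $T < c$ was arbitrary, $\liminf_n \rho_I^{x_n} \geq c = \rho_I^x$, which is the claimed lower semicontinuity.

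The only genuine subtlety — and the step I would flag as the main obstacle — is the attainment issue just discussed: one must be sure the first hitting time of the closed set $C_I$ is actually achieved (so that $\Z^x$ stays strictly outside $C_I$ on $[0,\rho_I^x)$ and one gets a positive distance on the \emph{compact} time interval $[0,T]$), rather than merely approached in a limit. This is immediate from continuity of $\Z^x$ and closedness of $C_I = \cup_{j \in \allN \setminus I} F_j$ (a finite union of closed faces). Everything else is the standard ``first hitting time of a closed set is lower semicontinuous in the path, and the path depends continuously on $x$'' argument; no appeal to the stochastic structure beyond Proposition \ref{prop:continuousfield} is needed, which is why the statement holds almost surely (on the continuity event) rather than requiring any probabilistic estimate.
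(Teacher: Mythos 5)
Your argument is correct and is essentially the paper's own proof written out in more detail: both rest on the a.s.\ joint continuity of $x\mapsto\Z^x$ from Proposition \ref{prop:continuousfield} and the closedness of $\cup_{j\in\allN\setminus I}F_j$, and then run the standard argument that the first hitting time of a closed set is lower semicontinuous along uniformly convergent continuous paths. The only caveat --- shared with the paper's terser proof --- is that the infimum in \eqref{eq:rhoIx} is over $t>0$, so if $x$ itself lay on some $F_j$ with $j\notin I$ your uniform separation of the path from $\cup_{j\in\allN\setminus I}F_j$ on the closed interval $[0,T]$ would fail at $t=0$; this case does not arise where the lemma is applied (Lemma \ref{lem:rhok}), since there every starting point $y$ satisfies $\allN(y)\subseteq I$.
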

\begin{proof}
Fix $\omega\in\Omega$. Let $x\in G$ and $\{x_\ell\}_{\ell \in\N}$ be a sequence in $G$ such that $x_\ell\to x$ as $\ell \to\infty$. If $\rho_I^x(\omega)=0$, then $\liminf_{\ell \to\infty}\rho_I^{x_\ell}(\omega)\geq\rho_I^x(\omega)$. On the other hand, if $\rho_I^x(\omega)>0$, let $t<\rho_I^x(\omega)$. By \eqref{eq:rhoIx}, the fact that $\cup_{j\in\allN\setminus I}F_j$ is a closed set and the continuity of $x\mapsto Z^x(\omega)$ (Proposition \ref{prop:continuousfield}), $\liminf_{\ell\to\infty}\rho_I^{x_\ell}(\omega)>t$. Since this holds for all $t<\rho_I^x(\omega)$, we have $\liminf_{\ell\to\infty}\rho_I^{x_\ell}(\omega)\geq\rho_I^x(\omega)$, which completes the proof.
\end{proof}

\begin{lem}\label{lem:rhok}
Almost surely, $\frac{1}{r_k^2}\rho_I^{x_k}\to\infty$ as $k\to\infty$.
\end{lem}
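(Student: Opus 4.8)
\textbf{Proof proposal for Lemma \ref{lem:rhok}.}

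The plan is to show that the scaled exit times diverge by reducing the problem to the lower semicontinuity of $x \mapsto \rho_I^x$ established in Lemma \ref{lem:rholsc}, combined with a scaling identity and a convergence-to-a-good-limit-point argument. First I would record the key scaling identity: since $b(\alpha,\cdot)\equiv 0$ throughout this section, the reflected diffusion obeys Brownian scaling, so the process $\Z^k_t = (\Z^{x_k}_{r_k^2 t \wedge \rho_I^{x_k}} - \bar x)/r_k$ from \eqref{eq:Zk} satisfies, by an argument paralleling \eqref{eq:thetaixrk2thetaik}, the relation $\rho_I^{x_k} = r_k^2 \bar\rho^k$, where $\bar\rho^k \doteq \inf\{t>0 : \Z^{x_k}_{r_k^2 t} \in \cup_{j \in \allN\setminus I} F_j\}$ is the exit time for the \emph{unstopped} scaled process. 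So it suffices to show $\bar\rho^k \to \infty$ a.s.

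Next I would set up a proof by contradiction: suppose on a set of positive probability there is a (random) subsequence along which $\bar\rho^k$ stays bounded, say $\bar\rho^{k_m} \leq T < \infty$. The natural move is to compare $\Z^{x_k}$ (run on the macroscopic time scale) with the ``limit'' reflected diffusion started at $\bar x \in F_I$. Concretely, by Lemma \ref{lem:ZalphaxZbetaybounds} (or rather the continuous-modification statement, Proposition \ref{prop:continuousfield}), a.s.\ $\Z^{x_k} \to \Z^{\bar x}$ uniformly on compacts as $k\to\infty$, since $x_k \to \bar x$. The point of $\rho_I$ is that it only registers hitting the faces $F_j$ with $j \notin I$ — faces that do \emph{not} contain $\bar x$. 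By Lemma \ref{lem:allNusc}, there is a neighborhood $V_{\bar x}$ of $\bar x$ with $\allN(y) \subseteq \allN(\bar x) = I$ for $y \in V_{\bar x}\cap G$, so $\Z^{\bar x}$ stays away from $\cup_{j\notin I} F_j$ for a deterministic positive amount of time — more precisely, a.s.\ $\rho_I^{\bar x} > 0$, and by the strong Markov property applied to the instants where $\Z^{\bar x}$ returns to $V_{\bar x}$ one in fact expects $\rho_I^{\bar x} = \infty$ a.s.\ (this is essentially the statement that $\Z^{\bar x}$, started on $F_I$, never instantaneously hits a face outside $I$; it can be deduced from Lemma \ref{lem:rholsc} together with a $0$–$1$-type argument, or directly from the neighborhood-recurrence structure). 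Given that $\rho_I^{\bar x} = \infty$ a.s., the lower semicontinuity in Lemma \ref{lem:rholsc} applied to the sequence $x_k \to \bar x$ gives $\liminf_k \rho_I^{x_k}(\omega) \geq \rho_I^{\bar x}(\omega) = \infty$ for a.e.\ $\omega$ — but here one must be careful, because $\rho_I^{x_k}$ in Lemma \ref{lem:rholsc} is the exit time on the \emph{original} (unscaled) time clock, which is exactly $\bar\rho^k$ after the change of variables $t \mapsto r_k^2 t$ is undone. So $\liminf_k \bar\rho^k = \infty$ a.s., contradicting $\bar\rho^{k_m} \leq T$. Unwinding, $\frac{1}{r_k^2}\rho_I^{x_k} = \bar\rho^k \to \infty$ a.s.

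The step I expect to be the main obstacle is establishing that $\rho_I^{\bar x} = \infty$ almost surely (equivalently, that a reflected diffusion started on $F_I$ does not hit $\cup_{j\notin I}F_j$) — the lower semicontinuity lemma only transfers this property from the limit to the sequence, so it has to be proved at the base point $\bar x$. The clean way is: by Lemma \ref{lem:allNusc} there is $\delta>0$ with $\dist(\bar x, \cup_{j\notin I}F_j) > \delta$ (since $\bar x \notin F_j$ for $j\notin I$ and these faces are closed), so $\rho_I^{\bar x} \geq$ the first exit time of $\Z^{\bar x}$ from the ball $B(\bar x,\delta)\cap G$; then one uses the strong Markov property together with Proposition \ref{prop:continuousfield}/lower semicontinuity to iterate and conclude the exit time is not finite — alternatively, one invokes Lemma \ref{lem:rholsc} itself at nearby starting points on $F_I$, leveraging that $\{y \in F_I : \dist(y,\cup_{j\notin I}F_j)>0\}$ is relatively open in $F_I$ and contains $\bar x$. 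A secondary technical point to handle carefully is the interchange between the two time scales and the a.s.\ (as opposed to in-probability) nature of the conclusion, which is why the contradiction-via-subsequence framing, combined with the pathwise lower semicontinuity of Lemma \ref{lem:rholsc}, is the cleanest route.
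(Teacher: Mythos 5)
Your proof hinges on the claim that $\rho_I^{\bar x}=\infty$ a.s., and that claim is false in general. Under the uniform ellipticity of Assumption \ref{ass:elliptic}, a reflected diffusion started at $\bar x\in F_I$ will typically reach a face $F_j$ with $j\notin I$ in finite time: for instance, for normally reflected Brownian motion in $\R_+^2$ started at $\bar x=(0,1)\in F_1$ with $I=\{1\}$, the second coordinate is a one-dimensional Brownian motion started at $1$, which hits $0$ a.s.\ in finite time, so $\rho_I^{\bar x}<\infty$ a.s. The heuristic you offer for it (strong Markov property at return times to a neighborhood $V_{\bar x}$) does not work, because the process need not return to $V_{\bar x}$ before visiting other faces. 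You correctly flag this step as the main obstacle, but it is not merely hard --- it is unprovable, so the argument as written fails.

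The fix is that you do not need $\rho_I^{\bar x}=\infty$; you only need $\rho_I^{\bar x}>0$ a.s., which is immediate from path continuity, since $\bar x$ does not lie in the closed set $\cup_{j\in\allN\setminus I}F_j$. Lemma \ref{lem:rholsc} then gives $\liminf_k\rho_I^{x_k}\geq\rho_I^{\bar x}>0$ a.s., and since $r_k^2\to0$ this already forces $\rho_I^{x_k}/r_k^2\to\infty$ a.s.; no contradiction argument, no subsequences, and no scaling identity are needed (your appeal to Brownian scaling is in any case a red herring --- the relation $\rho_I^{x_k}=r_k^2\bar\rho^k$ is a trivial reparametrization of time, and the diffusion with state-dependent $\sigma$ does not actually satisfy Brownian scaling). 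This repaired route is essentially the paper's proof, except that the paper obtains the positive lower bound uniformly, as $\inf\{\rho_I^x:x\in\mathbb{B}\}>0$ a.s.\ over a small compact ball $\mathbb{B}$ around $\bar x$ (using lower semicontinuity plus compactness, and the fact that $\allN(y)\subseteq I$ on $\mathbb{B}$ so the path must first exit $\mathbb{B}$), rather than applying lower semicontinuity directly along the sequence $x_k\to\bar x$.
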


\begin{proof}
Due to \eqref{eq:xkSrk}, \eqref{eq:rk} and Lemma \ref{lem:allNusc}, there exists $k_0\in\N$ such that $x_k\in\mathbb{B}\doteq\{y\in G:|y-\bar x|\leq r_{k_0}\}$ for all $k\geq k_0$, and $\allN(x)\subseteq\allN(\bar x)=I$ for all $x\in\mathbb{B}$. Thus, for each $x\in\mathbb{B}$, \eqref{eq:rhoIx} and the continuity of $\Z^x$ imply that a.s.\ 
	\be\label{eq:chix}\rho_I^x>\chi^x\doteq\inf\{t>0:\Z^x\not\in\mathbb{B}\}\geq0.\ee 
Then due to Lemma \ref{lem:rholsc} and the compactness of $\mathbb{B}$, a.s.\ $\inf\{\rho_I^x:x\in\mathbb{B}\}>0$. Therefore, given $\ve>0$, there exists $\delta>0$ such that $\P\lb\inf\{\rho_I^x:x\in\mathbb{B}\}>\delta\rb\geq1-\ve$. For each $k\geq k_0$, using the fact that $x_k\in\mathbb{B}$, \eqref{eq:chix} and the strong Markov property for $\Z^x$, we have,
	$$\P\lb\rho_I^{x_k}>\delta\rb\geq\P\lb\inf\{\rho_I^x:x\in\mathbb{B}\}>\delta\rb\geq1-\ve.$$
Let $C<\infty$ and choose $k_1\geq k_0$ such that $r_{k_1}^2\leq\delta/C$. Then for all $k\geq k_1$,
	$$\P\lb\frac{1}{r_k^2}\rho_I^{x_k}\leq C\;\forall\;k\in\N\rb\leq\P\lb\rho_I^{x_{k_1}} \leq r_{k_1}^2C\rb\leq\P\lb \inf\{ \rho_I^{x}: x \in \mathbb{B}\} \leq \delta \rb\leq\ve.$$
Since $\ve>0$ and $C<\infty$ were arbitrary, the conclusion of the lemma follows.
\end{proof}

In the following remark we observe that there exists a simple equivalence between $(\Z^k,\X^k,\Y^k)$ and another triplet of processes that will be easier to work with.

\begin{remark}
For $k\in\N$, using Brownian scaling, we can define a Brownian motion $\wh\bm^k=\{\wh\bm_t^k,t\geq0\}$ by 
	\be\label{eq:wtbmk}\wh\bm_t^k\doteq r_k\bm_{t/r_k^2},\qquad t\geq0.\ee
Let $\wh\F_t^k\doteq\F_{t/r_k^2}$ for $t\geq0$ so that $\wh\bm^k$ is $\{\wh\F_t^k\}$-adapted. Let $\wh\Z^{k,x_k}$ denote the reflected diffusion in $G$ with initial condition $x_k$, coefficients $b(\alpha,\cdot)\equiv0$ and $\sigma(\alpha,\cdot)$, and driving Brownian motion $\wh\bm^k$, whose existence and uniqueness is guaranteed by Proposition \ref{prop:rdeu}, and define the process $\wh\X^{k,x_k}$ as in \eqref{eq:X}, but with $\wh\X^{k,x_k}$, $x_k$, $\wh\Z^{k,x_k}$ and $\wh\bm^k$ in place of $\X^{\alpha, x}$, $x$, $\Z^{\alpha, x}$ and $\bm$, respectively, and set $\wh\Y^{k,x_k}\doteq\wh\Z^{k,x_k}-\wh\X^{k,x_k}$. Since pathwise uniqueness implies uniqueness in law, we have 
	\be\label{eq:ZXYrhodistribution}(\wh\Z^{k,x_k},\wh\X^{k,x_k},\wh\Y^{k,x_k},\wh\rho_I^k)\buildrel{d}\over=(\Z^{x_k},\X^{x_k},\Y^{x_k},\rho_I^{x_k}),\ee
where $\buildrel{d}\over{=}$ indicates equality in distribution,  $\rho_I^{x_k}$ is the $\{\F_t\}$-stopping time defined in \eqref{eq:rhoIx} and $\wh\rho_I^k$ is the $\{\F_t^k\}$-stopping time defined as in \eqref{eq:rhoIx}, but with $Z^x$ replaced with $\wh \Z^k$. For each $k\in\N$, define the scaled processes $(\wt\Z^k,\wt\X^k,\wt\Y^k)$ as in \eqref{eq:Zk}--\eqref{eq:Yk}, respectively, but with $\wt\Z^k$, $\wt\X^k$, $\wt\Y^k$, $\wh\Z^{k,x_k}$, $\wh\X^{k,x_k}$, $\wh\Y^{k,x_k}$ and $\wh\rho_I^k$ in place of $\Z^k$, $\X^k$, $\Y^k$, $\Z^{x_k}$, $\X^{x_k}$, $\Y^{x_k}$ and $\rho_I^{x_k}$, respectively. Then by \eqref{eq:ZXYrhodistribution}, it follows that  
	\be\label{eq:distribution}(\wt\Z^k,\wt\X^k,\wt\Y^k)\buildrel d\over=(\Z^k,\X^k,\Y^k).\ee
In addition, by {the definitions above}, \eqref{eq:X}, \eqref{eq:Xk}, \eqref{eq:Zk} and \eqref{eq:wtbmk},  for $k\in\N$ and $t\geq0,$ 
\begin{align}\label{eq:wtXtk}
	\wt\X_t^k&=\frac{x_k-\bar{x}}{r_k}+\frac{1}{r_k}\int_0^{r_k^2t\wedge\wh\rho_I^k}\sigma(\alpha,\wh\Z_s^k)d\wh\bm_s^k=\frac{x_k-\bar{x}}{r_k}+\int_0^{t\wedge\frac{1}{r_k^2}\wh\rho_I^k}\sigma(\alpha,\bar{x}+r_k\wt\Z_s^k)d\bm_s,
\end{align}
where the final equality holds by {\eqref{eq:wtbmk}}, \eqref{eq:Zk} and the time-change theorem for stochastic integrals (see \cite[Chapter IV, Proposition 30.10]{Rogers2000a}).
\end{remark}

Let 
	$$\Pi_I:\R^J\mapsto\spaan\lb\{n_i,i\in I\}\rb$$ 
denote the orthogonal projection operator with respect to the usual Euclidean inner product $\ip{\cdot,\cdot}$. Since $\spaan\lb\{n_i,i\in I\}\rb$ is the orthogonal complement of $\{y-\bar x:y\in F_I\}$, 
	\be\label{eq:distxFI}|\Pi_I[x-\bar x]|=\dist(x,F_I),\qquad x\in G.\ee
Define the convex cone $G_{\bar{x}}$ as in \eqref{eq:Gx}, but with $\bar{x}$ in place of $x$, so that $G_{\bar{x}}$ is the domain of the ESP $\{(d_i(\alpha),n_i,0),i\in I\}$.

\begin{lem}\label{lem:rk2EwtZk}
For each $t<\infty$, $r_k^2\E\lsb\norm{\wt\Z^k}_t^2\rsb\to0$ as $k\to\infty$.
\end{lem}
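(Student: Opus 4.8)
The plan is to pass, via the identity in distribution \eqref{eq:distribution} and the scaling \eqref{eq:Zk}, from the scaled process $\wt\Z^k$ back to the original reflected diffusion $\Z^{x_k}$ run over the short time horizon $[0,r_k^2 t]$, and then to control its displacement from $x_k$ by a routine short-time moment estimate.

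First I would record the reductions. Since the path-space sup-norm is a measurable functional, \eqref{eq:distribution} gives $\E[\|\wt\Z^k\|_t^2]=\E[\|\Z^k\|_t^2]$. From \eqref{eq:Zk} and $\bar x\in F_I\subset G$, for every $s\le t$ one has $r_k|\Z_s^k|=|\Z^{x_k}_{r_k^2 s\wedge\rho_I^{x_k}}-\bar x|\le\|\Z^{x_k}-\bar x\|_{r_k^2 t}$ (using $r_k^2 s\wedge\rho_I^{x_k}\le r_k^2 t$), so
$$r_k^2\,\E\big[\|\wt\Z^k\|_t^2\big]=r_k^2\,\E\big[\|\Z^k\|_t^2\big]\le\E\big[\|\Z^{x_k}-\bar x\|_{r_k^2 t}^2\big].$$
Bounding $\|\Z^{x_k}-\bar x\|_{r_k^2 t}^2\le 2\|\Z^{x_k}-x_k\|_{r_k^2 t}^2+2|x_k-\bar x|^2$ and using $x_k\to\bar x$ from \eqref{eq:xkSrk}, the claim reduces to showing $\E[\|\Z^{x_k}-x_k\|_{r_k^2 t}^2]\to 0$ as $k\to\infty$.

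For this I would invoke the Lipschitz continuity of the ESM. In the present section $b(\alpha,\cdot)\equiv 0$, so by Remark \ref{rmk:X} the pair $(\Z^{x_k},\Y^{x_k})$ solves the ESP $\{(d_i(\alpha),n_i,c_i),i\in\allN\}$ for $\X^{x_k}$ with $\X^{x_k}_u=x_k+\int_0^u\sigma(\alpha,\Z^{x_k}_v)\,d\bm_v$, while $(x_k,0)$ trivially solves the same ESP for the \emph{constant} path $x_k\in G$. (Comparing against this trivial solution, rather than estimating $\Y^{x_k}$ directly, is what makes the argument go through: in this section Assumption \ref{ass:DRL} is not assumed, so $\Y^{x_k}$ need not have finite variation, whereas the ESM is Lipschitz under Assumption \ref{ass:setB} alone.) Proposition \ref{prop:esmlip} then gives, for all $s<\infty$, $\|\Z^{x_k}-x_k\|_s\le\lip_{\esm}(\alpha)\,\norm{\int_0^{\cdot}\sigma(\alpha,\Z^{x_k}_v)\,d\bm_v}_s$. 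Applying the BDG inequality and using that $x\mapsto\sigma(\alpha,x)$ is $\lip_{b,\sigma}$-Lipschitz on the convex set $G$ by Assumption \ref{ass:drift} (hence $\|\sigma(\alpha,\Z^{x_k}_v)\|\le\|\sigma(\alpha,x_k)\|+\lip_{b,\sigma}\|\Z^{x_k}-x_k\|_v$), I would arrive, with $N_k(s)\doteq\E[\|\Z^{x_k}-x_k\|_s^2]$, at an inequality of the form $N_k(s)\le c_1\|\sigma(\alpha,x_k)\|^2 s+c_2\int_0^s N_k(v)\,dv$ for $s\le r_k^2 t$, where $c_1,c_2$ depend only on $\lip_{\esm}(\alpha)$, $\lip_{b,\sigma}$ and the BDG constant $C_2$. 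Gronwall's inequality then yields $N_k(r_k^2 t)\le c_1\|\sigma(\alpha,x_k)\|^2\,r_k^2 t\,e^{c_2 r_k^2 t}$; since $x_k\to\bar x$ keeps $\|\sigma(\alpha,x_k)\|$ bounded in $k$ and $r_k^2 t=4^{-k}t\to 0$, this tends to $0$, which together with the reductions above completes the proof.

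The only genuinely delicate point is making the BDG step legitimate without presupposing finiteness of the second moments: I would first run the above estimate with $\Z^{x_k}$ replaced by its stopping at $\tau_n^{x_k}\doteq\inf\{u\ge 0:|\Z^{x_k}_u-x_k|\ge n\}$ — noting that $(\Z^{x_k}_{\cdot\wedge\tau_n^{x_k}},\Y^{x_k}_{\cdot\wedge\tau_n^{x_k}})$ still solves the ESP for the stopped input $\X^{x_k}_{\cdot\wedge\tau_n^{x_k}}$, so Proposition \ref{prop:esmlip} applies and the stopped stochastic integral is a genuine $L^2$-martingale — obtain the Gronwall bound uniformly in $n$, and then let $n\to\infty$ by monotone convergence (using $\tau_n^{x_k}\to\infty$, which holds by continuity of $\Z^{x_k}$). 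Everything else is a routine short-time moment computation.
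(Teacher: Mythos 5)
Your proof is correct, and it arrives at the same conclusion by a mildly different route than the paper. The paper stays entirely in the rescaled picture: it bounds $\E\lsb\norm{\wt\X^k}_t^2\rsb$ via the BDG inequality and the Lipschitz continuity of $\sigma(\alpha,\cdot)$, uses Remark \ref{rmk:Zkesm} to view $\wt\Z^k$ as the image of $\wt\X^k$ under the ESM for the \emph{reduced} data $\{(d_i(\alpha),n_i,0),i\in I\}$ on the cone $G_{\bar x}$ so that $\E\lsb\norm{\wt\Z^k}_t^2\rsb\leq(\lip_{\esm}(\alpha))^2\E\lsb\norm{\wt\X^k}_t^2\rsb$, applies Gronwall, and only at the end multiplies by $r_k^2$ to get $r_k^2\E\lsb\norm{\wt\Z^k}_t^2\rsb\leq C\lb|x_k-\bar x|^2+r_k^2t\rb\to0$. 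You instead undo the scaling via \eqref{eq:distribution} and \eqref{eq:Zk}, reduce everything to the short-time displacement estimate $\E\lsb\norm{\Z^{x_k}-x_k}_{r_k^2t}^2\rsb\to0$ for the original diffusion, and compare against the trivial ESP solution $(x_k,0)$ for the constant input using only Proposition \ref{prop:esmlip} for the original ESM on $G$. The two arguments are equivalent up to the time change $s\mapsto r_k^2s$, and both rest on the same three ingredients (ESM Lipschitz continuity, BDG, Gronwall) together with $x_k\to\bar x$ and $r_k\to0$; what yours buys is that this particular step does not require the reduced ESP on $G_{\bar x}$ to be well posed and Lipschitz (the paper relies on Remark \ref{rmk:Zkesm} and, implicitly, Lemma \ref{lem:ZkYkESPXk} for that), and your localization with the stopping times $\tau_n^{x_k}$ makes the application of BDG self-contained, whereas the paper leaves the a priori finiteness of the second moments implicit (it is available from Lemma \ref{lem:Xmomentbound} combined with the ESM Lipschitz bound).
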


\begin{proof}
Let $t<\infty$. By \eqref{eq:wtXtk}, the BDG inequalities, Tonelli's theorem and the Lipschitz continuity of $\sigma(\alpha,\cdot)$ implied by Assumption \ref{ass:drift},
\begin{align*}
	\E\lsb\norm{\wt\X^k}_t^2\rsb&\leq2\left|\frac{x_k-\bar{x}}{r_k}\right|^2+2C_2\int_0^t\E\lsb\norm{\sigma(\alpha,\bar{x}+r_k\wt\Z_s^k)}^2\rsb ds\\
	&\leq2\left|\frac{x_k-\bar{x}}{r_k}\right|^2+4C_2\norm{\sigma(\alpha,\bar x)}^2t+4C_2r_k^2\lip_{b,\sigma}^2\int_0^t\E\lsb\norm{\wt\Z^k}_s^2\rsb ds.
\end{align*}
By \eqref{eq:distribution} and Remark \ref{rmk:Zkesm}, $\wt \Z^k = \bar{\Gamma}^\alpha (\wt \X^k)$, and so the Lipschitz continuity of $\esm^\alpha$ stated in
Proposition \ref{prop:esmlip} implies 
	$\E[\norm{\wt\Z^k}_t^2]\leq(\lip_{\esm}(\alpha))^2\E[\norm{\wt\X^k}_t^2].$
      Combining this with  the last display and applying Grownwall's inequality yields
\begin{align*}
	\E\lsb\norm{\wt\Z^k}_t^2 \rsb\leq 2(\lip_{\esm}(\alpha))^2\lb\left|\frac{x_k-\bar{x}}{r_k}\right|^2+2C_2\norm{\sigma(\alpha,\bar x)}^2t\rb\exp\lb4C_2r_k^2(\lip_{\esm}(\alpha)\lip_{b,\sigma})^2t\rb.
\end{align*}
The lemma then follows from \eqref{eq:rk} and \eqref{eq:xkSrk}. 
\end{proof}

\begin{lem}\label{lem:ZkYkESPXk}
The data $\{(d_i(\cdot),n_i,0),i\in I\}$ satisfies Assumptions \ref{ass:setB} and \ref{ass:projection}. Hence, given $\alpha\in U$ and $\x\in\cts_{G_{\bar{x}}}(\R^J)$, there is a unique solution $(\z,\y)$ to the ESP $\{(d_i(\alpha),n_i,0),i\in I\}$ for $\x$. Furthermore, there exists $\lip_I(\alpha)<\infty$ such that given  $\x_j \in \cts_{G_{\bar{x}}}(\R^J)$ and the solution $(\z_j,\y_j)$ to the ESP $\{(d_i(\alpha),n_i,0),i\in I\}$ for $\x_j$, $j = 1, 2$, we have for $t<\infty$, 
	\be\label{eq:projlip}\norm{\Pi_I[\z_1]-\Pi_I[\z_2]}_t+\norm{\Pi_I[\y_1]-\Pi_I[\y_2]}_t\leq\lip_I(\alpha)\norm{\Pi_I[\x_1]-\Pi_I[\x_2]}_t.\ee
\end{lem}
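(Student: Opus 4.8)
The plan is to prove Lemma \ref{lem:ZkYkESPXk} in two parts: first the verification that the reduced data $\{(d_i(\cdot),n_i,0),i\in I\}$ inherits Assumptions \ref{ass:setB} and \ref{ass:projection} from the original data, and then the derivation of the projected Lipschitz estimate \eqref{eq:projlip}. For the first part, I would observe that $G_{\bar x}$ is a convex polyhedral cone defined by exactly the half-spaces indexed by $I=\allN(\bar x)$, and that the faces of $G_{\bar x}$ through any boundary point correspond to a subset of $I$; in particular the cone of admissible directions of reflection for the reduced ESP at a point of $\partial G_{\bar x}$ is $\conv(\{d_i(\alpha),i\in I'\})$ for some $I'\subseteq I$, which is contained in the cone $d(\alpha,\cdot)$ appearing in the original ESP at a nearby point of $\partial G$ (using Lemma \ref{lem:allNusc}). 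Assumption \ref{ass:setB} for the reduced data then follows by using the \emph{same} set $B^\alpha$ and constant $\delta_\alpha$: the implication in \eqref{eq:setB} only needs to be checked for $i\in I\subseteq\allN$, so it is immediate. Assumption \ref{ass:projection} for the reduced data is a bit more delicate; I would construct the projection $\pi^\alpha_I:\R^J\to G_{\bar x}$ either by a direct compactness/convexity argument or by invoking the discussion in \cite[Section 2.3]{Lipshutz2016} on reduced ESPs, noting that the geometric conditions there are localized versions of those already assumed. Given these two assumptions, existence and uniqueness of $(\z,\y)$ for the reduced ESP is exactly Proposition \ref{prop:esp} applied to the reduced data, and Lipschitz continuity in the unprojected norm is Proposition \ref{prop:esmlip}, giving a constant I will call $\lip_{\esm,I}(\alpha)$.

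The second and more substantive part is upgrading the Lipschitz estimate from the full Euclidean norm to the projected norm $\norm{\Pi_I[\cdot]}_t$, which is the content of \eqref{eq:projlip}. The key structural fact is that all the relevant vectors — the constraining directions $d_i(\alpha)$ for $i\in I$ (these satisfy $\ip{d_i(\alpha),n_i}=1$, and more to the point lie, after a translation, essentially in $\spaan(\{n_i,i\in I\})$ modulo the lineality space of the cone) — interact with the decomposition $\R^J = \spaan(\{n_i,i\in I\})\oplus \spaan(\{n_i,i\in I\})^\perp$. Concretely, write $\z_j = \x_j + \y_j$; applying $\Pi_I$ gives $\Pi_I[\z_j] = \Pi_I[\x_j] + \Pi_I[\y_j]$. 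The crucial observation is that $(\Pi_I[\z_j],\Pi_I[\y_j])$ itself solves a reduced ESP — the one posed in the quotient space $\spaan(\{n_i,i\in I\})$ with data given by the orthogonal projections $\Pi_I[d_i(\alpha)]$ and the normals $n_i$ — for the input path $\Pi_I[\x_j]$. This reduction works because condition 2 of Definition \ref{def:esp} is preserved under the linear map $\Pi_I$ (a linear image of a convex cone is a convex cone, and $\Pi_I$ maps $\cup_{u\in(s,t]}d(\alpha,\z_j(u))$ into $\cup_{u\in(s,t]}\conv(\{\Pi_I[d_i(\alpha)],i\in\allN(\z_j(u))\})$), and because membership of $\z_j(t)$ in $G_{\bar x}$ is governed entirely by the values $\ip{\z_j(t),n_i}=\ip{\Pi_I[\z_j(t)],n_i}$ for $i\in I$. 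So $(\Pi_I[\z_j],\Pi_I[\y_j])$ solves an ESP in a $|I|$-or-fewer-dimensional space to which Proposition \ref{prop:esmlip} again applies, yielding \eqref{eq:projlip} with $\lip_I(\alpha)$ the Lipschitz constant of that lower-dimensional ESM. One must check Assumption \ref{ass:setB} for this projected data: take $B$ to be $\Pi_I[B^\alpha]$ (or $B^\alpha\cap\spaan(\{n_i,i\in I\})$, whichever makes the normal-cone computation clean), and verify \eqref{eq:setB} using that $\ip{\Pi_I[z],n_i}=\ip{z,n_i}$ and $\ip{\nu,\Pi_I[d_i(\alpha)]}=\ip{\Pi_I[\nu],d_i(\alpha)}$, so the hypothesis reduces to the already-assumed property of $B^\alpha$; Assumption \ref{ass:projection} for the projected data follows by composing $\pi^\alpha_I$ with $\Pi_I$.

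I expect the main obstacle to be the precise handling of the \emph{lineality space} of the cone $G_{\bar x}$, i.e., $\spaan(\{n_i,i\in I\})^\perp = \{y-\bar x: y\in F_I\}$ translated to the origin, in making rigorous the claim that $(\Pi_I[\z_j],\Pi_I[\y_j])$ solves a genuine (well-posed) ESP in $\spaan(\{n_i,i\in I\})$. The subtlety is that the directions $d_i(\alpha)$ need not lie in $\spaan(\{n_i,i\in I\})$, so their projections $\Pi_I[d_i(\alpha)]$ are what drive the reduced problem, and one must confirm that $\ip{\Pi_I[d_i(\alpha)],n_i} = \ip{d_i(\alpha),n_i}=1 > 0$ (true, since $n_i\in\spaan(\{n_i,i\in I\})$) so that the reduced data is still a valid ESP data set, and that the constraint set in the reduced problem is the image cone $\Pi_I[G_{\bar x}]$, which is a pointed or line-free cone in $\spaan(\{n_i,i\in I\})$ with the $F_i$'s as its faces. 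The alternative, cleaner route — which I would adopt if the direct projection argument gets unwieldy — is to simply cite Proposition \ref{prop:esmlip} for the reduced ESP $\{(d_i(\alpha),n_i,0),i\in I\}$ to get $\norm{\z_1-\z_2}_t+\norm{\y_1-\y_2}_t\leq\lip_{\esm,I}(\alpha)\norm{\x_1-\x_2}_t$, and then note this a fortiori implies \eqref{eq:projlip} is \emph{not} immediate (projected norm on the left is smaller, but so is the one on the right), so the projection-reduction genuinely is needed; hence I would invest the effort in the reduced-space ESP argument, organizing it as a short standalone verification that the triple $(\Pi_I[\x_j],\Pi_I[\z_j],\Pi_I[\y_j])$ satisfies Definition \ref{def:esp} for the projected data, and then applying Proposition \ref{prop:esmlip} once more in dimension $|I|$.
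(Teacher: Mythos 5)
Your overall strategy is sound and the first part largely coincides with the paper's: Assumption \ref{ass:setB} for the reduced data is verified exactly as you say, by reusing $B^\alpha$ and $\delta_\alpha$ and noting that \eqref{eq:setB} need only be checked for $i\in I\subseteq\allN$. The one place where your proposal has a genuine hole is Assumption \ref{ass:projection}: you defer the construction of $\pi_I^\alpha$ to ``a direct compactness/convexity argument or the discussion in \cite[Section 2.3]{Lipshutz2016},'' but neither of these is an argument. The paper's construction is short and you should supply something like it: by Lemma \ref{lem:allNusc}, the continuity of $\pi^\alpha$ (Remark \ref{rmk:picontinuous}) and $\pi^\alpha(\bar x)=\bar x$, there is a neighborhood $V_{\bar x}$ of $\bar x$ with $\allN(\pi^\alpha(x))\subseteq I$ for $x\in V_{\bar x}$; for $y\notin G_{\bar x}$ one picks $\delta>0$ with $\bar x+\delta y\in V_{\bar x}$ and sets $\pi_I^\alpha(y)\doteq\delta^{-1}(\pi^\alpha(\bar x+\delta y)-\bar x)$, then uses the facts that $G_{\bar x}$ and $d(\alpha,\cdot)$ are cones to check $\pi_I^\alpha(y)-y\in d_I(\alpha,\pi_I^\alpha(y))$. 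Without some such localization the existence claim (which needs Proposition \ref{prop:esp}, hence Assumption \ref{ass:projection}) is not established.

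For the projected Lipschitz estimate \eqref{eq:projlip} your route genuinely differs from the paper's, which simply cites \cite[Lemma 9.8]{Lipshutz2016}. You instead propose to show directly that $(\Pi_I[\z_j],\Pi_I[\y_j])$ solves the ESP in $\spaan(\{n_i,i\in I\})$ with domain $\Pi_I[G_{\bar x}]$ and directions $\Pi_I[d_i(\alpha)]$ for input $\Pi_I[\x_j]$, and then apply Proposition \ref{prop:esmlip} in that lower-dimensional space. This is a legitimate and more self-contained alternative: the active index sets are preserved because $\ip{\Pi_I[\z_j(t)],n_i}=\ip{\z_j(t),n_i}$ for $i\in I$, condition 2 of Definition \ref{def:esp} passes through the linear map $\Pi_I$, and $\ip{\Pi_I[d_i(\alpha)],n_i}=1>0$, so the projected data is admissible. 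You correctly identify that the naive route (Proposition \ref{prop:esmlip} for the unprojected reduced ESP) does not imply \eqref{eq:projlip}, since both sides shrink under projection. The remaining details you would need to write out carefully are the verification of Assumption \ref{ass:setB} for the projected data --- in particular relating inward normals of $\Pi_I[B^\alpha]$ at a boundary point $w$ to inward normals of $B^\alpha$ at a preimage $z\in\partial B^\alpha$ with $\Pi_I[z]=w$, which works because any $\nu\in\spaan(\{n_i,i\in I\})$ satisfies $\ip{\nu,y-z}=\ip{\nu,\Pi_I[y]-w}$ for $y\in B^\alpha$ --- and the minimality of the projected half-space representation. Modulo those checks and the missing $\pi_I^\alpha$ construction, your proof goes through; what it buys over the paper's is independence from the external Lemma 9.8, at the cost of a somewhat longer verification.
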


The proof of Lemma \ref{lem:ZkYkESPXk} is given in Appendix \ref{apdx:ZkYkESPXk}.

\begin{proof}[Proof of Proposition \ref{prop:weakconvergence}]
By \eqref{eq:xkSrk} and \eqref{eq:SIr}, for each $k\in\N$,
	\be\label{eq:PiIZ0k}\frac{1}{r_k}|\Pi_I[x_k-\bar x]|=\frac{1}{r_k}\dist(x_k,F_I)=1.\ee
Therefore, by possibly taking a subsequence, also denoted $\{x_k\}_{k\in\N}$, there exists $x_\ast\in G_{\bar x}\cap\spaan(\{n_i,i\in I\})$ such that $|x_\ast|=1$ and
	\be\label{eq:PiIxkxast}\lim_{k\to\infty}\frac{1}{r_k}\Pi_I[x_k-\bar x]=\Pi_I[x_\ast]=x_\ast.\ee
Define
	\be\label{eq:Xast}\X_t^\ast\doteq x_\ast+\sigma(\alpha,\bar{x})\bm_t,\qquad t\geq0.\ee
Let $(\Z^\ast,\Y^\ast)$ denote the solution to the ESP $\{(d_i(\alpha),n_i,0),i\in I\}$ for $\X^\ast$, which is well defined by Lemma \ref{lem:ZkYkESPXk}. Let $t<\infty$. By \eqref{eq:wtXtk}, \eqref{eq:Xast}, the fact that $\Pi_I$ is a contraction operator, the BDG inequalities and the Lipschitz continuity of $\sigma(\alpha,\cdot)$,
\begin{align*}
	\E\lsb\norm{\Pi_I[\wt\X_t^k-\X_t^\ast]}_t^2\rsb&\leq3\left|\frac{1}{r_k}\Pi_I[x_k-\bar x]-x_\ast\right|^2\\
	&\qquad+3\E\lsb\sup_{0\leq s\leq t}\left|\int_0^{s\wedge\frac{1}{r_k^2}\wh\rho_I^k}\lb\sigma(\alpha,\bar{x}+r_k\wt\Z_u^k)-\sigma(\alpha,\bar{x})\rb d\bm_u\right|^2\rsb\\
	&\qquad+3\E\lsb\sup_{0\leq s\leq t}\left|\sigma(\alpha,\bar{x})\lb\bm_s-\bm_{s\wedge\frac{1}{r_k^2}\wh\rho_I^k}\rb\right|^2\rsb\\ 
	&\leq3\left|\frac{1}{r_k}\Pi_I[x_k-\bar x]-x_\ast\right|^2+3C_2\lip_{b,\sigma}^2tr_k^2\E\lsb\norm{\wt\Z^k}_t^2\rsb\\
	&\qquad+3C_2\norm{\sigma(\alpha,\bar x)}^2\lb t-\E\lsb t\wedge\frac{1}{r_k^2}\wh\rho_I^k\rsb\rb.
\end{align*}
The last display, along with \eqref{eq:PiIxkxast}, Lemma
\ref{lem:rk2EwtZk}, \eqref{eq:ZXYrhodistribution} and Lemma \ref{lem:rhok}, implies
	\be\label{eq:L2XkXast}\lim_{k\to\infty}\E\lsb\norm{\Pi_I[\wt\X^k-\X^\ast]}_t^2\rsb=0.\ee
Let $i\in I$ and $F_i^{\bar{x}}\doteq\{x\in G_{\bar{x}}:\ip{x,n_i}=0\}$ denote the $i$th face of the cone $G_{\bar{x}}$. Define 
	\be\label{eq:Sset}\mathcal{E}\doteq\lcb x\in G_{\bar{x}}:|\Pi_I[x]|\geq\frac{3}{4}\rcb.\ee
Since $G_{\bar{x}}$ is a convex cone, it follows that $\mathcal{E}$ is a connected set and $\mathcal{E}\cap F_i^{\bar{x}}$ is nonempty. In addition, since $|\Pi_I[x_\ast]|=|x_\ast|=1$, we have $x_\ast\in\mathcal{E}$. Therefore, we can define a continuous path $\x:[0,t]\mapsto\R^J$ such that  
\begin{itemize}
	\item[(a)] $\x(0)=x_\ast$, 
	\item[(b)] $\x(s)\in \mathcal{E}$ for all $s\in(0,t/2)$, 
	\item[(c)] $\x(t/2)$ lies in the relative interior of $F_i^{\bar{x}}$, and 
	\item[(d)] $\x(s)-\x(t/2)=-(s-t/2) d_i(\alpha)$ for all $s\in[t/2,t]$. 
\end{itemize}

Let $(\z,\y)$ denote the solution to the ESP $\{(d_i(\alpha),n_i,0),i\in I\}$ for $\x$ on $[0,t]$, whose existence and uniqueness is guaranteed by Lemma \ref{lem:ZkYkESPXk}. It is readily verified that the solution $(\z,\y)$ satisfies
\begin{align}\label{eq:zy0t2}
	(\z(s),\y(s))&=(\x(s),0)&&s\in[0,t/2], \\ \label{eq:zyt2t}
	(\z(s),\y(s))&=(\x(t/2),(s-t/2)d_i(\alpha))&&s\in[t/2,t]. 
\end{align}
Thus, by \eqref{eq:zy0t2}, \eqref{eq:zyt2t}, the continuity of $\x$ and the fact that $S$ is closed, $\z(s)\in S$ for all $s\in[0,t]$. Define $w:[0,t]\mapsto\R^K$ by 
	$$w(s)=\sigma^T(\alpha,\bar{x})a^{-1}(\alpha,\bar{x})(\x(s)-x_\ast),\qquad s\in[0,t],$$
where we recall $a(\alpha,\bar{x})\doteq\sigma(\alpha,\bar{x})\sigma^T(\alpha,\bar{x})$ is invertible due to Assumption \ref{ass:elliptic}. Then
	$$\x(s)=x_\ast+\sigma(\alpha,\bar{x})w(s),\qquad s\in[0,t].$$
Therefore, \eqref{eq:Xast} and the last display imply
\begin{align}\label{eq:Xastx}
	\norm{\X^\ast-\x}_t&\leq\norm{\sigma(\alpha,\bar{x})}\norm{\bm-w}_t.
\end{align}
Now, note that  $\Pi_I[d_i(\alpha)]\neq0$ holds because $i\in I$ and $\ip{d_i(\alpha),n_i}=1$. Together with (c), this implies that 
	\be\label{eq:vePiI}0<\ve<\frac{1}{4}\min\lcb1,|\Pi_I[d_i(\alpha)]|t\rcb \ee
such that 
	\be\label{eq:xt2njve}\ip{\x(t/2),n_j}>\ve,\qquad j\in I\setminus\{i\}.\ee

We now consider some implications of $\norm{\Pi_I[\Z^k]-\Pi_I[\z]}_t<\ve$ and $\norm{\Pi_I[\Y^k]-\Pi_I[\y]}_t<\ve$. 
The first set of implications, which are explained below, are as follows:
\begin{align}\label{eq:implication1}
	\norm{\Pi_I[\Z^k]-\Pi_I[\z]}_t<\ve\quad&\Rightarrow\quad|\ip{\Z_s^k-\z(s),n_j}|<\ve,&&j\in I,\;s\in[0,t],\\ \notag
	&\Rightarrow\quad\ip{\Z_s^k,n_j}\geq\ip{\x(t/2),n_j}-\ve,&&j\in I,\;s\in[t/2,t],\\ \notag
	&\Rightarrow\quad\ip{\Z_s^k,n_j}>0,&&j\in I\setminus\{i\},\;s\in[t/2,t].
\end{align}
The first implication holds because $\Pi_I$ is linear and self-adjoint, $\Pi_I[n_j]=n_j$ for all $j\in I$ and $\{n_j,j\in I\}$ are unit vectors. The second implication follows from the first relation and \eqref{eq:zyt2t}, and the final implication holds due to the second relation and \eqref{eq:xt2njve}. The next set of implications are as follows:  
\begin{align}\label{eq:implication2}
	\norm{\Pi_I[\Z^k]-\Pi_I[\z]}_t<\ve\quad&\Rightarrow\quad|\Pi_I[\Z_s^k]|>\frac{1}{2},\qquad s\in[0,t],\\ \notag
	&\Rightarrow\quad\sigma_I^k>t.
\end{align}
The first implication is due to the fact that $\z(s)\in S$ for all $s\in[0,t]$, \eqref{eq:Sset} and \eqref{eq:vePiI}. In turn, this implies the second implication due to the definition of $\sigma_I^k$ in \eqref{eq:sigmaIk1}. The third set of implications are as follows:
\begin{align}\label{eq:implication3}
	\norm{\Pi_I[\Y^k]-\Pi_I[\y]}_t<\ve\quad&\Rightarrow\quad|\Pi_I[\Y^k(t/2)]|<\ve, \mbox{ and } |\Pi_I[\Y^k(t)]|>\frac{t}{2}|\Pi_I[d_i(\alpha)]|-\ve,\\ \notag
	&\Rightarrow\quad\Y^k\text{ is nonconstant on }[t/2,t].
\end{align}
The first implication follows from \eqref{eq:zy0t2} and \eqref{eq:zyt2t}, and the second implication is due to \eqref{eq:vePiI}. Combining the implications \eqref{eq:implication1}--\eqref{eq:implication3}, we obtain the final set of implications:
\begin{align}\label{eq:implication4}
\norm{\Pi_I[\X^k]-\Pi_I[\x]}_{t}<\frac{\ve}{\lip_I(\alpha)}\quad&\Rightarrow\quad\norm{\Pi_I[\Z^k]-\Pi_I[\z]}_t<\ve,\;\norm{\Pi_I[\Y^k]-\Pi_I[\y]}_t<\ve,\\ \notag
	&\Rightarrow\quad\theta_i^k<\sigma_I^k,\qquad\forall\;k\geq k_0,  
\end{align}
where $\lip_I(\alpha)$ is the constant in \eqref{eq:projlip}. The first implication follows from Lemma \ref{lem:ZkYkESPXk} and because  $(Z^k, Y^k)$ is a solution to the ESP $\{(d_i(\alpha),n_i,0),i\in I\}$ for $X^k$ by Remark \ref{rmk:Zkesm} and $(\z,\y)$ is a solution to the same ESP for $\x$ by construction. The second implication uses \eqref{eq:implication3}, \eqref{eq:implication1} and the fact that $\Y^k$ can only increase when $\Z^k$ lies on the boundary $\partial G_{\bar{x}}$ to conclude that $\theta_i^k\leq t$, which along with \eqref{eq:implication2} yields $\theta_i^k<\sigma_I^k$. 

Now, by \eqref{eq:thetaik1}, \eqref{eq:sigmaIk1}, \eqref{eq:implication4}, \eqref{eq:distribution}, the fact that $\Pi_I$ is a linear contraction operator, \eqref{eq:Xastx}, the relations $\norm{\sigma(\alpha,\bar x)}>0$ (due to Assumption \ref{ass:elliptic}) and $\P(A\cap B)\geq\P(A)-\P(B^c)$ for $A,B\in\F$, and Chebyshev's inequality, we have
\begin{align*}
	\P\lb\theta_i^k<\sigma_I^k\rb&\geq\P\lb\norm{\Pi_I[\X^k]-\Pi_I[\x]}_{t}<\frac{\ve}{\lip_I(\alpha)}\rb\\
	&\geq\P\lb\norm{\Pi_I[\wt\X^k]-\Pi_I[\X^\ast]}_{t}<\frac{\ve}{2\lip_I(\alpha)},\;\norm{\X^\ast-\x}_{t}<\frac{\ve}{2\lip_I(\alpha)}\rb\\
	&\geq\P\lb\norm{\Pi_I[\wt\X^k]-\Pi_I[\X^\ast]}_{t}<\frac{\ve}{2\lip_I(\alpha)},\;\norm{\bm-w}_{t}<\frac{\ve}{2\lip_I(\alpha)\norm{\sigma(\alpha,\bar{x})}}\rb\\
	&\geq\P\lb\norm{\bm-w}_{t}<\frac{\ve}{2\lip_I(\alpha)\norm{\sigma(\alpha,\bar{x})}}\rb-\P\lb\norm{\Pi_I[\wt\X^k]-\Pi_I[\X^\ast]}_{t}\geq\frac{\ve}{2\lip_I(\alpha)}\rb\\
	&\geq\P\lb\norm{\bm-w}_{t}<\frac{\ve}{2\lip_I(\alpha)\norm{\sigma(\alpha,\bar{x})}}\rb-\frac{4(\lip_I(\alpha))^2}{\ve^2}\E\lsb\norm{\Pi_I[\wt\X^k]-\Pi_I[\X^\ast]}_{t}^2\rsb.
\end{align*}
Taking limits as $k\to\infty$ in the last display and using \eqref{eq:L2XkXast} yields
$$\liminf_{k\to\infty}\P\lb\theta_i^k<\sigma_I^k\rb\geq\P\lb\norm{\bm-w}_t<\frac{\ve}{2\lip_I(\alpha)\norm{\sigma(\alpha,\bar{x})}}\rb>0,$$ 
where the final inequality is due to the fact that $K$-dimensional
Wiener measure assigns positive measure to (relatively) open subsets of $\{v\in\cts(\R^K):v(0)=0\}$ (see, e.g., \cite[Lemma 3.1]{Stroock1972}). This proves \eqref{eq:sigmaIxksigmaI}.

The proof of \eqref{eq:tauIxktauI} follows an argument analogous to the one used to prove \eqref{eq:sigmaIxksigmaI}. The main difference is to define $S\doteq\{ x\in G_{\bar{x}}:|\Pi_I[x]|\leq\frac{5}{4}|\Pi_I[x_\ast]|\}$ and to use $\tau_I^k$ in place of $\sigma_I^k$. To avoid repetition, we omit the details. 
\end{proof}

\subsection{Verifications of conditions 3 and 4}\label{sec:jitter34}

We first verify conditions 3 and 4 of the boundary jitter property when the drift coefficient satisfies $b(\alpha,\cdot)\equiv0$. In the proof of Theorem \ref{thm:jitter} below, we use a change of measure argument to verify the conditions for general Lipschitz continuous drift coefficients.

Given $x\in G$, $s\geq0$, $i\in\allN$, $I\in\J$ and $C<\infty$, define the $\{\F_t\}$-stopping times 
\begin{align}\label{eq:thetaisx}
	\theta_i^{s,x}&\doteq\inf\{t>s:\Z_t^x\in F_i\},\\ \label{eq:thetaIsx}
	\theta_I^{s,x}&\doteq\inf\{t>s:\Z_t^x\in F_I\}, \\ \label{eq:rhoIsx}
	\rho_I^{s,x}&\doteq\inf\{t>s:\Z_t^x\in\cup_{j\in\allN\setminus I}F_j\}, \\ \label{eq:rhoMsx}
	\xi_C^{s,x}&\doteq\inf\{t>s:|\Z_t^x|\geq C\}.
\end{align}
When $s=0$ we omit the ``$s$'' superscript and write $\theta_i^x$, $\theta_I^x$, $\rho_I^x$ and $\xi_C^x$ for $\theta_i^{0,x}$, $\theta_I^{0,x}$, $\rho_I^{0,x}$ and $\xi_C^{0,x}$, respectively. Note that the definitions of $\theta_i^x$ and $\rho_I^x$ here coincide with the ones given in \eqref{eq:thetaix} and \eqref{eq:rhoIx}, respectively.

\begin{lem}\label{lem:jitter3prelim}
Let $I\in\J$, $T<\infty$ and $C<\infty$. For each $i\in I$ there exists $\ve\in(0,1)$ and $k_0\in\N$ such that for each $k\geq k_0$,
\begin{align}\label{eq:rkp}
	\P(\theta_i^x\wedge\rho_I^x\wedge\xi_C^x\wedge T<\sigma_I^x)&\geq \ve\qquad\text{for all }x\in\mathbb{S}_I(r_k)\\ \label{eq:thetaixtaux}
	\P(\theta_i^x\wedge\rho_I^x\wedge\xi_C^x\wedge T<\tau_I^x)&\geq\ve\qquad\text{for all }x\in\mathbb{S}_I(r_k).
\end{align}
\end{lem}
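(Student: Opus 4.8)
The plan is to reduce the uniform estimate over $x \in \mathbb{S}_I(r_k)$ to the scaled hitting-time estimate already established in Proposition \ref{prop:weakconvergence}, using a contradiction-and-compactness argument together with the lower semicontinuity of $\rho_I^{\cdot}$ (Lemma \ref{lem:rholsc}) and the convergence $\frac{1}{r_k^2}\rho_I^{x_k} \to \infty$ (Lemma \ref{lem:rhok}). Fix $I \in \J$, $T, C < \infty$, and $i \in I$. Suppose, for contradiction, that no such $\ve$ and $k_0$ exist for \eqref{eq:rkp}; then there is a subsequence $k_\ell \to \infty$ and points $x_{k_\ell} \in \mathbb{S}_I(r_{k_\ell})$ with $\P(\theta_i^{x_{k_\ell}} \wedge \rho_I^{x_{k_\ell}} \wedge \xi_C^{x_{k_\ell}} \wedge T < \sigma_I^{x_{k_\ell}}) \to 0$. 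Since $\dist(x_{k_\ell}, F_I) = r_{k_\ell} \to 0$ and $F_I$ is compact (it is a closed, bounded face of $G$ — bounded because it lies in the $C$-ball relevant to the statement, or one may localize), a further subsequence of $\{x_{k_\ell}\}$ converges to some $\bar{x} \in F_I$, and $\allN(\bar x) \supseteq I$; by passing to the neighborhood in Lemma \ref{lem:allNusc} we may assume $\allN(\bar x) = I$, so $\bar x \in \U$ and $I \in \J$ as required to invoke Proposition \ref{prop:weakconvergence}.

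Next I would relate the truncated event to the event $\{\theta_i^{x_k} < \sigma_I^{x_k}\}$ appearing in Proposition \ref{prop:weakconvergence}. On the event $\{\theta_i^{x_k} < \sigma_I^{x_k}\}$ one already has $\theta_i^{x_k} \wedge \sigma_I^{x_k} = \theta_i^{x_k}$; the point is to show that, with probability bounded below, this common time also occurs before $\rho_I^{x_k} \wedge \xi_C^{x_k} \wedge T$. For the $\xi_C^{x_k}$ and $T$ truncations: on the rescaled time-scale $\theta_i^{x_k} = r_k^2 \theta_i^k$ by \eqref{eq:thetaixrk2thetaik}, and the rescaled process $\Z^k$ has a nondegenerate weak limit $\Z^\ast$ (as in the proof of Proposition \ref{prop:weakconvergence}), so the rescaled time $\theta_i^k$ is $O_p(1)$; hence $\theta_i^{x_k} = r_k^2 \theta_i^k \to 0$ in probability and in particular $\P(\theta_i^{x_k} < T) \to 1$ and, since $x_k \to \bar x$, also $\P(\xi_C^{x_k} > \theta_i^{x_k}) \to 1$ for $C$ large enough that $\bar x$ is in the interior of the $C$-ball (for the range of $x$ of interest, $|\bar x| < C$). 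For the $\rho_I^{x_k}$ truncation: by Lemma \ref{lem:rhok}, $\frac{1}{r_k^2}\rho_I^{x_k} \to \infty$ a.s., hence in probability; combined with $\theta_i^k$ being $O_p(1)$ this gives $\P(\theta_i^{x_k} < \rho_I^{x_k}) = \P(\theta_i^k < \frac{1}{r_k^2}\rho_I^{x_k}) \to 1$. Putting these together,
\begin{align*}
	\P(\theta_i^{x_k} \wedge \rho_I^{x_k} \wedge \xi_C^{x_k} \wedge T < \sigma_I^{x_k}) &\geq \P(\theta_i^{x_k} < \sigma_I^{x_k}) - \P(\rho_I^{x_k} \leq \theta_i^{x_k}) - \P(\xi_C^{x_k} \leq \theta_i^{x_k}) - \P(T \leq \theta_i^{x_k}),
\end{align*}
and taking $\liminf$ as $k \to \infty$ along the bad subsequence, the last three terms vanish while $\liminf \P(\theta_i^{x_k} < \sigma_I^{x_k}) > 0$ by \eqref{eq:sigmaIxksigmaI} of Proposition \ref{prop:weakconvergence}, contradicting the assumption that the left side tends to $0$. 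This yields \eqref{eq:rkp}, and \eqref{eq:thetaixtaux} follows by the identical argument using \eqref{eq:tauIxktauI} and $\tau_I$ in place of $\sigma_I$.

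The main obstacle I anticipate is the bookkeeping needed to make the subsequence/contradiction argument produce a uniform $k_0$: Proposition \ref{prop:weakconvergence} gives a positive $\liminf$ along the specific sequences $\{x_k\}$ satisfying \eqref{eq:xkSrk}, but here we need a bound $\ve$ valid for \emph{all} $x \in \mathbb{S}_I(r_k)$ simultaneously, for all $k \geq k_0$, with a single $\ve$. The contradiction setup handles this cleanly — a failure of uniformity produces some sequence $x_{k_\ell} \in \mathbb{S}_I(r_{k_\ell})$ to which, after extracting a convergent sub-subsequence with limit $\bar x \in F_I$, Proposition \ref{prop:weakconvergence} applies — but one must be careful that the limit point $\bar x$ genuinely lies in $\U$ (ensured by Lemma \ref{lem:allNusc}, since $\allN(x_{k_\ell}) \subseteq \allN(\bar x)$ eventually and $\dist(x_{k_\ell}, F_I) \to 0$ forces $I \subseteq \allN(\bar x)$, while $|\allN(\bar x)| = |I| \geq 2$ because $I \in \J$), and that the fixed index $i \in I$ is indeed an element of $\allN(\bar x)$ so that $\theta_i^{x}$ is the relevant hitting time. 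A secondary, more routine point is confirming that the weak limit $\Z^\ast$ built in the proof of Proposition \ref{prop:weakconvergence} has $\theta_i^k = O_p(1)$ — i.e., that $\Z^\ast$ a.s. hits face $F_i^{\bar x}$ in finite time — which follows from nondegeneracy of $\sigma(\alpha, \bar x)$ (Assumption \ref{ass:elliptic}) exactly as in the construction of the path $\x$ there; I would cite that construction rather than redo it.
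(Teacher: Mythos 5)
Your overall strategy --- argue by contradiction, extract a convergent subsequence $x_{k_\ell}\to\bar x\in F_I$ by compactness (after discarding the trivial case $|x|\geq C$, where $\xi_C^x=0<\sigma_I^x$), and invoke \eqref{eq:sigmaIxksigmaI} of Proposition \ref{prop:weakconvergence} --- is exactly the paper's. However, your handling of the truncations $\rho_I^{x_k}\wedge\xi_C^{x_k}\wedge T$ introduces a genuine gap where none is needed. The key point you missed is that these truncations only \emph{help}: since $\theta_i^{x}\wedge\rho_I^{x}\wedge\xi_C^{x}\wedge T\leq\theta_i^{x}$, one has the inclusion
$$\{\theta_i^{x}<\sigma_I^{x}\}\subseteq\{\theta_i^{x}\wedge\rho_I^{x}\wedge\xi_C^{x}\wedge T<\sigma_I^{x}\},$$
so $\P(\theta_i^{x}\wedge\rho_I^{x}\wedge\xi_C^{x}\wedge T<\sigma_I^{x})\geq\P(\theta_i^{x}<\sigma_I^{x})$ with no correction terms whatsoever, and Proposition \ref{prop:weakconvergence} immediately contradicts the assumed convergence to zero. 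This is all the paper does.

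Instead, you subtract $\P(\rho_I^{x_k}\leq\theta_i^{x_k})$, $\P(\xi_C^{x_k}\leq\theta_i^{x_k})$ and $\P(T\leq\theta_i^{x_k})$ and claim each vanishes. That claim is not justified by what you cite: showing $\P(T\leq\theta_i^{x_k})\to0$ amounts to showing $\theta_i^{x_k}\to0$ in probability, i.e.\ that the diffusion started near $F_I$ hits the face $F_i$ almost immediately --- which is essentially the boundary-hitting behavior (condition 4 of the jitter property) that this entire section is building toward, and is not available at this stage. The existence of a weak limit $\Z^\ast$ for the rescaled processes does not give tightness of the hitting times $\theta_i^k$ (hitting times are not continuous functionals of the path), and Lemma \ref{lem:rhok} controls $\rho_I^{x_k}$ but says nothing about $\theta_i^{x_k}$ being small. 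So as written your argument does not close; it is rescued only by deleting the correction terms via the trivial monotonicity above. Two smaller points: the limit $\bar x$ need not satisfy $\allN(\bar x)=I$ (it may lie on additional faces), but Proposition \ref{prop:weakconvergence} only requires $\bar x\in F_I$, so your attempt to "assume $\allN(\bar x)=I$" is both impossible in general and unnecessary; and $F_I$ need not be bounded --- the compactness comes from the bound $|x_k|\leq C$, not from $F_I$ itself.
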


\begin{proof}
Let $i\in I$. We first prove \eqref{eq:rkp}. For a proof by contradiction, recalling from \eqref{eq:sigmaIxksigmaI} that $\lim_{k\to\infty}\P(0 < \rho_I^{x_k}) > 0$, suppose there is a sequence $\{x_k\}_{k\in\N}$ in $G$ such that $x_k\in\mathbb{S}_I(r_k)$ and $|x_k|<C$ for each $k\in\N$, and
	\be\label{eq:jitter3contradiction}\lim_{k\to\infty}\P(\theta_i^{x_k}\wedge\rho_I^{x_k}\wedge\xi_C^{x_k}\wedge T<\sigma_I^{x_k})=0.\ee
Since $|x_k|\leq C$ and $\dist(x_k,F_I)=r_k$ for all $k\in\N$ due to \eqref{eq:SIr}, by possibly taking a subsequence $\{k_\ell\}_{\ell\in\N}$, we can assume there exists $\bar{x}\in F_I$ such that $x_{k_\ell}\to\bar x$ as $\ell\to\infty$. Then by \eqref{eq:sigmaIxksigmaI},
	$$\liminf_{\ell\to\infty}\P(\theta_i^{x_{k_\ell}}\wedge\rho_I^{x_{k_\ell}}\wedge\xi_C^{x_{k_\ell}}\wedge T<\sigma_I^{x_{k_\ell}})\geq\liminf_{\ell\to\infty}\P(\theta_i^{{x_{k_\ell}}}<\sigma_I^{{x_{k_\ell}}})>0,$$
which contradicts {\eqref{eq:jitter3contradiction}}.  With this contradiction thus obtained, it follows {that} there exist $\ve\in(0,1)$ and $k_0\in\N$ such that for each $k\geq k_0$, \eqref{eq:rkp} holds. The proof of \eqref{eq:thetaixtaux} is exactly analogous to the proof of \eqref{eq:rkp}, except it uses $\tau_I^{x_k}$, $\tau_I^k$ and \eqref{eq:tauIxktauI} in place of $\sigma_I^{x_k}$, $\sigma_I^k$ and \eqref{eq:sigmaIxksigmaI}, respectively, so we omit the details. 
\end{proof}

\begin{lem}\label{lem:ZtGcirc}
{$\P(\Z_t^x\in G^\circ)=1$ for all $t>0$.}
\end{lem}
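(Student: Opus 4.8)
The plan is to deduce this from Lemma \ref{lem:jitter2} and the ellipticity Assumption \ref{ass:elliptic}, upgrading a ``Lebesgue-almost every $t$'' statement to an ``every $t$'' statement.

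First, by Lemma \ref{lem:jitter2} (whose conclusion and proof hold for an arbitrary starting point in $G$) and Tonelli's theorem, $\int_0^\infty\P(\Z^x_s\in\partial G)\,ds=\E\big[\int_0^\infty 1_{\partial G}(\Z^x_s)\,ds\big]=0$, so $\P(\Z^x_s\in\partial G)=0$ for Lebesgue-almost every $s>0$; the content of the lemma is to remove this exceptional null set of times. Since $\partial G=\bigcup_{i\in\allN}F_i$, it suffices to fix $i\in\allN$ and $t>0$ and to show $\P(\langle\Z^x_t,n_i\rangle=c_i)=0$.

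The cleanest route is to invoke the submartingale problem. As recalled in the proof of Lemma \ref{lem:jitter2}, the law of $\Z^x$ on $\cts(G)$ solves the submartingale problem associated with the data and the (zero-drift, uniformly elliptic) generator; the fact that such a solution does not charge $\partial G$ at any fixed time $t>0$ is part of the theory in \cite{Kang2017} (it is established there alongside the zero-occupation-time property quoted in Lemma \ref{lem:jitter2}, and uses the uniform ellipticity in an essential way). A more hands-on alternative reduces to a one-dimensional computation: the coordinate $u_i(s)\doteq\langle\Z^x_s,n_i\rangle-c_i\geq0$, paired with the part of the constraining process that acts on $F_i$, behaves like a one-dimensional reflected process whose unconstrained ``input'' is a continuous semimartingale $\psi_i$ with martingale part $M^i$ satisfying $d[M^i]_s=n_i^Ta(\alpha,\Z^x_s)n_i\,ds\geq\lambda(\alpha)\,ds$ (here $b(\alpha,\cdot)\equiv0$) and with finite-variation part coming from the constraining action on the faces $F_j$, $j\neq i$; by the Skorokhod representation one gets $\{u_i(t)=0\}\subseteq\{\psi_i\text{ attains its minimum over }[0,t]\text{ at the endpoint }t\}$, and since a time change that is bi-Lipschitz up to a localizing stopping time turns $M^i$ into a Brownian motion --- which attains its minimum over a fixed interval at an interior point almost surely, by the arcsine law --- this event is null.

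I expect the main obstacle to lie in this last step of the hands-on approach: the constraining term is singular, able to increase only on the Lebesgue-null set of times at which $\Z^x$ is on $\partial G$, and its contribution to $\psi_i$ need not be monotone, so one must rule out that it conspires with the martingale fluctuations to place a running minimum of $\psi_i$ exactly at $t$. This can be handled by observing that the constraining increment of $\Y^x$ over any short interval $[t-\ve,t]$ is dominated in sup-norm by the corresponding increment of the driving local martingale (via the Lipschitz estimate of Proposition \ref{prop:esmlip} applied to $\Z^x$ on that interval), or else by quoting absolute continuity of the law of $\Z^x_t$ with respect to Lebesgue measure for $t>0$ under Assumption \ref{ass:elliptic}; one then lets the localizing constant tend to infinity to finish.
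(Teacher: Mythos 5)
The paper does not actually give a self-contained proof of this lemma: it states that the claim ``relies on Assumption \ref{ass:elliptic} and \eqref{eq:jitter2}'' and is established ``in a manner exactly analogous to'' equation (A.4) of the Budhiraja--Lee reference \cite{Budhiraja2007}, and omits the details. Your first route --- combine the zero-occupation-time identity of Lemma \ref{lem:jitter2} with ellipticity and quote the fixed-time non-charging of $\partial G$ from the submartingale-problem literature --- is therefore essentially the paper's approach, except that you attribute the fixed-time statement to \cite{Kang2017}, which the paper uses only for the occupation-time identity; the correct citation is \cite{Budhiraja2007}. If you take this route, you should point at the precise result rather than asserting it is ``part of the theory.''

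Your hands-on alternative has a genuine gap, which you half-acknowledge but do not close. The Skorokhod reduction is fine, and on the event $\Z_t^x\in F_i\cap\S$ it does finish the job: by Lemma \ref{lem:allNusc} only face $i$ is active in a left-neighborhood of $t$, so the finite-variation part of $\psi_i$ is constant there and the time-change/arcsine argument applies. The real problem is the event $\Z_t^x\in\U$, where the terms $\sum_{j\neq i}\ip{d_j(\alpha),n_i}\,d\ell^j$ are active arbitrarily close to $t$, and neither proposed patch handles it. The Lipschitz bound of Proposition \ref{prop:esmlip} gives $\sup_{[t-\ve,t]}|\Delta\Y^x|\leq\lip\sup_{[t-\ve,t]}|\Delta\X^x|$, but both sides scale like $\sqrt{\ve}$ and $\lip$ is not small, so by Brownian scaling the event that the martingale's drop over $[t-\ve,t]$ is dominated by $\lip$ times its sup-norm has probability bounded away from zero \emph{uniformly in} $\ve$; letting $\ve\downarrow0$ over a single family of intervals therefore yields nothing, and you would need some independence or iteration argument that is not sketched. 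The second patch --- quoting absolute continuity of the law of $\Z_t^x$ with respect to Lebesgue measure --- is circular: since $\partial G$ is Lebesgue-null, that absolute continuity immediately implies the lemma and is a statement of at least equal depth, requiring its own proof or citation.
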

\begin{proof}
{The proof of Lemma \ref{lem:ZtGcirc} relies on Assumption \ref{ass:elliptic} and \eqref{eq:jitter2}. Since it can be established 
in a manner exactly analogous to the proof of \cite[equation (A.4)]{Budhiraja2007}, which establishes the claim when the set $\V^\alpha$ defined in \eqref{eq:Valpha} is empty,  we omit the details.} 
\end{proof}

\begin{lem}\label{lem:jitter3}
Suppose Assumption \ref{ass:elliptic} holds, $b(\alpha,\cdot)\equiv0$
and $T<\infty$. 
Then for each $x\in G$ a.s.\ $\Z^x$ satisfies condition 3 of the boundary jitter property on $[0,T]$.
\end{lem}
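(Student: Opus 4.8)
The plan is to fix $C<\infty$ and prove that, almost surely, the conclusion of condition 3 holds for every $t\in(0,T]$ with $\Z_t^x\in\U$ and $\norm{\Z^x}_T\le C$; taking the union over $C\in\N$ and using that $\norm{\Z^x}_T<\infty$ a.s.\ then yields the lemma. Since $\allN(\Z_t^x)\in\J$ whenever $\Z_t^x\in\U$, and $\J$ and each $I\in\J$ are finite, it suffices to show, for each fixed $I\in\J$ and $i\in I$, that almost surely: whenever $t\in(0,T]$ satisfies $\Z_t^x\in F_I$ and $\norm{\Z^x}_T\le C$, then for every $\delta\in(0,t)$ there exists $s\in(t-\delta,t)$ with $\allN(\Z_s^x)=\{i\}$. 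I would prove this by strong induction on $|I|$, intersecting at the end over the finitely many pairs $(I,i)$ with $I\in\J$, $i\in I$ (and over $C\in\N$). A preliminary localization step: by Lemma \ref{lem:allNusc} and compactness of $F_I\cap\{|y|\le C\}$ there is $k_\ast\in\N$ such that $\allN(y)\subseteq I$ whenever $\dist(y,F_I)\le r_{k_\ast}$ and $|y|\le C$; hence any time $s$ with $0<\dist(\Z_s^x,F_I)\le r_{k_\ast}$ and $\norm{\Z^x}_T\le C$ has $i\in\allN(\Z_s^x)\subsetneq I$ whenever moreover $\Z_s^x\in F_i$.

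The heart of the argument is the following claim, obtained by iterating the uniform estimate of Lemma \ref{lem:jitter3prelim} (specifically \eqref{eq:rkp}) via the strong Markov property: almost surely on $\{\norm{\Z^x}_T\le C\}$, whenever $t\in(0,T]$ satisfies $\Z_t^x\in F_I$, the process $\Z^x$ visits $F_i$ at a sequence of times in $(0,t)$ increasing to $t$, each at positive distance from $F_I$. The mechanism: as $s\uparrow t$ one has $\dist(\Z_s^x,F_I)\to0$, so on each interval $(t-\delta,t)$ the path crosses every shell $\mathbb{S}_I(r_k)$ with $k$ large and, in particular, performs (near $t$) a ``descent'' from $\mathbb{S}_I(r_k)$ down to $\mathbb{S}_I(r_{k+1})$ for every such $k$. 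Each time the process enters $\mathbb{S}_I(r_k)$ from outside, Lemma \ref{lem:jitter3prelim} applied to the process restarted at that (stopping) time gives conditional probability at least $\ve$ of touching $F_i$ before $\dist(\cdot,F_I)$ drops to $r_k/2$ and before $\rho_I$, $\xi_C$ or the time horizon is reached; a conditional Borel--Cantelli argument (Lévy's extension of the second Borel--Cantelli lemma), applied along the stopping times recording successive shell entrances on the way down to $F_I$, then forces infinitely many of these excursions to be ``successful.'' By the localization step each successful touch of $F_i$ occurs at a point $y$ with $i\in\allN(y)\subsetneq I$, at a time strictly below $t$, and these times accumulate at $t$.

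Granting the claim, the induction closes as follows. In the base case $|I|=2$, say $I=\{i,j\}$, each successful touch occurs at $s^\ast\in(t-\delta,t)$ with $i\in\allN(\Z_{s^\ast}^x)\subsetneq\{i,j\}$, forcing $\allN(\Z_{s^\ast}^x)=\{i\}$, which is exactly what is required. For $|I|=m\ge3$, pick a successful touch at $s^\ast\in(t-\delta,t)$ and set $I'\doteq\allN(\Z_{s^\ast}^x)$, so $i\in I'$ and $|I'|<m$. If $|I'|=1$ then $I'=\{i\}$ and $s^\ast$ works. If $|I'|\ge2$ then $I'\in\J$ and, since $\Z_{s^\ast}^x\in F_{I'}$ with $s^\ast\in(0,T]$, the induction hypothesis applied to the pair $(I',i)$ yields, for every $\delta'>0$, some $s'\in(s^\ast-\delta',s^\ast)$ with $\allN(\Z_{s'}^x)=\{i\}$; choosing $\delta'<s^\ast-(t-\delta)$ places $s'$ in $(t-\delta,t)$, completing the induction.

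I expect the main obstacle to be the claim in the second paragraph: setting up the excursion stopping times so that the strong Markov property legitimately delivers the uniform conditional lower bound $\ve$ at each shell entrance (in particular controlling the remaining time horizon, using that excursions near $F_I$ are short by Brownian scaling, and using the localization step to ensure $\rho_I$ is not triggered near $F_I$), and then running the conditional Borel--Cantelli argument so that the successful excursions are simultaneously guaranteed to be infinitely many and to occur at times converging to the prescribed time $t$ at which $\Z^x$ hits $F_I$ --- uniformly over all such $t$. By contrast, the localization via Lemma \ref{lem:allNusc} and the induction on $|I|$ are routine bookkeeping.
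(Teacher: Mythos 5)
Your architecture --- localization via Lemma \ref{lem:allNusc} so that $\allN(\cdot)\subseteq I$ near $F_I$, induction on $|I|$ to upgrade ``hits $F_i$'' to ``$\allN=\{i\}$'', truncation by the exit time from $\{|y|\leq C\}$, and reliance on the uniform shell estimate of Lemma \ref{lem:jitter3prelim} together with the strong Markov property --- is exactly the paper's (the induction on $|I|$ is implicit there, hidden in the set identity \eqref{eq:bigcaprhoIsx}, but it is the same induction). The gap is in the step you yourself flag as the main obstacle. The uniformization over the uncountably many times $t$ with $\Z_t^x\in F_I$ is not delivered by a conditional Borel--Cantelli argument along ``shell entrances on the way down to $F_I$'': those entrances are only identifiable as a descent to $F_I$ by anticipating the future, and L\'evy's lemma applied to all shell entrances in $[0,T]$ yields infinitely many successes \emph{somewhere}, not successes accumulating at every such $t$. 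The paper's device is to intersect over \emph{rational} times $s$ and show, for each fixed $s,I,i$, that $\P(\theta_i^{s,x}\wedge\rho_I^{s,x}\wedge T<\theta_I^{s,x})=1$; by the Markov property at time $s$ and Lemma \ref{lem:ZtGcirc} this reduces to proving $\P(\theta_i^y\wedge\rho_I^y\wedge\xi_C^y\wedge T<\theta_I^y)=1$ for deterministic interior $y$, which follows because reaching $F_I$ from $y$ forces the process to cross every shell $\mathbb{S}_I(r_j)$, and the probability that every crossing ``fails'' is at most $\prod_j(1-\ve)=0$. No Borel--Cantelli is needed; given $t$ with $\Z_t^x\in F_I$ and $\delta>0$, one picks a rational $s\in(t-\delta,t)$ with $\allN(\Z_u^x)\subseteq I$ on $[s,t]$ and reads off a visit to $F_i$ in $(s,t)$.

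A second, related problem: you over-read \eqref{eq:rkp}. The event of probability at least $\ve$ is $\{\theta_i^x\wedge\rho_I^x\wedge\xi_C^x\wedge T<\sigma_I^x\}$, i.e.\ that the distance-halving time $\sigma_I^x$ is not first; the minimum on the left could be attained by $\rho_I^x$, $\xi_C^x$ or $T$ rather than by $\theta_i^x$, so a ``successful excursion'' in this sense need not touch $F_i$ at all, and ``infinitely many successes'' does not yield infinitely many visits to $F_i$. The paper uses the estimate only in contrapositive form, $\P(\sigma_I^z\leq\theta_i^z\wedge\rho_I^z\wedge\xi_C^z\wedge T)\leq1-\ve$, concluding that the event ``reach $F_I$ while avoiding $F_i$, staying in the $I$-neighborhood, bounded by $C$, within time $T$'' is null; since the side events are excluded on the event of interest, a visit to $F_i$ is then forced. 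Both issues are repairable, but repairing them essentially lands you on the paper's proof.
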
 
\begin{proof}
Let $x\in G$ and $T<\infty$.  {Due to} Definition \ref{def:jitter}, the upper semicontinuity of $\allN(\cdot)$ (Lemma \ref{lem:allNusc}) and the continuity of $\Z^x$, we have
\begin{align*}
	&\lcb\Z^x\text{ satisfies condition 3 of the boundary jitter property on }[0,T]\rcb\\
	&\qquad=\bigcap_{I\in\J}\bigcap_{i\in I}\bigcap_{s\in\Q\cap(0,T)}\lcb\theta_i^{s,x}\wedge T<\theta_I^{s,x}\rcb.
\end{align*}
We claim that
	\be\label{eq:bigcaprhoIsx}\bigcap_{I\in\J}\bigcap_{i\in I}\bigcap_{s\in\Q\cap(0,T)}\lcb\theta_i^{s,x}\wedge T<\theta_I^{s,x}\rcb=\bigcap_{I\in\J}\bigcap_{i\in I}\bigcap_{s\in\Q\cap(0,T)}\lcb\theta_i^{s,x}\wedge\rho_I^{s,x}\wedge T<\theta_I^{s,x}\rcb.\ee
The left-hand side of \eqref{eq:bigcaprhoIsx} is clearly contained in the right-hand side. Thus, to prove the claim it suffices to show that for any given $I\in\J$, $i\in I$ and $s\in\Q\cap(0,T)$,
	$$\bigcap_{r\in\Q\cap(0,T)}\lcb\theta_i^{r,x}\wedge\rho_I^{r,x}\wedge T<\theta_I^{r,x}\rcb\subseteq\lcb\theta_i^{s,x}\wedge T<\theta_I^{s,x}\rcb,$$
or equivalently,
	\be\label{eq:thetaisxTgeqthetaIsx}\lcb\theta_i^{s,x}\wedge T\geq\theta_I^{s,x}\rcb\subseteq\bigcup_{r\in\Q\cap(0,T)}\lcb\theta_i^{r,x}\wedge\rho_I^{r,x}\wedge T\geq\theta_I^{r,x}\rcb.\ee	
{Fix $I\in\J$, $i\in I$, $s\in\Q\cap(0,T)$ and $\omega\in\lcb\theta_i^{s,x}\wedge T\geq\theta_I^{s,x}\rcb$.}  If $\theta_I^{s,x}(\omega)=s$, then $\omega\in\lcb\theta_i^{s,x}\wedge\rho_I^{s,x}\wedge T\geq\theta_I^{s,x}\rcb$, so \eqref{eq:thetaisxTgeqthetaIsx} holds. Suppose $\theta_I^{s,x}(\omega)>s$. By the upper semicontinuity of $\allN(\cdot)$, the continuity of $\Z^x$ and the definition of $\theta_I^{s,x}$ in \eqref{eq:thetaIsx}, there exists $r\in(s,\theta_I^{s,x}(\omega))\cap\Q$ such that $\allN(\Z_u^x)\subseteq I$ for all $u\in[r,\theta_I^{s,x}(\omega)]$. Thus, by the definition of $\rho_I^{r,x}$ in \eqref{eq:rhoIsx}, $\omega\in\{\theta_i^{r,x}\wedge\rho_I^{r,x}\wedge T\geq\theta_I^{r,x}\}$. This proves \eqref{eq:thetaisxTgeqthetaIsx} and so the claim \eqref{eq:bigcaprhoIsx} holds.  

To show \eqref{eq:bigcaprhoIsx}, it clearly suffices to show that 
$\P(\theta_i^{s,x}\wedge\rho_I^{s,x}\wedge T<\theta_I^{s,x})=1$ for each $I\in\J$, $i\in I$ and $s\in\Q\cap(0,T)$. Fix $I\in\J$, $i\in I$ and $s\in\Q\cap(0,T)$. Using the Markov property of $\Z^x$ and Lemma \ref{lem:ZtGcirc}, we have
	\be\label{eq:thetaIythetaiy}\P(\theta_i^{s,x}\wedge\rho_I^{s,x}\wedge T<\theta_I^{s,x})=\int_{G^\circ}\P(\theta_i^y\wedge\rho_I^y\wedge T<\theta_I^y)\P(\Z_s^x\in dy).\ee
Hence, we are left to show that $\P(\theta_i^y\wedge\rho_I^y\wedge T<\theta_I^y)=1$ for all $y\in G^\circ$. Since a.s.\ $\xi_C^x\to\infty$ as $C\to\infty$, it is enough to show that for all $y\in G^\circ$ and $C<\infty$,
	\be\label{eq:thetaiythetaIy1}\P(\theta_i^y\wedge\rho_I^y\wedge\xi_C^y\wedge T<\theta_I^y)=1.\ee

Fix $y\in G^\circ$ and $C<\infty$. Let $\{r_k\}_{k\in\N}$ be the decreasing sequence defined in \eqref{eq:rk} and let $\ve\in(0,1)$ and $k_0 \in \mathbb{N}$ be such that \eqref{eq:rkp} holds for all $k \geq k_0$. For ${k\geq k_0}$ and $z\in G$, let $\sigma_I^z(r_k)\doteq\inf\{t>0:\dist(\Z_t^z,F_I)\leq r_k\}$. The definition of $\sigma_I^x$ in \eqref{eq:sigmaIx} implies that for each $k\in\N$, $\sigma_I^z(r_{k+1})=\sigma_I^z$ for all $z\in\mathbb{S}_I(r_k)$. Then by the continuity of the sample paths of $\Z^y$, the strong Markov property of $\Z^y$ and \eqref{eq:rkp},
\begin{align*}
	\P(\theta_I^y\leq\theta_i^y\wedge\rho_I^y\wedge\xi_C^y\wedge T)&=\P(\sigma_I^y(r_k)\leq\theta_i^y\wedge\rho_I^y\wedge\xi_C^y\wedge T\text{ for all }k\geq 1)\\
	&\leq\lim_{k\to\infty}\prod_{j=1}^k\sup_{z\in\mathbb{S}_I(r_j)}\P(\sigma_I^z\leq\theta_i^z\wedge\rho_I^z\wedge\xi_C^z\wedge T)\\
	&\leq\lim_{k\to\infty}(1-\ve)^k,
\end{align*}
which is equal to zero. This proves \eqref{eq:thetaiythetaIy1} holds.
\end{proof}

\begin{lem}\label{lem:jitter4}
Suppose Assumption \ref{ass:elliptic} holds and $b(\alpha,\cdot)\equiv0$. Then for each $x\in G$, a.s.\ $\Z^x$ satisfies condition 4 of the boundary jitter property.
\end{lem}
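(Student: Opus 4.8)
The plan is to deduce condition 4 from condition 3, already established for $b(\alpha,\cdot)\equiv0$ in Lemma \ref{lem:jitter3}, together with the assertion that just after time $0$ the reflected diffusion visits each face adjoining its starting point; the latter is extracted from the hitting estimate \eqref{eq:thetaixtaux} by a shell-crossing argument in the spirit of the proof of Lemma \ref{lem:jitter3}.

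If $x\notin\U$ condition 4 holds vacuously, so suppose $x\in\U$, set $I_0\doteq\allN(x)\in\J$ (so $|I_0|\ge2$ and $x\in F_{I_0}$), and fix $i\in I_0$ and $\delta>0$; since there are finitely many such $i$ and it suffices to let $\delta$ run over a sequence decreasing to $0$, it is enough to show that a.s.\ there is $s\in(0,\delta)$ with $\allN(\Z^x_s)=\{i\}$. By Lemma \ref{lem:allNusc}, fix an open neighborhood $B$ of $x$ in $G$ with $\allN(y)\subseteq I_0$ for all $y\in B$, and let $\chi^x\doteq\inf\{t>0:\Z^x_t\notin B\}$, so a.s.\ $\chi^x>0$ and $\chi^x\le\rho_{I_0}^x$. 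The first reduction is that it suffices to prove
\be\label{eq:jitter4aux}\P\big(\Z^x\text{ visits }F_i\text{ at some time in }(0,\delta\wedge\chi^x)\big)=1.\ee
Indeed, on this event pick such a time $s$; then $\Z^x_s\in B$, so $i\in\allN(\Z^x_s)\subseteq I_0$. If $\allN(\Z^x_s)=\{i\}$ we are done; otherwise $\Z^x_s\in\U$ with $\allN(\Z^x_s)\in\J$, and condition 3 (Lemma \ref{lem:jitter3} on $[0,\lceil\delta\rceil]$) applied at time $s$ and index $i\in\allN(\Z^x_s)$ produces $u\in(s/2,s)\subset(0,\delta)$ with $\allN(\Z^x_u)=\{i\}$.

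To prove \eqref{eq:jitter4aux}, fix $C<\infty$ with $C>\sup_{y\in B}|y|$ and let $\ve\in(0,1)$, $k_0\in\N$ be as in Lemma \ref{lem:jitter3prelim}, so that \eqref{eq:thetaixtaux} holds, with $I=I_0$, $T=\delta$ and this $C$, for every $z\in\mathbb{S}_{I_0}(r_k)$ and $k\ge k_0$. For $k\ge1$ set $T^x_k\doteq\inf\{t>0:\dist(\Z^x_t,F_{I_0})=r_k\}$; since $\dist(x,F_{I_0})=0$ the sequence $(T^x_k)_k$ is nonincreasing, and using Lemma \ref{lem:ZtGcirc} (which gives $\dist(\Z^x_t,F_{I_0})\ge\dist(\Z^x_t,\partial G)>0$ along a sequence of times $t\downarrow0$) together with the continuity of $\Z^x$, one checks that a.s.\ $T^x_k\downarrow0$ as $k\to\infty$, so $T^x_k<\delta\wedge\chi^x$ for all sufficiently large $k$. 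Since moreover $\dist(\Z^x_t,F_{I_0})<r_m$ on $[0,T^x_m)$, for $m\ge k$ the restarted process $\{\Z^x_{T^x_{m+1}+t},t\ge0\}$, which starts on $\mathbb{S}_{I_0}(r_{m+1})$, first reaches distance $2r_{m+1}=r_m$ from $F_{I_0}$ at relative time $T^x_m-T^x_{m+1}$. For $k\ge k_0$ and $N>k$, consider the event $E_{k,N}\doteq\{\Z^x\text{ does not visit }F_i\text{ on }(0,T^x_k]\}\cap\{T^x_k\le\delta\wedge\chi^x\}$. On $E_{k,N}$, for each $m$ with $k\le m<N$ the restart at $T^x_{m+1}$ reaches distance $r_m$ from $F_{I_0}$ before it hits $F_i$, before it hits $\cup_{j\in\allN\setminus I_0}F_j$ (the process stays in $B$ up to time $T^x_k\ge T^x_m$), before it leaves the ball of radius $C$, and before relative time $\delta$ (as $T^x_m\le\delta$); by \eqref{eq:thetaixtaux} and the strong Markov property this event has conditional probability at most $1-\ve$ given $\F_{T^x_{m+1}}$, so multiplying over $m=k,\dots,N-1$ gives $\P(E_{k,N})\le(1-\ve)^{N-k}$. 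Letting $N\to\infty$ yields $\P(\Z^x\text{ does not visit }F_i\text{ on }(0,T^x_k]\text{ and }T^x_k\le\delta\wedge\chi^x)=0$; since a.s.\ $T^x_k\downarrow0<\delta\wedge\chi^x$, choosing $k\ge k_0$ (depending on $\omega$) with $T^x_k<\delta\wedge\chi^x$ shows that a.s.\ $\Z^x$ visits $F_i$ on $(0,T^x_k]\subset(0,\delta\wedge\chi^x)$, which is \eqref{eq:jitter4aux}.

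The main obstacle is making the shell-crossing iteration precise: one must check that on $E_{k,N}$ none of the auxiliary events in \eqref{eq:thetaixtaux} (hitting $\cup_{j\in\allN\setminus I_0}F_j$, leaving the ball of radius $C$, or running past time $\delta$) can occur before the next shell is reached, so that each factor is genuinely at most $1-\ve$, and one must accommodate the degeneracy that $x$---and possibly the restart points $\Z^x_{T^x_{m+1}}$---may already lie on $F_i$, so that the naive first-hitting time of $F_i$ vanishes and one must argue with first-return times instead. Everything else is a routine combination of the reduction to condition 3 with the hitting estimate of Lemma \ref{lem:jitter3prelim}.
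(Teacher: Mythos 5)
Your argument is correct and follows essentially the same route as the paper's proof: an inside-to-outside shell-crossing iteration based on the hitting estimate \eqref{eq:thetaixtaux} and the strong Markov property shows that $F_i$ is a.s.\ visited immediately after time zero, and condition 3 (Lemma \ref{lem:jitter3}) then upgrades a visit to $F_i$ to a visit to the relative interior of $F_i$. The only cosmetic differences are that you localize to the neighborhood $B$ and the window $(0,\delta)$ up front (the paper instead works with $\rho_I^x$, $\xi_C^x$ and $T$ and sends $C,T\to\infty$ at the end) and that you invoke Lemma \ref{lem:ZtGcirc} rather than Lemma \ref{lem:jitter2} to see that the shell-exit times decrease to zero; the ``obstacles'' you flag are handled the same way (and with the same level of detail) in the paper, and the degeneracy of restart points lying on $F_i$ is moot since on your event $E_{k,N}$ no such point can lie on $F_i$, while \eqref{eq:thetaixtaux} holds for all $y\in\mathbb{S}_I(r_k)$ regardless.
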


\begin{proof}
If $x\not\in\U$ condition 4 holds trivially by Definition \ref{def:jitter}. Fix $x\in\U$ and let $i\in\allN(x)$. Set $I\doteq\allN(x)$. Let $\{r_k\}_{k\in\N}$ in $(0,\infty)$, $k_0 \in \N$ and $\ve\in(0,1)$ be such that \eqref{eq:thetaixtaux} holds for all $k \geq k_0$. For $k\geq k_0$ and $y\in G$, let $\tau_I^y(r_k)\doteq\inf\{t>0:\dist(\Z_t^y,F_I)\geq r_k\}$. Then by \eqref{eq:tauIx}, $\tau_I^y(r_{k-1})=\tau_I^y$ for $y\in\mathbb{S}_I(r_k)$. Let $T<\infty$. By the continuity of the sample paths of $\Z^x$, the strong Markov property of $\Z^x$ and \eqref{eq:thetaixtaux}, for each ${k^\dagger \geq k_0}$, 
\begin{align*}
	\P\lb\tau_I^x({r_{k^\dagger}})\leq\theta_i^x\wedge\rho_I^x\wedge\xi_C^x\wedge T\rb&=\P\lb\tau_I^x(r_k)\leq\theta_i^x\wedge\rho_I^x\wedge\xi_C^x\wedge T\text{ for all $k\geq{k^\dagger}$}\rb\\
	&\leq\lim_{K\to\infty}\prod_{k={k^\dagger}+1}^K\sup_{y\in\mathbb{S}_I(r_k)}\P\lb\tau_I^y\leq\theta_i^y\wedge\rho_I^y\wedge\xi_C^y\wedge T\rb\\
	&\leq\lim_{K\to\infty}(1-\ve)^{K-{k^\dagger}}.
\end{align*}
Upon sending $C\to\infty$ and $T\to\infty$, this proves $\P(\theta_i^x\wedge\rho_I^x < \tau_I^x (r_k)\;\forall\;{k \geq k_0}) = 1$. Since a.s.\ $\Z^x$ spends zero Lebesgue time on the boundary by Lemma \ref{lem:jitter2}, it follows that a.s.\ for every $\delta>0$, there exists $t\in(0,\delta)$ such that $\dist(\Z_t^x,F_I)>0$. In particular, this implies that a.s.\ ${\tau^x_I(r)} \downarrow0$ as $r\downarrow0$. Consequently, $\P(\theta_i^x\wedge\rho_I^x=0)=1$. By the upper semicontinuity of $\allN(\cdot)$ (Lemma \ref{lem:allNusc}) and the continuity of $\Z^x$, a.s.\ $\rho_I^x>0$. Thus, $\P(\theta_i^x\wedge\rho_I^x=0)=1$ implies that $\P(\theta_i^x=0)=1$. Since $i\in\allN(x)$ was arbitrary and $\allN(x)$ is a finite set, we have $\P(\theta_i^x=0\;\forall\;i\in\allN(x))=1$. Along with Lemma \ref{lem:jitter3}, this implies the set 
	\be\label{eq:cond3thetai}\{\Z^x\text{ satisfies condition 3 of the boundary jitter property}\}\cap\{\theta_i^x=0\;\forall\;i\in\allN(x)\},\ee
has $\P$-measure one. Let $\omega$ belong to the set \eqref{eq:cond3thetai}. Let $i\in\allN(x)$ and $\delta>0$. Since $\theta_i^x(\omega)=0$ there exists $t\in(0,\delta)$ such that $\Z_t^x(\omega)\in F_i$. If $\Z_t^x(\omega)\in\S$, then $\allN(\Z_t^x(\omega))=\{i\}$. On the other hand, if $\Z_t^x(\omega)\in\U$, then condition 3 of the boundary jitter property implies there exists $s\in(0,t)$ such that $\allN(\Z_s^x(\omega))=\{i\}$. Since this holds for all $i\in\allN(x)$ and $\delta>0$, $\Z^x(\omega)$ satisfies condition 4 of the boundary jitter property.
\end{proof}

\begin{proof}[Proof of Theorem \ref{thm:jitter}]
Condition 2 of the boundary jitter property follow immediately from
Lemma \ref{lem:jitter2}. When $b(\alpha,\cdot)\equiv0$, conditions 1, 3 and
4 follow from {Lemmas \ref{lem:jitter1}}, \ref{lem:jitter3} and
\ref{lem:jitter4}, respectively. To see that conditions 1, 3 and 4 hold under general Lipschitz continuous drift coefficients, we use a change of measure argument. Since the filtration $\{\F_t\}$ is right-continuous, we see that 
	\be\label{eq:jitter4F0}\{\Z^x\text{ satisfies condition 4 of the boundary jitter property}\}\in\F_0.\ee
In addition, it is readily verified that for $T<\infty$,
	\be\label{eq:jitter3FT}\{\Z^x\text{ satisfies conditions 1 and 3 of the boundary jitter property for all }t\in[0,T]\}\in\F_T,\ee
and $\Z^x$ a.s.\ satisfies conditions 1 and 3 of the boundary jitter
property if and only if for each $T<\infty$, the event  {in} \eqref{eq:jitter3FT} has $\P$-measure one. Let $T<\infty$. Observe that the uniform ellipticity of $a(\alpha,\cdot)\doteq\sigma(\alpha,\cdot)\sigma^T(\alpha,\cdot)$ stated in Assumption \ref{ass:elliptic} ensures that $a^{-1}(\alpha,\cdot)$ exists, and define
	$$\wh\bm_t\doteq\bm_t-\int_0^t\sigma^T(\alpha,\Z_s^x)a^{-1}(\alpha,\Z_s^x)b(\alpha,\Z_s^x)ds,\qquad t\in[0,T].$$
By a standard argument using the Lipschitz continuity of $b(\alpha,\cdot)$ and Girsanov's transformation (see, e.g., the proof of \cite[Theorem 4.1]{Kang2010}), there is a probability measure $\wt\P$ on $(\Omega,\F_T)$ equivalent to $\P$ such that under $\wt\P$, $\{\wh\bm_t,t\in[0,T]\}$ is a Brownian motion on $(\Omega,\F_T,\wt\P)$. Substituting $\wh\bm$ into \eqref{eq:Z}, we see that
	$$\Z_t^x=x+\int_0^t\sigma(\alpha,\Z_s^x)d\wh\bm_s+\Y_t^x,\qquad t\in[0,T].$$
By Lemmas \ref{lem:jitter3} and \ref{lem:jitter4} and because $\wt\P$ and $\P$ are equivalent on $(\Omega,\F_T)$, the events \eqref{eq:jitter4F0} and \eqref{eq:jitter3FT} have $\P$-measure one. Since this holds for all $T<\infty$, the proof is complete.
\end{proof}

\section{The derivative process}\label{sec:derivativeprocessproof}

In this section we prove Theorem \ref{thm:dpunique}, which establishes pathwise uniqueness of a derivative process along the reflected
diffusion $\Z^{\alpha,x}$. In Section \ref{sec:dp}, we describe the relationship between the derivative process and an associated deterministic problem, 
called the derivative problem. In Section \ref{sec:pathunique} we prove pathwise uniqueness and provide conditions for strong existence of the derivative process.

\subsection{The derivative problem}\label{sec:dp}

The derivative problem was first introduced in \cite[Definition
3.4]{Lipshutz2016} as an axiomatic framework for characterizing
directional derivatives of the ESM. 

\begin{defn}\label{def:dp}
Let $\alpha\in U$. Suppose $(\z,\y)$ is a solution to the ESP $\{(d_i(\alpha),n_i,c_i),i\in\allN\}$ for $\x\in\cts_G(\R^J)$. Let $\psi\in\dr(\R^J)$. Then $(\phi,\eta)\in\dr(\R^J)\times\dr(\R^J)$ is a solution to the derivative problem (associated with $\{(d_i(\alpha),n_i,c_i)\}$) along $\z$ for $\psi$ if $\eta(0)\in\spaan[d(\alpha,\z(0))]$ and if for all $t\geq0$, the following conditions hold:
\begin{itemize}
	\item[1.] $\phi(t)=\psi(t)+\eta(t)\in H_{\z(t)}$;
	\item[2.] $\phi(t)\in H_{\z(t)}$;
	\item[3.] for all $s\in[0,t)$, 
		$$\eta(t)-\eta(s)\in\spaan\lsb\cup_{u\in(s,t]}d(\alpha,\z(u))\rsb.$$
\end{itemize}
If there exists a unique solution $(\phi,\eta)$ to the derivative problem along $\z$ for $\psi$, we write $\phi=\dm_\z^\alpha[\psi]$ and refer to $\dm_\z^\alpha$ as the derivative map along with $\z$.
\end{defn}

The derivative problem can be viewed as a linearization of the ESP along a given solution $(h,g)$ of the ESP (compare Definition \ref{def:dp} with Definition \ref{def:esp}).

\begin{remark}\label{rmk:dpde} 
Given $\alpha\in U$, $x\in G$ and a derivative process $\phib^{\alpha,x}$ along $\Z^{\alpha,x}$, let $\psib^{\alpha,x}=\{\psib_t^{\alpha,x},t\geq0\}$ be the continuous $\{\F_t\}$-adapted process taking values in $\lin(\R^M\times H_x,\R^J)$ defined, for all $t\geq0$ and $(\beta,y)\in\R^M\times H_x$, by
\begin{align}\label{eq:psib}
	\psib_t^{\alpha,x}[\beta,y]&=y+\int_0^tb'(\alpha,\Z_s^{\alpha,x})[\beta,\phib_s^{\alpha,x}[\beta,y]]ds+\int_0^t\sigma'(\alpha,\Z_s^{\alpha,x})[\beta,\phib_s^{\alpha,x}[\beta,y]]d\bm_s\\ \notag
	&\qquad+R'(\alpha)[\beta]\L_t^{\alpha,x},
\end{align}
where $\{R'(\alpha)[\beta]\L_t^{\alpha,x},t\geq0\}$ is the process defined in Remark \ref{rmk:DRL}. Then by the properties stated in Definition \ref{def:de} and the statement of the derivative problem in Definition \ref{def:dp}, a.s.\ for all $(\beta,y)\in\R^M\times H_x$, $(\phib^{\alpha,x}[\beta,y],\etab^{\alpha,x}[\beta,y])$ is a solution to the derivative problem along $\Z^{\alpha,x}$ for $\psib^{\alpha,x}[\beta,y]$.
\end{remark}

The following Lipschitz continuity property of the derivative map was established in \cite{Lipshutz2016}.

\begin{prop}[{\cite[Theorem 5.4]{Lipshutz2016}}]
\label{prop:dmlip}
Let $\alpha\in U$. There exists $\lip_\dm(\alpha)<\infty$ such that if $(\z,\y)$ is a solution to the ESP $\{(d_i(\alpha),n_i,c_i),i\in\allN\}$ for $\x$, $(\phi_1,\eta_1)$ is a solution to the derivative problem along $\z$ for $\psi_1\in\cts(\R^J)$, and $(\phi_2,\eta_2)$ is a solution to the derivative problem along $\z$ for $\psi_2\in\cts(\R^J)$, then for all $t<\infty$,
	\be
\label{DMLC}
\norm{\phi_1-\phi_2}_t\leq\lip_\dm(\alpha)\norm{\psi_1-\psi_2}_t.\ee
\end{prop}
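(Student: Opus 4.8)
The plan is to use the linearity of the derivative problem in its input together with a Lyapunov argument built from the set $B^\alpha$ of Assumption~\ref{ass:setB}, in parallel with the proof of Lipschitz continuity of the ESM (Proposition~\ref{prop:esmlip}). First I would reduce the assertion to an a priori bound for a single solution. Since $H_{\z(t)}$ is a linear subspace and $\spaan[\cup_{u\in(s,t]}d(\alpha,\z(u))]$ is a linear space, conditions 1--3 of Definition~\ref{def:dp} are all preserved under subtraction: if $(\phi_1,\eta_1)$ and $(\phi_2,\eta_2)$ solve the derivative problem along $\z$ for $\psi_1$ and $\psi_2$, then $(\phi_1-\phi_2,\eta_1-\eta_2)$ solves it along $\z$ for $\psi_1-\psi_2$ (in particular $(\eta_1-\eta_2)(0)\in\spaan[d(\alpha,\z(0))]$ and $(\phi_1-\phi_2)(t)\in H_{\z(t)}$). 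It therefore suffices to produce a constant $\lip_\dm(\alpha)<\infty$, independent of $\z$ and $\psi$, such that any solution $(\phi,\eta)$ of the derivative problem along $\z$ for $\psi\in\cts(\R^J)$ satisfies $\norm{\phi}_t\le\lip_\dm(\alpha)\norm{\psi}_t$.

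For the a priori bound I would work with the Minkowski gauge $\rho_B(x)\doteq\inf\{\lambda>0:x\in\lambda B^\alpha\}$, which is a norm comparable to $|\cdot|$ (with constants depending only on $B^\alpha$) because $B^\alpha$ is compact, convex, symmetric and contains $0$ in its interior. Writing $\rho_B$ as the support function of the polar body, $\rho_B(x)=\max_{\nu\in(B^\alpha)^\circ}\ip{\nu,x}$, the maximizing $\nu$ at a point $x\ne0$ is proportional, up to sign, to the inward normal to $B^\alpha$ at $x/\rho_B(x)\in\partial B^\alpha$. The geometric heart of the argument is Assumption~\ref{ass:setB}: whenever $i\in\allN(\z(t))$ we have $\ip{\phi(t),n_i}=0$ (since $\phi(t)\in H_{\z(t)}$), so with $x=\phi(t)$ the point $x/\rho_B(x)$ lies on $\partial B^\alpha$ and satisfies $|\ip{x/\rho_B(x),n_i}|=0<\delta_\alpha$, whence $\ip{\nu,d_i(\alpha)}=0$ for the relevant dual vector $\nu$. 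As the increments of $\eta$ lie in $\spaan\{d_i(\alpha):i\in\allN(\z(t))\}$, this says exactly that the constraining directions are $\rho_B$-tangent to the level set of $\rho_B$ through $\phi(t)$ (in both orientations, which is why the \emph{span} rather than the cone causes no difficulty). Hence the term $\eta$ cannot increase $\rho_B(\phi)$, and I would conclude $\rho_B(\phi(t))\le\sup_{s\le t}\rho_B(\psi(s))$; converting gauges back to $|\cdot|$ then yields $\norm{\phi}_t\le\lip_\dm(\alpha)\norm{\psi}_t$ with $\lip_\dm(\alpha)$ depending only on $B^\alpha$.

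The jumps of $(\phi,\eta)$ are handled cleanly by the dual-vector identity: at a jump time $t$ one has $\phi(t)-\phi(t-)=\eta(t)-\eta(t-)\in\spaan[d(\alpha,\z(t))]$ (since $\psi$ is continuous), so taking $\nu$ to be the dual vector at $\phi(t)$, tangency gives $\ip{\nu,\phi(t)-\phi(t-)}=0$ and therefore $\rho_B(\phi(t))=\ip{\nu,\phi(t)}=\ip{\nu,\phi(t-)}\le\rho_B(\phi(t-))$; the initial condition $\eta(0)\in\spaan[d(\alpha,\z(0))]$ is treated identically, giving $\rho_B(\phi(0))\le\rho_B(\psi(0))$. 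The main obstacle is the continuous part, where $\eta$ need not have bounded variation, so the increment relation $\ip{\nu,d\eta}=0$ cannot be integrated naively; here I would invoke the Dupuis--Ishii test-function technique underlying \cite[Theorem 3.3]{Ramanan2006} and Proposition~\ref{prop:esmlip}, which replaces pointwise differentiation of $\rho_B(\phi)$ by a comparison using only convexity of $B^\alpha$ and the tangency relation. Finally, because there are only finitely many faces and hence finitely many configurations $\allN(\z(t))=I$, $I\in\J$, all the tangency estimates and gauge comparisons hold with constants uniform in $\z$ and $t$, yielding a single $\lip_\dm(\alpha)$. As a consistency check, when $\z=\esm^\alpha(\x)$ satisfies the boundary jitter property, $\phi$ coincides with the right-continuous directional derivative of $\esm^\alpha$ at $\x$ in the direction $\psi$, so the bound can alternatively be read off from Proposition~\ref{prop:esmlip} with $\lip_\dm(\alpha)=\lip_\esm(\alpha)$; the argument above is needed precisely because the statement concerns arbitrary $\z$ and $\psi$.
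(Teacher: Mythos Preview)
The paper does not prove this proposition; it simply cites it as \cite[Theorem 5.4]{Lipshutz2016}. So there is no ``paper's own proof'' to compare against here.

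That said, your sketch is the right strategy and almost certainly mirrors the cited proof. The reduction by linearity to a single solution is correct (all three conditions in Definition~\ref{def:dp} are linear in $(\phi,\eta,\psi)$), and the key geometric observation is exactly the one you isolate: because $\phi(t)\in H_{\z(t)}$, the point $\phi(t)/\rho_B(\phi(t))$ lies in the slab $\{|\ip{\cdot,n_i}|<\delta_\alpha\}$ for every $i\in\allN(\z(t))$, so Assumption~\ref{ass:setB} forces the span of the active reflection directions to be $\rho_B$-tangent there. Your treatment of jumps and of the initial condition via the dual vector is clean and correct.

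The one place where your write-up is still a sketch rather than a proof is the continuous part. You correctly flag that $\eta$ need not have bounded variation, but ``invoke the Dupuis--Ishii test-function technique'' is doing real work that you have not actually carried out. Concretely, you would need to show that whenever $\rho_B(\phi(t))>\sup_{s\le t}\rho_B(\psi(s))$, there is an earlier time $s$ with $\rho_B(\phi(s))\ge\rho_B(\phi(t))$, and this requires carefully exploiting condition~3 of Definition~\ref{def:dp} over intervals (where the increment lies in the span of directions active over the \emph{whole} interval, not just at the endpoint) together with convexity of $B^\alpha$. This is exactly the argument in \cite{Dupuis1991,Ramanan2006} adapted to the linear setting, and your intuition that it goes through with $\lip_\dm(\alpha)$ depending only on $B^\alpha$ is correct; but if you were to write this up in full you would need to fill in that step rather than cite it by analogy.
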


Note that the Lipschitz constant in \eqref{DMLC} depends only on $\alpha\in U$, and not on $h \in \cts_G(\R^J)$. 

\subsection{Pathwise uniqueness of the derivative process}\label{sec:pathunique}

\begin{proof}[Proof of Theorem \ref{thm:dpunique}]
Let $(\beta,y)\in\R^M\times H_x$. According to Remark \ref{rmk:dpde}, 
\begin{align}\label{eq:phibdmpsib}
	\phib^{\alpha,x}[\beta,y]=\dm_{\Z^{\alpha,x}}^\alpha[\psib^{\alpha,x}[\beta,y]]\qquad\text{and}\qquad\wt\phib^{\alpha,x}[\beta,y]=\dm_{\Z^{\alpha,x}}^\alpha[\wt\psib^{\alpha,x}[\beta,y]],
\end{align}
where $\psib^{\alpha,x}$ is defined as in \eqref{eq:psib} and $\wt\psib^{\alpha,x}$ is defined analogously, but with $\wt\phib^{\alpha,x}$ and $\wt\psib^{\alpha,x}$ in place of $\phib^{\alpha,x}$ and $\psib^{\alpha,x}$, respectively. By \eqref{eq:psib}, the Cauchy-Schwarz inequality, the BDG inequalities, the bounds on $\norm{b'(\alpha,x)}$ and $\norm{\sigma'(\alpha,x)}$ stated in Assumption \ref{ass:drift} and Tonelli's theorem, we have, for $t\geq0$,
\begin{align*}
	\E\lsb\norm{\psib^{\alpha,x}[\beta,y]-\wt\psib^{\alpha,x}[\beta,y]}_t^2\rsb&\leq2\E\lsb\sup_{s\in[0,t]}\left|\int_0^sb'(\alpha,\Z_u^{\alpha,x})[0,\phib^{\alpha,x}[\beta,y]-\wt\phib^{\alpha,x}[\beta,y]]du\right|^2\rsb\\
	&\qquad+2\E\lsb\sup_{s\in[0,t]}\left|\int_0^s\sigma'(\alpha,\Z_u^{\alpha,x})[0,\phib^{\alpha,x}[\beta,y]-\wt\phib^{\alpha,x}[\beta,y]]dW_u\right|^2\rsb\\
	&\leq2\lip_{b,\sigma}^2(t+C_2)\int_0^t\E\lsb\norm{\phib^{\alpha,x}[\beta,y]-\wt\phib^{\alpha,x}[\beta,y]}_s^2\rsb ds.
\end{align*}
Using \eqref{eq:phibdmpsib}, the Lipschitz continuity of the derivative map shown in Proposition \ref{prop:dmlip} and applying Gronwall's inequality, we obtain,
	$$\E\lsb\norm{\phib^{\alpha,x}[\beta,y]-\wt\phib^{\alpha,x}[\beta,y]}_t^2\rsb=0.$$
Since $t\geq0$ and $(\beta,y)\in\R^M\times H_x$ were arbitrary, and both $\phib^{\alpha,x}$ and $\wt\phib^{\alpha,x}$ are linear functions of $(\beta,y)\in\R^M\times H_x$, this proves that a.s.\ $\phib^{\alpha,x}=\wt\phib^{\alpha,x}$.
\end{proof}

\section{Pathwise differentiability of reflected diffusions}\label{sec:pathwiseproof}

In Section \ref{sec:SMderivatives} we recall the definition and characterization of a directional derivative of the ESM $\esm^\alpha$ from \cite{Lipshutz2016}. In Sections \ref{subs:pfpathwise} and \ref{sec:psivepsi} we use properties of these directional derivatives to characterize pathwise derivatives of a reflected diffusion in terms of derivative processes. 

\subsection{Directional derivatives of the ESM}\label{sec:SMderivatives}

Fix $\alpha\in U$. Recall the definition of the ESP $\{(d_i(\alpha),n_i,c_i),i\in\allN\}$ given in Definition \ref{def:esp}. By Proposition \ref{prop:esp}, the associated ESM $\esm^\alpha$ is well defined on $\cts_G(\R^J)$. We now introduce the notion of a directional derivative of $\esm^\alpha$. For $\x\in\cts_G(\R^J)$, $\psi\in\cts(\R^J)$ and $\ve>0$, define $\nabla_\psi^\ve\esm(\x)\in\cts(\R^J)$ by
	\be\label{eq:nablavepsiesm}\nabla_\psi^\ve\esm^\alpha(\x)\doteq\frac{\esm^\alpha(\x+\ve\psi)-\esm^\alpha(\x)}{\ve}.\ee

\begin{defn}
Given $\x\in\cts_G(\R^J)$ and $\psi\in\cts(\R^J)$, the directional derivative of $\esm^\alpha$ evaluated at $\x$ in the direction $\psi$ is a function $\nabla_\psi\esm^\alpha(\x)$ from $[0,\infty)$ into $\R^J$ defined as the pointwise limit
	\be\label{eq:nablapsiesm}\nabla_\psi\esm^\alpha(\x)(t)\doteq\lim_{\ve\downarrow0}\nabla_\psi^\ve\esm^\alpha(\x)(t),\qquad t\geq0.\ee
\end{defn}

\begin{prop}[{\cite[Proposition 2.17]{Lipshutz2016}}]
\label{prop:esmpsive}
Given $\x\in\cts_G(\R^J)$ and $\psi\in\cts(\R^J)$ such that $\nabla_\psi\esm^\alpha(\x)$ exists, suppose $\{\psi_\ve\}_{\ve>0}$ is a family in $\cts(\R^J)$ such that $\psi^\ve\to\psi$ in $\cts(\R^J)$ as $\ve\downarrow0$. Then 
	$$\lim_{\ve\downarrow0}\nabla_{\psi^\ve}^\ve\esm^\alpha(\x)(t)=\nabla_\psi\esm^\alpha(\x)(t),\qquad t\geq0.$$
\end{prop}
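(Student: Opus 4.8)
The plan is to add and subtract the intermediate quantity $\nabla_\psi^\ve\esm^\alpha(\x)(t)$ and estimate the two resulting pieces separately: the first piece will be controlled by the path-independent Lipschitz continuity of the ESM (Proposition \ref{prop:esmlip}), and the second piece vanishes by the standing hypothesis that $\nabla_\psi\esm^\alpha(\x)$ exists.

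Fix $t\geq0$. First I would observe that, since $\nabla_\psi\esm^\alpha(\x)$ is assumed to exist, the quantities $\nabla_\psi^\ve\esm^\alpha(\x)(t)$ — and hence the images $\esm^\alpha(\x+\ve\psi)$ — are well defined for all $\ve>0$ sufficiently small, and likewise, since $\psi^\ve\to\psi$ in $\cts(\R^J)$, the quantities $\nabla_{\psi^\ve}^\ve\esm^\alpha(\x)(t)$ and the images $\esm^\alpha(\x+\ve\psi^\ve)$ are well defined for small $\ve$; in particular $\x+\ve\psi$ and $\x+\ve\psi^\ve$ lie in $\cts_G(\R^J)$ for such $\ve$. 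Then I would write
\[
\nabla_{\psi^\ve}^\ve\esm^\alpha(\x)(t)-\nabla_\psi\esm^\alpha(\x)(t)
=\bigl[\nabla_{\psi^\ve}^\ve\esm^\alpha(\x)(t)-\nabla_\psi^\ve\esm^\alpha(\x)(t)\bigr]
+\bigl[\nabla_\psi^\ve\esm^\alpha(\x)(t)-\nabla_\psi\esm^\alpha(\x)(t)\bigr].
\]
The second bracket tends to $0$ as $\ve\downarrow0$ directly from the definition \eqref{eq:nablapsiesm} of the directional derivative and the hypothesis. For the first bracket, I would use \eqref{eq:nablavepsiesm} to rewrite it as $\ve^{-1}\bigl(\esm^\alpha(\x+\ve\psi^\ve)(t)-\esm^\alpha(\x+\ve\psi)(t)\bigr)$, bound its norm by $\ve^{-1}\norm{\esm^\alpha(\x+\ve\psi^\ve)-\esm^\alpha(\x+\ve\psi)}_t$, and apply Proposition \ref{prop:esmlip} to the inputs $\x+\ve\psi^\ve$ and $\x+\ve\psi$; since their difference is $\ve(\psi^\ve-\psi)$, this gives the bound $\ve^{-1}\lip_\esm(\alpha)\,\ve\norm{\psi^\ve-\psi}_t=\lip_\esm(\alpha)\norm{\psi^\ve-\psi}_t$, which tends to $0$ because $\psi^\ve\to\psi$ uniformly on compacts. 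Adding the two estimates yields the claim.

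The proof is short, and the only point worth flagging is that the Lipschitz constant $\lip_\esm(\alpha)$ supplied by Proposition \ref{prop:esmlip} is independent of the input paths; this uniformity is precisely what allows the factor $\ve^{-1}$ to be absorbed, and is the crux of the argument. The one minor technical caveat is the verification that $\x+\ve\psi$ and $\x+\ve\psi^\ve$ remain admissible inputs (i.e.\ start in $G$) for all small $\ve$, which is implicit in the hypothesis and in the well-posedness of the expression appearing in the conclusion.
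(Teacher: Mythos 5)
Your argument is correct and is precisely the standard proof of this fact: the paper itself does not prove Proposition \ref{prop:esmpsive} but quotes it from \cite{Lipshutz2016}, where the argument is the same triangle-inequality decomposition combined with the path-independent Lipschitz bound of Proposition \ref{prop:esmlip}, which absorbs the factor $\ve^{-1}$ exactly as you describe. Your caveat about $\x+\ve\psi^\ve$ needing to start in $G$ is the right thing to flag and is indeed implicit in the well-posedness of the expressions in the statement (and is automatically satisfied in the paper's application, where $\Psi^\ve(\beta,y)$ and $\Psi(\beta,y)$ share the initial value $y\in G_x$).
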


\begin{prop}\label{prop:nablaesmlip}
Given $\x,\psi,\wt\psi\in\cts(\R^J)$, suppose $\nabla_\psi\esm^\alpha(\x)$ and $\nabla_{\wt\psi}\esm^\alpha(\x)$ exist. Then for all $t<\infty$,
	\be\label{eq:nablaesmlip}\norm{\nabla_\psi\esm(\x)-\nabla_{\wt\psi}\esm(\x)}_t\leq\lip_{\esm}(\alpha)\norm{\psi-\wt\psi}_t.\ee
\end{prop}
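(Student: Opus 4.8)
The plan is to deduce the Lipschitz bound \eqref{eq:nablaesmlip} directly from the Lipschitz continuity of the ESM itself (Proposition \ref{prop:esmlip}) by a limiting argument. Fix $\x,\psi,\wt\psi\in\cts(\R^J)$ with $\x(0)\in G$ so that $\x,\x+\ve\psi,\x+\ve\wt\psi\in\cts_G(\R^J)$ for $\ve>0$ small (in fact the relevant paths here live in $\cts_G(\R^J)$ because $\nabla_\psi\esm^\alpha(\x)$ being well defined presupposes it), and fix $t<\infty$. First I would write, for each such $\ve>0$,
\[
	\nabla_\psi^\ve\esm^\alpha(\x)-\nabla_{\wt\psi}^\ve\esm^\alpha(\x)=\frac{\esm^\alpha(\x+\ve\psi)-\esm^\alpha(\x+\ve\wt\psi)}{\ve},
\]
using the definition \eqref{eq:nablavepsiesm}. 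Now apply Proposition \ref{prop:esmlip} to the two inputs $\x_1=\x+\ve\psi$ and $\x_2=\x+\ve\wt\psi$ (both in $\cts_G(\R^J)$): since $\esm^\alpha(\x_j)$ is the $\z$-component of the unique ESP solution for $\x_j$, we get
\[
	\norm{\esm^\alpha(\x+\ve\psi)-\esm^\alpha(\x+\ve\wt\psi)}_t\leq\lip_{\esm}(\alpha)\norm{(\x+\ve\psi)-(\x+\ve\wt\psi)}_t=\lip_{\esm}(\alpha)\,\ve\,\norm{\psi-\wt\psi}_t.
\]
Dividing by $\ve$ yields $\norm{\nabla_\psi^\ve\esm^\alpha(\x)-\nabla_{\wt\psi}^\ve\esm^\alpha(\x)}_t\leq\lip_{\esm}(\alpha)\norm{\psi-\wt\psi}_t$ for every admissible $\ve>0$, a bound uniform in $\ve$.

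Next I would pass to the limit $\ve\downarrow0$. By hypothesis the pointwise limits $\nabla_\psi\esm^\alpha(\x)(s)=\lim_{\ve\downarrow0}\nabla_\psi^\ve\esm^\alpha(\x)(s)$ and $\nabla_{\wt\psi}\esm^\alpha(\x)(s)=\lim_{\ve\downarrow0}\nabla_{\wt\psi}^\ve\esm^\alpha(\x)(s)$ exist for each $s\geq0$. Hence for each fixed $s\in[0,t]$,
\[
	|\nabla_\psi\esm^\alpha(\x)(s)-\nabla_{\wt\psi}\esm^\alpha(\x)(s)|=\lim_{\ve\downarrow0}|\nabla_\psi^\ve\esm^\alpha(\x)(s)-\nabla_{\wt\psi}^\ve\esm^\alpha(\x)(s)|\leq\lip_{\esm}(\alpha)\norm{\psi-\wt\psi}_t.
\]
Taking the supremum over $s\in[0,t]$ gives exactly \eqref{eq:nablaesmlip}. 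Note no regularity (continuity, \cadlag, etc.) of the directional derivatives is needed for this step — only their pointwise existence, which is assumed — so the supremum on the left is well defined as a supremum of a bounded function on $[0,t]$.

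The argument is essentially routine; the only point requiring a moment's care is the bookkeeping of domains of definition, namely that $\x+\ve\psi$ and $\x+\ve\wt\psi$ lie in $\cts_G(\R^J)$ so that Proposition \ref{prop:esmlip} applies. This is immediate once one observes that the very existence of $\nabla_\psi\esm^\alpha(\x)$ (resp. $\nabla_{\wt\psi}\esm^\alpha(\x)$) presupposes $\esm^\alpha(\x+\ve\psi)$ (resp. $\esm^\alpha(\x+\ve\wt\psi)$) is defined for all small $\ve>0$, i.e. that those perturbed paths start in $G$; and $\esm^\alpha(\x)$ is defined, so $\x(0)\in G$ as well. There is no genuine obstacle here — the substantive content (Lipschitz continuity of $\esm^\alpha$ on path space) has already been imported from \cite{Ramanan2006} via Proposition \ref{prop:esmlip}, and the proposition is simply the statement that this Lipschitz property is inherited by the directional derivatives, with the same constant.
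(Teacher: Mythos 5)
Your proof is correct and follows essentially the same route as the paper's: apply the Lipschitz bound of Proposition \ref{prop:esmlip} to the pair $\x+\ve\psi$, $\x+\ve\wt\psi$, divide by $\ve$, pass to the pointwise limit, and take the supremum over $s\in[0,t]$. The extra remarks on domains of definition are harmless bookkeeping that the paper leaves implicit.
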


\begin{proof}
Let $t<\infty$ and $s\in[0,t]$. By \eqref{eq:nablapsiesm} and the Lipschitz continuity of the ESM stated in Proposition \ref{prop:esmlip},
\begin{align*}
	|\nabla_\psi\esm^\alpha(\x)(s)-\nabla_{\wt\psi}\esm^\alpha(\x)(s)|=\lim_{\ve\downarrow0}\ve^{-1}|\esm^\alpha(\x+\ve\psi)(s)-\esm^\alpha(\x+\ve\wt\psi)(s)|\leq\lip_{\esm}(\alpha)\norm{\psi-\wt\psi}_s.
\end{align*}
Taking suprema over $s\in[0,t]$ of both sides of the last display yields \eqref{eq:nablaesmlip}.
\end{proof}

The notion of directional derivatives of the one-dimensional Skorokhod map was first introduced in \cite{Mandelbaum1995} (see also \cite[Corollary 9.5.1]{Whitt2002} and \cite[Theorem 3.2]{ManRam10}) to prove a diffusion approximation of a time-inhomogeneous queue. Directional derivatives of ESMs in the orthant with reflection matrices that are $\mathcal{M}$-matrices were subsequently studied in \cite{ManRam10}. The result in \cite{ManRam10} covers a large class of ESPs of interest, including those arising in rank-based models \cite{Banner2005,Ichiba2011} and interacting particle systems \cite{Burdzy2002,Warren2007,Zam2004}, but does not include many others arising in applications, such as multiclass feedforward queueing networks (see, e.g., \cite{Chen2000}). In \cite{Lipshutz2016} directional derivatives of a much broader class of ESMs in polyhedral domains were characterized when the solution to the ESM satisfies the boundary jitter property. We now recall the main result in \cite{Lipshutz2016}. Recall the derivative map introduced in Definition \ref{def:dp}, and the set $\W^\alpha$ defined in \eqref{eq:Walpha}.

\begin{prop}[{\cite[Theorem 3.12]{Lipshutz2016}}]
\label{prop:smderivative}
Given $\x\in\cts_G$, let $(\z,\y)$ denote the solution to the ESP for $\x$. Suppose $\z(t)\not\in\W^\alpha$ for all $t\geq0$ and $(\z,\y)$ satisfies the boundary jitter property (Definition \ref{def:jitter}). Then for all $\psi\in\cts(\R^J)$, $\nabla_\psi\esm^\alpha(\x)$ exists, lies in $\dlr(\R^J)$ and $\dm_\z^\alpha[\psi]$ is equal to the right-continuous regularization of $\nabla_\psi\esm^\alpha(\x)$; that is, $\dm_\z^\alpha[\psi](t)=\nabla_\psi\esm^\alpha(\x)(t+)$ for all $t\geq0$. In addition, $\nabla_\psi\esm^\alpha(\x)(\cdot)$ is continuous at times $t>0$ for which $\z(t)\in G^\circ\cup\U$.  
\end{prop}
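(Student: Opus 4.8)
The plan is to take as the candidate limit the unique solution $(\phi,\eta)$ of the derivative problem along $\z$ for $\psi$, to show that the difference quotients $\nabla_\psi^\ve\esm^\alpha(\x)$ converge pointwise to its right-continuous regularization, and then to read off the $\dlr(\R^J)$-regularity and the continuity assertions from the structure of $(\phi,\eta)$. Concretely I would: (i) use the Lipschitz bound of Proposition~\ref{prop:esmlip} to get compactness of $\{\nabla_\psi^\ve\esm^\alpha(\x)\}_{\ve>0}$ and extract subsequential limits along $\ve_n\downarrow0$; (ii) show, using the boundary jitter property, that any such limit solves the derivative problem along $\z$ for $\psi$; (iii) invoke Proposition~\ref{prop:dmlip} (with $\psi_1=\psi_2=\psi$) for uniqueness of that solution, so that the full pointwise limit exists and equals $\dm_\z^\alpha[\psi]$ after right-continuous regularization; and (iv) extract the regularity statements.

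For step~(i): write $\z^\ve\doteq\esm^\alpha(\x+\ve\psi)=\x+\ve\psi+\y^\ve$ and $\esm^\alpha(\x)=\x+\y$. Proposition~\ref{prop:esmlip} gives $\norm{\y^\ve-\y}_t\leq\lip_{\esm}(\alpha)\,\ve\,\norm{\psi}_t$ for every $t<\infty$, so both $\nabla_\psi^\ve\esm^\alpha(\x)=\psi+\ve^{-1}(\y^\ve-\y)$ and the rescaled constraining terms $\ve^{-1}(\y^\ve-\y)$ are bounded on compact time intervals, uniformly in $\ve>0$. This yields enough compactness to extract, along any $\ve_n\downarrow0$, pointwise subsequential limits of the pair $\big(\nabla_\psi^{\ve_n}\esm^\alpha(\x),\,\ve_n^{-1}(\y^{\ve_n}-\y)\big)$ on a countable dense set of times; using the increment containment~2 of Definition~\ref{def:esp} (together with upper semicontinuity of $\allN(\cdot)$, Lemma~\ref{lem:allNusc}) to control the directional structure, one extends such a limit to a pair $(\phi,\eta)\in\dlr(\R^J)\times\dlr(\R^J)$ and verifies $\phi=\psi+\eta$ together with the $\spaan$-valued increment condition~3 of Definition~\ref{def:dp}.

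The substantive step is (ii), and within it the containment $\phi(t)\in H_{\z(t)}$ for every $t\geq0$, which is where the jitter property is essential. For $t$ with $\z(t)\in\partial G$ and $i\in\allN(\z(t))$, conditions~3 and~4 of Definition~\ref{def:jitter} ensure that $\z$ revisits the relative interior of $F_i$ in every left-neighborhood of $t$ (or every right-neighborhood, when $t=0$); hence the one-dimensional reflected coordinate $\ip{\z(\cdot),n_i}-c_i$ has its regulator active arbitrarily close to $t$, and the directional-derivative analysis for the one-dimensional Skorokhod map (in the coordinate $\ip{\cdot,n_i}$ with reflection direction $d_i(\alpha)$, recalling $\ip{d_i(\alpha),n_i}=1$) then forces $\ip{\phi(t),n_i}=0$, i.e.\ $\phi(t)\in H_{\z(t)}$; the condition $\eta(0)\in\spaan[d(\alpha,\z(0))]$ follows from the ESP at $t=0$ after rescaling. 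Thus any subsequential limit $(\phi,\eta)$ solves the derivative problem along $\z$ for $\psi$, and by Proposition~\ref{prop:dmlip} (with $\psi_1=\psi_2=\psi$) it is the unique such solution; consequently $\nabla_\psi\esm^\alpha(\x)=\phi$ pointwise, $\nabla_\psi\esm^\alpha(\x)\in\dlr(\R^J)$, and $\dm_\z^\alpha[\psi](t)=\nabla_\psi\esm^\alpha(\x)(t+)$ for all $t\geq0$.

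Finally, the regularity assertions follow from the structure of $(\phi,\eta)$. On any maximal interval where $\z\in G^\circ$ one has $d(\alpha,\z(\cdot))=\{0\}$, so $\eta$ is constant there and $\phi$ is continuous. At $t>0$ with $\z(t)\in\U$ one shows $\phi(t-)\in H_{\z(t)}$ by the same jitter input (along times in the relative interiors of the adjoining faces accumulating at $t$); combined with $\phi(t)\in H_{\z(t)}$, $\phi(t)-\phi(t-)\in\spaan[d(\alpha,\z(t))]$, and the fact that $\z(t)\notin\W^\alpha$ forces $H_{\z(t)}\cap\spaan[d(\alpha,\z(t))]=\{0\}$ (this is exactly the uniqueness in Lemma~\ref{lem:projx}), one obtains $\phi(t-)=\phi(t)$, i.e.\ continuity at $t$; the only possible discontinuities then occur at times $t$ with $\z(t)\in\S$. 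I expect the main obstacle to be step~(ii) at nonsmooth boundary points: there the admissible reflection directions form a genuine cone, the constraining terms $\y^\ve$ may have unbounded variation on compacts, the rescaled constraining terms need not converge in any strong topology, and it is precisely the excursion-level control furnished by the boundary jitter property — exploited through localization and an induction on $|\allN(\z(t))|$ — that makes the limit well defined and identifies it; organizing this bookkeeping correctly is the delicate part.
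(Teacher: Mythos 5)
This proposition is not proved in the paper at all: it is imported verbatim as \cite[Theorem 3.12]{Lipshutz2016}, so the only ``proof'' here is a citation, and your sketch has to be measured against the argument in that reference, which is structured quite differently (a direct, excursion-by-excursion construction of the limit with oscillation estimates and an induction on $|\allN(\z(t))|$, not a compactness/subsequential-limit argument).

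There are two genuine gaps in your sketch. First, step (i) does not deliver what the statement asserts. Uniform boundedness of $\nabla_\psi^\ve\esm^\alpha(\x)$ on compacts (which does follow from Proposition \ref{prop:esmlip}) plus a diagonal argument gives subsequential pointwise limits on a countable dense set of times, but the limit function is genuinely discontinuous, so Arzel\`a--Ascoli is unavailable and there is no canonical extension of a dense-set limit to an element of $\dlr(\R^J)$. More importantly, the conclusion you need is that the pointwise limit \eqref{eq:nablapsiesm} exists at \emph{every} fixed $t\geq0$, and uniqueness of the solution to the derivative problem cannot supply this: the derivative problem only pins down the right-continuous regularization, so it is consistent with your argument that at a time $t$ with $\z(t)\in\S$ the difference quotients $\nabla_\psi^{\ve}\esm^\alpha(\x)(t)$ oscillate between $\phi(t-)$ and $\phi(t)$ along different sequences $\ve_n\downarrow0$ and the limit in \eqref{eq:nablapsiesm} fails to exist. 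Ruling this out is exactly the analytic core of \cite[Theorem 3.12]{Lipshutz2016}, and it requires quantitative control of the oscillation of $\nabla_\psi^\ve\esm^\alpha(\x)$ near boundary hitting times, not just identification of subsequential limits.

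Second, the mechanism you propose in step (ii) for proving $\ip{\phi(t),n_i}=0$ is not valid for oblique reflection. The coordinate $\ip{\z(\cdot),n_i}-c_i$ is \emph{not} the image of $\ip{\x(\cdot),n_i}-c_i$ under the one-dimensional Skorokhod map: the constraining term $\y$ contains contributions $d_j(\alpha)\,d\push^j$ from other active faces $j\neq i$, and in general $\ip{d_j(\alpha),n_i}\neq0$, so $\ip{\y(\cdot),n_i}$ is neither nondecreasing nor supported on $\{s:\z(s)\in F_i\}$. A coordinatewise reduction of this kind only works for the special monotone class of \cite{ManRam10}; in the present generality the identification of $\phi(t)\in H_{\z(t)}$ has to be carried out through the coupled multidimensional structure (in \cite{Lipshutz2016} this is done via the derivative projection operators of Lemma \ref{lem:projx} and a localization argument that uses conditions 3 and 4 of the jitter property to reduce to times when only one face is active). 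Your final paragraph on continuity at points of $G^\circ\cup\U$ (constancy of $\eta$ on interior excursions, and $H_{\z(t)}\cap\spaan[d(\alpha,\z(t))]=\{0\}$ off $\W^\alpha$ forcing $\phi(t-)=\phi(t)$) is the right idea and matches the reference, but it presupposes the two steps above.
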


\subsection{Proof of Theorem \ref{thm:pathwise}}\label{subs:pfpathwise}

We first show that whenever the directional derivative of the ESM evaluated at almost every sample path of $\X^{\alpha,x}$ exists, pathwise derivatives of reflected diffusions a.s.\ exist and can be characterized in terms of the directional derivative of the ESM. This result may be useful in cases when the boundary jitter property does not hold {(e.g., when the diffusion coefficient is degenerate)} but the directional derivative of the ESM still {exists}  ({e.g., for the class considered in  \cite{ManRam10}}). Recall the definition of $\partial_{\beta,y}\Z^{\alpha,x}$ given in \eqref{eq:nablaZ}.

\begin{prop}\label{prop:psivepsi}
Suppose Assumption \ref{ass:Holder} holds. Let $\alpha\in U$, $x\in G$ and suppose a.s.\ $\nabla_\psi\esm(\X^{\alpha,x})$ exists for all $\psi\in\cts(\R^J)$ and takes values in $\dlim(\R^J)$. Then for each $(\beta,y)\in\R^M\times G_x$, a.s.\ $\partial_{\beta,y}\Z^{\alpha,x}$ exists and is characterized as the unique $\{\F_t\}$-adapted process that satisfies $\partial_{\beta,y}\Z^{\alpha,x}=\nabla_{\Psi(\beta,y)}\esm(\X^{\alpha,x})$, where $\Psi(\beta,y)$ satisfies, for all $t\geq0$,
\begin{align}\label{eq:PsibetayPhi}
	\Psi_t(\beta,y)&= y+\int_0^tb'(\alpha,\Z_s^{\alpha,x})[\beta,\partial_{\beta,y}\Z_s^{\alpha,x}]ds+\int_0^t\sigma'(\alpha,\Z_s^{\alpha,x})[\beta,\partial_{\beta,y}\Z_s^{\alpha,x}]dW_s\\ \notag
	&\qquad+R'(\alpha)[\beta]\L_t^{\alpha,x},
\end{align}
and $\{R'(\alpha)[\beta]\L_t^{\alpha,x},t\geq0\}$ is the process described in Remark \ref{rmk:DRL}.
\end{prop}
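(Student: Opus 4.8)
The plan is to set up a Picard-type fixed point argument in an appropriate space of $\{\F_t\}$-adapted processes. Given $(\beta,y)\in\R^M\times G_x$, I would first define the candidate input process $\Psi_t(\beta,y)$ by \eqref{eq:PsibetayPhi}, but phrased as a fixed point equation: I seek a pair $(\Phi,\Psi)$ of $\{\F_t\}$-adapted processes, with $\Phi$ taking values in $\dlim(\R^J)$, such that $\Phi=\nabla_{\Psi}\esm^\alpha(\X^{\alpha,x})$ (defined pathwise, a.s., using the hypothesis that $\nabla_\psi\esm^\alpha(\X^{\alpha,x})$ exists for all $\psi$) and $\Psi$ satisfies \eqref{eq:PsibetayPhi} with $\partial_{\beta,y}\Z^{\alpha,x}$ replaced by $\Phi$. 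Composing, this reduces to a single fixed point equation for $\Phi$, namely $\Phi=\mathcal{F}(\Phi)$ where $\mathcal{F}(\Phi)\doteq\nabla_{\Psi[\Phi]}\esm^\alpha(\X^{\alpha,x})$ and $\Psi[\Phi]$ denotes the right-hand side of \eqref{eq:PsibetayPhi}.

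\textbf{Contraction estimate.} The key step is to show $\mathcal{F}$ is a contraction (on $[0,T]$ for small $T$, then iterate over a partition of $[0,\infty)$, or directly via a Gronwall argument with a time-weighted norm). Given two candidates $\Phi_1,\Phi_2$, I would estimate $\E[\norm{\Psi[\Phi_1]-\Psi[\Phi_2]}_t^2]$ using the Cauchy--Schwarz inequality, the BDG inequalities, and Assumption \ref{ass:drift} (the bounds on $\norm{b'}$ and $\norm{\sigma'}$), which gives
\[
	\E\lsb\norm{\Psi[\Phi_1]-\Psi[\Phi_2]}_t^2\rsb\leq 2\lip_{b,\sigma}^2(t+C_2)\int_0^t\E\lsb\norm{\Phi_1-\Phi_2}_s^2\rsb ds;
\]
note the $R'(\alpha)[\beta]\L^{\alpha,x}$ term cancels since it does not depend on $\Phi$. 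Then, applying the Lipschitz continuity of the directional derivative of the ESM from Proposition \ref{prop:nablaesmlip} pathwise, $\norm{\mathcal{F}(\Phi_1)-\mathcal{F}(\Phi_2)}_t=\norm{\nabla_{\Psi[\Phi_1]}\esm^\alpha(\X^{\alpha,x})-\nabla_{\Psi[\Phi_2]}\esm^\alpha(\X^{\alpha,x})}_t\leq\lip_{\esm}(\alpha)\norm{\Psi[\Phi_1]-\Psi[\Phi_2]}_t$, so that
\[
	\E\lsb\norm{\mathcal{F}(\Phi_1)-\mathcal{F}(\Phi_2)}_t^2\rsb\leq 2(\lip_{\esm}(\alpha))^2\lip_{b,\sigma}^2(t+C_2)\int_0^t\E\lsb\norm{\Phi_1-\Phi_2}_s^2\rsb ds.
\]
A standard Picard iteration (or Gronwall on the difference of successive iterates) then yields a unique fixed point $\Phi=\partial_{\beta,y}\Z^{\alpha,x}$ in the space of adapted processes with $\dlim(\R^J)$ paths and finite second moment on compacts; measurability/adaptedness of each iterate is preserved because $\esm^\alpha$ is a measurable, nonanticipating map on path space and $\X^{\alpha,x}$ is adapted.

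\textbf{Identifying the fixed point with the pathwise derivative.} The remaining task is to show that this fixed point $\Phi$ actually equals the a.s.\ limit $\lim_{\ve\downarrow0}(\Z^{\alpha+\ve\beta,x+\ve y}-\Z^{\alpha,x})/\ve$. Here I would write $\Z^{\alpha+\ve\beta,x+\ve y}=\esm^{\alpha+\ve\beta}(\X^{\alpha+\ve\beta,x+\ve y})$ and use Lemma \ref{lem:push} (last sentence) to re-express this as $\esm^\alpha$ applied to a perturbed input $\X^{\alpha,x}+\ve\Psi^\ve+o(\ve)$, where the $O(1)$ part of the perturbation is governed by the Hölder continuity of $b',\sigma',R'$ (Assumption \ref{ass:Holder}), the moment bounds on $\partial_{\beta,y}^\ve\Z^{\alpha,x}$ from Lemma \ref{lem:UI}, and the continuity of the reflected-diffusion field (Proposition \ref{prop:continuousfield}). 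Then Proposition \ref{prop:esmpsive} (stability of the directional derivative under perturbations of the direction) lets me pass to the limit and conclude $\partial_{\beta,y}^\ve\Z^{\alpha,x}\to\nabla_{\Psi}\esm^\alpha(\X^{\alpha,x})=\Phi$ a.s., where $\Psi$ is the limiting input; a bootstrap/Gronwall argument confirms this limiting $\Psi$ solves \eqref{eq:PsibetayPhi}. \textbf{I expect this last identification step to be the main obstacle}, since it requires careful control of the error terms arising from the state-dependence of $b,\sigma$ and from the $\alpha$-dependence of the reflection directions (via the $(R(\alpha)-R(\wt\alpha))\ell$ term in Lemma \ref{lem:push}), and must reconcile the a.s.\ pathwise limit with the $L^2$-constructed fixed point — this is presumably where the detailed stochastic estimates of Section \ref{sec:pathwiseproof} are needed.
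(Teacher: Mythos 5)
Your proposal follows essentially the same route as the paper: the paper's Lemma \ref{lem:PhiPsi} constructs the pair $(\Phi(\beta,y),\Psi(\beta,y))$ by exactly your Picard/contraction argument (using the Lipschitz continuity of $\psi\mapsto\nabla_\psi\esm^\alpha(\X^{\alpha,x})$ from Proposition \ref{prop:nablaesmlip} plus Gronwall), and the identification step is carried out via Lemma \ref{lem:Zveesm} --- which gives the \emph{exact} representation $\Z^{\alpha+\ve\beta,x+\ve y}=\esm^\alpha(\X^{\alpha,x}+\ve\Psi^\ve(\beta,y))$ with no $o(\ve)$ remainder, since the difference-quotient form of $\Psi^\ve$ and the last statement of Lemma \ref{lem:push} absorb all error terms --- followed by the $L^2$ convergence $\Psi^\ve(\beta,y)\to\Psi(\beta,y)$ and Proposition \ref{prop:esmpsive}. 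You correctly flagged the identification step as the main technical burden; that is precisely where the paper's $\Delta^{(b)},\Delta^{(\sigma)},\Delta^{(R)}$ estimates and the H\"older continuity of Assumption \ref{ass:Holder} are deployed.
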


\begin{remark}
Since functions in $\dlim(\R^J)$ {are} Lebesgue measurable and $\partial_{\beta,y}\Z^{\alpha,x}$ is $\{\F_t\}$-adapted, the Lebesgue-Stieltjes and It\^o integrals in \eqref{eq:PsibetayPhi} are well defined. 
\end{remark}

The proof of Proposition \ref{prop:psivepsi} is given in Section \ref{sec:psivepsi}. 

\begin{proof}[Proof of Theorem \ref{thm:pathwise}]
By assumption, a.s.\ $\tau^{\alpha,x}=\infty$ and $(\Z^{\alpha,x},\Y^{\alpha,x})$ satisfies the boundary jitter property. Thus, by Proposition \ref{prop:smderivative}, a.s.\ for all $\psi\in\cts(\R^J)$, 
\begin{itemize}
	\item[(a)] $\nabla_\psi\esm^\alpha(\X^{\alpha,x})$ exists and lies in $\dlr(\R^J)$,
	\item[(b)] $\nabla_\psi\esm^\alpha(\X^{\alpha,x})(t+)=\dm_{\Z^{\alpha,x}}^\alpha[\psi](t)$ for all $t\geq0$.
	\item[(c)] {$\nabla_\psi\esm^\alpha(\X^{\alpha,x})(\cdot)$ is continuous at times $t>0$ for which $\Z_t^{\alpha,x}\in G^\circ\cup\U$.}
\end{itemize}
Therefore, by Proposition \ref{prop:psivepsi}, for each $(\beta,y)\in\R^M\times G_x$, a.s.\
\begin{itemize}
	\item[(d)] $\partial_{\beta,y}\Z^{\alpha,x}$ exists,
	\item[(e)] $\partial_{\beta,y}\Z^{\alpha,x}=\nabla_{\Psi(\beta,y)}\esm(\X^{\alpha,x})$, where $\Psi(\beta,y)$ is defined in \eqref{eq:Psibetay}. 
\end{itemize}
Consequently, by (e), (a) and (b), for each $(\beta,y)\in\R^M\times G_x$, a.s.\
\begin{itemize}
	\item[(f)] $\partial_{\beta,y}\Z^{\alpha,x}$ lies in $\dlr(\R^J)$ 
	\item[(g)] $\lim_{s\downarrow t}\partial_{\beta,y}\Z_s^{\alpha,x}=\dm_{\Z^{\alpha,x}}^\alpha[\Psi(\beta,y)](t)$ for all $t\geq0$.
\end{itemize}
Thus, by (c)--(f), parts (i) and (ii) of Theorem \ref{thm:pathwise} hold. We are left to prove that there exists a {pathwise unique} derivative process $\phib^{\alpha,x}$ {along $Z^{\alpha,x}$} and part (iii) holds.  

For each $(\beta,y)\in\R^M\times G_x$, set $\Xi(\beta,y)\doteq\dm_{\Z^{\alpha,x}}^\alpha[\Psi(\beta,y)]$ so by (g) a.s.\ $\Xi_t(\beta,y)$ is the right-continuous regularization of $\partial_{\beta,y}\Z^{\alpha,x}$. By the definition of the derivative problem in Definition \ref{def:dp}, \eqref{eq:PsibetayPhi} and the fact that $\Xi(\beta,y)$ is the right continuous regularization of $\partial_{\beta,y}\Z^{\alpha,x}$, $\Xi(\beta,y)$ a.s.\ satisfies, for all $t\geq0$,
\begin{align}\label{eq:Xibetay}
	\Xi_t(\beta,y)&=y+\int_0^tb'(\alpha,\Z_s^{\alpha,x})[\beta,\Xi_s(\beta,y)]ds+\int_0^t\sigma'(\alpha,\Z_s^{\alpha,x})[\beta,\Xi_s(\beta,y)]dW_s\\ \notag
	&\qquad+R'(\alpha)[\beta]\L_t^{\alpha,x}+\Pi_t(\beta,y),
\end{align}
where $\Pi(\beta,y)=\{\Pi_t(\beta,y),t\geq0\}$ is a $J$-dimensional \cadlag process a.s.\ satisfying $\Pi_0(\beta,y)\in\spaan[d(\alpha,x)]$ and for all $0\leq s\leq t<\infty$,
	\be\label{eq:Pibetay}\Pi_t(\beta,y)-\Pi_s(\beta,y)\in\spaan\lsb\cup_{u\in(s,t]}d(\alpha,\Z_u^{\alpha,x})\rsb.\ee
Define $\wt y\doteq y+\Pi_0(\beta,y)$ and $\wt\Pi_t(\beta,y)\doteq\Pi_t(\beta,y)-\Pi_0(\beta,y)$ for all $t\geq0$. Observe that $\wt y=\Xi_0(\beta,y)\in H_x$ and $\wt y-y=\Pi_0(\beta,y)\in\spaan[d(\alpha,x)]$. Thus, by the uniqueness of the derivative projection operator shown in Lemma \ref{lem:projx}, it holds that $\wt y=\proj_x^\alpha[y]$. Therefore, using \eqref{eq:Xibetay}, we see that $\Xi(\beta,y)$ satisfies, for all $t\geq0$,
\begin{align}\label{eq:Xi}
	\Xi_t(\beta,y)&=\proj_x^\alpha[y]+\int_0^tb'(\alpha,\Z_s^{\alpha,x})[\beta,\Xi_s(\beta,y)]ds+\int_0^t\sigma'(\alpha,\Z_s^{\alpha,x})[\beta,\Xi_s(\beta,y)]dW_s\\ \notag
	&\qquad+R'(\alpha)[\beta]\L_t^{\alpha,x}+\wt\Pi_t(\beta,y),
\end{align}
where, by the definition of $\wt\Pi(\beta,y)$ and \eqref{eq:Pibetay}, $\wt\Pi_t(\beta,y)$ a.s.\ satisfies $\wt\Pi_0(\beta,y)=0$ and for all $0\leq s\leq t<\infty$,
	\be\label{eq:Pi}\wt\Pi_t(\beta,y)-\wt\Pi_s(\beta,y)\in\spaan\lsb\cup_{u\in(s,t]}d(\alpha,\Z_u^{\alpha,x})\rsb.\ee
	
Let $\{(\beta_k,y_k)\}_{k=1,\dots,m}$ denote an orthonormal basis of $\R^M\times H_x$. Since the basis is a finite set, a.s.\ $\Xi(\beta_k,y_k)$ satisfies \eqref{eq:Xi} for each $k=1,\dots,m$. Define $\phib^{\alpha,x}[\beta_k,y_k]\doteq\Xi(\beta_k,y_k)$ for each $k=1,\dots,m$, and linearly extend the definition of $\phib^{\alpha,x}[\cdot,\cdot]$ to all of $\R^M\times H_x$. Due to the linearity of \eqref{eq:Xi} and the fact $\proj_x^\alpha[y_k]=y_k$ for $k=1,\dots,m$, it follows that a.s.\ $\phib^{\alpha,x}[\beta,y]$ satisfies \eqref{eq:de} for all $(\beta,y)\in\R^M\times H_x$. Thus, $\phib^{\alpha,x}$ is a derivative process along $\Z^{\alpha,x}$, which is pathwise unique by Theorem \ref{thm:dpunique}. Moreover, it follows from \eqref{eq:Xi} that for any $(\beta,y)\in\R^M\times H_x$, a.s. $\Xi(\beta,y)=\phib^{\alpha,x}[\beta,\proj_x^\alpha[y]]$. This proves the remaining part (iii) of Theorem \ref{thm:pathwise}.
\end{proof}

\subsection{Proof of Proposition \ref{prop:psivepsi}}\label{sec:psivepsi}

Given $\alpha\in U$, $x\in G$, $\beta\in\R^M$ and $y\in G_x$, let $\ve_0(\alpha,x,\beta,y)>0$ be such that \eqref{eq:ve0} holds. Recall the definition of $\partial_{\beta,y}^\ve\Z^{\alpha,x}$ given in \eqref{eq:nablaveZ}, note that 
	\be\label{eq:Zve}\Z^{\alpha+\ve\beta,x+\ve y}=\Z^{\alpha,x}+\ve\partial_{\beta,y}^\ve\Z^{\alpha,x}\ee
and define the $J$-dimensional continuous process $\Psi^\ve(\beta,y)=\{\Psi_t^\ve(\beta,y),t\geq0\}$, for $t\geq0$, by
\begin{align}\label{eq:Psive}
	\Psi_t^\ve(\beta,y)&\doteq y+\int_0^t\frac{b(\alpha+\ve\beta,\Z_s^{\alpha,x}+\ve\partial_{\beta,y}^{\ve}\Z_s^{\alpha,x})-b(\alpha,\Z_s^{\alpha,x})}{\ve}ds\\ \notag
	&\qquad+\int_0^t\frac{\sigma(\alpha+\ve\beta,\Z_s^{\alpha,x}+\ve\partial_{\beta,y}^\ve\Z_s^{\alpha,x})-\sigma(\alpha,\Z_s^{\alpha,x})}{\ve}d\bm_s\\ \notag
	&\qquad+\frac{R(\alpha+\ve\beta)-R(\alpha)}{\ve}\L_t^{\alpha+\ve\beta,x+\ve y},
\end{align}
where, analogous to Remark \ref{rmk:DRL}, the last term is interpreted as follows (recall that Assumption \ref{ass:DRL} holds):
\begin{itemize}
	\item[1.] If Condition \ref{cond:independent} holds, then $\L^{\alpha+\ve\beta,x+\ve y}=\{\L_t^{\alpha+\ve\beta,x+\ve y},t\geq0\}$ is the $N$-dimensional process described in Remark \ref{rmk:local}.
	\item[2.] Otherwise, $R(\alpha)$ is constant in $\alpha\in U$ and we interpret the process to be identically zero (even if the process $\L^{\alpha+\ve\beta,x+\ve y}$ is not well defined).
\end{itemize}

\begin{lem}\label{lem:Zveesm}
Given $\alpha\in U$, $x\in G$, $\beta\in\R^M$ and $y\in G_x$, let $\ve_0(\alpha,x,\beta,y)>0$ be as in \eqref{eq:ve0}. For $0<\ve<\ve_0(\alpha,x,\beta,y)$, define $\Psi^\ve(\beta,y)$ as in \eqref{eq:Psive}. Then a.s.\
	\be\label{eq:ZveESM}\Z^{\alpha+\ve\beta,x+\ve y}=\esm^\alpha(\X^{\alpha,x}+\ve\Psi^\ve(\beta,y)).\ee
Consequently, a.s.\
	\be\label{eq:nablaveZbound}\partial_{\beta,y}^\ve\Z^{\alpha,x}=\frac{\esm^\alpha(\X^{\alpha,x}+\ve\Psi^\ve(\beta,y))-\esm^\alpha(\X^{\alpha,x})}{\ve}.\ee
Furthermore, if Condition \ref{cond:independent} holds and $\L^{\alpha+\ve\beta,x+\ve y}$ is the process defined in Remark \ref{rmk:local}, then a.s.\
	\be\label{eq:Lvebound}\norm{\L^{\alpha+\ve\beta,x+\ve y}}_t\leq\lip_\ell(\alpha)\lip_{\esm}(\alpha)\lb\norm{\X^{\alpha,x}-x}_t+\ve\norm{\Psi^\ve(\beta,y)}_t\rb.\ee
\end{lem}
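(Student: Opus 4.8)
The plan is to realize the reflected diffusion $\Z^{\alpha+\ve\beta,x+\ve y}$ as the image, under the \emph{fixed} extended Skorokhod map $\esm^\alpha$, of an explicitly identified continuous input, and then read off all three assertions from that representation together with the Lipschitz bounds in Proposition~\ref{prop:esmlip} and Lemma~\ref{lem:push}. Throughout I abbreviate $\wt\alpha\doteq\alpha+\ve\beta$ and $\wt x\doteq x+\ve y$; by \eqref{eq:ve0}, $\wt\alpha\in U$ and $\wt x\in G$, so $\Z^{\wt\alpha,\wt x}$ and the processes $\X^{\wt\alpha,\wt x},\Y^{\wt\alpha,\wt x}$ are well defined, and by Remark~\ref{rmk:X}, a.s.\ $(\Z^{\wt\alpha,\wt x},\Y^{\wt\alpha,\wt x})$ solves the ESP $\{(d_i(\wt\alpha),n_i,c_i),i\in\allN\}$ for $\X^{\wt\alpha,\wt x}$. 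The key mechanism is the final clause of Lemma~\ref{lem:push}, applied with the roles of $\alpha$ and $\wt\alpha$ reversed, which converts a solution of the $\wt\alpha$-ESP into a solution of the $\alpha$-ESP for a perturbed input.

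Suppose first that Condition~\ref{cond:independent} holds. Then by Remark~\ref{rmk:local}, a.s.\ $\L^{\wt\alpha,\wt x}$ is the decomposition of $\Y^{\wt\alpha,\wt x}=R(\wt\alpha)\L^{\wt\alpha,\wt x}$ produced by Lemma~\ref{lem:push} for the $\wt\alpha$-ESP, so the final clause of that lemma gives that a.s.\ $(\Z^{\wt\alpha,\wt x},R(\alpha)\L^{\wt\alpha,\wt x})$ solves the ESP $\{(d_i(\alpha),n_i,c_i),i\in\allN\}$ for $\wt\X\doteq\X^{\wt\alpha,\wt x}+(R(\wt\alpha)-R(\alpha))\L^{\wt\alpha,\wt x}$. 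A routine computation — substituting the definition \eqref{eq:X} of $\X^{\wt\alpha,\wt x}$, the identity $\Z^{\wt\alpha,\wt x}=\Z^{\alpha,x}+\ve\partial_{\beta,y}^\ve\Z^{\alpha,x}$ from \eqref{eq:Zve}, the relation $\wt x=x+\ve y$, and the definition \eqref{eq:Psive} of $\Psi^\ve(\beta,y)$ — shows that $\wt\X=\X^{\alpha,x}+\ve\Psi^\ve(\beta,y)$, which lies in $\cts_G(\R^J)$ since $\wt\X(0)=\wt x\in G$. As $\esm^\alpha$ is well defined on $\cts_G(\R^J)$ (Proposition~\ref{prop:esp}), uniqueness yields \eqref{eq:ZveESM}. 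If instead $R(\cdot)$ is constant on $U$ — the remaining alternative in Assumption~\ref{ass:DRL} — then the data $\{(d_i(\wt\alpha),n_i,c_i)\}$ and $\{(d_i(\alpha),n_i,c_i)\}$ coincide and the last term of \eqref{eq:Psive} vanishes by convention, so $(\Z^{\wt\alpha,\wt x},\Y^{\wt\alpha,\wt x})$ already solves the $\alpha$-ESP for $\X^{\wt\alpha,\wt x}$, and the same substitution (now with $\wt\X=\X^{\wt\alpha,\wt x}$) gives \eqref{eq:ZveESM} again. Then \eqref{eq:nablaveZbound} follows at once from \eqref{eq:ZveESM}, the definition \eqref{eq:nablaveZ}, and the fact that $\Z^{\alpha,x}=\esm^\alpha(\X^{\alpha,x})$ a.s.\ (Remark~\ref{rmk:X}).

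For \eqref{eq:Lvebound} assume Condition~\ref{cond:independent} holds, and set $\wt\Y\doteq R(\alpha)\L^{\wt\alpha,\wt x}$, so that by the previous paragraph, a.s.\ $(\Z^{\wt\alpha,\wt x},\wt\Y)$ is the solution of the $\alpha$-ESP for $\wt\X=\X^{\alpha,x}+\ve\Psi^\ve(\beta,y)$. Since by Remark~\ref{rmk:local} each $[\L^{\wt\alpha,\wt x}]^i$ vanishes at $0$, is nondecreasing, and can increase only when $\Z^{\wt\alpha,\wt x}$ lies in $F_i$, the uniqueness assertion of Lemma~\ref{lem:push} identifies $\L^{\wt\alpha,\wt x}$ with the decomposition of $\wt\Y$ associated with this $\alpha$-ESP solution. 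Comparing against the trivial solution $(x,0)$, resp.\ $(x,0,0)$, of the $\alpha$-ESP for the constant path $x\in\cts_G(\R^J)$, the bound \eqref{eq:pushL} of Lemma~\ref{lem:push} and Proposition~\ref{prop:esmlip} give, a.s.\ for $t<\infty$,
$$\norm{\L^{\wt\alpha,\wt x}}_t\le\lip_\ell(\alpha)\norm{\wt\Y}_t\le\lip_\ell(\alpha)\lip_{\esm}(\alpha)\norm{\wt\X-x}_t\le\lip_\ell(\alpha)\lip_{\esm}(\alpha)\lb\norm{\X^{\alpha,x}-x}_t+\ve\norm{\Psi^\ve(\beta,y)}_t\rb,$$
which is \eqref{eq:Lvebound}. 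I expect the only genuinely delicate step to be the bookkeeping in the last two paragraphs: one must keep straight the two roles of $\L^{\wt\alpha,\wt x}$ — as the $R(\wt\alpha)$-decomposition of $\Y^{\wt\alpha,\wt x}$ for the $\wt\alpha$-ESP, and as the $R(\alpha)$-decomposition of $\wt\Y$ for the $\alpha$-ESP — and it is precisely this coincidence, combined with the uniqueness in Lemma~\ref{lem:push}, that lets the Lipschitz constants be taken at $\alpha$ rather than at $\wt\alpha$. The algebraic identity $\wt\X=\X^{\alpha,x}+\ve\Psi^\ve(\beta,y)$ is mechanical, and the case split according to whether $R(\cdot)$ is constant is forced only because Lemma~\ref{lem:push} presupposes Condition~\ref{cond:independent}.
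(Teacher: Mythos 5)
Your proof is correct and follows essentially the same route as the paper's: realize $\Z^{\alpha+\ve\beta,x+\ve y}$ as $\esm^\alpha$ applied to $\X^{\alpha,x}+\ve\Psi^\ve(\beta,y)$ via the final clause of Lemma \ref{lem:push} (with the roles of $\alpha$ and $\alpha+\ve\beta$ reversed) together with the algebraic identity from \eqref{eq:X} and \eqref{eq:Psive}, split into cases according to whether Condition \ref{cond:independent} holds, and deduce \eqref{eq:Lvebound} from \eqref{eq:pushL} and Proposition \ref{prop:esmlip} by comparison with the trivial solution $(x,0)$. Your write-up is merely more explicit than the paper's about the two roles played by $\L^{\alpha+\ve\beta,x+\ve y}$, which the paper leaves implicit in its citation of Lemma \ref{lem:push}.
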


\begin{proof}
Suppose \eqref{eq:ZveESM} holds. Then \eqref{eq:nablaveZbound} follows from \eqref{eq:nablaveZ}, \eqref{eq:ZveESM} and the fact that a.s.\ $\Z^{\alpha,x}=\esm(\X^{\alpha,x})$ by Remark \ref{rmk:X}. We now establish \eqref{eq:ZveESM}. According to Remark \ref{rmk:X}, 
	\be\label{eq:ZveesmXve}\Z^{\alpha+\ve\beta,x+\ve y}=\esm^{\alpha+\ve\beta}(\X^{\alpha+\ve\beta,x+\ve y}),\ee
and by \eqref{eq:X} and \eqref{eq:Psive},
	\be\label{eq:XveX}\X^{\alpha+\ve\beta,x+\ve y}=\X^{\alpha,x}+\ve\Psi^\ve(\beta,y)+\lb R(\alpha)- R(\alpha+\ve\beta)\rb\L^{\alpha+\ve\beta,x+\ve y},\ee
where the final term is taken to be zero if Condition \ref{cond:independent} does not hold. 

Suppose Condition \ref{cond:independent} holds. Then by Lemma \ref{lem:push}, \eqref{eq:ZveESM} holds. In addition, it follows from \eqref{eq:pushL} and the Lipschitz continuity of the ESM $\esm^\alpha$ that a.s., for all $t\geq0$,
	$$\norm{\L^{\alpha+\ve\beta,x+\ve y}}_t\leq\lip_\ell(\alpha)\norm{R(\alpha)\L^{\alpha+\ve\beta,x+\ve y}}_t\leq\lip_\ell(\alpha)\lip_{\esm}(\alpha)\lb\norm{\X^\alpha-x}_t+\ve\norm{\Psi^\ve(\beta,y)}_t\rb,$$
so \eqref{eq:Lvebound} holds. On the other hand, suppose Condition \ref{cond:independent} does not hold. Then Assumption \ref{ass:DRL} implies the directions of reflection are constant in $\alpha\in U$, so $\esm^{\alpha+\ve\beta}=\esm^\alpha$ and by convention, the final term on the right-hand side of \eqref{eq:XveX} is identically zero. Thus, \eqref{eq:ZveESM} follows from \eqref{eq:ZveesmXve} and \eqref{eq:XveX}.
\end{proof}

Given $\alpha\in U$, $x\in G$, $\beta\in\R^M$ and $y\in G_x$, let $\ve_0(\alpha,x,\beta,y)$ be as in \eqref{eq:ve0}. Set
	\be\label{eq:lipalphaxbetay}\lip(\alpha,x,\beta,y)\doteq\sup\lcb\lip_{\esm}(\alpha),\lip_{b,\sigma},\lip_R,\lip_\ell(\alpha+\ve\beta):0\leq\ve\leq\ve_0(\alpha,x,\beta,y)\rcb,\ee
and
	\be\label{eq:veast}\ve_\ast(\alpha,x,\beta,y)\doteq\min\lcb\ve_0(\alpha,x,\beta,y),\frac{1}{4(\lip(\alpha,x,\beta,y))^3|\beta|}\rcb>0.\ee
Since $\lip_\ell(\cdot)$ is bounded on compact subsets of $U$ by Lemma \ref{lem:push}, it follows that $\lip(\alpha,x,\beta,y)<\infty$.

\begin{lem}\label{lem:Xmomentbound}
Let $V\subset U$ and $K\subset G$ be compact subsets. Then for all $p\geq2$ and $t<\infty$, 
	\be\label{eq:uniformXmoment}\sup\{\E\lsb\norm{\X^{\alpha,x}-x}_t^p\rsb:\alpha\in V,x\in K\}<\infty.\ee
\end{lem}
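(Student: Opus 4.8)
The plan is to reduce the claimed uniform bound to a single‑parameter moment estimate, exploiting the continuity estimate \eqref{eq:XpKolmogorov} already recorded in Lemma \ref{lem:ZalphaxZbetaybounds}. Fix $p\ge 2$, $t<\infty$, and nonempty compact sets $V\subset U$ and $K\subset G$ (if either is empty the supremum is over the empty set and there is nothing to prove). Fix a reference point $(\alpha_0,x_0)\in V\times K$ and set $\delta_V\doteq\sup_{\alpha\in V}|\alpha-\alpha_0|<\infty$ and $\delta_K\doteq\sup_{x\in K}|x-x_0|<\infty$. For any $(\alpha,x)\in V\times K$ and $s\in[0,t]$, the triangle inequality gives
\[
 |\X_s^{\alpha,x}-x|\le |\X_s^{\alpha,x}-\X_s^{\alpha_0,x_0}| + |\X_s^{\alpha_0,x_0}-x_0| + |x_0-x|,
\]
hence $\norm{\X^{\alpha,x}-x}_t\le \norm{\X^{\alpha,x}-\X^{\alpha_0,x_0}}_t+\norm{\X^{\alpha_0,x_0}-x_0}_t+\delta_K$. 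Raising to the $p$-th power, taking expectations, using $(a+b+c)^p\le 3^{p-1}(a^p+b^p+c^p)$ and then \eqref{eq:XpKolmogorov}, we obtain
\[
 \E\!\left[\norm{\X^{\alpha,x}-x}_t^p\right]\le 3^{p-1}\!\left(\wt C^\dagger\delta_V^p+\wt C^\ddagger\delta_K^p+\delta_K^p+\E\!\left[\norm{\X^{\alpha_0,x_0}-x_0}_t^p\right]\right),
\]
where $\wt C^\dagger,\wt C^\ddagger<\infty$ are the constants from \eqref{eq:XpKolmogorov} associated with $p$, $t$, $V$, $K$. Since the right‑hand side does not depend on $(\alpha,x)\in V\times K$, it suffices to show that the single‑parameter moment $\E[\norm{\X^{\alpha_0,x_0}-x_0}_t^p]$ is finite.

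To prove the latter, I would use the representation $\X_s^{\alpha_0,x_0}-x_0=\int_0^s b(\alpha_0,\Z_u^{\alpha_0,x_0})\,du+\int_0^s\sigma(\alpha_0,\Z_u^{\alpha_0,x_0})\,d\bm_u$ from \eqref{eq:X}. By Assumption \ref{ass:drift}, $b'$ and $\sigma'$ are bounded by $\lip_{b,\sigma}$, so $b(\alpha_0,\cdot)$ and $\sigma(\alpha_0,\cdot)$ grow at most linearly: $|b(\alpha_0,z)|\vee\norm{\sigma(\alpha_0,z)}\le c_0(1+|z|)$ for all $z\in G$, with $c_0<\infty$ depending only on $\lip_{b,\sigma}$, $\alpha_0$ and $x_0$. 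Applying Jensen's inequality to the drift term and the BDG inequalities (with constant $C_p$) to the stochastic integral, then bounding $|z|\le |x_0|+|z-x_0|$ and invoking the Lipschitz continuity of $\esm^{\alpha_0}$ from Proposition \ref{prop:esmlip}—noting that $\esm^{\alpha_0}$ maps the constant path equal to $x_0$ to itself, so $\norm{\Z^{\alpha_0,x_0}-x_0}_s\le\lip_{\esm}(\alpha_0)\norm{\X^{\alpha_0,x_0}-x_0}_s$—yields an estimate of the form
\[
 \E\!\left[\norm{\X^{\alpha_0,x_0}-x_0}_t^p\right]\le C_1\!\left(1+\int_0^t\E\!\left[\norm{\X^{\alpha_0,x_0}-x_0}_s^p\right]ds\right)
\]
for a finite constant $C_1=C_1(p,t,\alpha_0,x_0)$. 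A standard localization at the stopping times $\tau_n\doteq\inf\{s\ge 0:|\X_s^{\alpha_0,x_0}-x_0|\ge n\}$ renders both sides finite; Gronwall's inequality then gives $\E[\norm{\X^{\alpha_0,x_0}-x_0}_{t\wedge\tau_n}^p]\le C_1e^{C_1t}$ uniformly in $n$, and letting $n\to\infty$ (using $\tau_n\to\infty$ a.s.\ by continuity of $\X^{\alpha_0,x_0}$ and monotone convergence) gives $\E[\norm{\X^{\alpha_0,x_0}-x_0}_t^p]\le C_1e^{C_1t}<\infty$. This completes the proof.

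The only genuinely technical ingredient is this single‑parameter moment bound, and it is entirely classical. The one point requiring a little care is that the coefficients in \eqref{eq:X} are evaluated along $\Z^{\alpha_0,x_0}$ rather than along $\X^{\alpha_0,x_0}$, which is handled by the Lipschitz estimate for $\esm^{\alpha_0}$, together with the localization needed to legitimize Gronwall's inequality; I expect no essential obstacle here. One could alternatively dispense with the reduction to a single parameter and run the Gronwall argument uniformly over $V\times K$ provided $\lip_{\esm}(\cdot)$ is bounded on compact subsets of $U$, but routing through \eqref{eq:XpKolmogorov} avoids having to establish such local boundedness.
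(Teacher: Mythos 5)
Your proposal follows essentially the same route as the paper's proof: establish $\E\lsb\norm{\X^{\alpha,x}-x}_t^p\rsb<\infty$ for a fixed parameter pair by the standard H\"older/BDG/Tonelli/Gronwall argument, using that $\esm^\alpha$ fixes the constant path $\equiv x$ so that $\norm{\Z^{\alpha,x}-x}_s\le\lip_{\esm}(\alpha)\norm{\X^{\alpha,x}-x}_s$, and then obtain uniformity over $V\times K$ from \eqref{eq:XpKolmogorov}; your treatment of the localization needed to legitimize Gronwall is merely more explicit than the paper's.

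One caveat worth flagging: the paper's proof of \eqref{eq:XpKolmogorov} in Appendix~\ref{sec:continuousfield} itself cites Lemma~\ref{lem:Xmomentbound} to ensure its constants $\wt C^\dagger, C^\dagger$ are finite, so routing the uniformity through \eqref{eq:XpKolmogorov} --- which you do precisely in order to avoid proving local boundedness of $\lip_{\esm}(\cdot)$ --- is circular as the two proofs are currently written. This is a defect you share with (and inherit from) the paper rather than introduce, but your closing remark presents the detour through \eqref{eq:XpKolmogorov} as a way of sidestepping a uniformity issue that in fact resurfaces inside the proof of \eqref{eq:XpKolmogorov}; the honest fix is to run your single-point Gronwall estimate uniformly over $V\times K$, observing that the only parameter-dependent inputs are the values $|b(\alpha,x)|+\norm{\sigma(\alpha,x)}$ at the initial point (continuous, hence bounded on the compact $V\times K$) together with the Lipschitz constants of the reflection data, whose local boundedness must then be addressed.
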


\begin{proof}
The fact that $\E\lsb\norm{\X^{\alpha,x}-x}_t^p\rsb<\infty$ for fixed $\alpha\in U$ and $x\in G$ follows from a standard argument using \eqref{eq:X}, H\"older's inequality, the BDG inequalities, Tonelli's theorem, Assumption \ref{ass:drift}, the facts that $\Z^{\alpha,x}=\esm^\alpha(\X^{\alpha,x})$ and $\z=\esm^\alpha(\x)$ where $\z(\cdot)=\x(\cdot)\equiv x$, the Lipschitz continuity of the ESM $\esm^\alpha$ and Gronwall's inequality. The uniform bound \eqref{eq:uniformXmoment} then follows from \eqref{eq:XpKolmogorov}. 
\end{proof}

\begin{lem}\label{lem:psivemoment}
Let $\alpha\in U$, $x\in G$, $\beta\in\R^M$ and $y\in G_x$. Then for all $p\geq2$ and $t<\infty$,
	\be\label{eq:Psivebound}\sup\lcb\E\lsb\norm{\Psi^\ve(\beta,y)}_t^p\rsb:0<\ve<\ve_\ast(\alpha,x,\beta,y)\rcb<\infty.\ee
\end{lem}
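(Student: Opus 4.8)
The plan is to bound, for each $0<\ve<\ve_\ast(\alpha,x,\beta,y)$, the quantity $\norm{\Psi^\ve(\beta,y)}_t$ pathwise by a random variable whose $p$-th moment is finite \emph{uniformly} in $\ve$, and then take expectations; no fixed-point or Gronwall argument is needed. Throughout write $\lip\doteq\lip(\alpha,x,\beta,y)$ and $\ve_\ast\doteq\ve_\ast(\alpha,x,\beta,y)$ as in \eqref{eq:lipalphaxbetay}--\eqref{eq:veast}, and fix $0<\ve<\ve_\ast$.

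\textbf{Step 1 (difference-quotient bounds).} First I would apply the fundamental theorem of calculus along the segment $r\mapsto\br{\alpha+r\beta,\;\Z_s^{\alpha,x}+r\partial_{\beta,y}^\ve\Z_s^{\alpha,x}}$, $r\in[0,\ve]$. This segment lies in $U\times G$: $U$ is open with $\ve<\ve_0$, while $\Z_s^{\alpha,x}+r\partial_{\beta,y}^\ve\Z_s^{\alpha,x}=(1-r/\ve)\Z_s^{\alpha,x}+(r/\ve)\Z_s^{\alpha+\ve\beta,x+\ve y}$ is a convex combination of points of $G$. Combined with Assumption \ref{ass:drift} and $\lip_{b,\sigma}\leq\lip$, this gives that, a.s.\ and for every $s\in[0,t]$, the integrands of the first two integrals in \eqref{eq:Psive} have Euclidean (resp.\ operator) norm at most $\lip\br{|\beta|+|\partial_{\beta,y}^\ve\Z_s^{\alpha,x}|}$; integrating $R'$ along the same segment, $\norm{\ve^{-1}\br{R(\alpha+\ve\beta)-R(\alpha)}}\leq\lip_R|\beta|\leq\lip|\beta|$.

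\textbf{Step 2 (pathwise absorption).} Next I would take the supremum over $u\in[0,t]$ in \eqref{eq:Psive}, bound the Lebesgue integral termwise via Step 1, and control the last term of \eqref{eq:Psive} using \eqref{eq:Lvebound} together with $\lip_\ell(\alpha)\lip_{\esm}(\alpha)\leq\lip^2$. This produces an a.s.\ inequality of the form $\norm{\Psi^\ve(\beta,y)}_t\leq\mathcal{R}_t^\ve+\ve\,\lip^3|\beta|\,\norm{\Psi^\ve(\beta,y)}_t$, where $\mathcal{R}_t^\ve$ collects $|y|$, the integral over $[0,t]$ of $\lip\br{|\beta|+|\partial_{\beta,y}^\ve\Z_s^{\alpha,x}|}$, the running supremum of the stochastic integral appearing in \eqref{eq:Psive}, and $\lip^3|\beta|\norm{\X^{\alpha,x}-x}_t$, and involves $\Psi^\ve(\beta,y)$ in no other place. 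Since $\Psi^\ve(\beta,y)$ is continuous, $\norm{\Psi^\ve(\beta,y)}_t<\infty$ a.s., and $\ve<\ve_\ast$ forces $\ve\lip^3|\beta|\leq\tfrac14$ by \eqref{eq:veast}; hence the last term can be moved to the left and absorbed, giving $\norm{\Psi^\ve(\beta,y)}_t\leq\tfrac43\,\mathcal{R}_t^\ve$ a.s. (If Condition \ref{cond:independent} fails then $R$ is constant in $\alpha$, the last term of \eqref{eq:Psive} is identically zero, and this step is vacuous.)

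\textbf{Step 3 (taking moments), and the main obstacle.} Finally I would take $p$-th moments in $\norm{\Psi^\ve(\beta,y)}_t\leq\tfrac43\mathcal{R}_t^\ve$: using $(a_1+\cdots+a_4)^p\leq 4^{p-1}\sum_i a_i^p$, Jensen's inequality in time for the Lebesgue integral, and the Burkholder--Davis--Gundy inequality followed once more by Jensen (valid since $p/2\geq1$) for the stochastic integral, $\E\sbr{\norm{\Psi^\ve(\beta,y)}_t^p}$ is bounded by a constant depending only on $p$, $t$, $|y|$, $|\beta|$, $\lip$, $\E\sbr{\norm{\partial_{\beta,y}^\ve\Z^{\alpha,x}}_t^p}$ and $\E\sbr{\norm{\X^{\alpha,x}-x}_t^p}$. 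By Lemma \ref{lem:UI} the first of these expectations is bounded uniformly over $\ve\in(0,\ve_0)$, and by Lemma \ref{lem:Xmomentbound} the second is finite, so the resulting bound is uniform over $\ve\in(0,\ve_\ast)$, which is \eqref{eq:Psivebound}. The one point requiring care is the self-referential occurrence of $\norm{\Psi^\ve(\beta,y)}_t$ on the right-hand side, inherited through \eqref{eq:Lvebound}; it is handled not by a Gronwall or fixed-point argument but precisely by the pathwise absorption of Step 2, which is exactly why $\ve_\ast$ is chosen strictly below $(4\lip^3|\beta|)^{-1}$ in \eqref{eq:veast}. Everything else is routine bookkeeping of constants.
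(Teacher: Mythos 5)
Your argument is correct and is essentially the paper's proof: both hinge on controlling the reflection term via \eqref{eq:Lvebound} and absorbing the resulting self-referential occurrence of $\norm{\Psi^\ve(\beta,y)}_t$ using the smallness of $\ve_\ast$ guaranteed by \eqref{eq:veast}, with the remaining contributions bounded uniformly in $\ve$ by the moment estimates of Lemma \ref{lem:ZalphaxZbetaybounds}. The only differences are cosmetic: the paper performs the absorption at the level of $p$-th moments and bounds the drift/diffusion contributions in one stroke by citing \eqref{eq:XpKolmogorov} (via the identity \eqref{eq:XveX}, which shows those contributions equal $\ve^{-1}(\X^{\alpha+\ve\beta,x+\ve y}-\X^{\alpha,x})$), whereas you absorb pathwise and re-derive the same bound with BDG, Jensen and Lemma \ref{lem:UI}.
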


\begin{proof}
For brevity, we set $\lip\doteq\lip(\alpha,x,\beta,y)$ and $\ve_\ast\doteq\ve_\ast(\alpha,x,\beta,y)$. Let $p\geq2$. Choose $0<\ve<\ve_\ast$ so that \eqref{eq:ve0} holds. By \eqref{eq:Psive} and \eqref{eq:X}, for all $t\geq0$,
\begin{align}\label{eq:EnormPsive}
	|\Psi_t^\ve(\beta,y)|^p&\leq 2^{p-1}|\X_t^{\alpha+\ve\beta,x+\ve y}-\X_t^{\alpha,x}|^p+2^{p-1}\left|\frac{R(\alpha+\ve\beta)-R(\alpha)}{\ve}\right|^p|\L_t^{\alpha+\ve\beta,x+\ve y}|^p.
\end{align}
According to Assumption \ref{ass:DRL}, the second term on the right-hand side of \eqref{eq:EnormPsive} is equal to zero if Condition \ref{cond:independent} does not hold. On the other hand, if Condition \ref{cond:independent} does hold, then \eqref{eq:Lvebound} holds. By \eqref{eq:Lvebound} and \eqref{eq:XpKolmogorov} of Lemma \ref{lem:ZalphaxZbetaybounds}, we have
\begin{align}\label{eq:Epsivep2}
	\E\lsb\norm{\Psi^\ve(\beta,y)}_t^p\rsb&\leq2^{p-1}\wt C^\dagger|y|^p+2^{p-1}\wt C^\ddagger|\beta|^p+4^{p-1}\lip^{3p}|\beta|^p\E\lsb\norm{\X^{\alpha,x}}_t^p\rsb\\ \notag
	&\qquad+\ve^p4^{p-1}\lip^{3p}|\beta|^p\E\lsb\norm{\Psi^\ve(\beta,y)}_t^p\rsb.
\end{align}
Rearranging, we obtain, for all $t\geq0$,
\begin{align*}
	\E\lsb\norm{\Psi^\ve(\beta,y)}_t^p\rsb&\leq\frac{2^{p-1}\wt C^\dagger|y|^p+2^{p-1}\wt C^\ddagger|\beta|^p+4^{p-1}\lip^{3p}|\beta|^p\E\lsb\norm{\X^{\alpha,x}}_t^p\rsb}{1-\ve_\ast^p4^{p-1}\lip^{3p}|\beta|^p},
\end{align*}
where \eqref{eq:veast} ensures $\ve_\ast^p4^{p-1}\lip^{3p}|\beta|^p<1$. Since the right-hand side of the last display does not depend on $0<\ve<\ve_\ast$, this completes the proof.
\end{proof}

\begin{lem}\label{lem:PhiPsi}
Suppose Assumption \ref{ass:Holder} holds. Let $\alpha\in U$, $x\in G$ and suppose a.s.\ $\nabla_\psi\esm^\alpha(\X^{\alpha,x})$ exists for all $\psi\in\cts(\R^J)$ and takes values in $\dlim(\R^J)$. Then for all $(\beta,y)\in\R^M\times G_x$, there exists a unique $J$-dimensional $\{\F_t\}$-adapted process $\Phi(\beta,y)=\{\Phi_t(\beta,y),t\geq0\}$ such that a.s.\ $\Phi(\beta,y)$ takes values in $\dlim(\R^J)$ and satisfies $\Phi(\beta,y)=\nabla_{\Psi(\beta,y)}\esm^\alpha(\X^{\alpha,x})$, where $\Psi(\beta,y)$ is a $J$-dimensional continuous $\{\F_t\}$-adapted process that satisfies, for all $t\geq0$,
\begin{align}\label{eq:Psibetay}
	\Psi_t(\beta,y)&= y+\int_0^tb'(\alpha,\Z_s^{\alpha,x})[\beta,\Phi_s(\beta,y)]ds+\int_0^t\sigma'(\alpha,\Z_s^{\alpha,x})[\beta,\Phi_s(\beta,y)]dW_s\\ \notag
	&\qquad+R'(\alpha)[\beta]\L_t^{\alpha,x},
\end{align}
where $\{R'(\alpha)[\beta]\L_t^{\alpha,x},t\geq0\}$ is the process defined in Remark \ref{rmk:DRL}. Moreover, for all $(\beta,y)\in\R^M\times G_x$, $p\geq2$ and $t<\infty$, 
	\be\label{eq:Psimomentbound}\E\lsb\norm{\Psi(\beta,y)}_t^p\rsb<\infty\ee 
and
	\be\label{eq:PsiveL2}\lim_{\ve\downarrow0}\E\lsb\norm{\Psi^\ve(\beta,y)-\Psi(\beta,y)}_t^2\rsb=0,\ee
where $\Psi^\ve(\beta,y)$ is defined as in \eqref{eq:Psive}.
\end{lem}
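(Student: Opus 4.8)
\textbf{Proof strategy for Lemma \ref{lem:PhiPsi}.}

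The plan is to construct the pair $(\Phi(\beta,y),\Psi(\beta,y))$ by a fixed-point (Picard iteration) argument in the space of $\{\F_t\}$-adapted processes equipped with a suitable weighted $L^2$-norm on path space, and then to establish the moment bound \eqref{eq:Psimomentbound} and the convergence \eqref{eq:PsiveL2} by comparing to the pre-limit quantities $\Psi^\ve(\beta,y)$. Fix $(\beta,y)\in\R^M\times G_x$. Define a map $\Lambda$ that sends an $\{\F_t\}$-adapted process $\Phi$ (with values in $\dlim(\R^J)$, satisfying $\E[\norm{\Phi}_t^2]<\infty$ for all $t$) to the process $\Lambda(\Phi)\doteq\nabla_{\Psi[\Phi]}\esm^\alpha(\X^{\alpha,x})$, where $\Psi[\Phi]$ is the continuous process obtained by plugging $\Phi$ into the right-hand side of \eqref{eq:Psibetay}. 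The key observation is that $\Psi[\Phi]$ is a well-defined continuous $\{\F_t\}$-adapted process: the Lebesgue-Stieltjes integral against $ds$, the It\^o integral, and the finite-variation term $R'(\alpha)[\beta]\L^{\alpha,x}$ are all standard given that $\Z^{\alpha,x}$ is a continuous semimartingale (Remark \ref{rmk:local}) and $b',\sigma'$ are bounded by Assumption \ref{ass:drift}; moreover $\nabla_\psi\esm^\alpha(\X^{\alpha,x})$ is well defined and in $\dlim(\R^J)$ for every fixed continuous $\psi$ by hypothesis, and by Proposition \ref{prop:esmpsive} the map $\psi\mapsto\nabla_\psi\esm^\alpha(\X^{\alpha,x})$ is continuous on $\cts(\R^J)$ in the topology of uniform convergence on compacts, so $\Lambda(\Phi)$ is a legitimate adapted process (measurability of the evaluation $\psi\mapsto\nabla_\psi\esm^\alpha(\X^{\alpha,x})(t)$ follows from this continuity together with the measurability of $\esm^\alpha$).

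The contraction estimate is where the three Lipschitz properties combine. Given two adapted inputs $\Phi_1,\Phi_2$, the Cauchy-Schwarz and BDG inequalities together with Assumption \ref{ass:drift} give, for $t<\infty$,
\begin{align*}
	\E\lsb\norm{\Psi[\Phi_1]-\Psi[\Phi_2]}_t^2\rsb&\leq 2\lip_{b,\sigma}^2(t+C_2)\int_0^t\E\lsb\norm{\Phi_1-\Phi_2}_s^2\rsb\,ds,
\end{align*}
and then the Lipschitz continuity of the directional derivative map in the direction argument, Proposition \ref{prop:nablaesmlip}, yields
\begin{align*}
	\E\lsb\norm{\Lambda(\Phi_1)-\Lambda(\Phi_2)}_t^2\rsb&\leq(\lip_\esm(\alpha))^2\E\lsb\norm{\Psi[\Phi_1]-\Psi[\Phi_2]}_t^2\rsb\leq C_t\int_0^t\E\lsb\norm{\Phi_1-\Phi_2}_s^2\rsb\,ds,
\end{align*}
with $C_t\doteq 2(\lip_\esm(\alpha))^2\lip_{b,\sigma}^2(t+C_2)$. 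Iterating this bound over the standard Picard sequence (started, say, from $\Phi^{(0)}\equiv y$) and invoking the usual factorial-decay argument shows the sequence is Cauchy in the weighted norm, hence converges to a limit $\Phi(\beta,y)$; passing to the limit in both $\Psi[\cdot]$ (using the $L^2$ convergence of the integrands, via BDG again) and in $\Lambda(\cdot)$ (using Proposition \ref{prop:nablaesmlip}) shows $\Phi(\beta,y)=\nabla_{\Psi(\beta,y)}\esm^\alpha(\X^{\alpha,x})$ with $\Psi(\beta,y)$ as in \eqref{eq:Psibetay}. Uniqueness follows from the same contraction estimate plus Gronwall. For the moment bound \eqref{eq:Psimomentbound}, I would run the analogous estimate for the $p$-th moment: bound $\E[\norm{\Phi(\beta,y)}_t^p]\leq(\lip_\esm(\alpha))^p\E[\norm{\Psi(\beta,y)}_t^p]$ (using $\nabla\esm^\alpha$ applied to the zero direction gives the constant process, and Proposition \ref{prop:nablaesmlip} for the difference), then expand \eqref{eq:Psibetay} using H\"older, BDG, Assumption \ref{ass:drift}, Lemma \ref{lem:Xmomentbound} (to bound $\E[\norm{\L^{\alpha,x}}_t^p]$ via $\norm{\Y^{\alpha,x}}_t\leq\lip_\esm(\alpha)\norm{\X^{\alpha,x}-x}_t$ and Lemma \ref{lem:push}), and close with Gronwall.

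The main obstacle is the convergence \eqref{eq:PsiveL2}. The natural approach is to subtract \eqref{eq:Psive} from \eqref{eq:Psibetay} and control the three resulting differences. For the drift and diffusion terms, I would write the increment of the coefficient as $\frac{b(\alpha+\ve\beta,\Z^{\alpha,x}_s+\ve\partial^\ve_{\beta,y}\Z^{\alpha,x}_s)-b(\alpha,\Z^{\alpha,x}_s)}{\ve}-b'(\alpha,\Z^{\alpha,x}_s)[\beta,\Phi_s(\beta,y)]$, split it into (i) a first-order Taylor remainder along the segment, which is $O(|(\ve\beta,\ve\partial^\ve_{\beta,y}\Z^{\alpha,x}_s)|^\gamma)$ by the H\"older continuity of $b'$ in Assumption \ref{ass:Holder}, and (ii) the linear term $b'(\alpha,\Z^{\alpha,x}_s)[\beta,\partial^\ve_{\beta,y}\Z^{\alpha,x}_s-\Phi_s(\beta,y)]$; the first is handled by Lemma \ref{lem:UI} (uniform $L^p$ bounds on $\partial^\ve_{\beta,y}\Z^{\alpha,x}$) and dominated convergence, the second by Assumption \ref{ass:drift} and a Gronwall closure once we relate $\partial^\ve_{\beta,y}\Z^{\alpha,x}$ to $\Phi$. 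The crux is that $\partial^\ve_{\beta,y}\Z^{\alpha,x}=\nabla^\ve_{\Psi^\ve(\beta,y)}\esm^\alpha(\X^{\alpha,x})$ by \eqref{eq:nablaveZbound}, so by Proposition \ref{prop:esmpsive} (applied with $\psi^\ve=\Psi^\ve(\beta,y)$ converging to $\psi=\Psi(\beta,y)$, which is exactly what we are trying to prove) this $\ve$-derivative converges pathwise to $\nabla_{\Psi(\beta,y)}\esm^\alpha(\X^{\alpha,x})=\Phi(\beta,y)$ — so the argument is genuinely a simultaneous fixed point: one sets up a Gronwall inequality for $u_\ve(t)\doteq\E[\norm{\Psi^\ve(\beta,y)-\Psi(\beta,y)}_t^2]$ in which the "forcing" terms are the Taylor remainders (vanishing as $\ve\downarrow0$ by Lemma \ref{lem:UI} and \ref{lem:psivemoment} plus uniform integrability) and the $R$-term difference $\big\|\tfrac{R(\alpha+\ve\beta)-R(\alpha)}{\ve}\L^{\alpha+\ve\beta,x+\ve y}-R'(\alpha)[\beta]\L^{\alpha,x}\big\|_t$, which is controlled by Assumption \ref{ass:Holder} (or \ref{ass:DRL}) together with Lemma \ref{lem:Zveesm} and continuity of $\ell\mapsto\L^{\alpha,x}$ in $\alpha,x$; Gronwall then forces $u_\ve(t)\to0$. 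Care is needed to first establish pathwise a.s.\ convergence (via Proposition \ref{prop:esmpsive}) and upgrade to $L^2$ convergence using the uniform moment bounds of Lemmas \ref{lem:UI} and \ref{lem:psivemoment} to get uniform integrability of the relevant squared norms.
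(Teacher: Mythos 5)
Your construction of $(\Phi(\beta,y),\Psi(\beta,y))$ by Picard iteration, the contraction/Gronwall uniqueness argument, and the $p$-th moment bound all match the paper's proof essentially step for step (the paper starts the iteration from $\Phi^0\equiv0$ rather than $\equiv y$, and bounds $\E[\norm{R'(\alpha)[\beta]\L^{\alpha,x}}_t^p]$ exactly as you indicate, via Lemma \ref{lem:push} and Lemma \ref{lem:Xmomentbound}). The Taylor-remainder treatment of the drift and diffusion differences and the handling of the $R$-term in \eqref{eq:PsiveL2} are also in line with the paper's $\Delta^{(b)},\Delta^{(\sigma)},\Delta^{(R)}$ decomposition.

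The gap is in how you close the Gronwall loop for \eqref{eq:PsiveL2}. You correctly flag that invoking Proposition \ref{prop:esmpsive} with $\psi^\ve=\Psi^\ve(\beta,y)\to\Psi(\beta,y)$ is circular, but your proposed resolution (a ``simultaneous fixed point'' with forcing terms consisting only of the Taylor remainders and the $R$-term, followed by an a.s.-convergence-plus-uniform-integrability upgrade) never specifies how the quantity $\E[|\partial_{\beta,y}^\ve\Z_s^{\alpha,x}-\Phi_s(\beta,y)|^2]$ appearing in the Gronwall integrand gets split into a multiple of $u_\ve(s)$ plus something that vanishes. That split is the crux, and it requires no appeal to Proposition \ref{prop:esmpsive} at all: using \eqref{eq:nablaveZbound} and the triangle inequality one writes
\begin{align*}
|\partial_{\beta,y}^\ve\Z_s^{\alpha,x}-\Phi_s(\beta,y)|^2&\leq2\,|\nabla_{\Psi^\ve(\beta,y)}^\ve\esm^\alpha(\X^{\alpha,x})(s)-\nabla_{\Psi(\beta,y)}^\ve\esm^\alpha(\X^{\alpha,x})(s)|^2\\
&\qquad+2\,|\nabla_{\Psi(\beta,y)}^\ve\esm^\alpha(\X^{\alpha,x})(s)-\nabla_{\Psi(\beta,y)}\esm^\alpha(\X^{\alpha,x})(s)|^2,
\end{align*}
where the first term is bounded by $2(\lip_\esm(\alpha))^2\norm{\Psi^\ve(\beta,y)-\Psi(\beta,y)}_s^2$ \emph{uniformly in $\ve$}, because the difference quotients $\nabla^\ve_{\psi}\esm^\alpha(\x)$ are Lipschitz in $\psi$ with constant $\lip_\esm(\alpha)$ directly from Proposition \ref{prop:esmlip} (no limit taken); this term feeds back into $u_\ve$. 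The second term is the genuine additional forcing term your list omits: it involves only the \emph{fixed} direction $\Psi(\beta,y)$, so it tends to zero pointwise a.s.\ by the very definition \eqref{eq:nablapsiesm} of the directional derivative, is dominated by $4(\lip_\esm(\alpha))^2\norm{\Psi(\beta,y)}_t^2$ (Proposition \ref{prop:nablaesmlip} together with $\nabla_0\esm^\alpha\equiv0$), and hence vanishes in $L^2$ by \eqref{eq:Psimomentbound} and dominated convergence. With that term added to your forcing, Gronwall gives $u_\ve(t)\to0$; without it, the argument as you describe it does not close, since the pathwise convergence you want to ``establish first'' is itself contingent on the conclusion.
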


\begin{remark}
Since functions in $\dlim(\R^J)$ are Lebesgue measurable and $\Phi(\beta,y)$ is $\{\F_t\}$-adapted, both the Lebesgue-Stieltjes and It\^o integrals in \eqref{eq:Psibetay} are well defined. 
\end{remark}

\begin{proof}
Let $(\beta,y)\in\R^M\times G_x$ and set $\lip\doteq\max\{\lip(\alpha,x,\beta,y),\lip'\}$, where $\lip(\alpha,x,\beta,y)$ and $\lip'$ are the constants in 
\eqref{eq:lipalphaxbetay} and Assumption \ref{ass:Holder}, respectively. We first show uniqueness. Suppose there are two such process $\Phi(\beta,y)$ and $\wt\Phi(\beta,y)$. Using \eqref{eq:Psibetay} and standard estimates involving the Cauchy-Schwartz inequality, the BDG inequalities, Tonelli's theorem and Assumption \ref{ass:drift}, we have, for $t\geq0$,
\begin{align*}
	\E\lsb\norm{\Psi(\beta,y)-\wt\Psi(\beta,y)}_t^2\rsb&\leq2(t+C_2)\lip^2\int_0^t\E\lsb\norm{\Phi(\beta,y)-\wt\Phi(\beta,y)}_s^2\rsb ds
\end{align*}
The Lipschitz continuity of the function $\psi\mapsto\nabla_\psi\esm^\alpha(\X^{\alpha,x})$ shown in Proposition \ref{prop:nablaesmlip} along with an application of Gronwall's inequality implies that a.s.\ $(\Phi(\beta,y),\Phi(\beta,y))=(\wt\Phi(\beta,y),\wt\Phi(\beta,y))$.

The proof of existence of the process $\Phi(\beta,y)$ follows a standard Picard iteration argument. Set $\Phi^0\doteq0$ and recursively define, for $t\geq0$,
\begin{align}\label{eq:Psik}
	\Psi_t^k&\doteq y+\int_0^tb'(\alpha,\Z_s^{\alpha,x})[\beta,\Phi_s^{k-1}]ds+\int_0^t\sigma'(\alpha,\Z_s^{\alpha,x})[\beta,\Phi_s^{k-1}]dW_s+R'(\alpha)[\beta]\L_t^{\alpha,x},
\end{align}
and set $\Phi^k\doteq\nabla_{\Psi^k}\esm(\X^{\alpha,x})$,  where the integrals are well defined because $\Phi^k$ takes values in $\dlim(\R^J)$ and is $\{\F_t\}$-adapted because $\Psi^k$ is $\{\F_t\}$-adapted and the function $\psi\mapsto\nabla_\psi\esm^\alpha(\X^{\alpha,x})$ is Lipschitz continuous. Using \eqref{eq:Psik} and standard estimates as above, we obtain,
\begin{align}
\label{Psi1bd}
	\E\lsb\norm{\Psi^1}_t^2\rsb&\leq4|y|^2+4(t+C_2)\lip^2|\beta|^2+4\E\lsb\norm{R'(\alpha)[\beta]\L^{\alpha,x}}_t^2\rsb,\qquad t\geq0,
\end{align}
and for each $k\in\N$, again using standard estimates along with the Lipschitz continuity the function $\psi\mapsto\nabla_\psi\esm^\alpha(\X^{\alpha,x})$,
\begin{align*}
	\E\lsb\norm{\Psi^{k+1}-\Psi^k}_t^2\rsb&\leq 2(t+C_2)\lip^4\int_0^t\E\lsb\norm{\Psi^k-\Psi^{k-1}}_s^2\rsb ds,\qquad t\geq0.
\end{align*}
Iterating the last display yields, for each $k\in\N$,
\begin{align}\label{eq:Psik1Psik}
	\E\lsb\norm{\Psi^{k+1}-\Psi^k}_t^2\rsb&\leq\frac{(2(t+C_2)\lip^4)^k}{k!}\int_0^t\E\lsb\norm{\Psi^1}_s^2\rsb ds.
\end{align}
The first two terms on the right-hand side of \eqref{Psi1bd} are clearly finite. If Condition \ref{cond:independent} does not hold, then by Assumption \ref{ass:DRL}, the last term is defined to be zero (see Remark \ref{rmk:DRL}). On the other hand, if Condition \ref{cond:independent} holds, then let $\L^{\alpha,x}$ be the $N$-dimensional process introduced in Remark \ref{rmk:local}. Then by the fact that $(\z,\y)\equiv(x,0)$ is the solution to the ESP $\{(d_i(\alpha),n_i,c_i)\}$ for $\x\equiv x$, \eqref{eq:pushL}, Proposition \ref{prop:esmlip}, \eqref{eq:lipalphaxbetay} and Lemma \ref{lem:Xmomentbound}, for $p\geq1$,
	\be\label{eq:RLbound}\E\lsb\norm{R'(\alpha)[\beta]\L^{\alpha,x}}_t^p\rsb\leq\lip^{3p}|\beta|^p\E\lsb\norm{\X^{\alpha,x}-x}_t^p\rsb<\infty.\ee
Thus, $\E\lsb\norm{\Psi^1}_t^2\rsb<\infty$, which along with \eqref{eq:Psik1Psik}, implies $\lcb\E\lsb\norm{\Psi^{k+1}-\Psi^k}_t^2\rsb\rcb_{k\in\N}$ is a Cauchy sequence. Then by a standard argument using Chebyshev's inequality and the Borel-Cantelli lemma, there must exist a continuous process $\Psi$ such that a.s.\ $\Psi^k$ converges to $\Psi$ in $\cts(\R^J)$ as $k\to\infty$. Due to the relation $\Phi^k\doteq\nabla_{\Psi^k}\esm(\X^{\alpha,x})$ and the Lipschitz continuity of $\psi\mapsto\nabla_\psi\esm(\X^{\alpha,x})$ shown in Proposition \ref{prop:nablaesmlip}, a.s.\ $\Phi^k$ converges to $\Phi$ uniformly on compact time intervals as $k\to\infty$. Hence, by \eqref{eq:Psik} and the continuity of $\psi\mapsto\nabla_\psi\esm(\X^{\alpha,x})$, we see that $\Psi$ satisfies \eqref{eq:Psibetay} and $\Phi=\nabla_\Psi\esm(\X^{\alpha,x})$.

Next, we show \eqref{eq:Psimomentbound} holds. By \eqref{eq:Psibetay}, H\"older's inequality, the BDG inequalities, Tonelli's theorem, Assumption \ref{ass:drift}, the fact that $\Phi(\beta,y)=\nabla_{\Psi(\beta,y)}\esm(\X^{\alpha,x})$, the Lipschitz continuity of $\psi\mapsto\nabla_\psi\esm(\X^{\alpha,x})$ shown in Proposition \ref{prop:nablaesmlip}, \eqref{eq:lipalphaxbetay} and \eqref{eq:RLbound}, for all $t\geq0$,
\begin{align*}
	\E\lsb\norm{\Psi(\beta,y)}_t^p\rsb&\leq4^{p-1}|y|^p+4^{p-1}\E\lsb\sup_{0\leq s\leq t}\left|\int_0^sb'(\alpha,\Z_u^{\alpha,x})[\beta,\Phi_u(\beta,y)]du\right|^p\rsb\\
	&\qquad+4^{p-1}\E\lsb\sup_{0\leq s\leq t}\left|\int_0^s\sigma'(\alpha,\Z_u^{\alpha,x})[\beta,\Phi_u(\beta,y)]d\bm_u\right|^p\rsb\\
	&\qquad+4^{p-1}\lip^{3p}|\beta|^p\E\lsb\norm{\X^{\alpha,x}-x}_t^p\rsb\\
	&\leq4^{p-1}|y|^p+8^{p-1}(t^{p-1}+C_p)|\beta|^pt+4^{p-1}\lip^{3p}|\beta|^p\E\lsb\norm{\X^{\alpha,x}-x}_t^p\rsb\\
	&\qquad+8^{p-1}(t^{p-1}+C_p)\lip^{2p}\int_0^t\E\lsb\norm{\Psi(\beta,y)}_s^p\rsb ds.
\end{align*}
Then by Gronwall's inequality and \eqref{eq:RLbound}, \eqref{eq:Psimomentbound} holds.

Lastly, we prove \eqref{eq:PsiveL2}. Recall that by Assumption \ref{ass:Holder}, $b'$, $\sigma'$ and $R'$ are $\gamma$-H\"older continuous. Let $0<\ve<\ve_\ast(\alpha,x,\beta,y)$. By \eqref{eq:Psive} and \eqref{eq:Psibetay}, for $t\geq0$, 
\begin{align}\label{eq:psitvepsit}
	\E\lsb\norm{\Psi^\ve(\beta,y)-\Psi(\beta,y)}_t^2\rsb&\leq3\E\lsb\sup_{s\in[0,t]}\left|\int_0^s\Delta_u^{(b)}du\right|^2\rsb+3\E\lsb\sup_{s\in[0,t]}\left|\int_0^s\Delta_u^{(\sigma)}dW_u\right|^2\rsb\\ \notag
	&\qquad+3\E\lsb\norm{\Delta^{(R)}}_t^2\rsb,
\end{align}
where, for $s\geq0$,
\begin{align}\label{eq:Deltab}
	\Delta_s^{(b)}&\doteq\frac{b(\alpha+\ve\beta,\Z_s^{\alpha,x}+\ve\partial_{\beta,y}^{\ve}\Z_s^{\alpha,x})-b(\alpha,\Z_s^{\alpha,x})}{\ve}-b'(\alpha,\Z_s^{\alpha,x})[\beta,\Phi_s(\beta,y)],\\ \label{eq:Deltasigma}
	\Delta_s^{(\sigma)}&\doteq\frac{\sigma(\alpha+\ve\beta,\Z_s^{\alpha,x}+\ve\partial_{\beta,y}^\ve\Z_s^{\alpha,x})-\sigma(\alpha,\Z_s^{\alpha,x})}{\ve}-\sigma'(\alpha,\Z_s^{\alpha,x})[\beta,\Phi_s(\beta,y)],\\ \label{eq:DeltaR}
	\Delta_s^{(R)}&\doteq\frac{R(\alpha+\ve\beta)-R(\alpha)}{\ve}\L_s^{\alpha+\ve\beta,x+\ve y}-R'(\alpha)[\beta]\L_s^{\alpha,x}.
\end{align}
For $f=b,\sigma$, by \eqref{eq:Deltab} and \eqref{eq:Deltasigma},
\begin{align*}
	\Delta_s^{(f)}&=\frac{f(\alpha+\ve\beta,\Z_s^{\alpha,x}+\ve\partial_{\beta,y}^{\ve}\Z_s^{\alpha,x})-f(\alpha+\ve\beta,\Z_s^{\alpha,x}+\ve\Phi_s(\beta,y))}{\ve}\\
	&\qquad+\int_0^1\lcb f'(\alpha+v\ve\beta,\Z_s^{\alpha,x}+v\ve\Phi_s(\beta,y))-f'(\alpha,\Z_s^{\alpha,x})\rcb[\beta,\Phi_s(\beta,y)]dv.
\end{align*}
By the last display, the Lipschitz continuity of $b$ and $\sigma$ implied by Assumption \ref{ass:drift}, Jensen's inequality, the $\gamma$-H\"older continuity of $b'$ and $\sigma'$, the fact that $\Phi(\beta,y)=\nabla_{\Psi(\beta,y)}\esm^\alpha(\X^{\alpha,x})$, the Lipschitz continuity of $\psi\mapsto\nabla_\psi\esm^\alpha(\X^{\alpha,x})$ stated in Proposition \ref{prop:nablaesmlip}, and \eqref{eq:lipalphaxbetay},
\begin{align}\label{eq:Deltatvarphi2}
	|\Delta_s^{(f)}|^2&\leq2\lip^2|\partial_{\beta,y}^\ve\Z_s^{\alpha,x}-\Phi_s(\beta,y)|^2+2\lip^2\ve^{2\gamma}|\beta|^{2+2\gamma}+2\lip^{4+2\gamma}\ve^{2\gamma}\norm{\Psi(\beta,y)}_s^{2+2\gamma}.
\end{align}
By the Cauchy-Schwarz inequality, the BDG inequalities, Tonelli's theorem and \eqref{eq:Deltatvarphi2},
\begin{align}\label{eq:Deltabt2}
	&\E\lsb\sup_{s\in[0,t]}\left|\int_0^s\Delta_u^{(b)}du\right|^2\rsb+\E\lsb\sup_{s\in[0,t]}\left|\int_0^s\Delta_u^{(\sigma)}dW_u\right|^2\rsb\\ \notag
	&\qquad\leq t\int_0^t\E\lsb|\Delta_s^{(b)}|^2\rsb ds+C_2\int_0^t\E\lsb|\Delta_s^{(\sigma)}|^2\rsb ds\\ \notag
	&\qquad\leq 2(t+C_2)\lip^2\int_0^t\E\lsb\left|\partial_{\beta,y}^\ve\Z_s^{\alpha,x}-\Phi_s(\beta,y)\right|^2\rsb ds\\ \notag
	&\qquad\qquad+2t(t+C_2)\lip^2\ve^{2\gamma}|\beta|^{2+2\gamma}+2t(t+C_2)\lip^{4+2\gamma}\ve^{2\gamma}\E\lsb\norm{\Psi(\beta,y)}_t^{2+2\gamma}\rsb.
\end{align}
By \eqref{eq:DeltaR},
\begin{align*}
	\Delta_s^{(R)}&=\frac{R(\alpha+\ve\beta)-R(\alpha)}{\ve}(\L_s^{\alpha+\ve\beta,x+\ve y}-\L_s^{\alpha,x})+\int_0^1\lcb R'(\alpha+v\ve\beta)-R'(\alpha)\rcb[\beta]\L_s^{\alpha,x}dv.
\end{align*}
By \eqref{eq:pushL}, \eqref{eq:ZveESM}, the Lipschitz continuity of $\esm^\alpha$, the $\gamma$-H\"older continuity of $R'$ and \eqref{eq:lipalphaxbetay},
\begin{align*}
	|\Delta_s^{(R)}|^2&\leq 2\left|\frac{R(\alpha+\ve\beta)-R(\alpha)}{\ve}\right|^2|\L_s^{\alpha+\ve\beta,x+\ve y}-\L_s^{\alpha,x}|^2\\
	&\qquad+2\left|\int_0^1\lcb R'(\alpha+v\ve\beta)-R'(\alpha)\rcb[\beta]dv\right|^2|\L_s^{\alpha,x}|^2\\
	&\leq2\lip^4|\beta|^2\ve^2\norm{\Psi^\ve(\beta,y)}_s^2+2\lip^4\ve^{2\gamma}|\beta|^{2+2\gamma}\norm{\X^{\alpha,x}-x}_s^2.
\end{align*}
Taking the expectation of the supremum over $s\in[0,t]$, we obtain 
\begin{align}\label{eq:DeltaRt2}
	\E\lsb\norm{\Delta^{(R)}}_t^2\rsb&\leq2\lip^4|\beta|^2\ve^2\E\lsb\norm{\Psi^\ve(\beta,y)}_t^2\rsb+2\lip^4\ve^{2\gamma}|\beta|^{2+2\gamma}\E\lsb\norm{\X^{\alpha,x}-x}_t^2\rsb.
\end{align}
Substituting \eqref{eq:Deltabt2} and \eqref{eq:DeltaRt2} into \eqref{eq:psitvepsit} yields
\begin{align}\label{eq:psitvepsit2}
	\E\lsb\norm{\Psi^\ve(\beta,y)-\Psi(\beta,y)}_t^2\rsb&\leq6(t+C_2)\lip^2\int_0^t\E\lsb\left|\partial_{\beta,y}^\ve\Z_s^{\alpha,x}-\Phi_s(\beta,y)\right|^2\rsb ds\\ \notag
	&\qquad+6\ve^{2\gamma}t(t+C_2)\lip^2|\beta|^{2+2\gamma}\\ \notag
	&\qquad+6\ve^{2\gamma}t(t+C_2)\lip^{4+2\gamma}\E\lsb\norm{\Psi_t(\beta,y)}_t^{2+2\gamma}\rsb\\ \notag
	&\qquad+6\ve^2\lip^4\beta^2\E\lsb\norm{\Psi^\ve(\beta,y)}_t^2\rsb\\ \notag
	&\qquad+6\ve^{2\gamma}\lip^4|\beta|^{2+2\gamma}\E\lsb\norm{\X^{\alpha,x}-x}_t^2\rsb.
\end{align}
By \eqref{eq:nablaveZbound}, the fact that
$\Phi(\beta,y)=\nabla_{\Psi(\beta,y)}\esm^\alpha(\X^{\alpha,x})$, the
triangle inequality, \eqref{eq:nablavepsiesm}, the Lipschitz
continuity of the ESM $\esm^\alpha$, for $s\geq0$,
\begin{align}\label{eq:nablaZphib}
	|\partial_{\beta,y}^\ve\Z_s^{\alpha,x}-\Phi_s(\beta,y)|^2&\leq2|\nabla_{\Psi^\ve(\beta,y)}^\ve\esm^\alpha(\X^{\alpha,x})(s)-\nabla_{\Psi(\beta,y)}^\ve\esm^\alpha(\X^{\alpha,x})(s)|^2\\ \notag
	&\qquad+2|\nabla_{\Psi(\beta,y)}^\ve\esm^\alpha(\X^{\alpha,x})(s)-\nabla_{\Psi(\beta,y)}\esm^\alpha(\X^{\alpha,x})(s)|^2\\ \notag
	&\leq2\lip^2\norm{\Psi^\ve(\beta,y)-\Psi(\beta,y)}_s^2 \\
    \notag 
	&\qquad+2|\nabla_{\Psi(\beta,y)}^\ve\esm^\alpha(\X^{\alpha,x})(s)-\nabla_{\Psi(\beta,y)}\esm^\alpha(\X^{\alpha,x})(s)|^2.
\end{align}
Therefore, upon substituting \eqref{eq:nablaZphib} into \eqref{eq:psitvepsit2}, we obtain for all $t\geq0$,
\begin{align*} 
\E\lsb\norm{\Psi^\ve(\beta,y)-\Psi(\beta,y)}_t^2\rsb & \leq C_t (\ve)+12(t+C_2)\lip^4\int_0^t\E\lsb\norm{\Psi^\ve(\beta,y)-\Psi(\beta,y)}_s^2\rsb ds, 
\end{align*}
where 
\begin{align}\label{eq:Ctve}
C_t(\ve)&\doteq12(t+C_2)\lip^2\int_0^t\E\lsb|\nabla_{\Psi(\beta,y)}^\ve\esm^\alpha(\X^{\alpha,x})(s)-\nabla_{\Psi(\beta,y)}\esm^\alpha(\X^{\alpha,x})(s)|^2\rsb
{ds}\\ \notag
	&\qquad+6\ve^{2\gamma}t(t+C_2)\lip^2|\beta|^{2+2\gamma}+6\ve^{2\gamma}t(t+C_2)\lip^{4+2\gamma}\E\lsb\norm{\Psi_t(\beta,y)}_t^{2+2\gamma}\rsb\\ \notag
	&\qquad+6\ve^2\lip^4|\beta|^2\E\lsb\norm{\Psi^\ve(\beta,y)}_t^2\rsb+6\ve^{2\gamma}\lip^4|\beta|^{2+2\gamma}\E\lsb\norm{\X^{\alpha,x}-x}_t^2\rsb.
\end{align}
Gronwall's inequality then implies
	$$\E\lsb\norm{\Psi^\ve(\beta,y)-\Psi(\beta,y)}_t^2\rsb\leq C_t(\ve)\exp\lb12(t+C_2)\lip^4t\rb.$$
Let $t<\infty$. Once we demonstrate that $C_t(\ve)\to0$ as $\ve\downarrow0$, the proof of the lemma will be complete. Due to \eqref{eq:Psimomentbound}, Lemma \ref{lem:psivemoment} and Lemma \ref{lem:Xmomentbound}, the last four terms on the right-hand side of \eqref{eq:Ctve} converge to zero as $\ve\downarrow0$. We are left to show that
	\be\label{eq:Enablavenablazero}\lim_{\ve\downarrow0}\int_0^t\E\lsb\left|\nabla_{\Psi(\beta,y)}^\ve\esm^\alpha(\X^{\alpha,x})(s)-\nabla_{\Psi(\beta,y)}\esm^\alpha(\X^{\alpha,x})(s)\right|^2\rsb ds=0.\ee
By \eqref{eq:nablavepsiesm}, the Lipschitz continuity of the ESM $\esm^\alpha$, Proposition \ref{prop:nablaesmlip} and the fact that $\nabla_\psi \bar{\Gamma} (f) \equiv 0$ when $\psi \equiv 0$,
\begin{align}\label{eq:EnablaPsi2bound}
	\left|\nabla_{\Psi(\beta,y)}^\ve\esm^\alpha(\X^{\alpha,x})(t)-\nabla_{\Psi(\beta,y)}\esm^\alpha(\X^{\alpha,x})(t)\right|^2&\leq4\lip^2\norm{\Psi(\beta,y)}_t^2.
\end{align}
Together with \eqref{eq:Psimomentbound}, \eqref{eq:nablapsiesm} and the dominated convergence theorem, this implies \eqref{eq:Enablavenablazero}. 
\end{proof}

\begin{proof}[Proof of Proposition \ref{prop:psivepsi}]
Let $(\beta,y)\in\R^M\times G_x$ and $(\Phi(\beta,y),\Psi(\beta,y))$ be as in Lemma \ref{lem:PhiPsi}. By \eqref{eq:nablaZ}, Lemma \ref{lem:Zveesm} and Proposition \ref{prop:esmpsive}, a.s.\ $\partial_{\beta,y}\Z^{\alpha,x}=\nabla_{\Psi(\beta,y)}\esm^\alpha(\X^{\alpha,x})$. Then according to Lemma \ref{lem:PhiPsi}, a.s.\ $\partial_{\beta,y}\Z^{\alpha,x}=\Phi(\beta,y)$. Thus, \eqref{eq:Psibetay} implies \eqref{eq:PsibetayPhi} holds.
\end{proof}

\appendix

\section{Proof of Lemma \ref{lem:push}}\label{apdx:push}

\begin{proof}[Proof of Lemma \ref{lem:push}]
By \cite[Definition 1.1]{Ramanan2006}, there exists a Lebesgue measurable function $\xi:[0,\infty)\mapsto \mathbb{S}^{J-1}$ such that $\xi(t)\in d(\alpha,\z(t))$ ($d|\y|$-almost everywhere) and
	\be\label{eq:yty0gamma}\y(t)=\int_{(0,t]}\xi(s)d|\y|(s).\ee 
By \eqref{eq:dx} and Condition \ref{cond:independent}, given $x\in G$, there is a unique continuous function $\zeta_{\allN(x)}:d(\alpha,x)\mapsto\R_+^N$ such that for all $y\in d(\alpha,x)$,
	\be\label{eq:zetax}y=\sum_{i\in\allN(x)}\zeta_{\allN(x)}^i(y)d_i(\alpha)\qquad\text{and}\qquad\zeta_{\allN(x)}^i(y)=0\text{ for all }i\not\in\allN(x).\ee
For the $d|g|$-almost every $t\geq0$ such that $\xi(t)\in d(\alpha,\z(t))$, define $\chi(t)\doteq\zeta_{\z(t)}(\xi(t))$. Since $\xi$ is Lebesgue measurable, $\allN(\cdot)$ is upper semicontinuous (Lemma \ref{lem:allNusc}), $\z$ is continuous and $\zeta_{\allN(x)}$ is continuous for each $x\in G$, it follows that $\chi$ is also Lebesgue measurable. For each $i\in\allN$, define
	\be\label{eq:Lizetai}\push^i(t)\doteq\int_{(0,t]}\chi^i(s)d|\y|(s)= \int_{(0,t]}1_{\{i\in\allN(\z(s))\}}\chi^i(s)d|\y|(s),\ee
where the second equality follows because $\chi^i(s)=\zeta_{h(s)}^i(\xi(s))=0$ if $i\not\in\allN(\z(s))$. Since $\chi$ takes values in $\R_+^N$, \eqref{eq:Lizetai} implies that for each $i\in\allN$, $\push^i$ is nondecreasing and \eqref{eq:dLi} holds. By \eqref{eq:yty0gamma}--\eqref{eq:Lizetai}, for all $t\in[0,\infty)$,
	$$\y(t)=\sum_{i\in\allN}\lb\int_{(0,t]}\chi^i(s)d|\y|(s)\rb d_i(\alpha)=R(\alpha)\push(t).$$
Together with the linear independence condition in Condition \ref{cond:independent}, this implies that $\push$ is uniquely defined and there exists a positive constant $\lip_\ell(\alpha)<\infty$ such that if, for $k=1,2$, $(\z_k,\y_k)$ is the solution to the ESP $\{(d_i(\alpha),n_i,c_i),i\in\allN\}$ for $\x_k\in\cts_G$ and $\push_k$ is as above, but with $\z_k,\y_k$ and $\push_k$ in place of $\z,\y$ and $\push$, then for all $t<\infty$, $\norm{\push_1-\push_2}_t\leq\lip_\ell(\alpha)\norm{\y_1-\y_2}_t$. The continuity of $R(\cdot)$ implies that $\lip(\cdot)$ can be chosen to be bounded on compact subsets of $U$. To prove the final statement of the lemma, let $\wt\alpha\in U$. By Definition \ref{def:esp}, the fact that $\y=R(\alpha)\ell$ and the definition of $\wt\x$, we have $\z=\x+R(\alpha)\ell=\wt\x+R(\wt\alpha)\ell$. Since $\x$ takes values in $G$ and for each $i\in\allN$, $\ell^i$ starts at zero, is nondecreasing and can only increase when $\z$ lies in face $F_i$, it follows that $(\z,R(\wt\alpha)\ell)$ is a solution the ESP $\{(d_i(\wt\alpha),n_i,c_i),i\in\allN\}$ for $\wt\x$.
\end{proof}

\section{Proof of Lemma \ref{lem:ZalphaxZbetaybounds}}\label{sec:continuousfield}

\begin{proof}[Proof of Lemma \ref{lem:ZalphaxZbetaybounds}]
Fix $p\geq2$, $t<\infty$ and compact subsets $V\subset U$ and $K\subset G$. Let $\alpha_0\in V$ and set $\lip\doteq\sup\{\lip_{b,\sigma},\lip_R,\lip_\ell(\alpha),\lip_{\esm}(\alpha_0):\alpha\in V\}$, where $\lip<\infty$ follows because $\lip_\ell(\cdot)$ is bounded on compact subsets of $U$ by Lemma \ref{lem:push}. Let $(\alpha,x),(\wt\alpha,\wt x)\in V\times K$ and define $\X^{\alpha,x}$ as in \eqref{eq:X} and $\X^{\wt\alpha,\wt x}$ as in \eqref{eq:X}, but with $\wt\alpha$ and $\wt x$ in place of $\alpha$ and $x$, respectively. Then by \eqref{eq:X}, H\"older's inequality, the BDG inequalities, the Lipschitz continuity of $b(\cdot,\cdot)$ and $\sigma(\cdot,\cdot)$ that follows from Assumption \ref{ass:drift} and Tonelli's theorem,
\begin{align}\label{eq:XalphaxXbetay}
	\E\lsb\norm{\X^{\alpha,x}-\X^{\wt\alpha,\wt x}}_t^p\rsb&\leq3^{p-1}|x-\wt x|^p\\ \notag
	&\qquad+3^{p-1}(t^{p-1}+C_p)\lip^p\int_0^t\E\lsb\sup_{0\leq u\leq s}|(\alpha-\wt\alpha,\Z_u^{\alpha,x}-\Z_u^{\wt\alpha,\wt x})|^p\rsb ds\\ \notag
	&\leq3^{p-1}|x-\wt x|^p+3^{p-1}(t^{p-1}+C_p)\lip^pt|\alpha-\wt\alpha|^p\\ \notag
	&\qquad+3^{p-1}(t^{p-1}+C_p)\lip^p\int_0^t\E\lsb\norm{\Z^{\alpha,x}-\Z^{\wt\alpha,\wt x}}_s^p\rsb ds.
\end{align}
We consider two cases.\newline\newline
\emph{Case 1:} Suppose Condition \ref{cond:independent} holds. By Lemma \ref{lem:push}, 
\begin{align*}
	\Z^{\alpha,x}&=\esm^{\alpha_0}(\X^{\alpha,x}+(R(\alpha)-R(\alpha_0))\L^{\alpha,x}),\\
	\Z^{\wt\alpha,\wt x}&=\esm^{\alpha_0}(\X^{\wt\alpha,\wt x}+(R(\wt\alpha)-R(\alpha_0))\L^{\wt\alpha,\wt x}).
\end{align*}
Therefore, by Proposition \ref{prop:esmlip}, \eqref{eq:pushL} and the fact that $(\z,\y)\equiv(x,0)$ is a solution to the ESP $\{(d_i(\alpha),n_i,c_i),i\in\allN\}$ for $\x\equiv x$,
\begin{align*}
	\norm{\Z^{\alpha,x}-\Z^{\wt\alpha,\wt x}}_t&\leq\lip\norm{\X^{\alpha,x}-\X^{\wt\alpha,\wt x}}_t+\lip|R(\alpha)-R(\wt\alpha)|\norm{\L^{\alpha,x}}_t+\lip|R(\wt\alpha)-R(\alpha_0)|\norm{\L^{\alpha,x}-\L^{\wt\alpha,\wt x}}_t\\
	&\leq\lip\norm{\X^{\alpha,x}-\X^{\wt\alpha,\wt x}}_t+\lip^4|\alpha-\wt\alpha|\norm{\X^{\alpha,x}-x}_t+\lip^{4}|\wt\alpha-\alpha_0|\norm{\X^{\alpha,x}-\X^{\wt\alpha,\wt x}}_t\\
	&\leq\lip(1+\lip^3|\wt\alpha-\alpha_0|)\norm{\X^{\alpha,x}-\X^{\wt\alpha,\wt x}}_t+\lip^4|\alpha-\wt\alpha|\norm{\X^{\alpha,x}-x}_t.
\end{align*}
\newline
\emph{Case 2:} Suppose Condition \ref{cond:independent} does not hold, so by Assumption \ref{ass:DRL}, $R(\alpha)$ is constant in $\alpha\in U$. Then according to Remark \ref{rmk:X}, $\Z^{\alpha,x}=\esm^{\alpha_0}(\X^{\alpha,x})$ and $\Z^{\wt\alpha,\wt x}=\esm^{\alpha_0}(\X^{\wt\alpha,\wt x})$, so by Proposition \ref{prop:esmlip}, for all $t\geq0$,
	$$\norm{\Z^{\alpha,x}-\Z^{\wt\alpha,\wt x}}_t\leq\lip\norm{\X^{\alpha,x}+\X^{\wt\alpha,\wt x}}_t.$$

In either case, by \eqref{eq:XalphaxXbetay},
\begin{align*}
	\E\lsb\norm{\X^{\alpha,x}-\X^{\wt\alpha,\wt x}}_t^p\rsb&\leq 3^{p-1}|x-\wt x|^p+3^{p-1}(t^{p-1}+C_p)\lip^pt|\alpha-\wt\alpha|^p\\
	&\qquad+6^{p-1}(t^{p-1}+C_p)\lip^{5p}|\alpha-\wt\alpha|^p\E\lsb\norm{\X^{\alpha,x}-x}_t^p\rsb\\ \notag
	&\qquad+6^{p-1}(t^{p-1}+C_p)\lip^{2p}(1+\lip^3|\wt\alpha-\alpha_0|)^p\int_0^t\E\lsb\norm{\X^{\alpha,x}-\X^{\wt\alpha,\wt x}}_s^p\rsb ds.
\end{align*}
An application of Gronwall's inequality yields \eqref{eq:XpKolmogorov} with 
\begin{align*}
	\wt C^\dagger\doteq&\sup\lcb3^{p-1}(t^{p-1}+C_p)\lip^p\lb t+2^{p-1}\lip^{4p}\E\lsb\norm{\X^{\alpha,x}-x}_t^p\rsb\rb:\wt\alpha\in V,x\in K\rcb\\
	&\qquad\times\sup\lcb\exp\lb6^{p-1}(t^{p-1}+C_p)\lip^{2p}(1+\lip^3|\wt\alpha-\alpha_0|)^pt\rb:\wt\alpha\in V\rcb,\\
	\wt C^\ddagger\doteq&\sup\lcb3^{p-1}\exp\lb6^{p-1}(t^{p-1}+C_p)\lip^{2p}(1+\lip^3|\wt\alpha-\alpha_0|)^pt\rb:\wt\alpha\in V\rcb.
\end{align*}
Here $|\wt\alpha-\alpha_0|$ is uniformly bounded over $\wt\alpha\in V$ since $V$ is compact, and $\E\lsb\norm{\X^{\alpha,x}-x}_t^p\rsb$ is uniformly bounded over $\alpha\in V$ and $x\in K$ by Lemma \ref{lem:Xmomentbound}. It then follows from \eqref{eq:XpKolmogorov} and the bounds shown in Cases 1 and 2 above that \eqref{eq:ZpKolmogorov} holds with 
\begin{align*}
	C^\dagger&\doteq2^{p-1}(\lip(1+\lip^3|\wt\alpha-\alpha_0|))^p\wt C^\dagger+2^{p-1}\lip^{4p}\E\lsb\norm{\X^{\alpha,x}-x}_t^p\rsb,\\
	C^\ddagger&\doteq2^{p-1}(\lip(1+\lip^3|\wt\alpha-\alpha_0|))^p\wt C^\ddagger.
\end{align*}
\end{proof}

\section{Proof of Lemma \ref{lem:ZkYkESPXk}}\label{apdx:ZkYkESPXk}

\begin{proof}[Proof of Lemma \ref{lem:ZkYkESPXk}]
Since $\{(d_i(\cdot),n_i,c_i),i\in\allN\}$ satisfies Assumption \ref{ass:setB}, for each $\alpha\in U$ there is a set $B^\alpha$ such that \eqref{eq:setB} holds for all $i\in\allN$, and thus, for all $i\in I$. Therefore, $\{(d_i(\cdot),n_i,0),i\in I\}$ satisfies Assumption \ref{ass:setB}. In order to show that the data $\{(d_i(\cdot),n_i,c_i)i\in\allN\}$ satisfies Assumption \ref{ass:projection}, we first need some definitions. Define the set-valued function $I(\cdot)$ on $G_{\bar x}$ by 
	\be\label{eq:Iy}I(y)\doteq\{i\in I:\ip{y,n_i}=0\}.\qquad y\in G_{\bar x},\ee 
and define the set-valued function $d_I(\cdot,\cdot)$ on $U\times G_{\bar x}$ by 
	\be\label{eq:dI}d_I(\alpha,y)\doteq\conv\lsb\{d_i(\alpha),i\in I(y)\}\rsb,\qquad \alpha\in U,\;y\in G_{\bar x}.\ee
In other words, $I(\cdot)$ and $d_I(\cdot,\cdot)$ are defined analogously to $\allN(\cdot)$ and $d(\cdot,\cdot)$, respectively, but with the data $\{(d_i(\cdot),n_i,0),i\in I\}$ instead of $\{(d_i(\cdot),n_i,c_i),i\in\allN\}$. Now, according to Assumption \ref{ass:projection} and Remark \ref{rmk:picontinuous}, for each $\alpha\in U$ there is a continuous map $\pi^\alpha:\R^J\mapsto G$ such that $\pi^\alpha(x)=x$ for all $x\in G$ and $\pi^\alpha(x)-x\in d(\alpha,\pi^\alpha(x))$ for all $x\not\in G$. By the upper semicontinuity of $\allN(\cdot)$ (Lemma \ref{lem:allNusc}), the continuity of $\pi^\alpha$ and because $\pi^\alpha(\bar x)=\bar x$, there exists a neighborhood $V_{\bar x}$ of $\bar x$ such that $\allN(\pi(x))\subseteq\allN(\pi(\bar x))=I$ for all $x\in V_{\bar x}$.  We now define a map $\pi_I^\alpha:\R^J\mapsto G_{\bar x}$. For $y\in G_{\bar x}$, set $\pi_I^\alpha(y)=y$. For $y\not\in G_{\bar x}$, choose $\delta>0$ such that $\bar x+\delta y\in V_{\bar x}$ and define 
	\be\label{eq:piI}\pi_I^\alpha(y)\doteq\delta^{-1}(\pi^\alpha(\bar x+\delta y)-\bar x).\ee
Since $\pi^\alpha(\bar x+\delta y)\in V_{\bar x}$, $\bar x\in F_I$, and by \eqref{eq:Iy}, we have $\pi^\alpha(\bar x+\delta y)-\bar x\in G_{\bar x}$ and
\begin{align*}
	\allN(\pi^\alpha(\bar x+\delta y))&=\{i\in I:\ip{\pi^\alpha(\bar x+\delta y),n_i}=c_i\}\\
	&=\{i\in I:\ip{\pi^\alpha(\bar x+\delta y)-\bar x,n_i}=0\}\\
	&=I(\pi^\alpha(\bar x+\delta y)-\bar x).
\end{align*}
Since $G_{\bar x}$ is a cone with vertex at the origin, it follows that $\delta^{-1}(\pi^\alpha(\bar x+\delta y)-\bar x)\in G_{\bar x}$ and 
	\be\label{eq:Idelta}\allN(\pi^\alpha(\bar x+\delta y))=I(\delta^{-1}(\pi^\alpha(\bar x+\delta y)-\bar x)).\ee
Thus, by \eqref{eq:dI}, \eqref{eq:Idelta} and \eqref{eq:dx},
	\be\label{eq:dIalphadelta}d_I(\alpha,\delta^{-1}(\pi^\alpha(\bar x+\delta y)-\bar x))=\conv\lsb\{d_i(\alpha),i\in\allN(\pi^\alpha(\bar x+\delta y))\}\rsb=d(\alpha,\pi^\alpha(\bar x+\delta y)).\ee
Then, by \eqref{eq:piI}, the facts that $\{(d_i(\cdot),n_i,c_i),i\in\allN\}$ satisfies Assumption \ref{ass:projection} and $d(\alpha,x)$ is a cone for all $x\in G$, and \eqref{eq:dIalphadelta}, we have
	$$\pi_I^\alpha(y)-y=\delta^{-1}(\pi^\alpha(\bar x+\delta y)-(\bar x+\delta y))\in d(\alpha,\pi^\alpha(\bar x+\delta y))=d_I(\alpha,\pi^\alpha(\bar x+\delta y)-\bar x).$$ 
Since this holds for all $\alpha\in U$ and $y\in G_{\bar x}$, $\{(d_i(\cdot),n_i,0),i\in I\}$ satisfies Assumption \ref{ass:projection}. The existence of a unique solution $(\z,\y)$ to the ESP $\{(d_i(\alpha),n_i,0),i\in I\}$ for $\x\in\cts_{G_{\bar x}}(\R^J)$ then follows from Proposition \ref{prop:esp}. The bound \eqref{eq:projlip} follows from \cite[Lemma 9.8]{Lipshutz2016}.
\end{proof}

\bibliographystyle{plain}
\bibliography{pathwise}

\end{document}